\let\oldtocsection=\tocsection
\let\oldtocsubsection=\tocsubsection
\let\oldtocsubsubsection=\tocsubsubsection
\renewcommand{\tocsection}[2]{\hspace{0em}\oldtocsection{#1}{#2}}
\renewcommand{\tocsubsection}[2]{\hspace{2em} \oldtocsubsection{#1}{\small{#2}}}
\renewcommand{\tocsubsubsection}[2]{\hspace{4em}\oldtocsubsubsection{#1}{\scriptsize{#2}}}
\theoremstyle{plain}
\newtheorem{theorem}[equation]{Theorem}
\newtheorem{lemma}[equation]{Lemma}
\newtheorem{corollary}[equation]{Corollary}
\newtheorem{proposition}[equation]{Proposition}
\theoremstyle{definition}
\newtheorem{definition}[equation]{Definition}
\theoremstyle{remark}
\newtheorem{remark}[equation]{Remark}
\numberwithin{equation}{section}
\newcommand{\RR}{{\mathbb{R}}}
\newcommand{\NN}{{\mathbb{N}}}
\newcommand{\eps}{\varepsilon}
\newcommand{\dist}{\operatorname{dist}}
\newcommand{\heart}{\varheartsuit}
\newcommand{\dv}{\operatorname{div}}
\newcommand{\re}{\mathbb{R}}
\newcommand{\rn}{\mathbb{R}^n}
\newcommand{\ren}{\mathbb{R}^n}
\newcommand{\ree}{\mathbb{R}^{n+1}}
\newcommand{\cA}{\mathcal{A}}
\newcommand{\dd}{\mathbb{D}}
\newcommand{\F}{\mathcal{F}}
\newcommand{\cH}{\mathcal{H}}
\newcommand{\cB}{\mathcal{B}}
\newcommand{\sbf}{{\bf S}}
\newcommand{\Qin}{{Q_{\text{in}}}}
\newcommand{\Qout}{{Q_{\text{out}}}}
\newcommand{\pom}{\partial\Omega}
\newcommand{\hm}{\omega}
\renewcommand{\P}{\mathcal{P}}
\newcommand{\bfpsi}{\mbf{\Psi}}
\newcommand{\xbf}{\mathbf{X}}
\newcommand{\mbf}[1]{{\mathbf{#1}}}
\renewcommand{\emptyset}{\text{\textup{\O}}}
\DeclareMathOperator{\diam}{diam}
\DeclareMathOperator{\osc}{osc}
\DeclareMathOperator*{\esssup}{ess\,sup}
\def\div{\mathop{\operatorname{div}}\nolimits}
\def\Lip{\mathop{\operatorname{Lip}}\nolimits}
\def\BMO{\mathop{\operatorname{BMO_P}}\nolimits}
\newcommand{\vertiii}[1]{{\left\vert\kern-0.15ex\left\vert\kern-0.15ex\left\vert #1
		\right\vert\kern-0.15ex\right\vert\kern-0.15ex\right\vert}}
\def\Xint#1{\mathchoice
{\XXint\displaystyle\textstyle{#1}}%
{\XXint\textstyle\scriptstyle{#1}}%
{\XXint\scriptstyle\scriptscriptstyle{#1}}%
{\XXint\scriptscriptstyle%
\scriptscriptstyle{#1}}%
\!\int}
\def\XXint#1#2#3{{\setbox0=\hbox{$#1{#2#3}{%
\int}$ }
\vcenter{\hbox{$#2#3$ }}\kern-.6\wd0}}
\def\barint{\,\Xint -} % \, corrects the \! used in the definition
\def\bariint{\barint_{} \kern-.4em \barint}
\def\bariiint{\bariint_{} \kern-.4em \barint}
\renewcommand{\iint}{\int_{}\kern-.34em \int} %\, minor space between the integrals
\renewcommand{\iiint}{\iint_{}\kern-.34em \int} %\, minor space between the integrals
\renewcommand{\d}{\, \mathrm{d}}
\title[Variable coefficient FBP for $L^p$-solvability]{A variable coefficient Free Boundary Problem for $L^p$-solvability of parabolic Dirichlet problems in graph domains} 
\author[S. Bortz, S. Ferris, P. Hidalgo-Palencia, S. Hofmann]
{Simon Bortz, Sandra Ferris, Pablo Hidalgo-Palencia, Steve Hofmann}
\address{Simon Bortz \& Sandra Ferris, Department of Mathematics
\\
University of Alabama
\\
Tuscaloosa, AL, 35487, USA}
\email{sbortz@ua.edu}
\address{Pablo Hidalgo-Palencia\\
	Instituto de Ciencias Matemáticas CSIC-UAM-UC3M-UCM\\
	Con\-se\-jo Superior de Investigaciones Científicas\\
	E-28049 Ma\-drid, Spain; \& Departamento de Análisis Matemático y Matemática Aplicada
	\\
	Facultad de Matemáticas
	\\
	Universidad Complutense de Madrid
	\\
	E-28040 Madrid, Spain
}
\email{pablo.hidalgo@icmat.es}
\address{Steve Hofmann\\ 
	Department of Mathematics, University of Missouri, Columbia, MO 65211, USA
}
\email{hofmanns@missouri.edu}
\date{\today}
\keywords{}
\subjclass[2010]{}
\thanks{S.B. was  supported by the Simons foundation grant ``Travel support for Mathematicians'' (grant number 959861). P.H.-P. is supported by the grant CEX2019-000904-S-20-3, funded by MCIN/AEI/ 10.13039/501100011033, and acknowledges financial support from MCIN/AEI/ 10.13039/501100011033 grants CEX2019-000904-S and PID2019-107914GB-I00. S.H. was supported by NSF grant DMS-2349846. This project was carried out while the authors were visiting University of Alabama and the authors express their gratitude to UA. Roll Tide!}
\begin{document}
\allowdisplaybreaks

\begin{abstract}
We investigate variable coefficient analogs of  \cite{BHMN1}. In particular, we show that if $\Omega$ is the region above the graph of a Lip(1,1/2) (parabolic Lipschitz) function and $L$ is a parabolic operator in divergence form
\[L = \partial_t - \div A \nabla\]
with $A$ satisfying an $L^1$ Carleson condition on its spatial and time derivatives, then the $L^p$-solvability of the Dirichlet problem for $L$ and $L^*$ implies that the graph function has a half-order time derivative in BMO. Equivalently, the graph is parabolic uniformly rectifiable.

In the case of $A$ symmetric, we only require that the Dirichlet problem for $L$ is solvable, which requires us to adapt a clever integration by parts argument by Lewis and Nystr\"om \cite{LN07}.  A feature of the present work is that we must overcome the lack of translation invariance in our equation, which is a fundamental tool in similar works, including \cite{BHMN1}. 
\end{abstract}

\maketitle 

\tableofcontents

\section{Introduction}
In this paper we extend the results of \cite{BHMN1} to the setting of variable coefficient operators satisfying a Carleson condition on their oscillation. In particular, we consider parabolic operators in divergence form
\begin{equation} \label{L.eq}
	\mathcal{L}f := \partial_t f - \div_X(A(X,t) \nabla_X f),\qquad  \mathcal{L}^* f := -\partial_t f - \div_X(A^*(X,t) \nabla_X f),
\end{equation}
acting in $\mathbb{R}^{n+1} = \{(X,t): X \in \mathbb{R}^n, t \in \mathbb{R}\}$, where $A$ is a real $n \times n$
matrix with transpose $A^*$, satisfying the point-wise ellipticity bounds for 
some $\Lambda \ge 1$
\begin{equation}\label{ellip.eq}
\|A\|_{L^\infty} \le \Lambda, \quad \text{and} \quad
\langle A(X,t) \xi, \xi\rangle \ge \Lambda^{-1} |\xi|^2,
 \qquad \forall \; \xi \in \mathbb{R}^n, \; \text{a.e.} \, (X,t) \in 
 \mathbb{R}^{n+1}.
\end{equation}

Our results concern how the $L^p$-solvability of the Dirichlet problem for $\mathcal{L}$ and $\mathcal{L}^*$ imposes additional regularity on the boundary of certain domains. In particular we prove the following theorems.
\begin{theorem}\label{main1.thrm}
Let $n \geq 2$. Suppose that $\Omega \subseteq \RR^{n+1}$ 
is the region above the graph of a $\Lip(1,1/2)$ function $\psi$ (see Definition \ref{Lipnot}), and $A$ is a \textbf{symmetric} elliptic matrix satisfying \eqref{ellip.eq} and the $L^1$-Carleson oscillation condition in Definition \ref{smoothL1carl.def}. Assume further that there exists some $1 < p < \infty$ such that the $L^p$ Dirichlet problem is solvable for $\mathcal{L}$ (see Definition~\ref{Lpsolv.def}), where
\[\mathcal{L}f := \partial_t f - \div_X(A(X,t) \nabla_X f).\]
Then $D_t^{1/2}\psi \in \BMO$ (the parabolic $\mathrm{BMO}$ space, see Section \ref{sec:BMO}), with norm controlled uniformly depending only on the constants appearing in the hypotheses. Equivalently, the $L^p$ solvability of the Dirichlet problem for $\mathcal{L}$ implies that the graph of $\psi$ is parabolic uniformly rectifiable.
\end{theorem}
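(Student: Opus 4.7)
My plan is to deduce $D_t^{1/2}\psi \in \BMO$ by establishing a scale-invariant parabolic Carleson measure estimate that controls the half-order time derivative of $\psi$, and then invoking the known equivalence for Lip(1,1/2) graphs between parabolic uniform rectifiability and the BMO property of $D_t^{1/2}\psi$. The first step is to convert the hypothesized $L^p$-solvability of the Dirichlet problem for $\mathcal{L}$ into quantitative information about solutions in $\Omega$: together with the ellipticity and the $L^1$-Carleson oscillation of $A$, it yields an $A_\infty$-type property of parabolic measure with respect to surface measure on $\partial\Omega$, and, via standard good-$\lambda$ arguments, the Carleson measure bound that $|\nabla_X u|^2 \delta(X,t)\, dX\, dt$ is Carleson for any bounded $\mathcal{L}$-solution $u$ in $\Omega$, where $\delta$ denotes parabolic distance to $\partial\Omega$.

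Next, following \cite{LN07}, I would exploit the symmetry of $A$ to bypass the need for $L^p$-solvability of $\mathcal{L}^*$. One selects a suitable bounded solution $u$ to $\mathcal{L}u = 0$ in $\Omega$ with boundary data adapted to a parabolic surface ball $\Delta$, and works inside the Carleson box $T(\Delta)$. Symmetry allows the pointwise identity
\[
A\nabla u \cdot \nabla u \;=\; \tfrac12 \dv_X(A\nabla (u^2)) \;-\; u\, \partial_t u,
\]
which, after multiplication by a cutoff adapted to $T(\Delta)$ and integration by parts, produces boundary integrals on $\partial\Omega$ encoding $\partial_t \psi$ tested against $u$ and its derivatives. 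A $T1$-type rearrangement then converts these, in the model constant coefficient case, into the desired Carleson measure estimate for $D_t^{1/2}\psi$ on $\partial\Omega$.

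The main obstacle, and the place where the argument departs sharply from \cite{BHMN1}, is the advertised lack of translation invariance. In the constant coefficient setting one may freely translate fundamental solutions, build adapted frequency projections, and compare directly to a model half-space problem, none of which is available here. The integration by parts in the preceding step therefore produces additional error terms of the schematic form $\iint_{T(\Delta)} |\nabla_{X,t} A|\, |\nabla u|\, |u|\, dX\, dt$, which the plan is to absorb by Cauchy--Schwarz against the $L^\infty$ bound on $u$, the Carleson bound from the first step, and the $L^1$-Carleson hypothesis on $\nabla_{X,t} A$. Executing this absorption cleanly requires freezing coefficients at the tops of Whitney cubes, quantifying the non-translation-invariance error at each scale, and verifying that the boundary contributions from the symmetric identity still isolate $D_t^{1/2}\psi$ modulo controllable remainders; this is the technical heart of the proof.

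Once the Carleson measure estimate is in hand uniformly across parabolic surface balls $\Delta \subset \partial\Omega$, the equivalence with $D_t^{1/2}\psi \in \BMO$ follows from parabolic John--Nirenberg theory and the characterization of parabolic uniformly rectifiable Lip(1,1/2) graphs, completing the proof of the theorem and, as stated, the parabolic uniform rectifiability of $\partial\Omega$.
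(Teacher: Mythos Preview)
Your proposal has a genuine gap at its core: the claimed passage from the integration-by-parts identity to ``boundary integrals on $\partial\Omega$ encoding $\partial_t\psi$'' and then, via a ``$T1$-type rearrangement,'' to a Carleson estimate for $D_t^{1/2}\psi$, is not justified and does not match how either \cite{BHMN1} or the present paper actually proceeds. The identity $A\nabla u\cdot\nabla u = \tfrac12\div_X(A\nabla(u^2)) - u\,\partial_t u$ yields only the first-order square function bound for $|\nabla u|^2\delta$; this is far weaker than what is needed. The paper's Lewis--Nystr\"om style integration by parts (Lemma~\ref{mainsfest.lem}) is used to prove a \emph{second-order} local square function estimate, namely a Carleson bound for $(|\partial_t G|^2 + |\nabla_X^2 G|^2)\,\delta$, where $G$ is the (normalized) Green function. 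That estimate does not by itself produce $D_t^{1/2}\psi$ on the boundary; rather, it is the tool that makes the level sets $\{G=\lambda\}$ well-behaved in large regions.

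The mechanism you are missing is that the proof never extracts $D_t^{1/2}\psi$ directly from an integration by parts. Instead, one (i) uses the $A_\infty$ hypothesis to run a corona decomposition in which $\omega\approx\sigma$ on stopping-time regimes; (ii) proves, via the second-order square function estimate, that in a refined corona one has $\partial_{x_0}G \gtrsim 1$ on associated Whitney regions (Lemma~\ref{fkwhsa.lem}); (iii) uses this monotonicity to show that level sets of $G$ are graphs of \emph{regular} Lip(1,1/2) functions $\psi_{\sbf}$ approximating $\Sigma$ in each regime (Sections~\ref{construction.sec}--8); and (iv) invokes the corona characterization of parabolic uniform rectifiability (Lemma~\ref{coronaenough.lem}) to conclude. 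The obstacle you flag---loss of translation invariance---manifests precisely as the failure of the level sets to stratify a full strip near $\partial\Omega$, which is why the paper must work in the corona/sawtooth framework rather than the ``big pieces'' argument of \cite{BHMN1}. Your plan to absorb variable-coefficient errors by freezing $A$ on Whitney cubes does not address this structural issue, and the step where boundary terms ``isolate $D_t^{1/2}\psi$'' is simply not available.
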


\begin{theorem}\label{main2.thrm}
Let $n \geq 2$. Suppose that $\Omega \subseteq \RR^{n+1}$ 
is the region above the graph of
a $\Lip(1,1/2)$ function $\psi$ (see Definition \ref{Lipnot}), and $A$ is a 
(not necessarily symmetric) elliptic matrix satisfying \eqref{ellip.eq} and the $L^1$-Carleson oscillation condition in Definition \ref{smoothL1carl.def}. Assume further that there exists some $1 < p < \infty$ such that the $L^p$ Dirichlet problem is solvable for both $\mathcal{L}$ and $\mathcal{L}^*$ (see Definition~\ref{Lpsolv.def}), where
\[\mathcal{L}f := \partial_t f - \div_X(A(X,t) \nabla_X f),\qquad  \mathcal{L}^* f := -\partial_t f - \div_X(A^*(X,t) \nabla_X f).\]
Then $D_t^{1/2}\psi \in \BMO$ (the parabolic $\mathrm{BMO}$ space, see Section \ref{sec:BMO}), with norm controlled uniformly depending only on the constants appearing in the hypotheses. Equivalently, the $L^p$ solvability of the Dirichlet problem for $\mathcal{L}$ and $\mathcal{L}^*$ implies that the graph of $\psi$ is parabolic uniformly rectifiable. 
\end{theorem}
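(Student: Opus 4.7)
The plan is to deduce Theorem \ref{main2.thrm} via a chain of reductions passing through parabolic $A_\infty$ and Carleson measure estimates on $\Omega$, which are then transferred to regularity of the graph $\psi$. First I would translate the $L^p$-solvability hypothesis for both $\mathcal{L}$ and $\mathcal{L}^*$ into the equivalent statement that the parabolic measure $\hm$ of $\mathcal{L}$ and the adjoint parabolic measure $\thm$ of $\mathcal{L}^*$ both belong to $A_\infty$ with respect to the parabolic surface measure $\sigma$ on $\partial\Omega$. This reduction is standard in the parabolic setting. A useful consequence of two-sided $A_\infty$ is that Green's functions for $\mathcal{L}$ and $\mathcal{L}^*$ are comparable to suitable kernel functions, and that good non-tangential / square function estimates are available for bounded solutions.

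With two-sided $A_\infty$ in hand, the next step is to establish a Carleson measure estimate of the form
\[
\iint_{T(\Delta)} |\nabla u|^2\, \delta(X,t)\, dX\, dt \leq C\, \sigma(\Delta)\, \|u\|_{\infty}^2,
\]
valid for bounded solutions $u$ of $\mathcal{L} u = 0$, where $T(\Delta)$ is a Carleson region above a surface ball $\Delta$ and $\delta$ denotes parabolic distance to $\partial\Omega$. The argument proceeds by integration by parts against the Green's function of $\mathcal{L}^*$ with a carefully chosen pole; this is precisely where $A_\infty$ for $\thm$ is needed, and where the proof of Theorem \ref{main2.thrm} diverges from Theorem \ref{main1.thrm} (in which the Lewis--Nyström trick of pairing $u$ with itself bypasses the adjoint solvability). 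The $L^1$-Carleson oscillation hypothesis on $A$ is used at this stage to absorb the error terms produced by the variable coefficients.

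Next, I would apply the Carleson estimate to a family of test solutions whose boundary traces encode the time regularity of $\psi$. A natural choice is the $\mathcal{L}$-harmonic extension of smooth cutoffs adapted to surface balls, or single-layer potentials for $\mathcal{L}$; by polarization and duality the Carleson estimate yields a local $L^2$ estimate for $D_t^{1/2}\psi$ over every surface ball $\Delta$. A John--Nirenberg type argument then upgrades this to the BMO bound. Equivalently, this is precisely the characterization of parabolic uniform rectifiability of the graph of $\psi$, which is the conclusion of the theorem.

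The hardest step will be overcoming the lack of translation invariance, which in \cite{BHMN1} was fundamental: one could freely translate solutions in time and extract half-order time regularity of $\psi$ directly. Here every such shift generates an error term controlled by $\partial_t A$ and $\nabla_X A$, which must be absorbed using the $L^1$-Carleson oscillation hypothesis. I expect this to require a stopping-time or corona decomposition that organizes $\Omega$ into regions where $A$ is essentially frozen, with the bad set controlled by the Carleson oscillation. Managing these error terms cleanly, while preserving the $A_\infty$ structure produced at the outset and simultaneously closing the bootstrap (we begin with only $\psi \in \Lip(1,1/2)$ and end with the stronger BMO estimate), constitutes the technical heart of the argument.
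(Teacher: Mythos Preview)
Your proposal has the right opening move (passing from $L^p$ solvability to two-sided $A_\infty$) and correctly identifies the integration-by-parts pairing of $u$ with an adjoint Green function as the place where solvability for $\mathcal{L}^*$ enters. But the subsequent mechanism you sketch does not match the paper and, more importantly, has a genuine gap.

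The Carleson estimate you propose, $\iint_{T(\Delta)} |\nabla u|^2\,\delta\,dXdt \lesssim \sigma(\Delta)\|u\|_\infty^2$, is first-order and is essentially a known consequence of $A_\infty$; it does not by itself carry information about $D_t^{1/2}\psi$. What the paper actually proves (Lemma~\ref{mainsfest.lem}) is a \emph{second-order} square function bound for the Green function itself,
\[
\iiint \big(|\partial_t G|^2 + |\nabla_X^2 G|^2\big)\,\delta\,dXdt \lesssim \sigma(\Delta),
\]
and it is precisely the control of $\partial_t G$ and $\nabla^2 G$ that encodes the missing half-order time regularity. Your step from the first-order Carleson bound to ``a local $L^2$ estimate for $D_t^{1/2}\psi$ via test solutions and polarization'' is the crux, and you have not indicated what test solutions would do this or how duality would produce $D_t^{1/2}\psi$; no such direct route is known in the parabolic setting, and in the elliptic analogue the corresponding step already requires much more (level-set or compactness arguments).

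The paper's route is structurally different. The second-order square function estimate is used not to test against boundary data, but to show that $\partial_{x_0} G \gtrsim 1$ on a family of Whitney regions organized into stopping-time regimes (Lemma~\ref{fkwhsa.lem}). This makes the level sets of $G$ graphical there, and the approximating functions $\psi_{\sbf}$ are built from those level sets (Section~\ref{construction.sec}). One then proves each $\psi_{\sbf}$ is regular $\Lip(1,1/2)$ by transferring the square function bounds for $G$ to $\psi_{\sbf}$ (Propositions~\ref{psinoheart.prop}--\ref{psiheartbd.prop}) and running a localized half-order time derivative analysis (Section~\ref{Case1.ssect}). The conclusion follows from the corona-decomposition characterization of parabolic uniform rectifiability (Lemma~\ref{coronaenough.lem}). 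So the corona decomposition is not an error-absorption device for variable coefficients as you suggest; it is the entire architecture of the proof, forced by the fact that in the variable-coefficient setting one cannot obtain a global graphical stratification by level sets of $G$ as in \cite{BHMN1}.
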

The converse to Theorem \ref{main2.thrm} (hence also to
Theorem \ref{main1.thrm}) is known: 
assuming that $\Omega$ is the region above  
the graph of a $\Lip(1,1/2)$ function $\psi$ with $D_t^{1/2}\psi \in \BMO$,
and given that $A$ is an elliptic matrix satisfying \eqref{ellip.eq}, and also
that the $L^1$ Carleson oscillation condition holds, 
one obtains that there exists $p \in (1,\infty)$ 
such that the $L^p$ Dirichlet problem is solvable for 
$\mathcal{L}$ and $\mathcal{L}^*$. In fact, this converse is known under a somewhat 
weaker hypothesis on the coefficients, namely an 
$L^2$ Carleson measure condition on $\nabla A$; see \cite{RN}, which adapts
the methods of \cite{KKPT} and \cite{KP} to the
parabolic setting.  One may give an easier proof, using more recently developed techniques,
by applying the criterion in \cite{DPP17}\footnote{which in turn is a parabolic version of \cite{KKiPT}}, and using the local square function estimate in
 \cite[Lemma A.2 (i)]{HL2}\footnote{which in turn is a parabolic version of analogous elliptic bounds in \cite{KP}} (plus a pullback argument, using a parabolic version of the Dahlberg-Kenig-Stein mapping).  A slightly weaker version of the result (i.e.,
 with a more restrictive Carleson measure condition on the coefficients) 
 appeared earlier in \cite{HL-Mem}. Under the weaker $L^2$ Carleson measure 
 assumption, at present
we lack certain tools used in Theorems \ref{main1.thrm} 
and \ref{main2.thrm},
in particular, the local square function estimates of Lemma \ref{mainsfest.lem}, for the time derivative and second spatial partial derivatives of the Green function.  Thus we require the stronger $L^1$ condition in Definition \ref{smoothL1carl.def}. 
In fact, under the  $L^2$ Carleson measure 
assumption, it remains an open problem to establish
even the elliptic analogue of such square function estimates (we note that a variant of these estimates, in the elliptic setting, has been obtained in work of Feneuil and Li \cite{FL}, but it is not clear whether the estimates of \cite{FL} would suffice for our purposes in the present paper).

It is known that the solvability of the $L^p$ Dirichlet problem for $\mathcal{L}$ (for some $p > 1$) is equivalent 
(in the presence of the doubling property of parabolic measure)
to a certain scale-invariant mutual 
absolute continuity condition (of parabolic measure with respect to parabolic surface measure)
known as the parabolic (Muckenhoupt) $A_\infty$ 
condition\footnote{ In the absence of the doubling property, 
$L^p$ solvability is equivalent to the ``weak-$A_\infty$" condition; see, e.g., \cite{GH1}.}. 
This condition has been extensively studied in its connection with the boundedness of classical objects in harmonic analysis (maximal functions and SIOs) since the early 70's \cite{Muck}. In 1977, Dahlberg \cite{DahlbergRH2} resolved a major open conjecture concerning the absolute continuity of {\it harmonic} measure in Lipschitz domains by (implicitly) showing that the harmonic measure is an $A_\infty$ weight. It was also shown that the $A_\infty$ condition gives $L^p$ non-tangential maximal function estimates for 
solutions with $L^p$ data, for some $p<\infty$. 
This led R. Hunt to conjecture that if the lateral boundary $\Sigma$
of a parabolic domain is given by the graph of a $\Lip(1,1/2)$ function, 
then {\it caloric} measure should be mutually absolutely continuous with 
respect to the parabolic surface measure, but this conjecture was ultimately disproven by Kaufman and Wu \cite{KW}. 
In \cite{LewSil}, J. Lewis and his student J. Silver pioneered the search 
for the ``correct" regularity condition on $\Sigma$ that would allow
solvability of the parabolic
$L^p$ Dirichlet
problem in domains with time-varying rough (lateral) boundary. In particular, the work \cite{LewSil} showed that  
if $\Omega \subseteq \mathbb{R}^2$ was the region above a  
the graph of a $\Lip(1/2)$ function $\psi$, 
then a (stronger) condition, similar to (but slightly stronger than) $D_t^{1/2}\psi \in \BMO$, is sufficient for the mutual absolute continuity of caloric measure and (parabolic) surface measure.

The condition 
that $D_t^{1/2}\psi \in \BMO$ 
was seen to be natural, from the standpoint of singular 
integrals\footnote{It is also possible to recover Murray's result from the $T1$ theorem \cite{DavidJourne}.},
in light of the work of Murray \cite{MurThesis}.
Motivated by these considerations, 
Lewis and Murray \cite{LM} showed 
that the additional assumption
$D_t^{1/2}\psi \in \BMO$, is sufficient for the $L^p$ solvability of the 
Dirichlet problem for the {\it heat equation} above the graph of $\psi$ (for some $p > 1$). 
Led by the interesting papers of Lewis, Murray and 
Silver, the fourth named author began 
to work on parabolic singular integrals \cite{HofParaSio}, 
and together with Lewis \cite{HL96} showed that if 
$D_t^{1/2}\psi$ has sufficiently small BMO norm,
then one has $L^2$ solvability of the Dirichlet,
Neumann, and Regularity problems, via the method of layer potentials, 
for the heat equation in the domain above the 
graph of $\psi$. Later works 
extended these results beyond the graphical setting and/or to the variable coefficient setting \cite{HL-Mem,HL2,DPP17,NS,BHHLN-DJ,GH1,GH2}, yielding in particular the aforementioned converse of Theorems \ref{main1.thrm} and \ref{main2.thrm}.

The work \cite{BHMN1} shows that the results in \cite{LM} are sharp, that is, if $\Omega$ is the region above the graph of a $\Lip(1,1/2)$ function $\psi$, the solvability of the $L^p$ Dirichlet problem for some $p > 1$ is {\em equivalent} to $D_t^{1/2}\psi \in \BMO$.  Our work here concerns an extension of \cite{BHMN1}, using ideas from \cite{LN07} or \cite{HMT}. In particular, we use an integration by parts scheme
in order to establish an essential ingredient in \cite{BHMN1}, namely,
a local square function estimate for 
\[(|\partial_t G|^2 + |\nabla^2 G|^2) \dist(\cdot, \partial \Omega).\]   
To obtain this estimate, we use a different scheme depending on whether we are proving Theorem \ref{main1.thrm} or \ref{main2.thrm}. 
In the case of Theorem \ref{main1.thrm}, we adapt the argument in \cite{LN07} by using some elementary linear algebra.
In the case of Theorem \ref{main2.thrm}, we use an argument analogous to that of \cite{HMT} (but 
significantly more delicate in the parabolic setting).

In contrast to the situation in the constant coefficient case treated in
 \cite{BHMN1}, it is not clear whether the level sets of the Green function 
 locally stratify the region near $\partial \Omega$ graphically, 
in the sense that the sets $E_\lambda = \{Y: G(X,Y) = \lambda\}$ are 
 `nice' graphs for small $\lambda$, and 
 $\cup_{\lambda \in (0,\lambda_0)}E_\lambda$
is a sufficiently `fat' strip along the boundary. This local stratification by graphical level sets is a crucial ingredient in \cite{BHMN1}, 
and the way we circumvent this obstacle casts 
new light on \cite{BHMN1}, which may be useful in other settings. 
In particular, our strategy involves using the estimates on the time and second spatial partial derivatives of $G$ to prove that it is ``often" the case the $\partial_{x_0} G \gtrsim 1$ (here the domain is given by $\Omega := \{(x_0, x', t): x_0 > \psi(x,t)\}$), which, in turn, allows us to construct graphs in certain ``sawtooth regions". Then we can refine the estimates in \cite{BHMN1} to prove a certain ``corona" decomposition of the graph of $\psi$, by graphs of {\it regular} $\Lip(1,1/2)$ functions, that is, Lip(1,1/2) 
functions that have a half-order time derivative in  
(parabolic) $\BMO$. In \cite{BHHLN-Corona}, 
it is shown that such a corona decomposition 
implies that the graph of $\psi$ is parabolic uniformly rectifiable, which in turn, in the graph setting, is equivalent to $D_t^{1/2}\psi \in \BMO$ (since $\psi$ is $\Lip(1,1/2)$ by hypothesis).

We mention again that it would be interesting to know whether our results hold under 
the weaker $L^2$-Carleson oscillation condition. In the elliptic setting this is accomplished 
in \cite{HMMTZ}, by using both
compactness methods and the technique of extrapolation of Carleson measures; 
however, in the context of \cite{HMMTZ} the authors merely need to show an exterior corkscrew condition. Indeed, it is known that, in the elliptic setting, 
if a set satisfies the two-sided corkscrew condition then it must be uniformly rectifiable, but this is not the case in the parabolic setting\footnote{Otherwise every $\Lip(1,1/2)$ function would have a half-order time derivative in (parabolic) BMO, which is not true, see \cite{BHHLN-BP}.}. Moreover, it is known that weak flatness conditions (e.g. WHSA \cite{HLMN}, BWGL \cite{DS2}) that characterize (elliptic) uniformly rectifiable sets fail to characterize parabolic uniform rectifiability. 
It would be interesting if a more direct, ``constructive" (no compactness) argument, 
via a square function estimate, could be used to recover the result in \cite{HMMTZ}, 
even in the elliptic setting, as this might provide a path to prove the results here under the $L^2$-Carleson oscillation condition. Perhaps the ideas in \cite{FL} could serve as a first step on that path.

\begin{remark}\label{remark-smoothL1carl}
To conclude we make a remark for the careful reader, who may notice that in
the $L^1$-Carleson oscillation condition in Definition \ref{smoothL1carl.def}, we assume that the matrix function $A$ is locally smooth. In fact, at least in the setting of Theorem~\ref{main1.thrm}, this is merely qualitative and inessential,
as one can assume a weaker oscillation condition and smooth the coefficients, maintaining the $A_\infty$ property for the parabolic measure of the smoothed operator. This is known for the case that $A$ is symmetric \cite{N97} and it could be that a straightforward modification of \cite{N97} would give the non-symmetric case. These topics are discussed in Appendix \ref{extendrmks.sect}.
\end{remark}

\section{Preliminaries}

\subsection{\for{toc}{\small}Notation}

\begin{itemize}
	\item Throughout the paper, $n \ge 2$ is a natural number and
	we let $d:=n+1$ denote the  natural parabolic homogeneous dimension of space-time $\ren$:
	\[
	\rn = \big\{ \mbf{x}=(x,t)  \in \re^{n-1}\times\re\big\}.
	\]
	
	\item The ambient space we work in is $\ree:=\re\times \re^{n-1}\times \re$,
	\[
	\ree = \big\{\xbf= (X,t)=(x_0,x,t) \in \re\times \re^{n-1}\times\re\big\}.
	\]
	Here we have distinguished the last coordinate as the time coordinate and the first spatial coordinate as the graph coordinate. 
	
	\item To help the reader identify the nature of points used in the paper, we use the notation employed above, which we here describe in detail. We use lower case letters
	(e.g. $x$, $y$, $z$) to denote spatial points in $\re^{n-1}$, and capital letters (e.g $X = (x_0, x)$, $Y=(y_0, y)$, $Z = (z_0, z)$), to denote spatial points in $\re^{n}=\re\times\re^{n-1}$.
	We also use boldface capital letters (e.g. $\mbf{X}=(X,t)$, $\mbf{Y}=(Y,s)$, $\mbf{Z}=(Z,\tau)$) to denote points in space-time
	$\ree$, and boldface lowercase letters (e.g. $\mbf{x}=(x,t)$, $\mbf{y}=(y,s)$, $\mbf{z}=(z,\tau)$) to denote points in space-time $\RR^n$. In accordance with this notation, given $\mbf{X}=(X,t)\in \ree$ (resp. $ \mbf{x}=(x,t)\in\ren$) we use the notation $t(\mbf{X})$ (resp. $t(\mbf{x})$) to denote its time component, that is,  $t(\mbf{X}) = t$ (resp. $t(\mbf{x}) = t$). 
	
	\item We also denote 1-dimensional integrals by $\int$, integrals in space-time $\RR^n$ by $\iint$, and integrals in space-time $\RR^{n+1}$ by $\iiint$. Averages are denoted, in each case, by $\fint$, $\bariint$ and $\bariiint$. 
	
	\item We denote the parabolic length by
	\begin{align*}
		\|\mbf{X}\|&=\|(X,t)\|:=|X|+|t|^{\frac12}, \quad \mbf{X}=(X,t)\in\ree\,,
		\\ \|\mbf{x}\|&=\|(x,t)\|:=|x|+|t|^{\frac12}, \quad \mbf{x}=(x,t)\in\ren\,,
	\end{align*}
	and accordingly, all distances will be measured with respect to the parabolic metric
	\[
	\dist(\mbf{X},\mbf{Y}):=\|\mbf{X}-\mbf{Y}\|:=|X-Y|+|t-s|^{\frac12},
	\qquad \mbf{X}=(X,t),\ \mbf{Y}=(Y,s)\in\ree,
	\]
	\[
	\dist(\mbf{x},\mbf{y}):=\|\mbf{x}-\mbf{y}\|:=|x-y|+|t-s|^{\frac12},
	\qquad \mbf{x}=(x,t),\ \mbf{y}=(y,s)\in\rn.
	\]
	
	\item It is sometimes convenient to use a different (smooth) parabolic distance. Given $\mbf{x}=(x,t)\in\ren\setminus\{0\}$,  
	we let $\vertiii{\mbf{x}}$ 
	be defined as the unique positive solution of the equation
	\begin{equation*}
		\frac{|x|}{\vertiii{\mbf{x}}^2}+\frac{t^2}{\vertiii{\mbf{x}}^4}=1, 
	\end{equation*}
	In particular, $\vertiii{\mbf{x}}\approx \|\mbf{x}\|$ 
	with implicit constants depending 
	only on $n$.

	\item Given $\mbf{x}=(x,t)\in\ren$ and $R>0$, we define the parabolic
	\textbf{cube in $\ren$} as
	\[
	Q_R(\mbf{x})
	:=
	\big\{
	\mbf{y}=(y,s)\in\ren: |y_i-x_i|<R, \, 1\leq i\leq n-1,\,\ |t-s|< R^2
	\big\}.
	\]
	
	\item Given $\mbf{x}=(x,t)\in\ren$ and $R>0$, 
	we denote a closed parabolic \textbf{cube in $\ree$} by
	\begin{equation}\label{cuba}\mathcal{J}_{R}(\mbf{X}) = \mathcal{J}_{R}(x_0, x,t):=[x_0-R, x_0+R]\times \overline{Q_R(\mbf{x})}\,.
	\end{equation}
	For $\mathcal{J} = \mathcal{J}_{R}(\mbf{X})$, we let $\ell(\mathcal{J}) := \ell(\mathcal{J}_{R}(\mbf{X}))= 2R$
	denote the parabolic side length of $\mathcal{J}$.
	
	\item We let $\dd$ denote the standard grid of parabolic dyadic cubes on $\mathbb{R}^n$, and $\ell(Q)$ denote the side length of $Q \in \dd$. Also, for a given $Q_0 \in \dd$, we denote $\dd(Q_0) := \{ Q \in \dd : Q \subseteq Q_0 \}$.
\end{itemize}

\subsection{\for{toc}{\small} Parabolic Hausdorff measure}
Given $\eta \geq 0$, we let $\cH^\eta$ denote
 standard $\eta$-dimensional Hausdorff measure.
  We also define a {parabolic} Hausdorff measure of {homogeneous}
  dimension $\eta$, denoted
  $\cH_{\text p}^\eta$, in the same way that one defines standard Hausdorff measure, but instead using coverings
  by {parabolic} cubes. I.e., for $\delta>0$, and for $E\subseteq \mathbb R^{n+1}$, we set
  \[ \cH_{\text{p},\delta}^\eta(E):= \inf \sum_k \diam(E_k)^\eta\,,
  \]
  where the infimum runs over all countable such coverings of $E$, $\{E_k\}_k$, with $\diam(E_k)\leq \delta$ for all $k$. Of course, the diameter is measured in the parabolic metric.  We then define
  \[
  \cH_{\text p}^\eta (E) := \lim_{\delta\to 0^+} \cH_{\text{p},\delta}^\eta(E)\,.
  \]
As for classical Hausdorff measure, $ \cH_{\text{p}}^\eta$ is a Borel regular measure.
We refer the reader to \cite[Chapter 2]{EG} for a discussion of the basic properties of standard
Hausdorff measure, which adapt readily to treat $  \cH_{\text{p}}^\eta$.
In particular, one obtains a measure equivalent to   $\cH_{\text{p}}^\eta$ if one defines
$\cH_{\text{p},\delta}^\eta$ in terms of coverings by arbitrary sets of parabolic diameter at most $\delta$, rather than
cubes.

\subsection{\for{toc}{\small}Lip(1,1/2) graph domains}\label{Lipnot}  A function $\psi:\mathbb R^{n-1}\times\mathbb R\to \mathbb R$ is Lip(1,1/2) with constant $C$, if
\begin{align}\label{1.1}
|\psi(x,t) - \psi(y,s)| \le C(|x - y| + |t - s|^{1/2}) = C\|(x,t)-(y,s)\|, \quad \forall (x,t), (y,s) \in \rn.
\end{align}
We define $\|\psi\|_{\Lip(1,1/2)}$ to be the infimum of all constants $C$ as in \eqref{1.1}.  If we set
\begin{eqnarray}\label{1.1++}
\Sigma:=
\big\{(\psi(x,t), x, t): (x,t)\in\ren\big\}
=
\big\{(\psi(\mbf{x}), \mbf{x}): \mbf{x}\in\ren\big\} =:
\big\{\mbf{\Psi}(\mbf{x}): \mbf{x}\in\ren\big\},
\end{eqnarray}
 then we say that $\Sigma$ is a Lip(1,1/2) graph. The set $\Sigma$ is the boundary of the domain
\begin{eqnarray}\label{1.1+++}
\Omega:=\big\{\mbf{X}=(x_0,x,t)\in\ree: x_0>\psi(x,t)\big\},
\end{eqnarray}
We refer to $\Omega\subseteq\mathbb R^{n+1}$  as an
(unbounded) \textbf{$\Lip(1,1/2)$ graph
domain} with constant $\|\psi\|_{\Lip(1,1/2)}$. Given the closed set $\Sigma \subseteq \mathbb R^{n+1}$
  of  homogeneous dimension $\text{dim}_{\cH_{\text{p}}}(\Sigma)=n+1$, we
define  a surface measure on $\Sigma$ as the restriction of $\cH_{\text{p}}^{n+1}$ to $\Sigma$, i.e.,
\begin{equation}\label{sigdef}
\sigma := \sigma_\Sigma:=  \cH_{\text{p}}^{n+1}|_\Sigma\,.
\end{equation}

We remark that for Lip(1,1/2) graphs, $\sigma$ as defined in \eqref{sigdef} is equivalent to $\d\sigma^{\bf s}:= \d\sigma_t \,\d t$, where $\d\sigma_t$ is 
standard $(n-1)$-dimensional Hausdorff measure $\cH^{n-1}$, 
restricted to the cross section 
$\Sigma_t:= \{x: (x,t) \in \Sigma\}$.  We refer the reader to 
\cite[Remark 2.8 and Appendix B]{BHHLN-Corona} for details. 

\subsection{\for{toc}{\small}Surface cubes and reference points}\label{scuberef} We define several geometric objects related for $Lip(1,1/2)$ graph domains, see Figure \ref{boxescs.fig}. We let
\begin{eqnarray}\label{1.1+}
M_0 := 2 + \|\psi\|_{\Lip(1,1/2)}.
\end{eqnarray}
For every $\mbf{X}=(x_0,x,t) \in\ree$ and $R>0$, we introduce \textbf{vertically elongated open ``cubes"} 
\begin{equation*}
 I_R(\mbf{X}):=(x_0-3M_0\sqrt{n}R, x_0+3M_0\sqrt{n}R)\times Q_R(\mbf{x})
\end{equation*}
and set
$$\Delta_R(\mbf{X}):=I_R(\mbf{X})\cap\Sigma.$$ We will refer to $\Delta_R(\mbf{X})$ as a surface box or cube of size $R>0$ and centered at $\mbf{X}$. Unless otherwise specified, we implicitly assume that the center
$\mbf{X}=(x_0,\mbf{x})=(x_0,x,t)$, of any surface box 
$\Delta_R(\mbf{X})$, is in $\Sigma$, that is, $x_0=\psi(x,t)$.

Note the crude estimate
\begin{equation*}
\mbf{Y} \in  I_R(\mbf{X}) \implies \|\mbf{Y} - \mbf{X}\| \le 5M_0\sqrt{n} R,
\end{equation*}
and that by construction,
\begin{equation} \label{surface-box:graph}
	\Delta_R(\mbf{X})
= \bfpsi\big(Q_R(\mbf{x})\big) =
\big\{ \bfpsi(\mbf{y}): \mbf{y}\in Q_R(\mbf{x})\big\},
\qquad
\forall\,\mbf{X}=(x_0,\mbf{x})\in\Sigma,
\end{equation}
where we recall that $\mbf{y}\mapsto \bfpsi(\mbf{y}):=(\psi(\mbf{y}),\mbf{y})$
is the graph parametrization of $\Sigma$ (see \eqref{1.1++}).
Indeed, if $\mbf{y}\in Q_R(\mbf{x})$, 
then $\|\mbf{x} - \mbf{y}\| \le (\sqrt{n}R + R)$ and hence
\[|\psi(\mbf{x}) - \psi(\mbf{y})| \le M_0 (\sqrt{n}R + R) \le 2\sqrt{n}M_0 R.\]
We also note, by the same reasoning, that if $R > 0$, $\mbf{X}=(x_0,\mbf{x})\in\Sigma$ and $\mbf{y}\in Q_R(\mbf{x})$, then
\[\psi(\mbf{y}) + s \in I_R(\mbf{X}) \quad \forall s \in (-M_0\sqrt{n}R, M_0\sqrt{n}R).\]
Given $\tau>0$, we define the parabolic dilation
$\tau\Delta_R(\mbf{X}):=\Delta_{\tau R}(\mbf{X})$.

We introduce time forward and time backwards \textbf{corkscrew points} relative to $\Delta_R(\mbf{X})$,
\begin{equation} \label{CSpm}
\cA^\pm_R(\mbf{X})  := \left(x_0+ 2 M_0R, x, t\pm 2 R^2\right),   \quad  \mbf{X}=(x_0,x,t)= (x_0,\mbf{x})=(\psi(\mbf{x}),\mbf{x}) \in \Sigma.
\end{equation}
Note that for $\mbf{X}\in\Sigma$,
\begin{equation*}
\cA^\pm_R(\mbf{X}) \in \Omega_{2R}(\mbf{X}) := I_{2R}(\mbf{X}) \cap \Omega,
\end{equation*}
and that $$\dist\left(\cA^\pm_R(\mbf{X}), \partial \Omega_{2R}\right)\approx \dist\left(\cA^\pm_R(\mbf{X}), \partial \Omega_{3R}\right) \approx R,$$
where the implicit constants depend only on $n$ and $\|\psi\|_{\Lip(1,1/2)}$. Furthermore,
\[|\psi(x_0, x, t\pm 2 R^2) - \psi(x_0, x, t)| \le \sqrt{2}\|\psi\|_{\Lip(1,1/2)}R,\]
and hence
\begin{equation*}
\dist(\cA^\pm_R(\mbf{X}), \partial \Omega) \ge 2M_0R - \sqrt{2}\|\psi\|_{\Lip(1,1/2)}R \ge 2R.
\end{equation*}

\begin{figure}
	\centering
	\includegraphics[width=.65\textwidth]{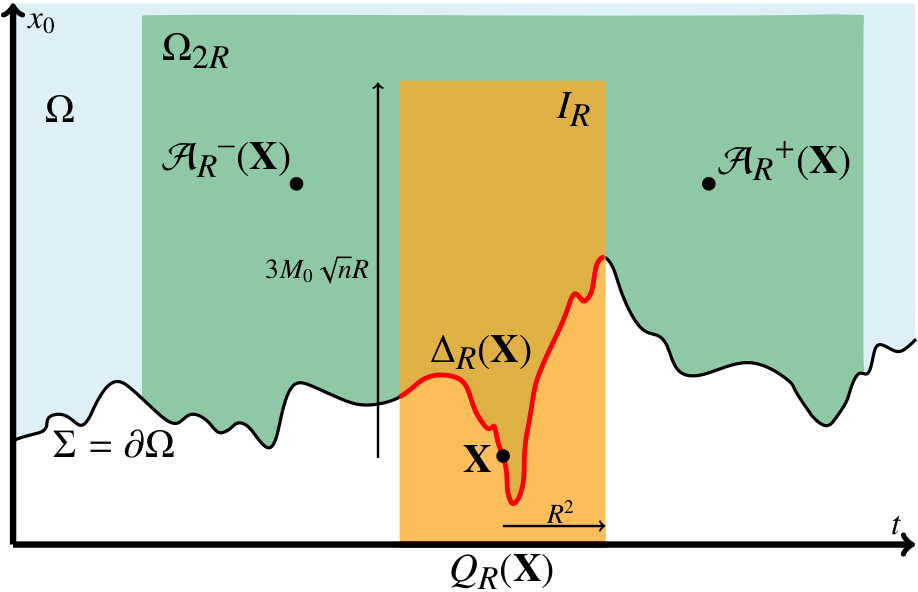}
	\caption{
		A (not-to-scale) depiction of elongated cubes and corkscrews associated to a surface ball $\Delta_R(\mbf{X}) = \mbf{\Psi}(Q_R(\mbf{X}))$. We always think that time flows from left to right, and $x_0$ is the vertical direction. One should imagine the other $n-1$ spatial directions as being perpendicular to both $x_0$ and $t$, but we will not reflect those in our 2D pictures for simplicity. 
	}
	\label{boxescs.fig}
\end{figure}

We frequently use (sometimes without mention) the following fact: the distance to the Lipschitz graph is, up to constants, attained in the vertical direction (in which we parametrize the graph).
\begin{lemma}\label{distgraphlem.lem} Assume that $\mbf{X}=(x_0,\mbf{x})\in \Omega$. Then
\[M_0^{-1} (x_0 - \psi(\mbf{x})) \le \dist(\mbf{X}, \Sigma) \le x_0 - \psi(\mbf{x}).\]
\end{lemma}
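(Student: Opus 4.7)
The upper bound is trivial: the point $\mbf{Y}_0 := \mbf{\Psi}(\mbf{x}) = (\psi(\mbf{x}), \mbf{x})$ lies on $\Sigma$ and shares its $(x,t)$–coordinates with $\mbf{X}$, so the parabolic distance is simply $\|\mbf{X} - \mbf{Y}_0\| = |x_0 - \psi(\mbf{x})| = x_0 - \psi(\mbf{x})$ (using $\mbf{X} \in \Omega$, i.e.\ $x_0 > \psi(\mbf{x})$). Thus $\dist(\mbf{X}, \Sigma) \le x_0 - \psi(\mbf{x})$.

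For the lower bound, the plan is to take an arbitrary $\mbf{Y} = (\psi(\mbf{y}), \mbf{y}) \in \Sigma$, insert $\psi(\mbf{y})$ via the triangle inequality on the real line, and exploit the $\Lip(1,1/2)$ hypothesis on $\psi$. Concretely, I would write
\[
x_0 - \psi(\mbf{x}) \le |x_0 - \psi(\mbf{y})| + |\psi(\mbf{y}) - \psi(\mbf{x})| \le |x_0 - \psi(\mbf{y})| + \|\psi\|_{\Lip(1,1/2)} \|\mbf{x} - \mbf{y}\|.
\]
Next I would relate both right-hand terms to the parabolic distance $\|\mbf{X} - \mbf{Y}\|$. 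Writing $\mbf{X} - \mbf{Y} = \bigl((x_0 - \psi(\mbf{y}), x-y), t - s\bigr)$, the definition of $\|\cdot\|$ gives simultaneously
\[
\|\mbf{X} - \mbf{Y}\| \ge |x_0 - \psi(\mbf{y})| \qquad \text{and} \qquad \|\mbf{X} - \mbf{Y}\| \ge |x - y| + |t - s|^{1/2} = \|\mbf{x} - \mbf{y}\|,
\]
since $\sqrt{(x_0 - \psi(\mbf{y}))^2 + |x-y|^2}$ dominates each of $|x_0 - \psi(\mbf{y})|$ and $|x-y|$.

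Combining, $x_0 - \psi(\mbf{x}) \le (1 + \|\psi\|_{\Lip(1,1/2)}) \|\mbf{X} - \mbf{Y}\| \le M_0 \|\mbf{X} - \mbf{Y}\|$, using $M_0 = 2 + \|\psi\|_{\Lip(1,1/2)}$ from \eqref{1.1+}. Taking the infimum over $\mbf{Y} \in \Sigma$ yields $x_0 - \psi(\mbf{x}) \le M_0 \dist(\mbf{X}, \Sigma)$, which is the desired lower bound. There is no real obstacle here; the only minor point to handle carefully is to remember that $\|\cdot\|$ mixes a Euclidean spatial norm with a square-root time component, so one must extract the $x_0$-component and the $(\mbf{x})$-component from $|X - Y|$ separately, as above.
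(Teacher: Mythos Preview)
Your proof is correct and follows essentially the same approach as the paper: both exploit the $\Lip(1,1/2)$ bound on $\psi$ together with the trivial inequalities $\|\mbf{X}-\mbf{Y}\|\ge |x_0-\psi(\mbf{y})|$ and $\|\mbf{X}-\mbf{Y}\|\ge \|\mbf{x}-\mbf{y}\|$. The only cosmetic difference is that the paper organizes the computation via the reverse triangle inequality through the foot point $(\psi(\mbf{x}),\mbf{x})$, yielding the constant $L+2=M_0$, whereas your direct splitting of $x_0-\psi(\mbf{x})$ gives the slightly sharper $L+1$ (which you then relax to $M_0$).
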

\begin{proof}
Let $L := \|\psi\|_{\Lip(1,1/2)}$. Find $\mbf{Y} = (\psi(\mbf{y}), \mbf{y}) \in \Sigma$ be such that $\dist(\mbf{X}, \Sigma) = \|\mbf{Y} - \mbf{X}\|$. Then
\begin{equation*}
	\| (\psi(\mbf{x}), \mbf{x}) - \mbf{Y} \|
	= 
	| \psi(\mbf{x}) - \psi(\mbf{y}) | + \| \mbf{x} - \mbf{y} \|
	\leq 
	(L+1) \| \mbf{x} - \mbf{y} \|
	\leq 
	(L+1) \| \mbf{X} - \mbf{Y} \|.
\end{equation*}
Therefore, using the (reverse) triangle inequality we obtain
\begin{equation*}
	\| \mbf{X} - \mbf{Y} \|
	\geq 
	\| \mbf{X} - (\psi(\mbf{x}), \mbf{x}) \| - \| (\psi(\mbf{x}), \mbf{x}) - \mbf{Y} \|
	\geq 
	(x_0 - \psi(\mbf{x})) - (L+1) \| \mbf{X} - \mbf{Y} \|,
\end{equation*}
which yields the desired result after rearranging the terms.

\end{proof}

\subsection{\for{toc}{\small}(Parabolic) BMO and fractional integral operators} \label{sec:BMO} Given a locally integrable function $f:\ren\to \re$, we say that $f\in\BMO(\ren)$, the \textbf{parabolic BMO-space}, if
\[
\|f\|_{\BMO(\ren)}
:=
\sup_{Q\subseteq\re^n}
\bariint_{Q} |f(x,t)-f_Q|\,\d x\d t<\infty,
\]
where the supremum runs over all parabolic cubes $Q=Q_R(\mbf{y})$, with $\mbf{y}\in\ren$ and $R>0$. Here $f_Q$ denotes the average of $f$ on $Q$.

We introduce the \textbf{fractional integral operator} of parabolic order 1 on $\ren$, via Fourier transform,
\begin{equation*}
	(\mathrm{I_P} \psi)^{\,\widehat{}}\, (\xi,\tau):= \vertiii{(\xi,\tau)}^{-1}\,\widehat{\psi}(\xi,\tau),
	\qquad
	(\xi,\tau)\in\re^n.
\end{equation*}
Then, we can represent it by a kernel $V$ as
\begin{equation}\label{def-IP}
\mathrm{I_P}\psi(\mbf{x})=\iint_{\re^n} V(\mbf{x}-\mbf{y})\,\psi(\mbf{y})\,\d\mbf{y},
\qquad
\mbf{x}\in\re^n,
\end{equation}
where, recalling that $n+1$ is the homogeneous dimension of parabolic $\rn$, it additionally holds
\begin{equation}\label{vstdbd.eq}
0\le V(\mbf{y})\lesssim \|\mbf{y}\|^{-n}.
\end{equation}

Using $\mathrm{I_P}$, following
\cite{FR}, we introduce a (parabolic) \textbf{half-order time derivative} as
\[
\mathcal{D}_t \psi(\mbf{x})
:=
\partial_t \circ\mathrm{I_P} \psi(\mbf{x})
=
\iint_{\re^n} \partial_t\big(V(\mbf{x}-\mbf{y})\big)\,\psi(\mbf{y})\,\d\mbf{y},
\qquad
\mbf{x}=(x,t)\in\re^n.
\]
This operator should be viewed as a principal value operator, or one should consider $\partial_t$ in the weak sense. Note that
the Fourier symbol for $\mathcal{D}_t $ is $2\pi i\tau/\vertiii{(\xi,\tau)}$.

Another half-order time derivative, $D_{1/2}^t$, can be introduced via the multiplier $|\tau|^{1/2}$, or by
\begin{eqnarray*} 
 D_{1/2}^t  \psi (\mbf{x})=D_{1/2}^t  \psi (x,t):=c \int_{ \mathbb R }
\, \frac{ \psi ( x, s ) - \psi ( x, t ) }{ | s - t |^{3/2} } \, \d s,
\end{eqnarray*} for properly chosen $c$.

\subsection{\for{toc}{\small}Regular Lip(1,1/2) graph domains}\label{Lipnotreg} Let $\psi:\mathbb R^{n-1}\times\mathbb R\to \mathbb R$ be a $\Lip(1,1/2)$ function (see \eqref{1.1}).  Such a function is said to be a {\bf regular Lip(1,1/2)} function if, in addition,
 \begin{eqnarray*} 
 \mathcal{D}_t \psi\in \BMO(\ren).
\end{eqnarray*}
If $\psi$ is a {regular} $\Lip(1,1/2)$ function, then we say that
$\Sigma$ as in \eqref{1.1++} is a {\em regular} $\Lip(1,1/2)$ graph, and  that $\Omega\subseteq\mathbb R^{n+1}$ as in \eqref{1.1+++} is an
(unbounded) regular $\Lip(1,1/2)$ graph
domain. In both cases the regularity is determined by $\|\psi\|_{\Lip(1,1/2)}$ and $\|\mathcal{D}_t \psi\|_{\BMO(\ren)}$. In \cite{HL96} it is proved that
\begin{equation}\label{eqnormequiv}
\|\psi\|_{\text{R-Lip}}:=
\|\mathcal{D}_t \psi\|_{\BMO(\ren)}+\|\nabla_x\psi\|_{\infty}\approx \|D_{1/2}^t \psi\|_{\BMO(\ren)}+\|\nabla_x\psi\|_{\infty}\,,
\end{equation}
where $\mathcal{D}_t, D_{1/2}^t$ were defined in subsection~\ref{sec:BMO}. In particular, that a function is a {regular} $\Lip(1,1/2)$ function can be equivalently
formulated using $D_{1/2}^t$ instead of $\mathcal{D}_t$, but the latter 
will be considerably more convenient for us to work with in this paper.

\begin{remark}  One can prove that, in general, the class of regular Lip(1,1/2) functions is strictly contained in Lip(1,1/2), i.e., there are examples of
functions $\psi$ which are  Lip(1,1/2) but not regular Lip(1,1/2), see \cite{LewSil},
\cite{KW}.  Moreover, it follows from the arguments of Strichartz
\cite{Stz} that for a regular Lip(1,1/2) function $\psi$, the assumption that 
$\psi$ is Lip(1/2) in the 
time variable is redundant: it 
follows from the finiteness of the R-Lip norm in \eqref{eqnormequiv}.
\end{remark}

\begin{remark}\label{PUR} One can prove, in the context of Lip(1,1/2) graphs, that the graph being regular Lip(1,1/2) is equivalent to the graph being parabolic uniformly rectifiable.
The notion of parabolic uniform rectifiability was introduced in
\cite{HLN1,HLN2}\footnote{based of course on the work of David and Semmes in the elliptic setting
\cite{DS1,DS2}}, in connection with the study of parabolic versions of the free boundary results of Kenig and Toro \cite{KT1,KT2,KT3}\footnote{See also \cite{Eng}, for an improved version of the result in \cite{HLN2}.}.  The results in \cite{HLN2,Eng}
are in some sense ``small constant" versions of the results presented here; we refer the interested reader to the introduction of \cite{BHMN1} for a discussion of the latter point,
and to \cite{HLN1,HLN2} (or for that matter to
\cite{BHHLN-CME,BHHLN-Corona}) for the precise definition of parabolic uniform rectifiability.
For our purposes, we simply remark that for the graph of a Lip(1,1/2) function $\psi$, the definition of parabolic uniform rectifiability reduces to \eqref{Carlestforh.eq} below, 
with $\beta$ defined as in \eqref{betadef}, and that 
for such $\psi$ and $\beta$, \eqref{Carlestforh.eq} is equivalent to the {\em regular} Lip(1,1/2) condition
(see \cite{HLN2} for details). To be precise, set
\begin{equation}\label{betadef}
\beta(r,x,t) := \inf_{L} \left[\bariint_{Q_r(x,t)} \left(\frac{\psi(y,s) - L(y)}{r} \right)^2 \,\d\sigma(y,s)\right]^{1/2}, \quad (r,x,t) \in \ree_+ ,
\end{equation}
where the infimum is taken over all affine functions $L$ of $y$ only, and let
\[\d\nu:= \d\nu_\psi := \beta^2(r,x,t) \frac{\, \d r\, \d x\, \d t}{r}.\]
If $\psi(x,t)$ is Lipschitz in the space variable
$x$, uniformly in $t$, then
 the condition that $\mathcal{D}_t \psi\in \BMO(\ren)$ is 
equivalent to saying that $\d\nu$ is a Carleson measure on $\ree_+$, i.e., there exists a finite constant $\|\nu\|$ such that for all $(z,\tau)\in\mathbb R^n$ and for all $R>0$, we have the uniform bound
\begin{equation}\label{Carlestforh.eq}
\int_0^R\bariint_{Q_R(z,\tau)}\beta^2(r,x,t) \frac{\, \d r\, \d x\, \d t}{r}\leq \|\nu\| <\infty.
\end{equation}

\end{remark}

\subsection{\for{toc}{\small}Dyadic cubes and a criterion for a Lip(1,1/2) graph to be regular}
Recall that $\dd$ denotes the standard grid of parabolic dyadic cubes on $\mathbb{R}^n$, and $\ell(Q)$ denotes the side length of any $Q \in \dd$.

\begin{definition}[(Semi-)coherent stopping times]
A collection of dyadic cubes $\sbf \subseteq \mathbb{D}$ is called {\bf semi-coherent stopping time} if there exists a maximal cube $Q(\sbf)$ (with respect to containment) in $\sbf$ and for any $Q', Q, Q^* \in \dd$ we have  
\[ \sbf \ni Q' \subseteq Q \subseteq Q^* \in \sbf 
\quad \implies \quad 
Q \in \sbf. \]
A stopping time regime is called {\bf coherent} if it is semi-coherent and whenever $Q \in \sbf$, then either all or none of its children are in $\sbf$.
\end{definition}

We are going to prove Theorem \ref{main1.thrm} (and Theorem~\ref{main2.thrm}) using the following ``Corona decomposition" characterization of regular $\Lip(1,1/2)$ functions.  In fact, it is a characterization, more generally, of parabolic uniformly rectifiable (PUR) sets, but as noted above, for a Lip(1,1/2) graph, PUR is equivalent to regularity.
The use of the Corona decomposition is a novelty of the present work,
as compared to \cite{BHMN1}, and as noted in the introduction, we use this 
approach in order to overcome the fact that in the case of non-constant coefficients, we are able to obtain only a weaker version of stratification by the level sets of the Green function, namely, we obtain an appropriate stratification only in a Corona sense, rather than in a ``big pieces" sense as in \cite{BHMN1}.
\begin{lemma}[{\cite[Theorem 1.2]{BHHLN-Corona}}]\label{coronaenough.lem}
Suppose that $\psi$ is a $\Lip(1,1/2)$ function, with associated graph $\Sigma$. Then $\psi$ is a regular $\Lip(1,1/2)$ function if and only if the following holds.  There exist constants $C_1, b_1, b_2 > 1$ such that we can form the disjoint decomposition $\mathbb{D}=\mathcal{G} \sqcup \mathcal{B}$ so that:
\begin{itemize}
\item $\mathcal{G}$ is further divided into semi-coherent stopping time regimes $\{\sbf\}$,
\item the bad cubes $\mathcal{B}$ and the maximal cubes $\{Q(\sbf)\}$ pack, that is,
\begin{equation*}
			\sum_{Q' \in \mathcal{B} \cap \mathbb{D}(Q)} |Q'|
			+ \sum_{\mbf{S} : Q(\mbf{S}) \subseteq Q} |Q(\mbf{S})|
			\leq 
			C(C_1) |Q|, 
			\quad 
			\forall Q \in \mathbb{D},
		\end{equation*}
\item for each $\sbf$, there exists a  \textbf{regular} $\Lip(1, 1/2)$ graph $\Sigma_{\mbf{S}}$ such that 
		\begin{equation*} 
			\sup_{\xbf \in \mbf{\Psi}(2Q)} \dist(\xbf, \Sigma_{\mbf{S}})
			\leq 
			C_1 \diam(Q), \quad \forall Q \in \sbf.
		\end{equation*} 
\end{itemize}
\end{lemma}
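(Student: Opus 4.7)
The plan is to prove both implications using the stopping-time corona machinery of David--Semmes, adapted to the parabolic setting. The key quantitative object is the Carleson measure $d\nu_\psi = \beta^2(r,x,t)\frac{dr\,dx\,dt}{r}$ from Remark \ref{PUR}, whose finiteness characterizes regularity of $\psi$ in the graph setting.

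For the forward implication, assume $d\nu_\psi$ is Carleson and fix small thresholds $\varepsilon, \eta > 0$. We build the regimes top-down: starting from a maximal cube $Q(\sbf) \in \dd$, we place $Q \subseteq Q(\sbf)$ in $\sbf$ as long as both (i) the $\beta$-coefficient of $\psi$ over $Q$ is at most $\varepsilon$, and (ii) the best-fit affine plane $L_Q$ has tilted from $L_{Q(\sbf)}$ by at most $\eta$. The first time either condition fails on a cube $Q$, we record $Q$ as the top of a new regime, moving any ancestors to $\mathcal{B}$ as needed to preserve the semi-coherent structure. Packing of $\mathcal{B} \cup \{Q(\sbf)\}$ then follows by a standard argument: failures of (i) charge directly against $\|\nu_\psi\|$, while failures of (ii) are controlled by a Dorronsoro-type comparison of best-fit planes at consecutive scales, producing another $\beta^2$-type charge. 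Inside each $\sbf$, the planes $L_Q$ are uniformly nearly parallel, so we define $\Sigma_\sbf$ as an anisotropic Whitney-type smoothing of $\psi$ at scale $\ell(Q(\sbf))$ that interpolates to a fixed affine plane outside a large enlargement of $Q(\sbf)$. Its regularity follows by bounding $d\nu_{\Sigma_\sbf}$ by the portion of $d\nu_\psi$ sitting over $Q(\sbf)$ plus an $O(1)$ contribution from the outer smoothing, while the pointwise approximation $\sup_{\mbf{X} \in \bfpsi(2Q)} \dist(\mbf{X}, \Sigma_\sbf) \leq C_1 \diam(Q)$ is obtained by telescoping plane-tilts from $Q(\sbf)$ down to $Q$.

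For the converse, we test \eqref{Carlestforh.eq} on a fixed cube $Q_0$, decomposing $\dd(Q_0)$ into $\mathcal{B} \cap \dd(Q_0)$ together with the pieces $\sbf \cap \dd(Q_0)$ as $\sbf$ ranges over regimes meeting $Q_0$. The bad part is handled by the packing combined with the crude bound $\beta_\psi(Q) \lesssim \|\psi\|_{\Lip(1,1/2)}$. On a good regime $\sbf$, we compare $\beta_\psi$ to $\beta_{\Sigma_\sbf}$ using the $C_1\diam(Q)$-approximation: taking the best-fit affine plane of $\Sigma_\sbf$ on a slight enlargement as a competitor yields $\beta_\psi(Q)^2 \lesssim \beta_{\Sigma_\sbf}(\kappa Q)^2 + (\text{controlled error})$, with the error summing against the packing. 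Regularity of $\Sigma_\sbf$ then bounds $\sum_{Q \in \sbf \cap \dd(Q_0)} \beta_{\Sigma_\sbf}^2 |Q|$ by $\|\nu_{\Sigma_\sbf}\| \cdot |Q(\sbf)|$, and the outer sum over $\sbf$ closes using the packing of $\{Q(\sbf)\}$.

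The main obstacle is the parabolic scaling. Regularity of $\Sigma_\sbf$ is not merely a spatial Lipschitz condition but involves the non-local half-order time derivative $\mathcal{D}_t$, which is governed by a Carleson measure rather than any pointwise bound. Consequently the smoothing used to construct $\Sigma_\sbf$ must be anisotropic in space and time, and one needs a parabolic Dorronsoro lemma tying $\beta$ to the oscillation of $\mathcal{D}_t \psi$. In the converse direction, controlling the error in the $\beta_\psi$-versus-$\beta_{\Sigma_\sbf}$ comparison is similarly delicate: spatial $C_1 \diam(Q)$-closeness of the graphs does not automatically translate into closeness of their half-order time derivatives, so one must exploit the $L^2$ nature of $\beta$ (rather than pointwise deviations) to absorb the gap into a summable error. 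Overcoming these two parabolic-specific issues is the heart of \cite{BHHLN-Corona}.
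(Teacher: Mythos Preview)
The paper does not prove this lemma: it is simply quoted from \cite[Theorem 1.2]{BHHLN-Corona} and used as a black box (see the text immediately following the statement, and the discussion in Remark~\ref{PUR}). So there is no ``paper's own proof'' to compare against.

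That said, your outline is a reasonable high-level sketch of how such corona characterizations are proved in the David--Semmes tradition, and you correctly identify the genuinely parabolic obstacle: the half-order time derivative is nonlocal, so the approximating graphs $\Sigma_\sbf$ cannot be built by naive mollification and the $\beta$-comparison in the converse direction requires care. One point in your forward-direction sketch is garbled, however: you write that when a stopping condition first fails on $Q$ you ``record $Q$ as the top of a new regime, moving any ancestors to $\mathcal{B}$ as needed.'' In the standard construction the ancestors are already in the current regime $\sbf$ (that is why you reached $Q$); it is $Q$ itself that either becomes a bad cube (if, say, $\beta(Q) > \varepsilon$) or becomes the top of a fresh regime, and nothing moves retroactively. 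Also, your proposed $\Sigma_\sbf$ --- a smoothing of $\psi$ at the single scale $\ell(Q(\sbf))$ --- would not in general satisfy the approximation bound $\dist(\cdot,\Sigma_\sbf) \le C_1 \diam(Q)$ for \emph{all} $Q \in \sbf$, since cubes deep in $\sbf$ can be arbitrarily small; one needs a graph that tracks $\psi$ at the local scale dictated by the stopping-time distance, not at the fixed top scale.
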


For future reference, we state the following lemma, which we intend to employ later:
 it will allow us to work below the scale of a given cube in $\mathbb{D}$, since the passage to the whole $\mathbb{D}$ is of a very abstract nature.

\begin{lemma}[{\cite[Section 7]{DS1}}]\label{dsinitialcor.lem}
There exists a collection of disjoint cubes $\mathcal{C} \subseteq \dd$ such that
$$\Sigma = \bigcup\limits_{ Q' \in \mathcal{C}} \mbf{\Psi}(Q'),$$ and such that the collection
\[\widetilde{\cB}: = \big\{Q \in \mathbb{D}: Q \not\subseteq Q', \forall Q' \in \mathcal{C}\big\}\]
satisfies the packing condition
\[\sum_{{Q \in \widetilde{\cB}: Q \subseteq R}} \sigma(\mbf{\Psi}(Q)) \le C\sigma(\mbf{\Psi}(R)), \quad \forall R \in \mathbb{D}.\] 
Here the constant $C$ depends only on $n$ and $\|\psi\|_{\Lip(1,1/2)}$.
\end{lemma}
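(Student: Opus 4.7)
The plan is to construct $\mathcal{C}$ as a parabolic dyadic Whitney-type tiling of $\rn$ with respect to a single fixed reference point $\mbf{x}_0\in\rn$, and then read off the packing inequality from the Whitney geometry combined with the Ahlfors regularity $\sigma(\mbf{\Psi}(Q))\approx \ell(Q)^{n+1}$ for dyadic parabolic cubes, which is already in hand from the discussion of Hausdorff measure on $\Sigma$.

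For the construction, I would fix a small constant $c_0>0$ depending only on $n$, and let $\mathcal{C}$ be the collection of maximal dyadic cubes $Q\in \mathbb{D}$ satisfying $\ell(Q)\le c_0\,\dist(Q,\mbf{x}_0)$, where $\dist$ denotes the parabolic distance. Standard dyadic Whitney arguments then show that $\mathcal{C}$ is pairwise disjoint and tiles $\rn\setminus\{\mbf{x}_0\}$. Since the parabolic Hausdorff measure of the singleton $\{\mbf{\Psi}(\mbf{x}_0)\}$ is zero, we obtain $\Sigma=\bigcup_{Q'\in\mathcal{C}}\mbf{\Psi}(Q')$ up to a null set; if strict set equality is required, one may adjoin a nested sequence of dyadic cubes converging to $\mbf{x}_0$ to $\mathcal{C}$, which does not affect the packing analysis that follows.

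For the packing, I would first observe that, since $\mathcal{C}$ is a disjoint dyadic cover of $\rn$, a cube $Q\in \mathbb{D}$ lies in $\widetilde{\cB}$ precisely when $Q$ strictly contains some $Q'\in\mathcal{C}$, and by the defining property of $\mathcal{C}$ this forces $\dist(Q,\mbf{x}_0)\lesssim \ell(Q)$. In particular, if $R\in \mathbb{D}$ satisfies $\dist(R,\mbf{x}_0)\gg \ell(R)$ then no $Q\subseteq R$ lies in $\widetilde{\cB}$ and the sum is zero. In the remaining case I would count the cubes in $\widetilde{\cB}\cap \dd(R)$ scale by scale: at each dyadic scale $s=2^{-k}\ell(R)$ with $k\ge 0$, the cubes of side length $s$ contained in $R$ and within parabolic distance $\lesssim s$ of $\mbf{x}_0$ number only $O_n(1)$, since they must all lie in a parabolic ball of radius $\lesssim s$ around $\mbf{x}_0$. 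Each such cube has $\sigma$-mass $\lesssim s^{n+1}$ by Ahlfors regularity, so the contribution at scale $s$ is $O(s^{n+1})$. Summing the geometric series in $k\ge 0$ yields $O(\ell(R)^{n+1})\lesssim \sigma(\mbf{\Psi}(R))$.

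The main obstacle is essentially only bookkeeping: once the Whitney tiling is in place, the packing falls out mechanically. The only conceptual input is the Ahlfors regularity of $\sigma$ on the $\Lip(1,1/2)$ graph $\Sigma$, with constants depending only on $n$ and $\|\psi\|_{\Lip(1,1/2)}$, already recorded in the preliminaries; this controls the final constant $C$ in the packing inequality. One could equivalently carry out the construction via a Vitali-type cover at each generation, or as a Whitney decomposition relative to any chosen center, with no change in the conclusion.
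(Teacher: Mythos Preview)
Your argument is correct and is essentially the David--Semmes construction that the paper cites: fix a point, take the maximal dyadic cubes whose side length is small compared to their parabolic distance to that point, and observe that the complementary family $\widetilde{\cB}$ consists of cubes close to the fixed point at their own scale, hence only $O_n(1)$ per generation, giving a geometric series. The minor caveat about the single missing point $\mbf{x}_0$ is harmless for the application (and in any event the dyadic chain containing $\mbf{x}_0$ is already in $\widetilde{\cB}$ and packs trivially), so the set equality holds up to a $\sigma$-null set, which suffices.
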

For a proof of the lemma, we 
refer the reader to the discussion in \cite[Section 7, p. 38]{DS1}; note that the same argument works in the parabolic setting treated here.

\subsection{\for{toc}{\small}Whitney regions and sawtooths}\label{whit.sec}

For any dyadic cube $Q \in \dd$ and $K > 4$, we define the \textbf{Whitney regions} $U_Q(K)$ as 
\begin{equation}\label{whitdef.eq}
U_Q(K) :=  \big{\{}(x_0, x,t) \in \ree: \; (x,t) \in 10^{5}Q, \;\; x_0 - \psi(x,t) \in (K^{-1}\ell(Q), K\ell(Q))\big{\}}.
\end{equation}
By construction, for any $K > 4$, $\Omega = \cup_{Q \in \dd} U_Q(K)$. 

We also define a few fattened versions of these:
\begin{equation*}
U_Q^*(K) := \big{\{}(x_0, x,t) \in \ree: \; (x,t) \in 10^{6}Q, \;\; x_0 - \psi(x,t) \in ((2K)^{-1}\ell(Q), 2K\ell(Q))\big{\}},
\end{equation*}
\begin{equation*}
U_Q^{**}(K) := \big{\{}(x_0, x,t) \in \ree: \; (x,t) \in 10^{7}Q, \;\; x_0 - \psi(x,t) \in ((4K)^{-1}\ell(Q), 4K\ell(Q))\big{\}},
\end{equation*}
and 
\begin{equation*}
U_Q^{***}(K) := \bigcup_{Q' \in \mathfrak{A}_Q} U_{Q'}^{**},
\quad \text{where} \quad 
\mathfrak{A}_Q := \big{\{}Q' \in \mathbb{D}: \; U_{Q'}^{**} \cap U_{Q}^{**} \neq \emptyset \big{\}}.
\end{equation*}
Note that it  holds that (see Lemma~\ref{distgraphlem.lem})
\begin{equation*}
\dist(\xbf, \partial\Omega) \approx_K \ell(Q) \approx_K \dist(\xbf, \mbf{\Psi}(Q)), \qquad \; \forall \xbf \in U_Q^{***}(K)
\end{equation*}
(and by extension any $\xbf$ in $U_Q(K)$, $U_Q^*(K)$ or $U_Q^{**}(K)$).

Moreover, it is easy to show that, for any $Q \in \dd$, it holds (the constants are in no way optimal) 
\begin{equation} \label{eq:U_Q_in_balls}
	U_Q^{**} \subseteq Q_{10^{10}KM_0\diam(Q)}(\psi(\mbf{x}_Q), \mbf{x}_Q), \quad \text{ and } \quad 
U_Q^{***} \subseteq Q_{10^{13}K^3M_0\diam(Q)}(\psi(\mbf{x}_Q), \mbf{x}_Q).
\end{equation}

\begin{definition}[Sawtooth regions]\label{sawtoothdef.def}
Given a collection\footnote{Here we do not insist on $\sbf$ being a stopping time regime. We will use these objects with $\sbf$ a doubly truncated stopping time regime, which will destroy the property that $\sbf$ has a unique maximal cube.} of cubes $\sbf \subseteq \dd$ we define the following (dyadic) \textbf{``sawtooth regions"}:
\[\Omega_{\sbf} := \Omega_\sbf(K) = \bigcup_{Q \in \sbf} U_Q(K), \qquad \Omega^{*}_{\sbf} := \Omega^{*}_\sbf(K) = \bigcup_{Q \in \sbf} U_Q^{*}(K), \]
\[\quad \Omega_{\sbf}^{**} := \Omega_\sbf^{**}(K) = \bigcup_{Q \in \sbf} U_Q^{**}(K), \qquad \Omega_{\sbf}^{***} := \Omega_\sbf^{***}(K) = \bigcup_{Q \in \sbf} U_Q^{***}(K).\]
\end{definition}

See a (not-to-scale) depiction of the constructions in Figure~\ref{fig:U_Q}.

\begin{figure}
	\centering 
	\includegraphics[width=.6\textwidth]{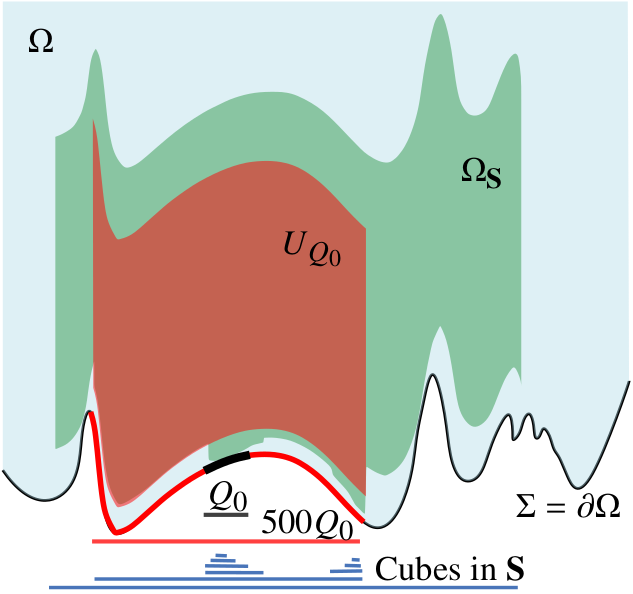}
	\caption{A sketch of how a sawtooth $\Omega_{\sbf}$ is comprised of several (fattened) Whitney regions $U_{Q_0}$. If the cubes in $\sbf$ get smaller, the sawtooth region $\Omega_\sbf$ gets closer to the boundary.}
	\label{fig:U_Q}
\end{figure}

\subsection{\for{toc}{\small}Convention concerning constants}\label{coo} We refer to $n$, $\Lambda$, $\| \psi \|_{\Lip(1, 1/2)}$ (see \eqref{1.1}), and the constants $C_\ast$ and $q  > 1$ appearing in Definition \ref{defainfty} below, as the {\it structural constants}. For all constants $A,B > 0$, the notation $A\lesssim B$  means, unless otherwise stated, that $A/B$ is bounded from above by a positive constant depending at most on the structural constants; $A\gtrsim B$ of course means $B\lesssim A$. We write $A\approx B$  if $A\lesssim B$ and  $B\lesssim A$, while for a given constant $\eta$,
$A\lesssim_\eta B$  means, unless otherwise stated, that the implicit constant depends at most on the structural constants and 
$\eta$.

\section{Boundary estimates in $\Lip(1,1/2)$ domains}\label{bbest} 
In the sequel, we will assume that $A$ is an elliptic $n \times n$ matrix (see \eqref{ellip.eq}) and $\Omega$ is the region above a $\Lip(1,1/2)$ graph as in \eqref{1.1+++}, with boundary $\Sigma$.

The Dirichlet problem, parabolic measure, and the boundary behavior of non-negative solutions,  for the heat equation but also for more general linear uniformly parabolic equations with space and time dependent coefficients,  have been studied intensively in Lipschitz cylinders and in Lip(1,1/2) domains over the years, see, e.g., \cite{FGS,FGS_BH,FS,FSY,LM,N97}.
Results include 
Carleson type estimates, the relation between the associated parabolic measure and the Green function, the backward in time Harnack inequality, the doubling property of parabolic measure, boundary Harnack principles (local and global) and  H\"older continuity up to the boundary of quotients of non-negative solutions vanishing on the lateral boundary.  
All estimates stated in this section are known, and also apply for solutions to the adjoint equation subject to the appropriate changes (typically just exchanging $\cA_R^+$ with $\cA_R^-$) induced by the change of variables $t \mapsto -t$.

\subsection{\for{toc}{\small}\label{ss3.1}Green's functions and parabolic measure}

We refer the reader to \cite{N97} for the results stated in this subsection and the next (Subsections \ref{ss3.1} and \ref{sec:PDEestimates}), 
in the setting of a 
Lip(1,1/2) domain, or also \cite{DK, GH1} in very general frameworks.
In particular, for the parabolic equations that we consider in this paper, 
the Dirichlet problem with bounded and continuous data has a unique solution in 
$\Omega$.
Given $\mbf{Y}\in \Omega$ we let $G(\cdot)=G ( \cdot, \mbf{Y})$
 denote \textbf{Green's function} for $\mathcal{L}$ in $ \Omega$ with pole at $ \mbf{Y}$, i.e.
\begin{eqnarray*}
(\partial_t-\div_X A \nabla_X)\, G (\mbf{X}, \mbf{Y})   = \delta_{\mbf{Y}} (  \mbf{X} ) \;\; \mbox{ for }
\mbf{X} \in \Omega, \qquad  \mbox{ and } \quad G(\cdot, \mbf{Y}) \equiv 0 \;\; \mbox{ on }
\Sigma.
\end{eqnarray*}
Moreover, we note that $ G
( \mbf{Y}, \cdot ) $ is the Green's function for the adjoint equation with pole at $ \mbf{Y}$, i.e., 
 \begin{eqnarray*}
 	(-\partial_t - \div_X A^* \nabla_X)\, G (\mbf{Y},\mbf{X})  = \delta_{\mbf{Y}} (  \mbf{X} ) \;\; \mbox{ for }
\mbf{X} \in \Omega, \qquad  \mbox{ and } \quad  G(\mbf{Y}, \cdot) \equiv 0  \;\; \mbox{ on }
\Sigma.
\end{eqnarray*}
We  let $ \omega^{\mbf{Y}}(\cdot)$ and $\widetilde{\omega}^{\mbf{Y}}( \cdot) $ be the \textbf{parabolic and adjoint parabolic measures},
with pole at $\mbf{Y}\in\Omega$, associated to the equation for $\mathcal{L}$ and the adjoint one for $\mathcal{L}^*$, in $ \Omega$.
Then, given $\mbf{Y} \in \Omega$, if we extend our Green's function $G$ by 0 outside $\Omega$, the following identities hold for every $ \phi \in
C_0^\infty ( \mathbb R^{ n + 1 } \setminus \{  \mbf{Y} \} ) $:
 \begin{align*}
 \iiint \left( A^\ast \nabla_X G( \mbf{Y}, \mbf{X})\cdot \nabla\phi  - G( \mbf{Y}, \mbf{X})\, \partial_t\phi \right) \d \mbf{X} &= \iint\phi  \, \d \omega^{\mbf{Y}},\notag\\
 \iiint \big( A \nabla_X G( \mbf{Y}, \mbf{X})\cdot \nabla\phi  + G( \mbf{Y}, \mbf{X})\, \partial_t\phi \big) \, \d \mbf{X} &= \iint \phi  \, \d \widetilde{\omega}^{\mbf{Y}}.
\end{align*}

\subsection{\for{toc}{\small}PDE estimates} \label{sec:PDEestimates}
As in the preceding subsection, we refer to \cite{N97} for the results stated in 
Lemmas \ref{HCatBdry.lem}, \ref{carlest.lem}, \ref{strongharnack4gf.lem}, \ref{CFMS},  and  \ref{calisdoubpre.lem} below (we also mention \cite{FGS,FGS_BH,FS,FSY,LM} where one may find analogous results specifically for the heat equation, or for variable coefficient parabolic equations in Lipschitz cylinders). In particular, the proofs of the backwards Harnack inequality 
\ref{strongharnack4gf.lem} and the doubling property of parabolic measure \ref{calisdoubpre.lem} may be found in
\cite{FS,FSY}; in \cite{N97}, the author simply observes that the proofs in \cite{FS,FSY}
may be carried over to the Lip(1,1/2) setting, {\em mutatis mutandis}.

Note that all implicit constants in this subsection depend only on $n$, $\Lambda$ and $\|\psi\|_{\Lip(1,1/2)}$.

\begin{lemma}[Boundary Hölder continuity]\label{HCatBdry.lem}
Let $\mbf{X}\in\Sigma$ and $R>0$. Assume that
	$0 \le u\in C(\overline{I_{2R}(\mbf{X})\cap\Omega})$  satisfies
	$\mathcal{L} u=0$ in $I_{2R}(\mbf{X})\cap\Omega$, with
	$u=0$ in $\Delta_{2R}(\mbf{X})$.
Then there exists $\alpha \in (0, 1/2)$ such that
\[u(\mbf{Y}) \lesssim \left(\frac{\dist(\mbf{Y}, \Sigma)}{R}\right)^{\alpha} \sup_{\mbf{Z} \in I_{2R}(\mbf{X})\cap\Omega} u(\mbf{Z}),
\qquad 
\mbf{Y} \in  I_{R}(\mbf{X}) \cap \Omega.\]
\end{lemma}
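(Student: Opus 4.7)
The plan is to prove this through the standard dyadic-iteration scheme for boundary Hölder continuity: reduce the desired Hölder bound to an oscillation-decay lemma at a single scale, then iterate it from scale $R$ down to scale $\dist(\mbf{Y}, \Sigma)$. First, by the parabolic dilation $(X,t) \mapsto (X/R, t/R^{2})$ followed by a translation sending $\mbf{X}$ to the origin, I would reduce to the case $R=1$ and $\mbf{X} = \mbf{0}$; the ellipticity bounds \eqref{ellip.eq} and the $\Lip(1,1/2)$ norm of $\psi$ are preserved under this change of variables (with the operator's coefficients replaced by a rescaled but equally elliptic matrix), so all resulting constants will depend only on the structural parameters.

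The central step is to establish a single-scale oscillation decay: there should exist a constant $\theta \in (0,1)$, depending only on the structural parameters, such that whenever $\mbf{W} \in \Sigma$, $r > 0$, and $v \geq 0$ is continuous on $\overline{I_{2r}(\mbf{W}) \cap \Omega}$, solves $\mathcal{L}v = 0$ there, and vanishes on $\Delta_{2r}(\mbf{W})$, one has
\[\sup_{I_r(\mbf{W}) \cap \Omega} v \,\leq\, \theta \sup_{I_{2r}(\mbf{W}) \cap \Omega} v.\]
To prove this, I would set $M := \sup_{I_{2r}(\mbf{W}) \cap \Omega} v$ and consider $w := M - v \geq 0$, a nonnegative $\mathcal{L}$-solution with $w \equiv M$ on $\Delta_{2r}(\mbf{W})$. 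The weak maximum principle gives $w(\mbf{Y}) \geq M \cdot \omega^{\mbf{Y}}_{I_{2r}(\mbf{W})\cap\Omega}(\Delta_{2r}(\mbf{W}))$, so it suffices to show that this parabolic measure is bounded below by a structural constant $\eta > 0$, uniformly over $\mbf{Y} \in I_r(\mbf{W}) \cap \Omega$. This non-degeneracy should follow from the ``fat exterior'' enjoyed by a Lip(1,1/2) graph domain: since $\Omega^{c} \supseteq \{x_{0} < \psi(\mbf{x})\}$ contains a full half-space in the graph direction, a sub-caloric barrier (or, equivalently, a parabolic Wiener-type argument) would produce such an $\eta$ uniformly across $\Sigma$ and all scales.

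With the single-scale lemma in hand, I would iterate. Given $\mbf{Y} = (y_0, \mbf{y}) \in I_R(\mbf{X}) \cap \Omega$, set $d := \dist(\mbf{Y}, \Sigma)$, which by Lemma \ref{distgraphlem.lem} is comparable to $y_0 - \psi(\mbf{y})$, and let $\mbf{W} := (\psi(\mbf{y}), \mbf{y}) \in \Sigma$. Choosing the largest $k \in \mathbb{N}$ with $2^{-k} R \geq d$, $k$ successive applications of the single-scale lemma (nested inside $I_{2R}(\mbf{X}) \cap \Omega$) would yield
\[\sup_{I_{2^{-k}R}(\mbf{W}) \cap \Omega} u \,\leq\, \theta^{k} \sup_{I_{R}(\mbf{W}) \cap \Omega} u \,\lesssim\, \left(\frac{d}{R}\right)^{\alpha} \sup_{I_{2R}(\mbf{X}) \cap \Omega} u,\]
with $\alpha := \log(1/\theta)/\log 2$; after possibly shrinking $\theta$ one may assume $\alpha \in (0, 1/2)$. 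Since $\mbf{Y} \in I_{2^{-k}R}(\mbf{W})$ by construction, the claimed bound then follows.

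The hard part will be the uniform-in-scale non-degeneracy of parabolic measure invoked in the second step, since in the variable-coefficient setting no explicit barriers are available. The standard remedy, carried out in \cite{N97} following the prior works \cite{FGS, FS, FSY, LM}, is to apply a Moser-type weak Harnack estimate to $w = M - v$ (extended by $M$ across $\Sigma$, so that $w \geq M$ on a definite portion of any parabolic cylinder centered at $\mbf{W}$), then propagate the resulting lower bound from a backward-in-time corkscrew $\mathcal{A}_r^{-}(\mbf{W})$ to an arbitrary $\mbf{Y} \in I_{r}(\mbf{W}) \cap \Omega$ via a Harnack chain. The time-asymmetry of parabolic measure makes the choice of reference corkscrew (and the chain of Harnack balls joining it to $\mbf{Y}$) more delicate than in the elliptic setting, but for a Lip(1,1/2) graph domain the corkscrews from \eqref{CSpm} supply the needed reference points, and the Lip(1,1/2) bound on $\psi$ provides the uniformity of the Harnack chain length.
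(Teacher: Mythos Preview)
The paper does not give its own proof of this lemma: it simply refers the reader to \cite{N97} (and the earlier works \cite{FGS,FGS_BH,FS,FSY,LM}) for all of Lemmas \ref{HCatBdry.lem}--\ref{calisdoubpre.lem}. Your proposal correctly reconstructs the standard argument that one finds in those references---oscillation decay at a single scale via a Bourgain-type lower bound on parabolic measure (obtained from the exterior Lip(1,1/2) corkscrew plus weak Harnack), followed by dyadic iteration to produce the H\"older exponent---so it is entirely consistent with what the paper is citing.
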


\begin{lemma}[Carleson's estimate]\label{carlest.lem} Let $\mbf{X}$, $R$, $u$, and $\alpha$ be as in the statement of
Lemma \ref{HCatBdry.lem}. Then
\begin{equation*}
u(\mbf{Y}) \lesssim u(\cA^+_R(\mbf{X})),
\qquad 
\mbf{Y} \in  I_{R}(\mbf{X}) \cap \Omega,
\end{equation*}
where $\cA^+_R(\mbf{X})$, is the time-forward corkscrew point,
defined in \eqref{CSpm}, relative to the ``surface ball" $\Delta_R(\mbf{X})$.
In particular,
\begin{equation}\label{carlhcest.eq}
u(\mbf{Y}) \lesssim \left(\frac{\dist(\mbf{Y}, \Sigma)}{R}\right)^{\alpha} u(\cA^+_R(\mbf{X})),
\qquad 
\mbf{Y} \in  I_{R/2}(\mbf{X}) \cap \Omega.
\end{equation}
\end{lemma}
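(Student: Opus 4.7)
The second estimate \eqref{carlhcest.eq} follows immediately by combining the first bound with Lemma~\ref{HCatBdry.lem}: once $\sup_{I_R(\mbf{X}) \cap \Omega} u \lesssim u(\cA^+_R(\mbf{X}))$ is known, applying Lemma~\ref{HCatBdry.lem} with radius $R/2$ (so that the sup appears over $I_R(\mbf{X}) \cap \Omega$) yields the desired Hölder decay on $I_{R/2}(\mbf{X}) \cap \Omega$. The content of Lemma~\ref{carlest.lem} is therefore the first bound, and I would adapt the classical Caffarelli--Fabes--Mortola--Salsa strategy in its parabolic form (see \cite{FGS}, and \cite{N97} for our $\Lip(1,1/2)$ setting and class of operators).

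The argument splits according to $d := \dist(\mbf{Y}, \Sigma)$. In the \emph{interior} regime $d \ge \eps R$, for a small structural constant $\eps$, I connect $\mbf{Y}$ to $\cA^+_R(\mbf{X})$ by a finite time-forward chain of overlapping parabolic cylinders of radius $\approx \eps R$ contained in $\Omega$: first step upward in the $x_0$-direction to the level $\psi(\mbf{x}) + R$ to gain uniform distance from $\Sigma$, then translate in $(x,t)$ in a time-forward manner to $\cA^+_R(\mbf{X})$. The chain has length $O_\eps(1)$ because $\|\mbf{Y} - \cA^+_R(\mbf{X})\| \lesssim R$ and $t(\cA^+_R(\mbf{X})) - t(\mbf{Y}) \ge R^2$; iterated interior parabolic Harnack then yields $u(\mbf{Y}) \lesssim u(\cA^+_R(\mbf{X}))$.

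In the \emph{near-boundary} regime $d < \eps R$, I iterate Lemma~\ref{HCatBdry.lem}. Picking $\mbf{Y}_0 \in \Sigma$ with $\|\mbf{Y} - \mbf{Y}_0\| \le M_0 d$ (Lemma~\ref{distgraphlem.lem}) and $K \gg 1$, Lemma~\ref{HCatBdry.lem} on $\Delta_{Kd}(\mbf{Y}_0)$ gives
\[
u(\mbf{Y}) \le C K^{-\alpha} \sup_{I_{Kd}(\mbf{Y}_0) \cap \Omega} u,
\]
so there exists $\mbf{Y}_1 \in I_{Kd}(\mbf{Y}_0) \cap \Omega$ with $u(\mbf{Y}_1) \ge (2C)^{-1} K^{\alpha} u(\mbf{Y})$. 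If $\dist(\mbf{Y}_1, \Sigma) \gtrsim Kd$ we fall into the interior regime and conclude; otherwise we iterate with $\mbf{Y}_1$ in place of $\mbf{Y}$ at pseudo-radius $K^2 d$, and so on. Choosing $K$ large so that $(2C)^{-1} K^{\alpha} \ge 2$ makes each iteration double the value of $u$; since $u$ is bounded on $I_{2R}(\mbf{X}) \cap \Omega$, after $k = O(\log(\eps R / d))$ steps the process terminates in the interior regime, yielding $u(\mbf{Y}) \le 2^{-k} \cdot C\, u(\cA^+_R(\mbf{X})) \lesssim u(\cA^+_R(\mbf{X}))$.

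The main obstacle is the time asymmetry of parabolic Harnack: all chains must advance in time, which is precisely why the time-forward corkscrew $\cA^+_R(\mbf{X})$ (rather than $\cA^-_R$) appears on the right-hand side. One must also verify that the nested surface boxes $\Delta_{K^j d}(\mbf{Y}_j)$ stay inside $\Delta_{2R}(\mbf{X})$ so that the vanishing hypothesis of Lemma~\ref{HCatBdry.lem} continues to apply at each step; this is ensured by taking $\eps$ small in terms of $K$, both ultimately fixed in terms of $n$, $\Lambda$, $\|\psi\|_{\Lip(1,1/2)}$, and the Hölder exponent $\alpha$.
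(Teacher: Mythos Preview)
The paper does not give its own proof of this lemma: it simply refers the reader to \cite{N97} (and to \cite{FGS,FGS_BH,FS,FSY,LM} for related settings) at the start of Section~\ref{sec:PDEestimates}. Your sketch is precisely the classical Caffarelli--Fabes--Mortola--Salsa iteration, in its parabolic form, that one finds in those references, and you have correctly flagged the two genuinely parabolic points: that the Harnack chain to $\cA^+_R(\mbf{X})$ must be time-forward (hence the time-forward corkscrew on the right-hand side), and that the nested surface boxes must remain inside $\Delta_{2R}(\mbf{X})$ so that the vanishing hypothesis of Lemma~\ref{HCatBdry.lem} is available at each step. So your approach matches what the cited literature does; there is no alternative argument in the paper to compare against.

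One small point worth tightening in your termination argument: you write that boundedness of $u$ on $I_{2R}(\mbf{X})\cap\Omega$ forces the iteration to terminate in the interior regime. Strictly speaking, the iteration can only be run while the boxes $I_{2K^j d}(\cdot)$ remain inside $I_{2R}(\mbf{X})$, which gives at most $k_0 \approx \log_K(R/d)$ steps; one must argue that the interior regime is reached \emph{within those $k_0$ steps}. The standard way is by contradiction: if not, then at step $k_0$ the scale is $\approx R$, so the sup in the final application of Lemma~\ref{HCatBdry.lem} is over a box of size $\approx R$, and the point achieving it can be connected by a (time-forward) Harnack chain of bounded length to $\cA^+_R(\mbf{X})$, closing the loop. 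This is routine and is exactly what \cite{N97} and \cite{FGS} do, but it is worth stating so that ``boundedness of $u$'' is not doing more work than it actually can.
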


\begin{corollary}[Bourgain's estimate] \label{Bourgain} There is a uniform constant $c>0$ such that for all 
$\mbf{X} =(\psi(\mbf{x}),\mbf{x})\in\Sigma$, and every $R>0$,
\[
\hm^{\cA^+_R(\mbf{X})}\big(\Delta_R(\mbf{X})\big) \geq c\,.
\]
\end{corollary}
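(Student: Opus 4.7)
The plan is to first produce an interior point $\mbf{Y}_1 \in \Omega$ at which $\omega^{\mbf{Y}_1}(\Delta_R(\mbf{X}))$ is bounded below by a structural constant, and then transport this bound to $\cA_R^+(\mbf{X})$ via a short parabolic Harnack chain, exploiting that $\cA_R^+(\mbf{X})$ sits strictly forward in time from $\mbf{Y}_1$.

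For the first step, I would take $\mbf{x}' := (x, t - R^2/2) \in Q_{R/2}(\mbf{x})$, set $\mbf{X}' := \bfpsi(\mbf{x}') \in \Sigma$, and note that $\Delta_{R/2}(\mbf{X}') = \bfpsi(Q_{R/2}(\mbf{x}')) \subseteq \bfpsi(Q_R(\mbf{x})) = \Delta_R(\mbf{X})$. The function $w(\mbf{Y}) := \omega^{\mbf{Y}}\bigl(\Sigma \setminus \Delta_{R/2}(\mbf{X}')\bigr)$ is a non-negative solution of $\mathcal{L} w = 0$ in $\Omega$, bounded by $1$ and vanishing continuously on $\Delta_{R/2}(\mbf{X}')$ by regularity of Lip(1,1/2) graph boundaries. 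Applying Lemma \ref{HCatBdry.lem} at scale $R/4$ centered at $\mbf{X}'$ yields
\[
w(\mbf{Y}) \lesssim \bigl(\dist(\mbf{Y}, \Sigma)/R\bigr)^{\alpha}, \qquad \mbf{Y} \in I_{R/4}(\mbf{X}') \cap \Omega.
\]
Choosing $\delta > 0$ a sufficiently small structural constant and setting $\mbf{Y}_1 := (\psi(\mbf{x}') + \delta R, \mbf{x}')$, Lemma \ref{distgraphlem.lem} gives $\dist(\mbf{Y}_1, \Sigma) \approx \delta R$, hence $w(\mbf{Y}_1) \leq 1/2$. Since $\Sigma \setminus \Delta_R(\mbf{X}) \subseteq \Sigma \setminus \Delta_{R/2}(\mbf{X}')$, this gives $\omega^{\mbf{Y}_1}(\Delta_R(\mbf{X})) \geq 1 - w(\mbf{Y}_1) \geq 1/2$.

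For the second step, I would transport this bound to $\cA_R^+(\mbf{X})$ via a parabolic Harnack chain. The point $\mbf{Y}_1$ sits at time $t - R^2/2$ and depth $\delta R$ from $\Sigma$, while $\cA_R^+(\mbf{X})$ sits at time $t + 2R^2$ and depth $\approx R$, with parabolic distance $\lesssim R$. I would build a chain of $O(1)$ parabolic Harnack cylinders: $O(\log(1/\delta)) = O(1)$ ``climbing'' steps that approximately double the depth until it reaches order $R$, followed by $O(1)$ forward-in-time translations at depth $\approx R$. Each cylinder can be arranged to lie inside $\Omega$ by the Lip(1,1/2) regularity. Applying parabolic Harnack step by step to $\mbf{Y} \mapsto \omega^{\mbf{Y}}(\Delta_R(\mbf{X}))$ gives $\tfrac12 \leq \omega^{\mbf{Y}_1}(\Delta_R(\mbf{X})) \leq C_H^k \, \omega^{\cA_R^+(\mbf{X})}(\Delta_R(\mbf{X}))$, so $\omega^{\cA_R^+(\mbf{X})}(\Delta_R(\mbf{X})) \geq 1/(2 C_H^k) =: c$. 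The main obstacle is the directional nature of the parabolic Harnack inequality---past values control future values, not vice versa---which forces $\mbf{Y}_1$ to be placed at a strictly earlier time than $\cA_R^+(\mbf{X})$; the rest is routine bookkeeping to ensure each intermediate Harnack cylinder stays inside $\Omega$.
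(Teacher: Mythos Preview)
Your proposal is correct and follows essentially the same approach as the paper's sketch: apply boundary H\"older continuity (Lemma~\ref{HCatBdry.lem}) to the solution $1-\omega^{\mbf{Y}}(\Delta_R(\mbf{X}))$ (you use the slightly larger $\omega^{\mbf{Y}}(\Sigma\setminus\Delta_{R/2}(\mbf{X}'))$, which only helps) to obtain a point close to the boundary with $\omega^{\mbf{Y}_1}(\Delta_R(\mbf{X}))\ge 1/2$, and then run a parabolic Harnack chain to reach $\cA_R^+(\mbf{X})$. Your extra care in placing $\mbf{Y}_1$ at the backward-shifted time $t-R^2/2$ is not strictly required (since $\cA_R^+(\mbf{X})$ already sits at time $t+2R^2$, giving ample forward-time budget for the climbing chain), but it does no harm and makes the directionality of the parabolic Harnack inequality transparent.
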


\begin{proof}[Sketch of proof]
First apply Lemma \ref{HCatBdry.lem} to the solution 
$u(\mbf{Y}):= 1- \hm^{\mbf{Y}}\big(\Delta_R(\mbf{X})\big)$, with $\mbf{Y}$ sufficiently close to the boundary, and then use Harnack's inequality to connect to $\cA^+_R(\mbf{X})$.  We omit the standard details.
\end{proof}

\begin{remark}
It is well-known that, in fact, 
the conclusion of Corollary \ref{Bourgain} is actually equivalent to that of Lemma
\ref{HCatBdry.lem}; see, e.g., \cite[Lemma 2.5]{GH1} for a proof
of the converse implication in a much more general setting.
\end{remark}

For $\mbf{X}= (x_0, x, t) \in \Sigma$,  $r > 0$, and
 $\kappa =\kappa(n,M_0)$, a sufficiently large constant to be fixed, we introduce the \textbf{space-time parabolas} by
\begin{equation} \label{paraboladef.eq}
	\P^\pm_{\kappa, r}(\mbf{X}) := \big\{(y_0,y,s) \in \Omega: \;\; |(x_0,x) - (y_0,y)| \le \kappa \, |t-s|^{1/2}, \; \; \pm(s-t)\ge 16r^{2}\big\}.
\end{equation}
Recall that $M_0 = 2 + \|\psi\|_{\Lip(1,1/2)}$ (see \eqref{1.1+}). The parabola $\P^+_{\kappa, r}$ is the forward in time parabola and $\P^-_{\kappa, r}$ is the backward in time parabola. Note that
if $\mbf{Y} \in \P^\pm_{\kappa, r}(\mbf{X})$, then $\mbf{Y} \in \P^\pm_{\kappa, r'}(\mbf{X})$ for all $r' \in (0,r)$. Concerning the ``aperture'' $\kappa$,
we may take this constant as large as we like, but we will choose
\begin{equation}\label{kappadef}\kappa:=  40M_0\!\sqrt{n}.
\end{equation}

\begin{lemma}[Backwards/Strong Harnack inequality]\label{strongharnack4gf.lem} If $\mbf{X} \in \Sigma$ and $\mbf{Y} \in \P^+_{\kappa, R}(\mbf{X})$, then
\[G(\mbf{Y}, \cA_R^-(\mbf{X})) \approx  G(\mbf{Y}, \cA_R^+(\mbf{X})).\]
Similarly, if $\mbf{X} \in \Sigma$ and  $\mbf{Y} \in \P^-_{\kappa, R}(\mbf{X})$,  then
\[  G(\cA_R^+(\mbf{X}),\mbf{Y})\approx G(\cA_R^-(\mbf{X}),\mbf{Y}).\]
\end{lemma}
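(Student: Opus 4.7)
The plan is to exploit the fact that $u(\mbf{Z}) := G(\mbf{Y},\mbf{Z})$ is a non-negative solution of the adjoint equation $\mathcal{L}^* u = 0$ in $I_{2R}(\mbf{X}) \cap \Omega$, once we check that $\mbf{Y}$ is far enough away, and that $u$ vanishes on $\Delta_{2R}(\mbf{X})$. The first assertion of the lemma is then the classical ``backward Harnack inequality'' for non-negative adjoint-caloric functions vanishing on the lateral boundary; the second assertion follows by exchanging the roles of $\mathcal{L}$ and $\mathcal{L}^*$, since $G(\cA_R^\pm(\mbf{X}),\mbf{Y})$, viewed as a function of the first variable, is caloric and the parabola $\P^-_{\kappa,R}(\mbf{X})$ places $\mbf{Y}$ symmetrically in the past. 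So I only sketch the first.

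The preliminary step is to verify that $\mbf{Y} \notin I_{2R}(\mbf{X})$: the constraint $t(\mbf{Y}) - t(\mbf{X}) \ge 16 R^2$ built into the definition \eqref{paraboladef.eq} of $\P^+_{\kappa,R}(\mbf{X})$ easily exceeds the time-extent $\pm 4 R^2$ of $I_{2R}(\mbf{X})$, so $u$ is adjoint-caloric throughout $I_{2R}(\mbf{X}) \cap \Omega$ and vanishes on $\Delta_{2R}(\mbf{X})$. This sets up the two lemmas we have at our disposal, namely Lemmas \ref{HCatBdry.lem} and \ref{carlest.lem}, together with Corollary \ref{Bourgain}, all in their adjoint versions.

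The easy direction $u(\cA_R^+(\mbf{X})) \lesssim u(\cA_R^-(\mbf{X}))$ is just interior Harnack. One connects $\cA_R^-(\mbf{X})$ to $\cA_R^+(\mbf{X})$ by the straight segment $\gamma(\tau) := (x_0 + 2 M_0 R,\, x,\, t + \tau)$ for $\tau \in [-2R^2, 2R^2]$; by Lemma \ref{distgraphlem.lem} and the Lip(1,1/2) bound on $\psi$, each point on $\gamma$ lies at distance $\approx R$ from $\Sigma$, so the segment admits a uniformly finite Harnack chain of parabolic balls of radius $\approx R$ staying in $\Omega$. Since $\cA_R^+(\mbf{X})$ has the later time coordinate and $u$ is adjoint-caloric, the (adjoint) interior Harnack inequality propagates control from $\cA_R^-(\mbf{X})$ to $\cA_R^+(\mbf{X})$ with a constant depending only on structural parameters.

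The reverse bound $u(\cA_R^-(\mbf{X})) \lesssim u(\cA_R^+(\mbf{X}))$ is the genuinely new content and where I expect the main obstacle to lie. I would follow the scheme of Fabes--Safonov(--Yuan) \cite{FS,FSY}, adapted to the Lip(1,1/2) variable-coefficient setting in \cite{N97}. Setting $M := u(\cA_R^-(\mbf{X}))/u(\cA_R^+(\mbf{X}))$ and normalizing $v := u/u(\cA_R^+(\mbf{X}))$, the plan is to suppose $M$ very large and derive a contradiction. Representing $v(\cA_R^-(\mbf{X}))$ by the adjoint parabolic measure $\widetilde{\omega}^{\cA_R^-(\mbf{X})}$ of the truncated region $I_{2R}(\mbf{X}) \cap \Omega$, one splits the relevant part of the boundary into the time-forward cap and vertical top (where Lemma \ref{carlest.lem} gives $v \lesssim 1$) and a neighborhood of the lateral part $\Sigma$ (where $v \equiv 0$ on $\Sigma$ and the boundary Hölder estimate of Lemma \ref{HCatBdry.lem} gives the decay $v(\mbf{Z}) \lesssim (\dist(\mbf{Z},\Sigma)/R)^\alpha$). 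The lower bound on $\widetilde{\omega}^{\cA_R^-(\mbf{X})}$ from Corollary \ref{Bourgain} (in its adjoint version), combined with this Hölder decay, is then iterated dyadically across scales $2^{-k} R$ to conclude that $M$ must be bounded by a structural constant. The principal difficulty is precisely this iteration: unlike the elliptic case, there is no time-symmetry that yields the reverse estimate for free, and one must tie the forward corkscrew $\cA_R^+(\mbf{X})$ into the argument through a Harnack chain that crosses the time direction while keeping all intermediate points at distance $\approx R$ from the rough Lip(1,1/2) graph. The variable-coefficient hypothesis does not complicate this step qualitatively beyond the constant-coefficient case of \cite{FS,FSY}, since all the ingredients (interior Harnack, Carleson, Bourgain, boundary Hölder) hold with constants depending only on $n$, $\Lambda$, and $\|\psi\|_{\Lip(1,1/2)}$.
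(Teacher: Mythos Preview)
Your sketch is correct and follows the same approach as the references the paper cites: the paper does not give its own proof of this lemma but simply defers to \cite{N97}, noting that the arguments of \cite{FS,FSY} carry over to the Lip(1,1/2) setting \emph{mutatis mutandis}. Your outline of the Fabes--Safonov--Yuan iteration (easy direction by interior Harnack for the adjoint equation, hard direction by contradiction via boundary H\"older decay, Carleson, and Bourgain) is exactly what those references do, so there is nothing to compare.
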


\begin{lemma}[``CFMS estimate'' ]\label{CFMS}
If $\mbf{X} \in \Sigma$ and $\mbf{Y} \in \P^+_{\kappa, R}(\mbf{X})$, then
\[R^n \, G(\mbf{Y}, \cA_R^+(\mbf{X})) \approx \omega^{\mbf{Y}}(\Delta_{R}(\mbf{X})) \approx R^n \, G(\mbf{Y}, \cA_R^-(\mbf{X})).\]
Similarly, if $\mbf{X} \in \Sigma$ and $\mbf{Y} \in \P^-_{\kappa, R}(\mbf{X})$, then
\[R^n \, G(\cA_R^-(\mbf{X}),\mbf{Y}) \approx \widetilde{\omega}^{\mbf{Y}}(\Delta_{R}(\mbf{X})) \approx R^n\, G(\cA_R^+(\mbf{X}),\mbf{Y}).\]
\end{lemma}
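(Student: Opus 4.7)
The plan is to adapt the standard CFMS (Caffarelli-Fabes-Mortola-Salsa) argument to the parabolic $\Lip(1,1/2)$ setting, assembling the PDE tools already collected in this subsection. First, by the backwards Harnack inequality (Lemma \ref{strongharnack4gf.lem}), the two Green function values in the claimed chain are already equivalent,
\[G(\mbf{Y}, \cA^+_R(\mbf{X})) \approx G(\mbf{Y}, \cA^-_R(\mbf{X})), \qquad \mbf{Y} \in \P^+_{\kappa, R}(\mbf{X}),\]
so it is enough to prove the single comparison
\[\omega^{\mbf{Y}}(\Delta_R(\mbf{X})) \approx R^n G(\mbf{Y}, \cA^+_R(\mbf{X})).\]
The adjoint half follows from the change of variables $t \mapsto -t$, which interchanges $\mathcal{L}$ with $\mathcal{L}^*$, $\cA^+$ with $\cA^-$, and $\P^+$ with $\P^-$.

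For the upper bound $\omega^{\mbf{Y}}(\Delta_R(\mbf{X})) \lesssim R^n G(\mbf{Y}, \cA^+_R(\mbf{X}))$, I would run a test-function argument based on the Green's-identity representation from Subsection \ref{ss3.1}. Fix $\varphi \in C^\infty_c(\ree)$ with $\varphi \equiv 1$ on a neighborhood of $\Delta_R(\mbf{X})$, $\supp \varphi \subseteq I_{3R/2}(\mbf{X})$, and parabolic derivative bounds $|\nabla_X \varphi| \lesssim R^{-1}$, $|\partial_t \varphi| \lesssim R^{-2}$. The condition $\mbf{Y} \in \P^+_{\kappa, R}(\mbf{X})$, with $\kappa$ as in \eqref{kappadef}, places $\mbf{Y}$ strictly in the time-future of $\supp \varphi$ (since $16 R^2 > 9R^2/4$), so the identity
\[\omega^{\mbf{Y}}(\Delta_R(\mbf{X})) \leq \omega^{\mbf{Y}}(\varphi|_\Sigma) = \iiint_\Omega \bigl(A^* \nabla_X G(\mbf{Y}, \cdot) \cdot \nabla_X \varphi - G(\mbf{Y}, \cdot) \partial_t \varphi\bigr) d\mbf{Z}\]
is legitimate. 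Caccioppoli's inequality applied to the adjoint solution $\mbf{Z} \mapsto G(\mbf{Y}, \mbf{Z})$ on the support of $\nabla_X \varphi$ and $\partial_t \varphi$, combined with the derivative bounds on $\varphi$ and the volume $\lesssim R^{n+2}$ of that support, reduces the right-hand side to $R^n \sup_{I_{3R/2}(\mbf{X}) \cap \Omega} G(\mbf{Y}, \cdot)$. Since $G(\mbf{Y}, \cdot)$ is an adjoint-$\mathcal{L}$ solution vanishing on $\Sigma$, Carleson's estimate (Lemma \ref{carlest.lem}, in its adjoint form via $t \mapsto -t$) controls this supremum by $G(\mbf{Y}, \cA^-_R(\mbf{X}))$, which is $\approx G(\mbf{Y}, \cA^+_R(\mbf{X}))$ by backwards Harnack.

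For the lower bound $R^n G(\mbf{Y}, \cA^+_R(\mbf{X})) \lesssim \omega^{\mbf{Y}}(\Delta_R(\mbf{X}))$, I would argue by maximum-principle comparison. Set $v(\mbf{Z}) := \omega^{\mbf{Z}}(\Delta_R(\mbf{X}))$ and $w(\mbf{Z}) := R^n G(\mbf{Z}, \cA^+_R(\mbf{X}))$, both non-negative $\mathcal{L}$-solutions in $\Omega \setminus \{\cA^+_R(\mbf{X})\}$, both vanishing on $\Sigma \setminus \overline{\Delta_R(\mbf{X})}$, with $w \equiv 0$ on $\Delta_R(\mbf{X})$ as well. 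Let $B$ be a parabolic ball of radius $\approx R$ centered at $\cA^+_R(\mbf{X})$ with $\overline{B} \subseteq \Omega$. On $\partial B$, the Aronson/Nash upper bound gives $w \lesssim 1$. On the other hand, Bourgain's estimate (Corollary \ref{Bourgain}) at the smaller scale $R/2$ yields $v(\cA^+_{R/2}(\mbf{X})) \gtrsim 1$, and from this one can build a purely forward-in-time parabolic Harnack chain (of length controlled by the structural constants $n$, $\Lambda$, and $M_0$) reaching every point of $\partial B$, giving $v \gtrsim 1$ on $\partial B$. Therefore $w \lesssim v$ on $(\partial B) \cup \Sigma$, and the parabolic maximum principle on $\Omega \setminus B$ (after truncating at a large future time to work on a bounded parabolic domain containing $\mbf{Y}$) delivers $w(\mbf{Y}) \lesssim v(\mbf{Y})$.

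The delicate point in the whole argument is precisely the lower bound $v \gtrsim 1$ on the time-\emph{backward} portion of $\partial B$: the one-sided time orientation of the parabolic Harnack inequality forbids transporting the bound $v(\cA^+_R(\mbf{X})) \gtrsim 1$ directly to all of $\partial B$. The fix is to initiate the Bourgain-Harnack chain from $\cA^+_{R/2}(\mbf{X})$, which lies at time $t(\mbf{X}) + R^2/2$ and is therefore early enough that \emph{every} point of $\partial B$ (whose times lie in $t(\mbf{X}) + 2R^2 + O(R^2)$) can be reached by forward-in-time Harnack steps, thereby sidestepping any circular appeal to CFMS-type lower bounds at backward-in-time corkscrews.
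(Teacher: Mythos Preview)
The paper does not supply its own proof of this lemma; it is one of the results in Subsection~\ref{sec:PDEestimates} that the authors explicitly defer to \cite{N97} (and, for the heat equation, to \cite{FGS,FGS_BH,FS,FSY,LM}). Your argument is precisely the standard CFMS-type proof that those references carry out in the parabolic/Lip(1,1/2) setting: Green's identity plus (boundary) Caccioppoli and Carleson's estimate for the upper bound, and a Bourgain/Harnack-chain comparison via the maximum principle for the lower bound. The sketch is correct, including your observation that one should seed the Harnack chain at $\cA^+_{R/2}(\mbf{X})$ so that every point of $\partial B$ is reachable in a purely forward-in-time fashion; the only standard technicalities you leave implicit are the use of \emph{boundary} Caccioppoli (since $\supp\nabla\varphi$ meets $\Sigma$, one uses that the zero extension of $G(\mbf{Y},\cdot)$ across $\Sigma$ is a global weak subsolution) and a spatial truncation or Phragm\'en--Lindel\"of step to justify the maximum principle in the unbounded domain $\Omega\setminus B$.
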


\begin{lemma}[Doubling property]\label{calisdoubpre.lem}
If $\mbf{X} \in \Sigma$ and $\mbf{Y} \in \P^+_{\kappa, R}(\mbf{X})$, then
\[\omega^{\mbf{Y}}(\Delta_{R}(\mbf{X})) \approx \omega^{\mbf{Y}}(\Delta_{R/2}(\mbf{X})).\]
Similarly, if $\mbf{X} \in \Sigma$ and $\mbf{Y} \in \P^-_{\kappa, R}(\mbf{X})$, then
\[\widetilde{\omega}^{\mbf{Y}}(\Delta_{R}(\mbf{X})) \approx \widetilde{\omega}^{\mbf{Y}}(\Delta_{R/2}(\mbf{X})).\]
\end{lemma}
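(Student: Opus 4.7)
One direction, $\omega^{\mbf{Y}}(\Delta_{R/2}(\mbf{X})) \le \omega^{\mbf{Y}}(\Delta_R(\mbf{X}))$, I would dismiss as trivial monotonicity: the elongated cubes nest, $I_{R/2}(\mbf{X}) \subseteq I_R(\mbf{X})$, and hence $\Delta_{R/2}(\mbf{X}) \subseteq \Delta_R(\mbf{X})$. All the work goes into the reverse inequality, so from now on the plan is to show $\omega^{\mbf{Y}}(\Delta_R(\mbf{X})) \lesssim \omega^{\mbf{Y}}(\Delta_{R/2}(\mbf{X}))$.

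My strategy is to transfer the question to Green function values via the CFMS estimate and then bridge the two scales by a single parabolic Harnack chain. First I would note that $\P^+_{\kappa,R}(\mbf{X}) \subseteq \P^+_{\kappa,R/2}(\mbf{X})$, since the only $R$-dependence in the definition of the forward parabola is the time-gap $s-t \geq 16 R^2$. This allows me to invoke Lemma \ref{CFMS} at \emph{both} scales on the same point $\mbf{Y}$, reducing the desired inequality to
\[
G\big(\mbf{Y}, \cA_R^+(\mbf{X})\big) \lesssim G\big(\mbf{Y}, \cA_{R/2}^-(\mbf{X})\big).
\]
I choose the time-forward corkscrew on the left and the time-backward corkscrew on the right deliberately; this choice leaves me room to Harnack-chain in the direction compatible with the adjoint equation.

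For this Green function comparison I would fix $\mbf{Y}$ and consider the map $\mbf{Z} \mapsto G(\mbf{Y},\mbf{Z})$, a non-negative solution of $\mathcal{L}^* v = 0$ on $\Omega \setminus \{\mbf{Y}\}$. Because time is reversed for the adjoint equation, the classical parabolic Harnack inequality controls the value of such a $v$ at a \emph{later} time-slice by its value at an \emph{earlier} one, provided the two points are joined by an $O(1)$-length chain of parabolic balls of radius $\approx R$ lying in $\Omega$ at depth $\gtrsim R$ from $\Sigma$ and safely away from the pole $\mbf{Y}$. Here $\cA_R^+(\mbf{X})$ sits at time $t(\mbf{X}) + 2R^2$ and $\cA_{R/2}^-(\mbf{X})$ at time $t(\mbf{X}) - R^2/2$; both have depth $\gtrsim R$ by the corkscrew estimates in Section \ref{scuberef}, and the hypothesis $\mbf{Y} \in \P^+_{\kappa,R}(\mbf{X})$ forces $t(\mbf{Y}) \ge t(\mbf{X}) + 16R^2$, which is comfortably above both endpoints. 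The required chain can therefore be built by an $O(1)$ interpolation in the graph coordinate $x_0$ and the time coordinate, and the adjoint Harnack inequality applied along it delivers the desired bound.

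The adjoint statement, for $\widetilde\omega$ and $\mbf{Y} \in \P^-_{\kappa, R}(\mbf{X})$, I would handle symmetrically via the change of variables $t \mapsto -t$: swap $\cA^+ \leftrightarrow \cA^-$ and apply classical forward-in-time parabolic Harnack to $\mbf{Z} \mapsto G(\mbf{Z},\mbf{Y})$, which is $\mathcal{L}$-caloric away from its pole at $\mbf{Y}$. The only genuine (and rather minor) obstacle is the bookkeeping for the Harnack chain: I must verify that its cardinality is bounded independently of $\mbf{Y}$ and $R$ and that every ball in it stays at uniformly comparable depth from $\Sigma$ and clear of $\mbf{Y}$. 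The aperture $\kappa := 40 M_0 \sqrt{n}$ fixed in \eqref{kappadef} is chosen generously enough that this verification is automatic.
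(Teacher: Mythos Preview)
The paper does not actually prove this lemma; it merely cites \cite{N97} and \cite{FS,FSY}, noting that the arguments there carry over to the Lip(1,1/2) setting. Your argument is correct and is in fact the standard derivation of doubling from the CFMS estimate (Lemma~\ref{CFMS}) together with the parabolic Harnack inequality, which is precisely what one finds in those references. The key observations---that $\P^+_{\kappa,R}(\mbf{X}) \subseteq \P^+_{\kappa,R/2}(\mbf{X})$, that the time-orientation of the corkscrews is chosen so that the adjoint Harnack inequality applies in the correct direction, and that the pole $\mbf{Y}$ sits at time at least $t(\mbf{X})+16R^2$ and hence well clear of the chain---are all handled correctly. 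There is no circularity: in the classical development CFMS is established from the backward Harnack inequality (Lemma~\ref{strongharnack4gf.lem}) without invoking doubling, and doubling is then deduced exactly as you do.
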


Actually, we will use the following corollary of Lemma \ref{calisdoubpre.lem}.

\begin{lemma}[{\cite[Lemma 3.12]{BHMN1}}]\label{calisdoub.lem}
Let $\kappa$ be as in \eqref{kappadef} and consider  $\mbf{X} \in \Sigma$ and $r > 0$. Then,
\[\cA^+_{4r}(\mbf{X}) \in \P^+_{\kappa, 2\rho}(\mbf{Z})\]
for all $\mbf{Z} \in \Sigma$ and $\rho > 0$ such that $\Delta_{2\rho}(\mbf{Z}) \subseteq \Delta_r(\mbf{X})$. In particular, for such $\mbf{Z}$ and $\rho$, it holds 
\[\omega^{\cA^+_{4r}(\mbf{X})}(\Delta_{2\rho}(\mbf{Z})) \approx \omega^{\cA^+_{4r}(\mbf{X})} (\Delta_{\rho}(\mbf{Z})).\]
\end{lemma}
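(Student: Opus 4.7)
The proof is essentially a geometric verification: once we show that $\cA^+_{4r}(\mbf{X})$ sits in the forward parabola $\P^+_{\kappa,2\rho}(\mbf{Z})$, the doubling statement follows immediately by applying Lemma \ref{calisdoubpre.lem} with center $\mbf{Z}$ and radius $2\rho$. So the whole task reduces to verifying the parabolic containment.

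First, I would unpack what the surface-ball containment $\Delta_{2\rho}(\mbf{Z}) \subseteq \Delta_r(\mbf{X})$ gives us. Writing $\mbf{X} = (\psi(\mbf{x}),\mbf{x})$ with $\mbf{x} = (x,t)$ and $\mbf{Z} = (\psi(\mbf{z}),\mbf{z})$ with $\mbf{z} = (z,\tau)$, and using the graph parametrization \eqref{surface-box:graph}, the containment forces $Q_{2\rho}(\mbf{z}) \subseteq Q_r(\mbf{x})$. By the definition of parabolic cubes, this yields
\[
|z_i - x_i| \le r - 2\rho\ (1 \le i \le n-1),\qquad |\tau - t| \le r^2 - 4\rho^2,
\]
and in particular the basic size relation $\rho \le r/2$.

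Next, with $\mbf{Y} := \cA^+_{4r}(\mbf{X}) = (\psi(\mbf{x}) + 8M_0 r,\, x,\, t + 32 r^2)$, I need to check the three defining conditions of $\P^+_{\kappa, 2\rho}(\mbf{Z})$ from \eqref{paraboladef.eq}. The condition $\mbf{Y} \in \Omega$ is already built into \eqref{CSpm} (in fact $\dist(\mbf{Y},\Sigma) \ge 8r$). For the time condition, writing $s := t + 32 r^2$, we have
\[
s - \tau = 32 r^2 + (t - \tau) \ge 32 r^2 - (r^2 - 4\rho^2) = 31 r^2 + 4\rho^2 \ge 64 \rho^2,
\]
the last inequality coming from $31 r^2 \ge 60 \rho^2$, which holds because $\rho \le r/2$. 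For the aperture condition, I bound the spatial distance:
\[
|(\psi(\mbf{z}),z) - (\psi(\mbf{x}) + 8M_0 r, x)|
\le |\psi(\mbf{z}) - \psi(\mbf{x})| + 8 M_0 r + |z - x|.
\]
Using the Lip(1,1/2) bound, $|\psi(\mbf{z}) - \psi(\mbf{x})| \le \|\psi\|_{\Lip(1,1/2)} \|\mbf{z} - \mbf{x}\| \le M_0(|z-x| + |\tau-t|^{1/2}) \le 2 M_0 \sqrt{n}\, r$, and clearly $|z-x| \le \sqrt{n}\, r$. Collecting, the spatial distance is at most $\lesssim M_0 \sqrt{n}\, r$, while $\kappa |s-\tau|^{1/2} \ge \kappa \sqrt{31}\, r$. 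Since $\kappa = 40 M_0 \sqrt{n}$, the choice absorbs all numerical constants with plenty of room.

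Having verified $\cA^+_{4r}(\mbf{X}) \in \P^+_{\kappa, 2\rho}(\mbf{Z})$, the doubling conclusion follows by applying Lemma \ref{calisdoubpre.lem} directly, with pole $\cA^+_{4r}(\mbf{X})$, boundary point $\mbf{Z}$, and radius $2\rho$. No analytic obstacle arises; the only point requiring care is getting the numerical constants to line up, and the aperture choice $\kappa = 40 M_0 \sqrt{n}$ is deliberately generous so that the inequalities go through.
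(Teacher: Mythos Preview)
Your proof is correct. The paper does not give its own argument for this lemma, instead citing \cite[Lemma 3.12]{BHMN1}; your direct geometric verification of the parabola containment followed by an application of Lemma~\ref{calisdoubpre.lem} is exactly the expected argument, and the numerical checks go through as you indicate.
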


\section{The assumptions in Theorem \ref{main1.thrm} and Theorem \ref{main2.thrm}}

Let us first define what we mean by $L^p$ solvability of the Dirichlet problem, the assumption on our main theorems. Actually, we are not going to use it directly, so we refer the interested reader to \cite{GH1} for further details.
\begin{definition}[$L^p$ solvability of the Dirichlet problem] \label{Lpsolv.def}
	Let $\Omega \subseteq \ree$ be the region above a $\Lip(1, 1/2)$ function, and $A$ an elliptic matrix satisfying \eqref{ellip.eq}. We say that the $L^p$ Dirichlet problem for $\mathcal{L}$, as in \eqref{L.eq}, is solvable if there exists some $p < \infty$ such that the following holds: for any $f \in L^p(\Sigma)$, the solution $u$ of the Dirichlet boundary value problem $\mathcal{L}u = 0$ in $\Omega$, $u = f$ on $\Sigma$ (understood in the sense of parabolic non-tangential convergence), provided by the parabolic measure, satisfies the estimate $\| N_* u \|_{L^p(\Sigma)} \lesssim \|f\|_{L^p(\Sigma)}$ (with constants uniform on $f$). Here we denote by $N_*u$ the nontangential maximal function of $u$ (for the precise definition, see \cite[Section 4]{GH1}).
\end{definition}

As commented above, we will not use this property directly, but rather another condition which is easier to deal with, and we define next.

\begin{definition}($A_\infty$ property) \label{defainfty}  
Recall that the parabolic and adjoint parabolic measures satisfy appropriate doubling
properties, by virtue of Lemma \ref{calisdoubpre.lem}.  
We therefore say that $\omega$ is in $A_\infty$ 
(or that its density is a parabolic $A_\infty$ weight) 
if there exist $C_\ast \geq 1$ and $q  > 1$ such that 
the following holds: if $\mbf{X} \in \Sigma$, $r > 0$, and $\mbf{Y} \in \P^+_{\kappa, 2r}(\mbf{X})$, then $\omega^{\mbf{Y}} \ll \sigma$ on $\Delta_{2r}(\mbf{X})$ and $k^{\mbf{Y}} := {\d\omega^{\mbf{Y}}}/{\d\sigma}$ satisfies the reverse-H\"older inequality
\begin{equation}\label{rhq}
\left( \bariint_{\Delta_{r}(\mbf{X})} (k^\mbf{Y})^q \, \d\sigma \right)^{1/q} \le C_\ast \bariint_{\Delta_{r}(\mbf{X})} k^\mbf{Y} \, \d\sigma.
\end{equation} 

Similarly, we say $\widetilde{\omega}$
 is in $A_\infty$ (or that its density is a parabolic $A_\infty$ weight) if there
exist $C_\ast \geq 1$ and $q  > 1$ such that the following holds. If $\mbf{X} \in \Sigma$, $r > 0$, and $\mbf{Y} \in \P^-_{\kappa, 2r}(\mbf{X})$, then $\widetilde{\omega}^{\mbf{Y}} \ll \sigma$ on $\Delta_{2r}(\mbf{X})$ and $\tilde{k}^{\mbf{Y}} := {\d\omega^{\mbf{Y}}}/{\d\sigma}$ satisfies the reverse-H\"older inequality
\begin{equation*}
\left( \bariint_{\Delta_{r}(\mbf{X})} (\tilde{k}^\mbf{Y})^q \, \d\sigma \right)^{1/q} \le C_\ast \bariint_{\Delta_{r}(\mbf{X})} \tilde{k}^\mbf{Y} \, \d\sigma.
\end{equation*}
\end{definition}

\begin{remark}\label{whnweusrhq.rmk}
Consider $\mbf{X}^0 \in \Sigma$ and $R_\ast > 0$, Using  Lemma \ref{calisdoub.lem} we have that
\[\cA^+_{4R_\ast}(\mbf{X}^0) \in \P^+_{\kappa, 2\rho}(\mbf{Z}),\]
for all $\mbf{Z} \in \Sigma$ and $\rho > 0$ such that $\Delta_{2\rho}(\mbf{Z}) \subseteq \Delta_{R_\ast}(\mbf{X}^0)$. Thus  \eqref{rhq} is valid with $\mbf{Y} = \cA^+_{4R_\ast}(\mbf{X}^0)$, i.e.,
\[\left( \bariint_{\Delta_{r}(\mbf{Z})} (k^{\cA^+_{4R_\ast}(\mbf{X}^0)}) ^q \, \d\sigma \right)^{1/q}\le C_\ast \bariint_{\Delta_{r}(\mbf{Z})} k^{\cA^+_{4R_\ast}(\mbf{X}^0)} \, \d\sigma,
\qquad \mbf{Z} \in \Delta_{R_*/2}(\mbf{X}^0), r < R_*/2.\]
\end{remark}

As anticipated above, we can actually use the handier $A_\infty$ condition instead of the $L^p$ solvability of the Dirichlet problem. Therefore, from here on, we will just use the $A_\infty$ condition as hypothesis for Theorems~\ref{main1.thrm} and \ref{main2.thrm}. This is justified by the following result.
\begin{theorem}\label{th4.5}
Let $\Omega \subseteq \ree$ be the region above a $\Lip(1, 1/2)$ function, and $A$ an elliptic matrix satisfying \eqref{ellip.eq}. Then, the parabolic measure associated to the operator $\mathcal{L}$ with coefficients $A$ (as in \eqref{L.eq}) is in $A_\infty$ (with exponent $q > 1$) if and only if the $L^{q'}$ Dirichlet problem for $\mathcal{L}$ is solvable in $\Omega$ (where $1/q + 1/q' = 1$, so concretely $q' < \infty$).
\end{theorem}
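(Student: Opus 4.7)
The plan is to establish both implications via the standard duality argument that has become folklore in elliptic and parabolic potential theory (originally due to Dahlberg and Kenig in the elliptic setting, with parabolic adaptations going back to \cite{FGS,N97}). The proof rests on the solution representation via parabolic measure,
\[ u(\mbf{Y}) = \iint_\Sigma f(\mbf{Z})\, \d\omega^{\mbf{Y}}(\mbf{Z}),\qquad \mbf{Y}\in\Omega,\]
which is valid once $f$ is continuous and compactly supported, together with the doubling property (Lemma \ref{calisdoubpre.lem}) and Bourgain's estimate (Corollary \ref{Bourgain}) from Section \ref{bbest}. These put us precisely into the framework where the classical $A_\infty$ vs. $L^p$ duality machinery works, after we are careful to account for the causal/parabolic asymmetry by always placing the pole $\mbf{Y}$ in the forward parabola $\P^+_{\kappa, 2r}(\mbf{X})$ as required by Definition \ref{defainfty}.

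For the forward direction (parabolic $A_\infty \Rightarrow L^{q'}$ solvability): given $f\in L^{q'}(\Sigma)$, and a nontangential cone vertex $\mbf{Z}\in\Sigma$, any $\mbf{Y}$ in the cone sits above a surface cube $\Delta_r(\mbf{Z})$ at depth $r\approx \dist(\mbf{Y},\Sigma)$. Using the Carleson estimate (Lemma \ref{carlest.lem}) and Harnack's inequality, one may change the pole to $\cA^+_{Cr}(\mbf{Z})$ and write
\[|u(\mbf{Y})|\lesssim \bariint_{\Delta_{Cr}(\mbf{Z})} |f|\, k^{\cA^+_{Cr}(\mbf{Z})}\,\d\sigma.\]
By H\"older's inequality and the reverse H\"older bound \eqref{rhq}, this is dominated by $(M_\sigma (|f|^{q'}))^{1/q'}(\mbf{Z})$, where $M_\sigma$ is the parabolic Hardy--Littlewood maximal operator on $(\Sigma,\sigma)$. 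Taking supremum over $\mbf{Y}$ in the cone yields $N_*u(\mbf{Z})\lesssim (M_\sigma(|f|^{q'}))^{1/q'}(\mbf{Z})$, and the $L^{q'}$ bound follows from the maximal inequality on the parabolic doubling space $(\Sigma,\sigma)$.

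For the converse ($L^{q'}$ solvability $\Rightarrow$ parabolic $A_\infty$): fix $\mbf{X}\in\Sigma$, $r>0$, and the pole $\mbf{Y}=\cA^+_{4r}(\mbf{X})$, which lies in $\P^+_{\kappa,2\rho}(\mbf{Z})$ for every $\Delta_{2\rho}(\mbf{Z})\subseteq \Delta_r(\mbf{X})$ by Lemma \ref{calisdoub.lem}. For nonnegative $f\in L^q(\Delta_{r}(\mbf{X}))$ the solution $u$ with data $f\mathbbm{1}_{\Delta_r(\mbf{X})}$ satisfies
\[ u(\mbf{Y})=\iint f\,\d\omega^{\mbf{Y}},\]
while on a fixed surface sub-cube at the scale of $\mbf{X}$ (where $\mbf{Y}$ lies in the nontangential cone), Bourgain's estimate and interior Harnack give $N_*u(\mbf{Z})\gtrsim u(\mbf{Y})$ for all $\mbf{Z}$ in a surface ball of $\sigma$-measure $\approx \sigma(\Delta_r(\mbf{X}))$. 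Hence $L^{q'}$-solvability yields
\[\iint f\,\d\omega^{\mbf{Y}}\lesssim \|N_*u\|_{L^{q'}(\Sigma)}\,\sigma(\Delta_r(\mbf{X}))^{-1/q'}\cdot \sigma(\Delta_r(\mbf{X}))\lesssim \sigma(\Delta_r(\mbf{X}))^{1/q'}\|f\|_{L^q}.\]
Duality now gives $\omega^{\mbf{Y}}\ll\sigma$ on $\Delta_r(\mbf{X})$ with density $k^{\mbf{Y}}\in L^{q'}(\Delta_r(\mbf{X}),\d\sigma/\sigma(\Delta_r(\mbf{X})))$, and combining with Bourgain's lower bound $\omega^{\mbf{Y}}(\Delta_r(\mbf{X}))\gtrsim 1$ after rescaling, one extracts the reverse H\"older inequality \eqref{rhq} for $k^{\mbf{Y}}$. (The equivalence of duality and reverse H\"older here is exactly the well-known self-improvement of $A_\infty$ weights; with the doubling property of $\sigma$ and $\omega$ at hand, the argument proceeds exactly as in \cite{GH1} and the classical elliptic references.)

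The only delicate point, and the main bookkeeping obstacle, is the parabolic time-asymmetry: at every step one must ensure that the pole $\mbf{Y}$ used to define $k^{\mbf{Y}}$ lies in the \emph{forward} parabolic cone over the surface cube being tested, which is why the pole is always taken to be $\cA^+_{4r}(\mbf{X})$ and why Lemma \ref{calisdoub.lem} (rather than just Lemma \ref{calisdoubpre.lem}) is invoked to move the pole between comparable surface cubes. Apart from this, the proof is the standard one and the technical details are worked out in \cite{GH1} in far greater generality; we therefore content ourselves with this sketch.
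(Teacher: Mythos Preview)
The paper does not actually prove Theorem \ref{th4.5}; it simply records the statement as well known and defers to \cite{LM}, \cite{N97}, and \cite[Theorem~2.10]{GH1}. Your sketch is exactly the standard duality argument contained in those references, so in that sense you are aligned with the paper.

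That said, your converse direction has a systematic swap of $q$ and $q'$ that you should fix. To prove $k^{\mbf{Y}}$ satisfies \eqref{rhq} with exponent $q$, you must test against $f\in L^{q'}(\Delta_r(\mbf{X}),\sigma)$ (not $L^q$): the $L^{q'}$-solvability hypothesis then controls $\|N_*u\|_{L^{q'}}\lesssim \|f\|_{L^{q'}}$, and the pointwise bound $u(\mbf{Y})\le N_*u(\mbf{Z})$ on a surface ball of measure $\approx\sigma(\Delta_r(\mbf{X}))$ gives, via H\"older,
\[
\iint f\,\d\omega^{\mbf{Y}}=u(\mbf{Y})\lesssim \sigma(\Delta_r(\mbf{X}))^{-1/q'}\|N_*u\|_{L^{q'}}\lesssim \sigma(\Delta_r(\mbf{X}))^{-1/q'}\|f\|_{L^{q'}}.
\]
Duality then places $k^{\mbf{Y}}\in L^{q}$ (not $L^{q'}$), which together with Bourgain's lower bound yields \eqref{rhq}. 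Your displayed chain has an extra factor of $\sigma(\Delta_r(\mbf{X}))$ and the wrong norm on $f$. In the forward direction, your displayed bound for $|u(\mbf{Y})|$ as a single local average is also a bit loose: for globally supported $f$ one needs the usual decomposition into dyadic annuli around $\mbf{Z}$, using boundary H\"older decay (Lemma~\ref{HCatBdry.lem}) to sum the far-away contributions; the conclusion $N_*u\lesssim (M_\sigma|f|^{q'})^{1/q'}$ is nonetheless correct.
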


The equivalence stated in Theorem \ref{th4.5} is well known, but see, e.g.,
\cite{LM} and \cite{N97}, and also
\cite[Theorem 2.10]{GH1} for a proof in a much more general setting.

Now we give the definition of our extra condition on the matrix of coefficients $A$ of our PDE.

\begin{definition}[Smoothness and $L^1$-Carleson oscillation condition]\label{smoothL1carl.def}
Let $A$ be an $n \times n$ elliptic matrix on $\mathbb{R}^{n+1}$ as in \eqref{ellip.eq}. We say $A$ satisfies an \textbf{$L^1$-Carleson oscillation condition} if $A$ is $C^\infty(\Omega)$ and there exists a constant $C_A > 0$ such that for every $(X,t) \in \Omega$ and $k = 1,2$ it holds
\begin{equation}\label{smoothnessonA.eq}
\dist((X,t),\partial\Omega)^k \, |\nabla^k_X A(X,t)| 
+ \dist((X,t),\partial\Omega)^{2k}\, |\partial_t^k A(X, t)| 
+ \dist((X,t),\partial\Omega)^3 \, |\partial_t \nabla_X A(X, t)|
\le C_A, 
\end{equation} 
and $\mu$ is a Carleson measure relative to $\Omega$ with constant bounded by $C_A$, where 
\begin{equation} \label{eq:carleson_measure}
	\d\mu(X,t) := \big( \, |\nabla_X A(X,t)| \, + \, |\partial_t A(X,t)| \, \dist((X,t),\partial\Omega) \big)  \d X \d t.
\end{equation}  
Here $\mu$ being a Carleson measure relative to $\Omega$ means for every $\xbf \in \partial \Omega$ and $\rho > 0$ it holds
\begin{equation} \label{carlesonmeasure.eq}
	\mu(\mathcal{J}_\rho(\xbf) \cap \Omega)\le C_A\, \rho^{n+1}.
\end{equation}
\end{definition}

Note that if 
\[\d\tilde{\mu}(X,t) := \big( \, |\nabla_X A(X,t)|^2\dist((X,t),\partial\Omega)  + |\partial_t A(X,t)|^2\dist((X,t),\partial\Omega)^3 \big) \d X \d t,\]
then it still holds, simply by using \eqref{smoothnessonA.eq} and the Carleson condition on $\mu$,
\begin{equation} \label{L2Carleson.eq}
	\tilde{\mu}(\mathcal{J}_\rho(\xbf) \cap \Omega) \lesssim \rho^{n+1}, 
\qquad \xbf \in \partial \Omega, \; \rho > 0.
\end{equation}
This means that, as is well known, our $L^1$ Carleson condition is stronger than the usual $L^2$ Carleson condition frequently considered in the literature. 
As remarked in the Introduction, the essential obstacle to working with 
the weaker $L^2$ Carleson condition is that at present, we are unable to prove
the local square function estimates of Lemma \ref{mainsfest.lem} 
without the $L^1$ hypothesis.

We recall that the smoothness assumption in \eqref{smoothnessonA.eq}
can be relaxed, at least in the context of Theorem \ref{main1.thrm}:
see Remark \ref{remark-smoothL1carl},
and Appendix \ref{extendrmks.sect}.

\section{Step 1: Setting up initial stopping time regimes and the preliminary estimates}

In the remaining part of the paper, we prove Theorems \ref{main1.thrm} and \ref{main2.thrm}. For that, we will follow a sequence of steps, some of which are similar to those in \cite{BHMN1}. Nevertheless, there will be, at some points, substantial differences to be able to deal with our variable coefficient matrices.

In this first step, we start by exploiting (in Subsection~\ref{sec:corona_A_infty}) the $A_\infty$ assumption for $\omega$ (and $\widetilde \omega$ if proving Theorem~\ref{main2.thrm}) to create ample (sawtooth) regions where $\omega$ is ``comparable'' to $\sigma$. Later, in Subsection~\ref{sec:green_whitney}, we will analyze the behavior of the Green function. Here we will find the most challenging difficulty as compared to \cite{BHMN1}: in our setting, we are only able to show that the Green function grows in the vertical direction $x_0$ (which is essential to run the forthcoming arguments) only in Whitney regions associated to the ``$A_\infty$ sawtooths'', rather than in whole strips surrounding the boundary as in \cite{BHMN1}. This will have a strong influence in the rest of the paper, because it forces us to use coronizations through the whole argument.

\begin{remark}
In the sequel, all constants (explicit and implicit) may depend on the \textit{structural constants} $n, \Lambda, \|\psi\|_{\Lip(1, 1/2)}$, and the $A_\infty$ constants of $\omega$ (namely, $C_*$ and $q$);  and also the $A_\infty$ constants of $\widetilde \omega$ in the case of Theorem~\ref{main2.thrm}. 
\end{remark}

For the time being, we will keep track of dependencies on the constant $K$ defining the Whitney regions (see Subsection~\ref{whit.sec}), but later we will choose it depending only on the structural constants.

\subsection{\for{toc}{\small}Control of the parabolic measure in ``ample'' stopping-time regimes} \label{sec:corona_A_infty}

The following lemma will be the primary tool to produce our initial corona decomposition (one for the parabolic measure). We remind the reader that $\mbf{\Psi}(Q)$ is the ``surface ball" on the graph $\Sigma$, obtained by ``lifting" the parabolic cube $Q\subset \rn$ to the graph;
see \eqref{surface-box:graph}.

We now present two Lemmata that will be useful in the proof of
Theorem \ref{main1.thrm} (i.e., in the case that $A$ is symmetric).  
For the general case considered
in Theorem \ref{main2.thrm}, we will require  
 Lemma \ref{pmcorona2.eq} below.

\begin{lemma}\label{compsaw.lem}
Given $Q_0 = Q_R( {\bf x}_{Q_0}) \in \dd$ (i.e. it has center $\mbf{x}_{Q_0}$ and `radius' $R$), we define
\begin{equation}\label{CSfortop.eq}
\qquad \xbf^\pm_{Q_0}:= \cA^\pm_{M_\star R}(\psi(\mbf{x}_{Q_0}), \mbf{x}_{Q_0}), 
\qquad \text{where } M_\star := 10^{100}M_0^2n.
\end{equation}
Then, under the hypotheses of Theorem \ref{main1.thrm}, 
there exists a constant $M' \geq 1$ 
and a collection $\mathcal{F} = \{Q_j\}_{j}$ of pairwise disjoint subcubes of ${Q_0}$ satisfying 
\begin{equation}\label{ampcntct.eq}
\bigg|\bigcup_{Q_j \in \mathcal{F}} Q_j\bigg| \le \frac14 |Q_0|,
\end{equation}
and also
\begin{equation}  \label{sawestm1.eq}
\frac{1}{M'} \frac{\omega^{\xbf^+_{Q_0}}(\mbf{\Psi}(Q_0))}{\sigma(\mbf{\Psi}(Q_0))} 
\le 
\frac{\omega^{\xbf^+_{Q_0}}(\mbf{\Psi}(Q))}{ \sigma(\mbf{\Psi}(Q))} 
\le 
M' \frac{\omega^{\xbf^+_{Q_0}}(\mbf{\Psi}(Q_0))}{\sigma(\mbf{\Psi}(Q_0))}, 
\quad \forall Q \in \mathbb{D}(Q_0), Q \not\subseteq Q_j \ \forall Q_j \in \mathcal{F}.
\end{equation}
\end{lemma}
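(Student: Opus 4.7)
Set $\alpha_0 := \omega^{\xbf^+_{Q_0}}(\mbf{\Psi}(Q_0))/\sigma(\mbf{\Psi}(Q_0))$ and let $M' \geq 1$ be a threshold to be chosen momentarily. The plan is to take $\mathcal{F}$ to be the collection of maximal cubes $Q \in \mathbb{D}(Q_0)$, $Q \subsetneq Q_0$, for which the density ratio $\omega^{\xbf^+_{Q_0}}(\mbf{\Psi}(Q))/\sigma(\mbf{\Psi}(Q))$ fails to lie in $[\alpha_0/M', M'\alpha_0]$, and to split $\mathcal{F} = \mathcal{F}_1 \sqcup \mathcal{F}_2$ according to whether the ratio is too large ($>M'\alpha_0$) or too small ($<\alpha_0/M'$). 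By maximality the cubes in $\mathcal{F}$ are automatically pairwise disjoint, and \eqref{sawestm1.eq} is immediate: if $Q \in \mathbb{D}(Q_0)$ is not contained in any element of $\mathcal{F}$, then $Q$ itself, together with all of its dyadic ancestors up to $Q_0$, is a non-stopping cube, so its density ratio must remain in $[\alpha_0/M', M'\alpha_0]$.

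The huge dilation $M_\star$ in the choice of pole \eqref{CSfortop.eq} is designed exactly so that, by Lemma~\ref{calisdoub.lem} and Remark~\ref{whnweusrhq.rmk}, the reverse H\"older inequality \eqref{rhq} for $k^{\xbf^+_{Q_0}} := d\omega^{\xbf^+_{Q_0}}/d\sigma$ holds uniformly on every surface cube $\mbf{\Psi}(Q)$ with $Q \in \mathbb{D}(Q_0)$. This is the assertion that $k^{\xbf^+_{Q_0}}$ is a parabolic $A_\infty$ weight on $\mbf{\Psi}(Q_0)$, and hence enjoys the standard two-sided $A_\infty$ implications: for every $\eta \in (0,1)$ there exists $\theta(\eta) \in (0,1)$, with $\theta(\eta) \to 0$ as $\eta \to 0$, such that for every Borel set $E \subseteq \mbf{\Psi}(Q_0)$,
\begin{equation*}
\omega^{\xbf^+_{Q_0}}(E) \le \eta\, \omega^{\xbf^+_{Q_0}}(\mbf{\Psi}(Q_0)) \quad \Longrightarrow \quad \sigma(E) \le \theta(\eta)\, \sigma(\mbf{\Psi}(Q_0)),
\end{equation*}
and vice versa. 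We will also use the elementary comparability $\sigma(\mbf{\Psi}(Q)) \approx |Q|$ for $Q \in \mathbb{D}$, which follows from $\psi$ being $\Lip(1,1/2)$ (cf.\ the discussion following \eqref{sigdef}).

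With these ingredients in place, the measure bound \eqref{ampcntct.eq} will reduce to separate estimates for $\mathcal{F}_1$ and $\mathcal{F}_2$. For $\mathcal{F}_1$, disjointness and the stopping rule give
\begin{equation*}
M'\alpha_0 \sum_{Q\in\mathcal{F}_1}\sigma(\mbf{\Psi}(Q)) \,\le\, \sum_{Q\in\mathcal{F}_1}\omega^{\xbf^+_{Q_0}}(\mbf{\Psi}(Q)) \,\le\, \omega^{\xbf^+_{Q_0}}(\mbf{\Psi}(Q_0)) \,=\, \alpha_0\,\sigma(\mbf{\Psi}(Q_0)),
\end{equation*}
which, after translating $\sigma$ to Lebesgue measure, yields $\big|\bigcup_{\mathcal{F}_1} Q\big| \lesssim |Q_0|/M'$. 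For $\mathcal{F}_2$, the analogous disjointness plus stopping computation gives $\omega^{\xbf^+_{Q_0}}\big(\bigcup_{\mathcal{F}_2} \mbf{\Psi}(Q)\big) \le (1/M')\,\omega^{\xbf^+_{Q_0}}(\mbf{\Psi}(Q_0))$, and the reverse $A_\infty$ implication (applied with $\eta = 1/M'$) then converts this to $\big|\bigcup_{\mathcal{F}_2} Q\big| \lesssim \theta(1/M')\,|Q_0|$. Choosing $M'$ large enough, depending only on the structural constants and on the $A_\infty$ constants of $\omega$, so that both contributions sum to at most $|Q_0|/4$ completes the proof. The only mildly delicate step is the uniform validity of the reverse $A_\infty$ implication, but this is a standard consequence of the reverse H\"older bound \eqref{rhq} (see, e.g., \cite{GH1}).
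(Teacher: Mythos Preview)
Your proposal is correct and is precisely the standard stopping-time argument that the paper has in mind: the paper does not spell out a proof but simply says the lemma ``is a fairly routine consequence of the $A_\infty$ property of $\omega$, see \cite[Lemma 4.9]{BHMN1}.'' Your argument---stopping at maximal cubes where the density ratio exits $[\alpha_0/M', M'\alpha_0]$, handling the ``too large'' family by disjointness and the ``too small'' family via the $A_\infty$ comparability of $\omega$ and $\sigma$---is exactly that routine argument.
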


The proof of Lemma \ref{compsaw.lem} is a fairly routine consequence of
the $A_\infty$ property of $\hm$, see \cite[Lemma 4.9]{BHMN1}. Now, given $Q_0$ as in Lemma \ref{compsaw.lem} we may define the semi-coherent stopping time regime by
\begin{equation}\label{saw4pm.eq}
\sbf^* := \sbf^*_{Q_0} := \dd_{\mathcal{F}, Q_0} := \{Q \in \dd(Q_0): Q \not \subseteq Q_j, \ \forall Q_j \in \mathcal{F}\}.
\end{equation}
Note that \eqref{ampcntct.eq} gives the following estimate on the ``contact region" for the discrete sawtooth
\begin{equation}\label{ampcntct2.eq}
|E_{Q_0}| \ge (3/4) |Q_0|, \quad \text{where} \quad E_{Q_0}:= Q_0 \setminus \cup_{Q_j \in \mathcal{F}} Q_j.
\end{equation}

Now, producing our ``corona decomposition for the parabolic measure'' is a standard argument: it is just a matter of iterating Lemma~\ref{compsaw.lem}. 
\begin{lemma}\label{pmcorona.eq}
Suppose that the hypotheses of Theorem \ref{main1.thrm} hold. Then, there exist constants $C', M' > 1$ such that following holds. We can decompose the dyadic grid as
\[\dd = \mathcal{G}^* \sqcup \mathcal{B}^*\]
where the `good cubes' $\mathcal{G}^*$ are further decomposed into disjoint semi-coherent stopping time regimes $\{\sbf^*\}$ (i.e. $\mathcal{G}^* = \sqcup \; \sbf^*$) so that the maximal cubes $\{Q(\sbf^*)\}_{\sbf^*}$ and the bad cubes pack, that is,
\begin{equation*}
		\sum_{Q' \in \mathcal{B}^* \cap \mathbb{D}(Q)} |Q'|
		+ \sum_{\mbf{S} : Q(\sbf^*) \subseteq Q} |Q(\sbf^*)|
		\leq 
		C' |Q|, 
		\quad 
		\forall Q \in \mathbb{D},
	\end{equation*}
and for each $\sbf^*$ it holds
\begin{equation}\label{sawestm2.eq}
\frac{1}{M'}\frac{\omega^{\xbf^+_{{Q(\sbf^*)}}}(\mbf{\Psi}(Q(\sbf^*)))}{\sigma(\mbf{\Psi}(Q(\sbf^*)))} 
\le 
\frac{\omega^{\xbf^+_{{Q(\sbf^*)}}}(\mbf{\Psi}({Q}))}{ \sigma(\mbf{\Psi}({Q}))} 
\le 
M' \frac{\omega^{\xbf^+_{{Q(\sbf^*)}}}(\mbf{\Psi}(Q(\sbf^*)))}{\sigma(\mbf{\Psi}(Q(\sbf^*)))}, \quad \forall Q \in \sbf^*.
\end{equation}
\end{lemma}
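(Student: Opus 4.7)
The plan is to iterate Lemma \ref{compsaw.lem} below an initial scale, and to use Lemma \ref{dsinitialcor.lem} to bridge that initial scale to the full grid $\dd$. Concretely, I first invoke Lemma \ref{dsinitialcor.lem} to obtain a disjoint collection $\mathcal{C}\subseteq\dd$ with $\Sigma=\bigcup_{Q_0\in\mathcal{C}}\mbf{\Psi}(Q_0)$ and an associated family $\widetilde{\cB}$ satisfying a Carleson packing condition, and set $\mathcal{B}^*:=\widetilde{\cB}$. For each $Q_0\in\mathcal{C}$, I build the corona decomposition of $\dd(Q_0)$ by the standard recursion: apply Lemma \ref{compsaw.lem} to $Q_0$ to produce the semi-coherent regime $\sbf^*_{Q_0}$ defined in \eqref{saw4pm.eq} together with its stopping family $\mathcal{F}(Q_0)$; then iterate, applying Lemma \ref{compsaw.lem} to each $Q_j\in\mathcal{F}(Q_0)$, and so on. Because the side length strictly decreases with each generation, the (finite) sequence of ancestors of any $Q\in\dd(Q_0)$ visits exactly one regime, so the regimes $\{\sbf^*\}$ thus produced form a disjoint decomposition of $\dd(Q_0)$. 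Each such $\sbf^*$ is semi-coherent by \eqref{saw4pm.eq}, and \eqref{sawestm2.eq} is precisely \eqref{sawestm1.eq} applied in the generation at which $\sbf^*$ was created. I set $\mathcal{G}^*:=\dd\setminus\mathcal{B}^*$, which coincides with the union of all these regimes.

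To verify the packing condition, the crucial input is the contact set bound \eqref{ampcntct2.eq}, namely $|E_{Q(\sbf^*)}|\ge (3/4)|Q(\sbf^*)|$ for each regime. I claim the contact sets $\{E_{Q(\sbf^*)}\}_{\sbf^*}$ are pairwise disjoint. Indeed, two distinct regimes have tops that are either disjoint or nested; in the nested case, say $Q(\sbf^*_2)\subsetneq Q(\sbf^*_1)$, then $Q(\sbf^*_2)$ must be contained in some $Q_j\in\mathcal{F}(\sbf^*_1)$ (otherwise, by \eqref{saw4pm.eq}, $Q(\sbf^*_2)\in\sbf^*_1$, contradicting the disjointness of the regimes), hence $E_{Q(\sbf^*_2)}\subseteq Q_j$ is disjoint from $E_{Q(\sbf^*_1)}=Q(\sbf^*_1)\setminus\bigcup\mathcal{F}(\sbf^*_1)$. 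Consequently, for any $Q\in\dd$, the sets $\{E_{Q(\sbf^*)}:Q(\sbf^*)\subseteq Q\}$ are pairwise disjoint subsets of $Q$, so
\[
\sum_{\sbf^*:\,Q(\sbf^*)\subseteq Q}|Q(\sbf^*)| \le \frac{4}{3}\sum_{\sbf^*:\,Q(\sbf^*)\subseteq Q}|E_{Q(\sbf^*)}| \le \frac{4}{3}|Q|.
\]
Combining this with the packing of $\mathcal{B}^*=\widetilde{\cB}$ supplied by Lemma \ref{dsinitialcor.lem} (after using the standard equivalence $\sigma(\mbf{\Psi}(Q'))\approx|Q'|$ for a $\Lip(1,1/2)$ graph) yields the stated bound with some $C'$ depending only on the constants of Lemmas \ref{compsaw.lem} and \ref{dsinitialcor.lem}. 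The argument is essentially a routine Carleson iteration, and the only point requiring genuine care is the pairwise disjointness of the contact sets, which is however immediate from the definition \eqref{saw4pm.eq} of the regimes; I do not expect any substantial obstacle here.
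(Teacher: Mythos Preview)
Your proposal is correct and follows essentially the same route as the paper: reduce to $\dd(Q_0)$ for $Q_0\in\mathcal{C}$ via Lemma \ref{dsinitialcor.lem}, iterate Lemma \ref{compsaw.lem} to produce the regimes $\sbf^*$, and use the disjointness of the contact sets $E_{Q(\sbf^*)}$ together with \eqref{ampcntct2.eq} to obtain the packing of maximal cubes (the paper notes the disjointness holds up to measure zero, which is all that is needed). Your explicit justification of the contact-set disjointness is a nice addition, but otherwise there is no substantive difference.
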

Given Lemma \ref{compsaw.lem}, we observe that Lemma \ref{pmcorona.eq} is simply 
an illustration of the well-known fact that ``Big Pieces implies Corona".
Since the proof is standard,
let us only sketch it. First, using Lemma \ref{dsinitialcor.lem}, it is enough to produce a corona decomposition for $\dd(Q')$ for each $Q' \in \mathcal{C}$ (with constants independent of $Q'$). With this in mind, we fix $Q' \in \mathcal{C}$, and apply Lemma \ref{compsaw.lem} with $Q_0 := Q'$, hence obtaining $\sbf^*_{Q'} = \dd_{\mathcal{F}, Q'}$, a semi-coherent stopping time regime. Applying again Lemma~\ref{dsinitialcor.lem} wherever we stopped before, i.e., at each $Q_j \in \mathcal{F}$, we obtain new stopping time regimes $\sbf^*_{Q_j}$. Iterating this, we end up exhausting $\dd(Q')$ by stopping-time regimes with the estimate \eqref{sawestm2.eq}. Indeed, this produces a `coronization' of $\dd(Q')$ with no bad cubes, where the estimates \eqref{sawestm2.eq} hold within each stopping time $\sbf^*$. In order to show that the maximal cubes $\{Q(\sbf^*)\}_{\sbf^*}$ satisfy the necessary packing condition, we use \eqref{ampcntct2.eq}. Indeed, by construction the sets $\{E_{Q(\sbf^*)}\}_{\sbf^*}$ are disjoint (up to a set of measure zero) and fixing $Q \in \dd(Q')$ it holds, using \eqref{ampcntct2.eq}, that
\begin{equation}\label{crnapmmaxpk.eq}
	\sum_{\sbf^*: Q(\sbf^*) \subseteq Q} |Q(\sbf^*)| \le \frac43 \sum_{\sbf^*: Q(\sbf^*) \subseteq Q} |E_{Q(\sbf^*)}| \le \frac43|Q|.
\end{equation}

In the case of Theorem \ref{main2.thrm}, almost identical techniques yield the following result.
\begin{lemma}\label{pmcorona2.eq}
Suppose that the hypotheses of Theorem \ref{main2.thrm} hold. Then there exists constants $C', M' > 1$ 
such that following holds. The dyadic grid $\dd$ can be decomposed as 
\[\dd = \mathcal{G}^* \sqcup \  \mathcal{B}^*,\]
where the `good cubes' $\mathcal{G}^*$ are further decomposed into disjoint semi-coherent stopping time regimes $\{\sbf^*\}$ (i.e. $\mathcal{G}^* = \sqcup \; \sbf^*$) so that the maximal cubes $\{Q(\sbf^*)\}_{\sbf^*}$ and the bad cubes pack, that is,
\begin{equation*}
		\sum_{Q' \in \mathbb{D}_Q \cap \mathcal{B}^*} |Q'|
		+ \sum_{\mbf{S} : Q(\sbf^*) \subseteq Q} |Q(\sbf^*)|
		\leq 
		C' |Q|, 
		\quad 
		\forall Q \in \mathbb{D},
	\end{equation*}
and for each $\sbf^*$ it holds
\begin{equation}\label{sawestm3.eq}
\frac{1}{M'} \frac{\omega^{\xbf^+_{{Q(\sbf^*)}}}(\mbf{\Psi}(Q(\sbf^*)))}{ \sigma(\mbf{\Psi}(Q(\sbf^*)))} \le \frac{\omega^{\xbf^+_{{Q(\sbf^*)}}}(\mbf{\Psi}({Q}))}{ \sigma(\mbf{\Psi}({Q}))} \le M' \frac{\omega^{\xbf^+_{{Q(\sbf^*)}}}(\mbf{\Psi}(Q(\sbf^*)))}{\sigma(\mbf{\Psi}(Q(\sbf^*)))}, \quad \forall Q \in \sbf^*,
\end{equation}
and additionally
\begin{equation}\label{sawestm4.eq}
\frac{1}{M'} \frac{\widetilde{\omega}^{\xbf^-_{{Q(\sbf^*)}}}(\mbf{\Psi}(Q(\sbf^*)))}{ \sigma(\mbf{\Psi}(Q(\sbf^*)))} \le \frac{\widetilde{\omega}^{\xbf^-_{{Q(\sbf^*)}}}(\mbf{\Psi}({Q}))}{ \sigma(\mbf{\Psi}({Q}))} \le M' \frac{\widetilde{\omega}^{\xbf^-_{{Q(\sbf^*)}}}(\mbf{\Psi}(Q(\sbf^*)))}{\sigma(\mbf{\Psi}(Q(\sbf^*)))}, \quad \forall Q \in \sbf^*.
\end{equation}
\end{lemma}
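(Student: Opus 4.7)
The plan is to mirror the proof of Lemma \ref{pmcorona.eq}, replacing its single-sided stopping step (Lemma \ref{compsaw.lem}) with a two-sided analog that controls $\omega^{\xbf^+_{Q_0}}$ and $\widetilde{\omega}^{\xbf^-_{Q_0}}$ simultaneously from a common maximal cube $Q_0$. Thus my first task is to establish, for each $Q_0 \in \mathbb{D}$, the existence of $M' \ge 1$ and a family $\mathcal{F} = \{Q_j\}$ of pairwise disjoint subcubes of $Q_0$ satisfying $|\bigcup_j Q_j| \le \tfrac{1}{4}|Q_0|$, such that for every $Q \in \mathbb{D}(Q_0)$ with $Q \not\subseteq Q_j$ for all $Q_j \in \mathcal{F}$, both \eqref{sawestm1.eq} (for $\omega^{\xbf^+_{Q_0}}$) and its backward-in-time analog (for $\widetilde{\omega}^{\xbf^-_{Q_0}}$) hold.

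To produce this, I would apply Lemma \ref{compsaw.lem} twice. First, using the $A_\infty$ hypothesis for $\omega$, obtain a family $\mathcal{F}_+$; here I tighten the bound so that $|\bigcup \mathcal{F}_+| \le \tfrac{1}{8}|Q_0|$, which is a harmless modification, since the proof of Lemma \ref{compsaw.lem} (a standard Vitali-type stopping argument for $A_\infty$) tolerates any fraction strictly less than $1$. Second, by symmetry, the $A_\infty$ hypothesis for $\widetilde{\omega}$, combined with the time-reversed versions of the CFMS and doubling lemmas (with $\xbf^+$ replaced by $\xbf^-$ throughout), yields an analogous family $\mathcal{F}_-$ with $|\bigcup \mathcal{F}_-| \le \tfrac{1}{8}|Q_0|$ controlling $\widetilde{\omega}^{\xbf^-_{Q_0}}$. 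I then let $\mathcal{F}$ be the collection of maximal (with respect to inclusion) cubes in $\mathcal{F}_+ \cup \mathcal{F}_-$. Disjointness is immediate from maximality; subadditivity yields $|\bigcup \mathcal{F}| \le \tfrac14 |Q_0|$; and the clause ``$Q \not\subseteq Q_j$ for every $Q_j \in \mathcal{F}$'' is equivalent to $Q$ being contained neither in any cube of $\mathcal{F}_+$ nor in any cube of $\mathcal{F}_-$, so both estimates carry over to $Q$ at once.

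With the two-sided version in hand, the iteration is identical to that in Lemma \ref{pmcorona.eq}: by Lemma \ref{dsinitialcor.lem} it suffices to produce a corona decomposition of $\mathbb{D}(Q')$ for each $Q' \in \mathcal{C}$ (with constants independent of $Q'$), the complementary cubes playing the role of the packing bad set $\mathcal{B}^*$. Starting from $Q_0 := Q'$, the two-sided stopping step produces a semi-coherent stopping-time regime $\sbf^*_{Q'}$ on which both \eqref{sawestm3.eq} and \eqref{sawestm4.eq} hold, together with its stopping family; restarting the construction at each stopping cube exhausts $\mathbb{D}(Q')$, and the packing of the maximal cubes $\{Q(\sbf^*)\}$ follows verbatim from \eqref{crnapmmaxpk.eq}, using that the contact sets $E_{Q(\sbf^*)}$ are pairwise disjoint and each has measure at least $\tfrac34 |Q(\sbf^*)|$. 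No essentially new difficulty arises relative to the one-sided case; the only substantive observation, which I expect to be the main (purely bookkeeping) obstacle, is the need to tighten each single-sided fractional estimate from $\tfrac14$ to $\tfrac18$ before amalgamating the two stopping families so that the combined bound remains $\tfrac14$, thereby preserving the argument \eqref{crnapmmaxpk.eq} for packing of the maximal cubes.
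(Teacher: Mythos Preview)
Your proposal is correct and follows essentially the same approach as the paper: run the stopping-time argument of Lemma~\ref{compsaw.lem} separately for $\omega$ and $\widetilde{\omega}$, take the maximal cubes in the union of the two stopping families, and then iterate exactly as in Lemma~\ref{pmcorona.eq}. The only cosmetic difference is that the paper keeps each single-sided bound at $\tfrac14$ (so the combined family satisfies $|\bigcup \mathcal{F}'| \le \tfrac12|Q_0|$ and the packing constant in \eqref{crnapmmaxpk.eq} becomes $2$ instead of $\tfrac43$), whereas you tighten each to $\tfrac18$ to preserve the $\tfrac14$ bound; either bookkeeping choice works.
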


The only difference in proving Lemma \ref{pmcorona2.eq}, in comparison to Lemma \ref{pmcorona.eq}, is using a slightly modified version of Lemma \ref{compsaw.lem}. Indeed, we should run the argument in Lemma \ref{compsaw.lem} for both $\omega$ and $\widetilde{\omega}$ separately, hence obtaining $\mathcal{F}$ and $\widetilde{\mathcal{F}}$ so that \eqref{ampcntct.eq} holds for $\mathcal{F}$ and also for $\widetilde{\mathcal{F}}$, 
\[\frac{1}{M'} \frac{\widetilde{\omega}^{\xbf^-_{Q_0}}(\mbf{\Psi}(Q_0))}{ \sigma(\mbf{\Psi}(Q_0))} \le \frac{\widetilde{\omega}^{\xbf^-_{Q_0}}(\mbf{\Psi}(Q))}{ \sigma(\mbf{\Psi}(Q))} \le M' \frac{\widetilde{\omega}^{\xbf^-_{Q_0}}(\mbf{\Psi}(Q_0))}{\sigma(\mbf{\Psi}(Q_0))}, \quad \forall Q \in \mathbb{D}(Q_0), Q \not\subseteq \widetilde{Q}_j \ \forall \widetilde{Q}_j \in \widetilde{\mathcal{F}}.\]
Then, letting $\mathcal{F}'$ denote the maximal cubes (with respect to containment) in $\mathcal{F} \cup \widetilde{\mathcal{F}}$, we have
\begin{equation}\label{amcntctboth.eq}
|\cup_{Q_j \in \mathcal{F}'} Q_j| \le (1/2)|Q_0|
\end{equation}
and the estimates \eqref{sawestm3.eq} and \eqref{sawestm4.eq} hold for $\sbf^* := \sbf^*(Q_0)$ now defined by 
\[\sbf^*(Q_0) := \dd_{\mathcal{F}', Q_0} := \{Q \in \dd(Q_0): Q \not \subseteq Q'_j, \ \forall Q'_j \in \mathcal{F}'\}.\]
Then all of the arguments that follow are identical, since \eqref{amcntctboth.eq} can be used for the packing of the maximal cubes with the only difference being that the constant in \eqref{crnapmmaxpk.eq} changes from $4/3$ to $2$.

\subsection{\for{toc}{\small}Level lines and estimates for the Green function in Whitney regions} \label{sec:green_whitney}

In the sequel we fix a semi-coherent stopping time regime $\sbf^*$ from Lemma \ref{pmcorona.eq} (or Lemma \ref{pmcorona2.eq} if we are proving Theorem \ref{main2.thrm}). We are going to further decompose $\sbf^*$ into stopping times where nice estimates on the Green function hold (and therefore, there will be ``bad cubes" where these estimates fail).

Just as in \cite{BHMN1}, we need to transfer information about the parabolic measure to the Green function. We define the normalized Green function 
\begin{equation} \label{defu.eq}
	u(\mbf{Y}) := \sigma(Q(\sbf^*))\, G(\xbf^+_{{Q(\sbf^*)}}, \mbf{Y}),
\end{equation}
which solves $\mathcal{L}^*u = 0$ in $\{ (X, t) \in \Omega : t < t(\xbf^+_{{Q(\sbf^*)}}) \}$.
In the case we are proving Theorem \ref{main2.thrm}, we also define the normalized adjoint Green function (solving $\mathcal{L}v = 0$ in $\{ (X, t) \in \Omega : t > t(\xbf^-_{{Q(\sbf^*)}}) \}$)
\begin{equation*}
v(\mbf{X}) :=  \sigma(Q(\sbf^*))\, G(\mbf{X}, \xbf^-_{{Q(\sbf^*)}}) . 
\end{equation*}

We note that, since the coefficients $A$ of our equation are locally Hölder continuous by \eqref{ellip.eq} and \eqref{smoothnessonA.eq}, both $u$ and $v$ are locally $C^{2, \alpha}$ for $\alpha > 0$ (in a parabolic sense, see \cite[Chapter 4]{L}) inside $\Omega$, away from their poles, by standard regularity results (like Nash's theorem and Schauder regularity, see \cite[Chapters 5 and 6]{L}). Concretely, we will use (without further mention) the fact that $u$ and $v$ are (qualitatively) twice differentiable inside $\Omega$ (away from the poles).

To produce our initial estimates on the Green function we use Section \ref{bbest}. Actually, we want to obtain estimates on our Whitney regions $U_Q$ (from subsection~\ref{whit.sec}). Recall that these constructions included a constant $K$, that we will take sufficiently large below. In any case, we will insist that 
\begin{equation} \label{eq:K_big}
	K = 2^N, \quad N > {2000}, 
\end{equation}
so that $K >10^{500}$. With this, we obtain a consequence of the results in Section~\ref{sec:PDEestimates} and Lemma~\ref{pmcorona.eq}.
\begin{lemma}\label{gfcomplemt1.eq}
Suppose that the hypotheses of Theorem \ref{main1.thrm} hold, and that $K = 2^N,  N > {2000}$. Let $Q \in \sbf^*$ (Lemma~\ref{pmcorona.eq}). Then there exist two constants $M_1, M_2 > 1$ ($M_2$ depending on $K$)
such that the following holds. For any $Q \in \sbf^*$, without restriction on its side-length, 
\begin{equation}\label{M1def.eq}
(M_1)^{-1} \le \frac{u(\mbf{Y})}{\dist(\mbf{Y}, \partial \Omega)} \le M_1,
\end{equation}
whenever (recall that $\mbf{x}_Q$ denotes the center of $Q$)
\begin{equation}\label{whereM1esthlds.eq}
\mbf{Y} \in \big{\{} (y_0,y, s) \in \ree: \; (y,s) \in 10^{10}Q, \;\; y_0 = \psi(\mbf{x}_Q) + 2M_010^{8}\ell(Q) \big{\}}.
\end{equation}
Moreover, if $Q$ satisfies the additional property that $\ell(Q) \le K^{-10}\ell(Q(\sbf^*))$, then for all $\mbf{X} \in U_Q^{***}(K)$
\begin{equation}\label{M2def.eq}
(M_2)^{-1} \le \frac{u(\mbf{Y})}{\dist(\mbf{Y}, \partial \Omega)} \le M_2, \qquad \forall \, \mbf{Y} \in B\left(\mbf{X}, \frac{\dist(\mbf{X},\pom)}{10K}\right)    .
\end{equation}
\end{lemma}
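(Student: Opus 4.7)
The plan is to unwind the normalization \eqref{defu.eq} by chaining together the CFMS estimate (Lemma \ref{CFMS}), the ampleness bound \eqref{sawestm2.eq}, Bourgain's estimate (Corollary \ref{Bourgain}), and the surface-measure computation $\sigma(\mbf{\Psi}(Q))\approx \ell(Q)^{n+1}$ valid for Lip(1,1/2) graphs. First I would establish \eqref{M1def.eq} at a single reference point $\mbf{Y}_\star := \cA^+_{R_0}(\mbf{X}_Q)$, where $\mbf{X}_Q := (\psi(\mbf{x}_Q),\mbf{x}_Q) \in \Sigma$ and $R_0 \approx \ell(Q)$ is chosen so that $\mbf{Y}_\star$ sits in the slab \eqref{whereM1esthlds.eq} (the precise scale is pinned down by matching the vertical height $2M_0 10^8\ell(Q)$). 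Because $Q \subseteq Q(\sbf^*)$ and $\xbf^+_{Q(\sbf^*)}$ is the far-forward corkscrew at the very large scale $M_\star \ell(Q(\sbf^*))$ with $M_\star = 10^{100}M_0^2 n$ (see \eqref{CSfortop.eq}), the forward time gap $\sim M_\star^2 \ell(Q(\sbf^*))^2$ vastly exceeds $16 R_0^2$, and the aperture $\kappa = 40M_0\sqrt{n}$ amply covers the spatial separation; hence $\xbf^+_{Q(\sbf^*)} \in \P^+_{\kappa, R_0}(\mbf{X}_Q)$. Applying CFMS gives
\[
G(\xbf^+_{Q(\sbf^*)},\mbf{Y}_\star) \approx R_0^{-n}\, \omega^{\xbf^+_{Q(\sbf^*)}}\!\bigl(\Delta_{R_0}(\mbf{X}_Q)\bigr).
\]
Doubling (Lemma \ref{calisdoubpre.lem}) together with the standard comparison between surface balls $\Delta_{R_0}(\mbf{X}_Q)$ and dyadic projections $\mbf{\Psi}(Q)$ allows me to replace $\Delta_{R_0}(\mbf{X}_Q)$ by $\mbf{\Psi}(Q)$ (choosing a dyadic ancestor of $Q$ inside $Q(\sbf^*)$ when possible, otherwise invoking \eqref{sawestm2.eq} at finitely many comparable scales). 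Combined with \eqref{sawestm2.eq} and Bourgain's estimate $\omega^{\xbf^+_{Q(\sbf^*)}}(\mbf{\Psi}(Q(\sbf^*)))\approx 1$, this yields
\[
\omega^{\xbf^+_{Q(\sbf^*)}}(\mbf{\Psi}(Q)) \approx \frac{\sigma(\mbf{\Psi}(Q))}{\sigma(\mbf{\Psi}(Q(\sbf^*)))} \approx \frac{\ell(Q)^{n+1}}{\ell(Q(\sbf^*))^{n+1}}.
\]
Multiplying through by $\sigma(Q(\sbf^*))\approx \ell(Q(\sbf^*))^{n+1}$ gives $u(\mbf{Y}_\star) \approx \ell(Q) \approx \dist(\mbf{Y}_\star,\partial\Omega)$, which is \eqref{M1def.eq} at $\mbf{Y}_\star$.

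To promote this bound to every $\mbf{Y}$ in the slab \eqref{whereM1esthlds.eq} (and then to prove \eqref{M2def.eq}), I would run an interior parabolic Harnack chain for $u$, viewed as a nonnegative solution of $\mathcal{L}^* u = 0$ away from the pole $\xbf^+_{Q(\sbf^*)}$. For \eqref{M1def.eq} every point of the slab lies within parabolic distance $\lesssim \ell(Q)$ of $\mbf{Y}_\star$, has depth comparable to $\ell(Q)$ from $\partial\Omega$, and time coordinate far behind $t(\xbf^+_{Q(\sbf^*)})$; consequently a Harnack chain of length bounded by a structural constant transfers the two-sided bound from $\mbf{Y}_\star$ to all such $\mbf{Y}$. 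For \eqref{M2def.eq}, the target balls lie in $U_Q^{***}(K)$, which by \eqref{eq:U_Q_in_balls} is contained in a parabolic cube of radius $\sim K^3\ell(Q)$ and has depth $\gtrsim K^{-1}\ell(Q)$ from $\partial\Omega$; Harnack now requires $O_K(1)$ steps, and the extra hypothesis $\ell(Q) \le K^{-10}\ell(Q(\sbf^*))$ is exactly what I need to keep the entire chain at parabolic distance $\gtrsim K\ell(Q)$ from the pole $\xbf^+_{Q(\sbf^*)}$ (and, in particular, with time strictly less than $t(\xbf^+_{Q(\sbf^*)})$), so that the adjoint equation is satisfied uniformly along the chain. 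This produces the $K$-dependent constant $M_2$.

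The principal obstacle is the bookkeeping of constants needed to verify the CFMS parabola condition and to bridge cleanly between the continuous surface ball $\Delta_{R_0}(\mbf{X}_Q)$ (natural for CFMS) and the dyadic projection $\mbf{\Psi}(Q)$ (natural for \eqref{sawestm2.eq}). This is mildly delicate but purely routine, and it is facilitated by the deliberately enormous choice of $M_\star$ in \eqref{CSfortop.eq}; the substantive ideas are the $A_\infty$/Bourgain normalization of $\omega^{\xbf^+_{Q(\sbf^*)}}(\mbf{\Psi}(Q))$ and the Harnack transfer, both standard in this circle of ideas.
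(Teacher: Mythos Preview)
Your proposal is correct and follows essentially the same approach as the paper: combine CFMS (Lemma~\ref{CFMS}) at a corkscrew of scale $\approx\ell(Q)$, doubling (Lemma~\ref{calisdoubpre.lem}), the corona bound \eqref{sawestm2.eq}, Bourgain's estimate (Corollary~\ref{Bourgain}), and $\sigma(\mbf{\Psi}(Q))\approx\ell(Q)^{n+1}$ to get $u\approx\dist(\cdot,\partial\Omega)$ at a reference point, then spread via interior Harnack for the adjoint equation. The paper organizes the same ingredients in a slightly different order---it first Harnack-shifts an arbitrary $\mbf{Y}$ in the slab to $\cA^{\mp}_{10^{12}\ell(Q)}(\psi(\mbf{x}_Q),\mbf{x}_Q)$ (using $\cA^-$ for the upper bound and $\cA^+$ for the lower bound, to respect the time-direction of adjoint Harnack) and then invokes CFMS---but the substance is identical, including the role of $\ell(Q)\le K^{-10}\ell(Q(\sbf^*))$ in keeping the pole safely ahead in time for \eqref{M2def.eq}.
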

\begin{proof} 
	Let us rapidly sketch how to obtain \eqref{M1def.eq}. Indeed, the upper bound follows like this:
	\begin{multline*}
		u(\mbf{Y})
		\; = \; 
		\sigma(Q(\sbf^*)) \, G(\mbf{X}_{Q(\sbf^*)}^+, \mbf{Y})
		\; \lesssim \; 
		\sigma(Q(\sbf^*)) \, G\big(\mbf{X}_{Q(\sbf^*)}^+, \cA_{10^{12}\ell(Q)}^-(\psi(\mbf{x}_Q), \mbf{x}_Q)\big)
		\\ \approx  \;
		\sigma(Q(\sbf^*))  \, \ell(Q)^{-n} \,  \omega^{\mbf{X}_{Q(\sbf^*)}^+}\big(\Delta_{10^{12}\ell(Q)}(\psi(\mbf{x}_Q), \mbf{x}_Q)\big)
		\; \approx  \; 
		\sigma(Q(\sbf^*)) \, \ell(Q)^{-n} \,  \omega^{\mbf{X}_{Q(\sbf^*)}^+}(\mbf{\Psi}(Q))
		\\ \approx \;
		\sigma(\mbf{\Psi}(Q)) \, \ell(Q)^{-n} \, 
\omega^{\mbf{X}_{Q(\sbf^*)}^+}(\mbf{\Psi}(Q(\sbf^*)))
		\; \approx \;
		\ell(Q)
		\; \approx \;
		\dist(\mbf{Y}, \Sigma).
	\end{multline*}
Let us explain more carefully each step. The first step is the definition of $u$ in \eqref{defu.eq}. The second follows from the parabolic Harnack's inequality for adjoint solutions like $u$ (see \cite[Theorem 0.2]{FGS_BH} for a precise statement). The third follows from the CFMS estimate in Lemma~\ref{CFMS} because $\mbf{X}_{Q(\sbf^*)}^+ = \cA^+_{M_\star \diam(Q(\sbf^*))}(\psi(\mbf{x}_{Q(\sbf^*)}), \mbf{x}_{Q(\sbf^*)}) \in \P_{\kappa, 10^{12}\ell(Q)}^+ (\psi(\mbf{x}_Q), \mbf{x}_Q)$ (see the definitions of all the objects in \eqref{paraboladef.eq}, \eqref{kappadef} and \eqref{CSfortop.eq}). The fourth follows from the doubling property of the parabolic measure in Lemma~\ref{calisdoubpre.lem}. The fifth comes from Lemma~\ref{pmcorona.eq}. The second-to-last estimate follows from 
Corollary \ref{Bourgain}, and the fact 
that $\sigma(\mbf{\Psi}(Q)) \approx \ell(Q)^{n+1}$ because $\psi$ is Lip(1,1/2),
and the last estimate from Lemma~\ref{distgraphlem.lem} and \eqref{whereM1esthlds.eq}.
	
	In turn, to obtain the lower bound in \eqref{M1def.eq}, we can repeat the computations with the point $\cA_{10^{12}\ell(Q)}^+(\psi(\mbf{x}_Q), \mbf{x}_Q)$ replacing $\cA_{10^{12}\ell(Q)}^-(\psi(\mbf{x}_Q), \mbf{x}_Q)$, hence reversing Harnack's inequality.
	
	Lastly, to show \eqref{M2def.eq}, the argument is similar to the above using massive dilates of the cube $Q$ (having \eqref{eq:U_Q_in_balls} in mind), depending on $K$, so that we obtain the estimates for $u$ in the whole region $U_Q^{***}(K)$ (therefore the constants will depend on $K$, too). The assumption  $\ell(Q) \le K^{-10}\ell(Q(\sbf^*))$ ensures that the pole $\xbf^+_{{Q(\sbf^*)}}$ is still far to the right of these dilations, so that all the estimates from Section~\ref{sec:PDEestimates} continue to hold. We leave further details to the interested reader. % 6-8
\end{proof}

In the case that we are proving Theorem \ref{main2.thrm}, the analogous result reads as follows.
\begin{lemma}\label{gfcomplemt2.eq}
Suppose that the hypotheses of Theorem \ref{main2.thrm} hold and that $K = 2^N, N > {2000}$. Let $Q \in \sbf^*$ (Lemma \ref{pmcorona2.eq}). Then there exist two constants $M_1, M_2 > 1$ ($M_2$ depending on $K$)
such that the following holds. For any $Q \in \sbf^*$, without restriction on its side-length,
\begin{equation*}
(M_1)^{-1} \le \frac{u(\mbf{Y})}{\dist(\mbf{Y}, \partial \Omega)} \le M_1, 
\end{equation*}
whenever (recalling that $\mbf{x}_Q$ is the center of $Q$)
\begin{equation*}
\mbf{Y} \in \big{\{} (y_0,y, s): \; (y,s) \in 10^{10}Q,\;\; y_0 = \psi(\mbf{x}_Q) + 2M_010^{8}\ell(Q)\big{\}}.
\end{equation*}
Moreover, if $Q$ satisfies the additional property that $\ell(Q) \le K^{-10}\ell(Q(\sbf^*))$, then for all $\mbf{X} \in U_Q^{***}(K)$
\begin{equation}\label{M2defa.eq}
(M_2)^{-1} \le \frac{u(\mbf{Y})}{\dist(\mbf{Y}, \partial \Omega)} \le M_2, \qquad \forall \, \mbf{Y} \in B\left(\mbf{X}, \frac{\dist(\mbf{X},\pom)}{10K}\right),
\end{equation}
and also
\begin{equation}\label{M2defb.eq}
(M_2)^{-1} \le \frac{v(\mbf{Y})}{\dist(\mbf{Y}, \partial \Omega)} \le M_2,  \qquad \forall \, \mbf{Y} \in B\left(\mbf{X}, \frac{\dist(\mbf{X},\pom)}{10K}\right).
\end{equation}
\end{lemma}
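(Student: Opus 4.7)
The proof follows the same strategy as Lemma \ref{gfcomplemt1.eq}, carried out in parallel for both $u$ and $v$. For $u$, I would simply observe that the chain of estimates used in the proof of Lemma \ref{gfcomplemt1.eq} depends on the corona decomposition only through \eqref{sawestm2.eq}, and that the analogous bound \eqref{sawestm3.eq} provided by Lemma \ref{pmcorona2.eq} has exactly the same form. Hence the bound near height $\psi(\mbf{x}_Q) + 2M_0 10^8 \ell(Q)$ and the bound \eqref{M2defa.eq} on the Whitney region $U_Q^{***}(K)$ transfer verbatim, with no change in the constants $M_1, M_2$.

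For $v$, I would carry out the time-reversed version of the same computation. Since $v$ solves $\mathcal{L}v = 0$, the (forward-in-time) Harnack inequality applied to $v$ allows us to move the evaluation point from $\mbf{Y}$ forward in time to the corkscrew $\cA^+_{10^{12}\ell(Q)}(\psi(\mbf{x}_Q),\mbf{x}_Q)$. Next, because $\xbf^-_{Q(\sbf^*)}$ is a time-backward corkscrew for $Q(\sbf^*)\supseteq Q$ with the constant $M_\star$ in \eqref{CSfortop.eq}, it lies in $\P^-_{\kappa,10^{12}\ell(Q)}(\psi(\mbf{x}_Q),\mbf{x}_Q)$, so the adjoint version of the CFMS estimate in Lemma \ref{CFMS} gives
\[
    \ell(Q)^n\, G\!\left(\cA^+_{10^{12}\ell(Q)}(\psi(\mbf{x}_Q),\mbf{x}_Q),\,\xbf^-_{Q(\sbf^*)}\right)
    \;\approx\; \widetilde{\omega}^{\xbf^-_{Q(\sbf^*)}}\!\big(\Delta_{10^{12}\ell(Q)}(\psi(\mbf{x}_Q),\mbf{x}_Q)\big).
\]
The chain then proceeds exactly as before: one applies the doubling property for $\widetilde{\omega}$ (Lemma \ref{calisdoubpre.lem}) to pass to $\mbf{\Psi}(Q)$, then invokes \eqref{sawestm4.eq} to transfer the $\widetilde{\omega}$ mass up to $\mbf{\Psi}(Q(\sbf^*))$, applies the adjoint version of Corollary \ref{Bourgain} to bound $\widetilde{\omega}^{\xbf^-_{Q(\sbf^*)}}\!(\mbf{\Psi}(Q(\sbf^*)))\gtrsim 1$, and collects $\sigma$-factors, ending at $\ell(Q)\approx \dist(\mbf{Y},\Sigma)$ via Lemma \ref{distgraphlem.lem}. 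The lower bound is the same computation with $\cA^+$ replaced by $\cA^-$ to reverse the direction of Harnack.

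The last step is to upgrade these single-point estimates to the entire Whitney region $U_Q^{***}(K)$ in \eqref{M2defa.eq} and \eqref{M2defb.eq}. One simply reruns the chain above with the surface ball $\Delta_{10^{12}\ell(Q)}$ replaced by a $K$-dependent dilate large enough to absorb $U_Q^{***}(K)$, as in \eqref{eq:U_Q_in_balls}; the hypothesis $\ell(Q)\le K^{-10}\ell(Q(\sbf^*))$ is what guarantees that both poles $\xbf^\pm_{Q(\sbf^*)}$ still lie in the correct forward/backward parabolas with aperture $\kappa$ over these dilated surface balls. Once this is verified, the interior Harnack inequality propagates the pointwise bound to every $\mbf{Y}\in B(\mbf{X},\dist(\mbf{X},\pom)/(10K))$, giving \eqref{M2defa.eq} and \eqref{M2defb.eq} with a constant $M_2$ depending on $K$.

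The only real bookkeeping subtlety — and the nearest thing to an obstacle — is checking the parabola geometry in the time-reversed setting: namely, verifying that the backward corkscrew pole $\xbf^-_{Q(\sbf^*)}$ still lies in $\P^-_{\kappa,\rho}$ over the enlarged surface balls required for the $U_Q^{***}(K)$ estimate. This is a direct computation using \eqref{CSfortop.eq}, \eqref{kappadef}, \eqref{eq:K_big}, and the $\Lip(1,1/2)$ bound on $\psi$, and is essentially the same verification one does for the forward direction when handling $u$.
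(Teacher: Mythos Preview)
Your proposal is correct and follows exactly the approach the paper intends: the paper gives no separate proof of this lemma, stating it as the analogue of Lemma~\ref{gfcomplemt1.eq} under the hypotheses of Theorem~\ref{main2.thrm}, and your argument---reusing the chain for $u$ via \eqref{sawestm3.eq}, running the time-reversed chain for $v$ with forward Harnack, adjoint CFMS, doubling for $\widetilde{\omega}$, and \eqref{sawestm4.eq}, then dilating to cover $U_Q^{***}(K)$---is precisely how one fills in the omitted details.
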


For some of the forthcoming arguments, the estimates in Lemmas \ref{gfcomplemt1.eq} and \ref{gfcomplemt2.eq} in $U_Q^{***}$ will be enough. However, it will be crucial for us later to be able to use level sets of these Green functions, too. The following lemma makes it explicit, how these level sets traverse our Whitney regions.

\begin{lemma}\label{fndndpt.lem}
Suppose that the hypotheses of Theorem \ref{main1.thrm} or Theorem \ref{main2.thrm} hold. Then there exists $N_0 \ge 2000$ such that: if $K = 2^N$ for $N \ge N_0$, and $Q \in \sbf^*$ with $\ell(Q) \le K^{-10}\ell(Q(\sbf^*))$, then 
\begin{equation}\label{nondegpt.eq}
\mbf{y} \in 10Q 
\quad \implies \quad 
\exists \; y_0 \in \RR \;\; \text{with} \;\;\;
(y_0,\mbf{y}) \in U_Q(K) \quad \text{ and } \quad \partial_{y_0} u(y_0,\mbf{y}) \geq c_1,
\end{equation} 
for some $c_1 > 0$ which is independent of $K$.

Moreover, for every $\delta \in (0,1/2]$, there exists $N = N(\delta) \geq 2000$ 
such that if $K = 2^N$, then for each $Q \in \sbf^*$ with $\ell(Q) \le K^{-10}\ell(Q(\sbf^*))$ we have
\[
\mbf{y} \in 10^5Q, \;\; r \in [\delta\ell(Q), \delta^{-1}\ell(Q)] 
\quad \implies \quad 
\exists \; y_r \in \mathbb{R} \;\;
\text{so that} \;\;\;
(y_r, \mbf{y}) \in U_Q
\quad \text{ and } \quad 
u(y_r,\mbf{y}) = r.\]
\end{lemma}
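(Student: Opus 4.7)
The plan is to exploit the Green function comparability \eqref{M1def.eq} at heights $\approx \ell(Q)$ above the boundary (where $u \approx \ell(Q)$) against boundary H\"older decay from Lemmas \ref{HCatBdry.lem}--\ref{carlest.lem} in a thin strip of width $K^{-1}\ell(Q)$ (where $u = O(K^{-\alpha}\ell(Q))$). For \eqref{nondegpt.eq} I then integrate $\partial_{y_0} u$ vertically and invoke the fundamental theorem of calculus; for the moreover clause I invoke the intermediate value theorem on a vertical segment. In both cases, the target point lies on the vertical segment $\{(\tau,\mbf{y}) : \tau \in [h_K, h]\}$, where $h_K := \psi(\mbf{y}) + K^{-1}\ell(Q)$ plays the role of the lower endpoint.

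For the first part, fix $\mbf{y} \in 10Q$ and set $h := \psi(\mbf{x}_Q) + 2M_0 10^8 \ell(Q)$. Applying \eqref{M1def.eq} (which requires only $\mbf{y} \in 10^{10}Q$) yields $u(h,\mbf{y}) \approx \ell(Q)$. Applying the Carleson--H\"older estimate \eqref{carlhcest.eq} in its adjoint version (for the adjoint solution $u$, with time-backward corkscrew $\cA^-_R$) at $\mbf{X} = (\psi(\mbf{y}),\mbf{y}) \in \Sigma$ and $R \approx \ell(Q)$ gives $u(h_K,\mbf{y}) \lesssim K^{-\alpha}\, u(\cA^-_R(\mbf{X})) \lesssim K^{-\alpha}\,\ell(Q)$; the second bound follows by arguing as in the sketch of Lemma \ref{gfcomplemt1.eq} (CFMS, doubling, the stopping-time estimate \eqref{sawestm2.eq}, and backward Harnack). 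For $N_0$ chosen large enough (depending only on structural constants) the first quantity dominates the second, so
\[ \int_{h_K}^{h} \partial_{y_0} u(s,\mbf{y})\,ds \;=\; u(h,\mbf{y}) - u(h_K,\mbf{y}) \;\gtrsim\; \ell(Q). \]
Since $|h - h_K| \lesssim_{M_0} \ell(Q)$, the mean value theorem produces $y_0 \in (h_K, h)$ with $\partial_{y_0} u(y_0,\mbf{y}) \geq c_1$, where $c_1$ depends only on structural constants. The inclusion $(y_0,\mbf{y}) \in U_Q(K)$ is then immediate: $\mbf{y} \in 10Q \subseteq 10^5 Q$, and $y_0 - \psi(\mbf{y}) \in [K^{-1}\ell(Q), K\ell(Q)]$ once $K \geq C_{M_0}$.

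For the moreover clause, fix $\delta \in (0,1/2]$, $\mbf{y} \in 10^5Q$, and $r \in [\delta\ell(Q),\delta^{-1}\ell(Q)]$. Choosing $K = 2^N$ with $N = N(\delta)$ large enough, the H\"older bound above still yields $u(h_K,\mbf{y}) \lesssim K^{-\alpha}\ell(Q) < \delta\ell(Q) \le r$. For the upper endpoint I take the smallest dyadic ancestor $Q^\dagger$ of $Q$ with $\ell(Q^\dagger) \geq C_\delta\,\ell(Q)$, where $C_\delta \sim \delta^{-1}$ is chosen so that \eqref{M1def.eq} applied to $Q^\dagger$ yields $u(h^\dagger,\mbf{y}) \geq \delta^{-1}\ell(Q) \geq r$ at $h^\dagger := \psi(\mbf{x}_{Q^\dagger}) + 2M_0 10^8\ell(Q^\dagger)$. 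I note that $\mbf{y} \in 10^5 Q \subseteq 10^{10}Q^\dagger$, and that $Q^\dagger \in \sbf^*$ by semi-coherence as long as $\ell(Q^\dagger) \le \ell(Q(\sbf^*))$, which is precisely where the hypothesis $\ell(Q) \le K^{-10}\ell(Q(\sbf^*))$ is used: once $K^{10} \geq C_\delta$, the ancestor fits inside $Q(\sbf^*)$. The intermediate value theorem on $[h_K, h^\dagger]$ produces $y_r$ with $u(y_r,\mbf{y}) = r$, and since $h^\dagger - \psi(\mbf{y}) \lesssim_{M_0} C_\delta\,\ell(Q)$, the point $(y_r,\mbf{y})$ lies in $U_Q(K)$ for $K \geq C_{M_0}C_\delta$.

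The only real obstacle is the bookkeeping of constants: $K$ must be large relative to $\delta$ and to the H\"older exponent $\alpha$, so that the H\"older decay beats $\delta$ at the lower endpoint and the height $h^\dagger$ fits in the Whitney window $(K^{-1}\ell(Q), K\ell(Q))$ at the upper endpoint. The hypothesis $\ell(Q) \le K^{-10}\ell(Q(\sbf^*))$ is exactly what makes the ancestor $Q^\dagger$ of size $\approx_\delta \ell(Q)$ still lie inside $Q(\sbf^*)$, so that \eqref{M1def.eq} can be invoked at $Q^\dagger$ via semi-coherence of $\sbf^*$.
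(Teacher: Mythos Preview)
Your proof is correct and follows essentially the same approach as the paper: H\"older decay near the boundary for the small endpoint, the comparability \eqref{M1def.eq} (possibly at an ancestor cube) for the large endpoint, then the mean value theorem for \eqref{nondegpt.eq} and the intermediate value theorem for the moreover clause. The only cosmetic differences are that the paper proves the moreover clause first and then specializes to $\delta=1/2$ to deduce \eqref{nondegpt.eq}, and it chooses the ancestor at scale $\approx K^{1/2}\ell(Q)$ rather than your $C_\delta\ell(Q)$; neither change affects the substance.
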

\begin{proof}
Let us start by proving the second half of the lemma. Before that, note first that we can make $N(\delta)$ to be a decreasing function in $\delta$.

Fix $\delta \in (0,1/2]$ and $\mbf{y} \in 10^5Q$. Recalling the definition of $U_Q$ in \eqref{whitdef.eq}, and using the intermediate value theorem,
it suffices to show that for $K = 2^N$ with $N = N(\delta)$ large enough
\begin{equation}\label{smallfromhcatbdry.eq}
u(\psi(\mbf{y}) + K^{-1}\ell(Q),\mbf{y}) < \delta\ell(Q),
\end{equation}
and that there exists $y_0$ such that $(y_0, \mbf{y}) \in U_Q(K)$ with
\begin{equation}\label{bigfromanc.eq}
u(y_0,\mbf{y}) > \delta^{-1}\ell(Q).
\end{equation}

To show \eqref{smallfromhcatbdry.eq}, we use the Lemma \ref{carlest.lem}. Letting $\mbf{X} := \mbf{\Psi}(\mbf{x}_Q)$ and $R := 10^8\ell(Q)$, we clearly have
\[\mathcal{A}^-_R(\mbf{X}) \in \big{\{} (y_0,y, s): \; (y,s) \in 10^{10}Q, \;\; y_0 = \psi(\mbf{x}_Q) + 2M_010^{8}\ell(Q)\big{\}}.\]
Then, by \eqref{carlhcest.eq}, and later \eqref{M1def.eq} (along with Lemma~\ref{distgraphlem.lem} repeatedly), we obtain 
\[
u(\psi(\mbf{y}) + K^{-1}\ell(Q), \mbf{y})
\lesssim_{M_0}
\left( \frac{K^{-1} \ell(Q)}{10^8 \ell(Q)} \right)^\alpha u(\mathcal{A}^-_R(\mbf{X}))
\lesssim_{M_0, M_1}
\left( \frac{K^{-1} \ell(Q)}{R} \right)^\alpha R
\approx 
K^{-\alpha} \ell(Q),
\] 
from where \eqref{smallfromhcatbdry.eq} simply follows after choosing $K$ sufficiently large, depending on $\delta$.

Let us now show \eqref{bigfromanc.eq}. Here we are going to use \eqref{M1def.eq} again, but for an ancestor of $Q$. Let $Q^*$ be the unique cube in $\sbf^*$ which contains $Q$ with $\ell(Q^*) = 2^{\lfloor N/2 \rfloor + 1} \ell(Q) \approx K^{1/2} \ell(Q)$, which is a possible choice since $Q \in \sbf^*$ with $\ell(Q) \le K^{-10}\ell(Q(\sbf^*))$. Now consider the point 
\[\mbf{Y} := (\psi(\mbf{x}_{Q^*}) + 2M_010^{8}\ell(Q^*), \mbf{y}),\]
that is, the point over $\mbf{y}$ in the set \eqref{whereM1esthlds.eq} for $Q^*$. By \eqref{M1def.eq} (and Lemma~\ref{distgraphlem.lem}) it holds that
\begin{equation}\label{Khlfbd1.eq}
u(\mbf{Y}) \ge M_1\dist(\mbf{Y}, \partial\Omega) \gtrsim M_1 \ell(Q^*) \ge c K^{1/2} \ell(Q),
\end{equation}
where $c$ is independent of $K$. Moreover, by definition of $\mbf{Y}$ (and Lemma~\ref{distgraphlem.lem}) it holds that
\begin{equation}\label{Khlfbd2.eq}
\|\mbf{Y} - \mbf{\Psi}(\mbf{y})\| \approx K^{1/2}\ell(Q),
\end{equation}
where the implicit constant is independent of $K$. Hence, by choosing $K$ large enough (depending on $\delta$), \eqref{Khlfbd1.eq} implies that $u(\mbf{Y}) > \delta^{-1}\ell(Q)$, and \eqref{Khlfbd2.eq} ensures $(y_0, \mbf{y}) := \mbf{Y} \in U_Q(K)$ (see \eqref{whitdef.eq}). This finishes the proof of the second half of the lemma.

Finally, let us prove \eqref{nondegpt.eq}. Fix $N_0 := N(1/2)$ (i.e., put $\delta = 1/2$ in the above argument), and denote $K_0 := 2^{N_0}$ (so it is clear that both $N_0$ and $K_0$ only depend on $n$, $\|\psi\|_{\Lip(1,1/2)}$, and the $A_\infty$ constants for $\omega^{\xbf^+_{Q_0}}$). Then, for a given $\mbf{y} \in 10Q$, we repeat the argument above to find some $y_0$ such that $(y_0, \mbf{y}) \in U_Q(K_0)$ (therefore, by \eqref{whitdef.eq}, $y_0 \in (\psi(\mbf{y}) + K_0^{-1}\ell(Q), \psi(\mbf{y}) + K_0\ell(Q))$) such that
\[
u(\psi(\mbf{y}) + K_0^{-1}\ell(Q), \mbf{y}) 
< (1/2) \ell(Q)
< 2 \ell(Q)
< u(y_0, \mbf{y}). 
\]
Therefore, by the mean value theorem, 
there exists $x_0 \in (\psi(\mbf{y}) + K_0^{-1}\ell(Q), y_0)$ (so that $(x_0, \mbf{y}) \in U_Q(K_0) \subseteq U_Q(K)$ for any $K \geq K_0$, i.e., $N \geq N_0$) such that 
\[
\partial_{x_0} u(x_0, \mbf{y})
=
\frac{u(y_0, \mbf{y}) - u(\psi(\mbf{y}) + K_0^{-1}\ell(Q), \mbf{y}) }{y_0 - (\psi(\mbf{y}) + K_0^{-1}\ell(Q))}
\geq 
\frac{2\ell(Q) - (1/2) \ell(Q)}{(\psi(\mbf{y}) + K_0\ell(Q)) - (\psi(\mbf{y}) + K_0^{-1}\ell(Q))}
=
C(K_0)
> 0,
\]
which is indeed the result stated in the first half of the lemma.
\end{proof}

The conditions for our matrix $A$, allow us to obtain the following.
\begin{lemma}\label{ptwiseestinsaw4der.lem} 
Assume the hypotheses of Theorem \ref{main1.thrm} (or Theorem \ref{main2.thrm}). 
Let $\sbf^*$ be as in Lemma~\ref{pmcorona.eq} (or \ref{pmcorona2.eq}). Suppose $K = 2^N,  N > {2000}$. If $Q \in \sbf^*$ satisfies $\ell(Q) \le K^{-10}\ell(Q(\sbf^*))$,
then 
\begin{equation}\label{ptwisederest.eq}
|\nabla^k_Y u(\mbf{Y}) | \lesssim \dist(\mbf{Y}, \pom)^{1-k}, \quad |\partial_t u(\mbf{Y})| \lesssim \dist(\mbf{Y}, \pom)^{-1} \quad \forall \; \mbf{Y} \in U_Q^{***}, \; k = 0, 1,2,
\end{equation}
\begin{equation}\label{avgderest.eq}
 |\partial_t \nabla_Y u(\mbf{Y})| \lesssim \dist(\mbf{X}, \partial\Omega)^{-2}, \quad \forall \mbf{Y} \in U_Q^{***}.
\end{equation}
Moreover, in the case that we assume the hypotheses of Theorem \ref{main2.thrm}, then \eqref{ptwisederest.eq} and \eqref{avgderest.eq} also hold with $v$ in place of $u$.
\end{lemma}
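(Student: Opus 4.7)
The plan is to combine a rescaling argument with standard interior parabolic regularity. The starting point is the $L^\infty$ control $u\lesssim\dist(\cdot,\partial\Omega)$ in a neighborhood of any $\mbf{Y}\in U_Q^{***}$, which follows from \eqref{M2def.eq} together with Lemma~\ref{distgraphlem.lem} (in the setting of Theorem~\ref{main2.thrm}, the analogous bound for $v$ comes from \eqref{M2defb.eq}). The hypothesis \eqref{smoothnessonA.eq} is precisely tailored so that when $A$ is rescaled from a ball of radius $\approx\dist(\cdot,\partial\Omega)$ to a unit ball, one recovers a matrix with uniform $C^2$-type control.

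First I would fix $\mbf{Y}\in U_Q^{***}$ and set $r:=\dist(\mbf{Y},\partial\Omega)$. By the construction in Subsection~\ref{whit.sec} and Lemma~\ref{distgraphlem.lem}, $r\approx_K\ell(Q)$, and there is a parabolic ball of radius $\approx_K r$ around $\mbf{Y}$ on which $\dist(\cdot,\partial\Omega)\approx r$. Since $\ell(Q)\le K^{-10}\ell(Q(\sbf^*))$, the pole $\mbf{X}^+_{Q(\sbf^*)}$ lies at parabolic distance $\gtrsim K^{10}\ell(Q)\gg r$ from $\mbf{Y}$, so $u$ is a classical solution of $\mathcal{L}^*u=0$ on this ball, with $u\lesssim r$ there.

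Writing $\mbf{Y}=(Y_0,t_0)$, I would then rescale by setting
$$\tilde{u}(Z,\tau):=r^{-1}u(Y_0+rZ,\,t_0+r^2\tau),\qquad \tilde{A}(Z,\tau):=A(Y_0+rZ,\,t_0+r^2\tau).$$
Then $\tilde u$ is a classical solution of $-\partial_\tau\tilde u-\div_Z(\tilde A^*\nabla_Z\tilde u)=0$ on a parabolic ball of size independent of $r$, with $\|\tilde u\|_{L^\infty}\lesssim 1$. Because $\dist(\cdot,\partial\Omega)\approx r$ throughout the pre-image, the quantitative bounds in \eqref{smoothnessonA.eq} rescale to
$$|\nabla_Z^k\tilde A|+|\partial_\tau^k\tilde A|+|\partial_\tau\nabla_Z\tilde A|\lesssim 1,\qquad k=1,2,$$
uniformly in $r$. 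Combined with ellipticity \eqref{ellip.eq} (which is preserved under rescaling), standard interior parabolic Schauder estimates (see, e.g., \cite[Chapter 4]{L}) yield $|\nabla_Z\tilde u(0)|,|\nabla_Z^2\tilde u(0)|,|\partial_\tau\tilde u(0)|\lesssim 1$. Unwinding the rescaling produces \eqref{ptwisederest.eq} at $\mbf{Y}$.

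For the mixed-derivative estimate \eqref{avgderest.eq}, I would differentiate the equation in a spatial direction: $w:=\partial_{Z_i}\tilde u$ satisfies
$$-\partial_\tau w-\div_Z(\tilde A^*\nabla_Z w)=\div_Z(\partial_{Z_i}\tilde A^*\,\nabla_Z\tilde u),$$
whose right-hand side is uniformly bounded on a slightly smaller ball by the already-established control on $\nabla_Z^2\tilde u$, $\nabla_Z\tilde u$, and on the first and second derivatives of $\tilde A$. Another application of Schauder regularity then controls $\partial_\tau\partial_{Z_i}\tilde u(0)$, and scaling back gives $|\partial_t\nabla_Y u(\mbf{Y})|\lesssim r^{-2}\approx\dist(\mbf{Y},\partial\Omega)^{-2}$. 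The argument for $v$ in the setting of Theorem~\ref{main2.thrm} is identical. The one semi-delicate step is verifying that all rescaled coefficient bounds arise uniformly from \eqref{smoothnessonA.eq}; this is essentially bookkeeping, since \eqref{smoothnessonA.eq} encodes precisely the right number of parabolic derivatives at scale $\dist(\cdot,\partial\Omega)$ to survive the rescaling.
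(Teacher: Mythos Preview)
Your rescaling approach is essentially equivalent to the paper's direct application of weighted interior Schauder estimates on Whitney boxes, and for \eqref{ptwisederest.eq} it works exactly as you describe.

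There is, however, a gap in your treatment of the mixed derivative \eqref{avgderest.eq}. Differentiating in a spatial direction and applying divergence-form Schauder to $w=\partial_{Z_i}\tilde u$ with $C^\alpha$ source yields only $C^{1,\alpha}$ parabolic regularity for $w$, i.e., control on $\nabla w$ (which is $\nabla^2\tilde u$, already known), \emph{not} pointwise control on $\partial_\tau w$. To obtain $\partial_\tau w$ you would need $w\in C^{2,\alpha}$, which via non-divergence Schauder requires the expanded source $\div_Z\big((\partial_{Z_i}\tilde A^*)\nabla_Z\tilde u\big)$ to be uniformly $C^\alpha$; but one of the terms in this expansion is $(\partial_{Z_j}\partial_{Z_i}\tilde a_{jk})\,\partial_{Z_k}\tilde u$, and $\nabla^2\tilde A$ is only uniformly bounded under \eqref{smoothnessonA.eq}---no third spatial derivative of $A$ is assumed, so $\nabla^2\tilde A$ need not be uniformly H\"older.

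The paper avoids this by differentiating the equation in $t$ instead, and applying divergence-form Schauder to $\partial_t u$: the source is then $\div\big((\partial_t A)\nabla u\big)$, and $(\partial_t A)\nabla u$ \emph{is} uniformly $C^\alpha$ thanks precisely to the bounds on $\partial_t\nabla_X A$ and $\partial_t^2 A$ in \eqref{smoothnessonA.eq}. Divergence-form Schauder then gives $\nabla(\partial_t u)=\partial_t\nabla u$, which is exactly \eqref{avgderest.eq}. In your rescaled setup the fix is simply to set $\tilde v:=\partial_\tau\tilde u$ in place of $\partial_{Z_i}\tilde u$ and run the same argument; this uses the assumed derivatives of $A$ exactly and no more.
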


\begin{proof}
	These estimates are a consequence of standard interior regularity for solutions of linear parabolic PDE with twice differentiable coefficients. Let us include a complete proof for completeness.
	
	Fix $\mbf{X} \in U_Q^{***}$, and denote by $\mathcal{J}$ a Whitney box containing $\mbf{X}$, for instance $\mathcal{J} := \mathcal{J}_{\dist(\mbf{X}, \pom)/100}(\mbf{X})$. By \cite[Theorem 4.9]{L} applied in $4\mathcal{J} \subset \Omega$ (check the definitions in \cite[Chapter IV]{L}),
	\begin{equation*} 
		\sup_{\mbf{Y} \in 4\mathcal{J}} \Big[ 
		\dist(\mbf{Y}, \partial(4\mathcal{J})) \, |\nabla u(\mbf{Y})| 
		+ \dist(\mbf{Y}, \partial(4\mathcal{J}))^2 \, |\nabla^2 u(\mbf{Y})| 
		+ \dist(\mbf{Y}, \partial(4\mathcal{J}))^2 \, |\partial_t u(\mbf{Y})| \Big]
		\lesssim 
		\| u \|_{L^\infty(4\mathcal{J})},
	\end{equation*}
	(where, across the proof, $\nabla$ denotes the spatial gradient)
	and also, for $\alpha > 0$, 
	\begin{equation} \label{eq:holder_u}  
		\sup_{\substack{(Y, t), (Y, s) \in 4\mathcal{J} \\ t \neq s}} 
		\min \big\{ \! \dist((Y, t), \partial(4\mathcal{J})), \dist((Y, s), \partial (4\mathcal{J})) \big\}^{2+\alpha} 
		\frac{|\nabla u(Y, t) - \nabla u(Y, s)|}{|(Y, t) - (Y, s)|^{1+\alpha}}
		\lesssim 
		\| u \|_{L^\infty(4\mathcal{J})}.
	\end{equation}
	The implicit constants are uniform because the $C^\alpha$ norm of $A$ is controlled by \eqref{ellip.eq} and \eqref{smoothnessonA.eq}.
	
	Note that, since $\mathcal{J}$ is a Whitney box and $\mbf{X} \in \mathcal{J}$, it holds $\dist(\mbf{X}, \partial(4\mathcal{J})) \approx \diam(\mathcal{J}) \approx \dist(\mbf{X}, \pom)$. Thus, the estimate in the first display yields \eqref{ptwisederest.eq} because $\|u\|_{L^\infty(4\mathcal{J})} \lesssim \sup_{\mbf{Y} \in 4\mathcal{J}} \dist(\mbf{Y}, \pom) \approx \dist(\mbf{X}, \pom)$ by Lemma~\ref{gfcomplemt1.eq} (and again properties of Whitney boxes).
	
	The argument to obtain \eqref{avgderest.eq} is similar. Differentiating the equation that $u$ solves, we get 
	\begin{equation*}
		\partial_t (\partial_t u) - \div(A \nabla (\partial_t u))
		=
		\div ((\partial_t A) \nabla u).
	\end{equation*}
	Hence, applying \cite[Theorem 4.8]{L} in $2\mathcal{J}$, we obtain, after some simplifications as before,
	\begin{multline} \label{eq:second_derivative}
		\dist(\mbf{X}, \pom) \, |\nabla (\partial_t u(\mbf{X}))|
		\lesssim 
		\| \partial_t u \|_{L^\infty(2\mathcal{J})}
		+ \dist(\mbf{X}, \pom) \, \| (\partial_t A) \nabla u \|_{L^\infty(2\mathcal{J})}
		\\ + \sup_{\mbf{Y} \neq \mbf{Z} \in 2\mathcal{J}} 
		\min \big\{ \! \dist(\mbf{Y}, \partial(2\mathcal{J})), \dist(\mbf{Z}, \partial (2\mathcal{J})) \big\}^{1+\alpha} 
		\frac{|\partial_t A(\mbf{Y}) \nabla u(\mbf{Y}) - \partial_t A(\mbf{Z}) \nabla u(\mbf{Z})|}{|\mbf{Y}-\mbf{Z}|^\alpha}.
	\end{multline}
	The first term in the right-hand-side is controlled (up to constants) by $\dist(\mbf{X}, \pom)^{-1}$ by \eqref{ptwisederest.eq}. So is the second one by using \eqref{ptwisederest.eq} and also \eqref{smoothnessonA.eq}.
	In turn, to control the last term, we compute
	\begin{multline*}
		|\partial_t A(\mbf{Y}) \nabla u(\mbf{Y}) - \partial_t A(\mbf{Z}) \nabla u(\mbf{Z})|
		\leq 
		|\partial_t A(\mbf{Y}) - \partial_t A(\mbf{Z})| \, |\nabla u(\mbf{Y})|
		+ |\partial_t A(\mbf{Z})| \, |\nabla u(\mbf{Y}) - \nabla u(\mbf{Z})|
		\\ \lesssim 
		\dist(\mbf{X}, \pom)^{-4} \, |\mbf{Y} - \mbf{Z}|^2 
		+ \dist(\mbf{X}, \pom)^{-3} \, |\mbf{Y} - \mbf{Z}|
		+ \dist(\mbf{X}, \pom)^{-2} \, |\nabla u(\mbf{Y}) - \nabla u(\mbf{Z})|,
	\end{multline*}
	where in the last estimate we have used the mean value theorem along with \eqref{smoothnessonA.eq}, and also \eqref{ptwisederest.eq}. Noting that $\dist(\mbf{Y}, \partial(2\mathcal{J})), |\mbf{Y} - \mbf{Z}| \lesssim \diam(\mathcal{J}) \approx \dist(\mbf{X}, \pom)$ if $\mbf{Y}, \mbf{Z} \in 2\mathcal{J}$, the last display yields
	\begin{multline} \label{eq:cross_terms}
		\sup_{\mbf{Y} \neq \mbf{Z} \in 2\mathcal{J}} 
		\min \big\{ \! \dist(\mbf{Y}, \partial(2\mathcal{J})), \dist(\mbf{Z}, \partial (2\mathcal{J})) \big\}^{1+\alpha} \, \frac{|\partial_t A(\mbf{Y}) \nabla u(\mbf{Y}) - \partial_t A(\mbf{Z}) \nabla u(\mbf{Z})|}{|\mbf{Y}-\mbf{Z}|^\alpha}
		\\ \lesssim 
		\dist(\mbf{X}, \pom)^{-1}
		+ \, \dist(\mbf{X}, \pom)^{-2} \!\! \sup_{\mbf{Y} \neq \mbf{Z} \in 2\mathcal{J}} \!\! 
		\min \big\{ \! \dist(\mbf{Y}, \partial(2\mathcal{J})), \dist(\mbf{Z}, \partial (2\mathcal{J})) \big\}^{1+\alpha}  \, \frac{|\nabla u(\mbf{Y}) - \nabla u(\mbf{Z})|}{|\mbf{Y}-\mbf{Z}|^\alpha}.
	\end{multline}
	Let us estimate the last term using the mean value theorem and \eqref{ptwisederest.eq}, where $\mbf{Y} = (Y, t), \mbf{Z} = (Z, s)$: 
	\begin{equation*}
		|\nabla u(\mbf{Y}) - \nabla u(\mbf{Z})| 
		\leq 
		|\nabla u(Y, t) - \nabla u(Z, t)|
		+ |\nabla u(Z, t) - \nabla u(Z, s)|
		\lesssim 
		\dist(\mbf{X}, \pom)^{-1} |\mbf{Y} - \mbf{Z}|
		+ |\nabla u(Z, t) - \nabla u(Z, s)|.
	\end{equation*}
	Inserting this back in \eqref{eq:cross_terms}, we obtain
	\begin{align*}
		&\sup_{\mbf{Y} \neq \mbf{Z} \in 2\mathcal{J}} \min \big\{ \! \dist(\mbf{Y}, \partial(2\mathcal{J})), \dist(\mbf{Z}, \partial (2\mathcal{J})) \big\}^{1+\alpha} \, \frac{|\partial_t A(\mbf{Y}) \nabla u(\mbf{Y}) - \partial_t A(\mbf{Z}) \nabla u(\mbf{Z})|}{|\mbf{Y}-\mbf{Z}|^\alpha}
		\\ &\;\;\; \lesssim 
		\dist(\mbf{X}, \pom)^{-1}
		\\[-.2cm] &\;\; \;\;\; \;\;\;+ \dist(\mbf{X}, \pom)^{-2} \!\!  \sup_{\substack{(Z, t), (Z, s) \in 2\mathcal{J} \\ t \neq s}} \! \! \min \big\{ \! \dist((Z, t), \partial(4\mathcal{J})), \dist((Z, s), \partial (4\mathcal{J})) \big\}^{2+\alpha} \, \frac{|\nabla u(Z, t) - \nabla u(Z, s)|}{|(Z, t)-(Z, s)|^{1+\alpha}}
		\\[-.2cm] & \;\;\;\lesssim 
		\dist(\mbf{X}, \pom)^{-1},
	\end{align*}
	where the last estimate is due to \eqref{eq:holder_u} (and in the first step, we also used that $\dist(\mbf{Y}, \partial (4\mathcal{J})) \approx \diam(\mathcal{J}) \approx \dist(\mbf{X}, \pom)$ because $\mbf{Y} \in 2\mathcal{J}$). Back in \eqref{eq:second_derivative}, this finishes the proof of \eqref{avgderest.eq}.
	
\end{proof}

\section{Step 2: A local square function estimate and refined corona decomposition} \label{sec:step2}

In this section, we will reduce the proofs of Theorems~\ref{main1.thrm} and \ref{main2.thrm} to constructing some regular $\Lip(1, 1/2)$ graphs that approximate our original graph $\Sigma$ in a corona decomposition sense (concretely, see Subsection~\ref{sec:reduction_corona}). To construct those graphs (which we will do in the next sections, we will use level lines of the Green's function $u$. And actually, it will be convenient that these level lines ``follow the shape'' of $\Sigma$: concretely, we would like that $\partial_{x_0}u > 0$ (uniformly). Indeed, we will prove in Subsection~\ref{sec:second_corona} that this property holds in our Whitney regions. The price we need to pay is that we need to refine our initial corona decomposition for the parabolic measure. And to prove the packing condition for the ``bad cubes'', we will need estimates for the regions where the Green's function behaves poorly: that will be the content of Subsection~\ref{sec:IBP}.

In fact, the integration by parts scheme that we will develop in Subsection~\ref{sec:IBP} to obtain square function estimates for the Green's function, is one of the main novelties of this paper. Inspired by the work of \cite{LN07} for the heat equation, we develop an integration by parts method that allows us to deal with symmetric matrices without needing to simultaneously use solutions to the forward equation (that for the operator $\mathcal{L}$) and the adjoint equation (for $\mathcal{L}^*$). Concretely, that allows us to only require the $L^p$ solvability for $\mathcal{L}$ (and not for $\mathcal{L}^*$) when proving Theorem~\ref{main1.thrm}.

\subsection{\for{toc}{\small}Local square function estimate via integration by parts} \label{sec:IBP}

In order to later rigorously integrate by parts, we need a smooth cutoff function adapted to sawtooth regions like $\Omega_{\sbf}$. The following lemma is sufficient for our purposes and we have put its proof in Appendix \ref{cutoff.sect}.
\begin{lemma}\label{cutofffnlem.lem}
Let $K \ge 4$ and $\sbf \subseteq \dd$ a semi-coherent stopping time, with maximal cube $Q(\sbf)$. For every $N_1,N_2 \in \mathbb{N} \cup \{0\}$, $N_1 \le N_2$, define the following truncation of $\sbf$ as
\begin{equation*}
\sbf_{N_1,N_2} := \{Q \in \sbf:   2^{-N_2}\ell(Q(\sbf)) \le \ell(Q) \le 2^{-N_1} \ell(Q(\sbf))\}
\end{equation*}
Then, for each $N$, there exists a cut off function $\eta = \eta_N$ with the following properties:
\begin{equation}\label{iscutoff.eq}
\mathbbm{1}_{\Omega_{\sbf_{N_1,N_2}}^{**}} \le \eta \le \mathbbm{1}_{\Omega_{\sbf_{N_1,N_2}}^{***}},
\end{equation}
\begin{equation}\label{cutoffbds1.eq}
|\nabla_X \eta(X,t)|\dist((X,t), \partial\Omega) + |\partial_t \eta(X,t)|\dist((X,t), \partial\Omega)^2 \lesssim 1,
\end{equation}
and 
if we define the ``boundary'' of the region where $\eta \equiv 1$ by
\[\mathcal{F}_b := \big\{Q \in \mathbb{D}: \exists \, \mbf{X} \in U_{Q}^{***} \text{ with } \nabla_{\mbf{X}} \eta (\mbf{X}) \neq 0\big\},\]
then the ``perimeter'' of $\{ \eta \equiv 1\}$ behaves well in the sense that 
\begin{equation}\label{cutoffbds2.eq}
\sum_{Q \in \mathcal{F}_b } |Q| \lesssim |Q(\sbf)|.
\end{equation}
In particular, it holds
\begin{equation}\label{cutoffbds3.eq}
\iiint_{\mathbb{R}^{n+1}}\big( |\nabla_X \eta(X,t)| + |\partial_t \eta(X,t)|\dist((X,t), \partial\Omega)\big) \, \d X \d t \lesssim |Q(\sbf)|.
\end{equation}
Here all the implicit constants depend only on the structural constants and $K$ (defining the Whitney regions in Subsection~\ref{whit.sec}). 
\end{lemma}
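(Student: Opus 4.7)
The plan is to build $\eta$ via a partition-of-unity-type construction adapted to Whitney regions, and then derive the packing and integral bounds by counting the dyadic cubes on the ``boundary'' of the sawtooth in three groups (top, bottom, lateral).

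First I would fix a smooth parabolic bump $\varphi_Q$ for each $Q \in \dd$ satisfying
$\mathbbm{1}_{U_Q^{**}} \le \varphi_Q \le \mathbbm{1}_{U_Q^{***}}$
with the natural scale-invariant derivative bounds
$|\nabla_X \varphi_Q| \lesssim \ell(Q)^{-1}$ and $|\partial_t \varphi_Q| \lesssim \ell(Q)^{-2}$.
These bumps exist by standard mollification of the indicator of $U_Q^{**}$ at parabolic scale $\approx \ell(Q)$, and the constants depend only on $n$ and $K$. To assemble them, I would define
$$\eta(\mbf{X}) := \Phi\!\left( \sum_{Q \in \sbf_{N_1,N_2}} \varphi_Q(\mbf{X}) \right),$$
where $\Phi: [0,\infty) \to [0,1]$ is a fixed smooth function with $\Phi(s) = 0$ for $s \le 0$, $\Phi(s) = 1$ for $s \ge 1$, and $\Phi' \ge 0$ bounded. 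On $\Omega_{\sbf_{N_1,N_2}}^{**}$ the sum is at least $1$, giving $\eta \equiv 1$, while $\eta \equiv 0$ off $\Omega_{\sbf_{N_1,N_2}}^{***}$. The sum has bounded overlap (any point lies in $U_Q^{***}$ for a bounded number of $Q$'s due to the Whitney structure and the restriction to $\sbf_{N_1,N_2}$), so the derivative of $\eta$ at a point $\mbf{X}$ only picks up contributions from $\varphi_Q$ with $\ell(Q) \approx \dist(\mbf{X},\pom)$, yielding the pointwise bounds \eqref{cutoffbds1.eq}.

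Next I would identify the ``perimeter cubes'' $\mathcal{F}_b$. A cube $Q \in \mathcal{F}_b$ must have $U_Q^{***}$ overlap with the region where $0 < \eta < 1$, hence with the boundary of $\Omega_{\sbf_{N_1,N_2}}^{**}$. I would split $\mathcal{F}_b = \mathcal{F}_b^{\mathrm{top}} \sqcup \mathcal{F}_b^{\mathrm{bot}} \sqcup \mathcal{F}_b^{\mathrm{lat}}$ according to whether the relevant piece of boundary comes from $\ell(Q) \sim 2^{-N_1}\ell(Q(\sbf))$, $\ell(Q) \sim 2^{-N_2}\ell(Q(\sbf))$, or from a cube in $\sbf \setminus \sbf_{N_1,N_2}$ that is outside of $\sbf$ altogether (i.e.\ a dyadic neighbor of a stopping cube of $\sbf$). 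For the top and bottom groups, $\mathcal{F}_b^{\mathrm{top}}$ and $\mathcal{F}_b^{\mathrm{bot}}$ consist of uniformly boundedly many dyadic cubes at each fixed scale whose projections lie in a fixed dyadic dilate of $Q(\sbf)$, so their total measure is controlled by a constant multiple of $|Q(\sbf)|$. For $\mathcal{F}_b^{\mathrm{lat}}$, the key point is that the semi-coherence of $\sbf$ forces these cubes to live near the family of maximal stopping cubes of $\sbf$ (those maximal $Q' \subseteq Q(\sbf)$ with $Q' \notin \sbf$), which are pairwise disjoint subcubes of $Q(\sbf)$; thus $\sum_{Q \in \mathcal{F}_b^{\mathrm{lat}}} |Q| \lesssim |Q(\sbf)|$, giving \eqref{cutoffbds2.eq}.

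Finally, \eqref{cutoffbds3.eq} follows immediately: on each $Q \in \mathcal{F}_b$ the derivatives of $\eta$ are supported in a set of measure $\lesssim |U_Q^{***}| \lesssim \ell(Q)^{n+2}$ with $|\nabla_X \eta| \lesssim \ell(Q)^{-1}$ and $|\partial_t \eta| \lesssim \ell(Q)^{-2}$, so that
$$\iiint_{U_Q^{***}} \big(|\nabla_X \eta| + |\partial_t \eta|\,\dist(\cdot,\pom)\big) \, \d X \, \d t \;\lesssim\; \ell(Q)^{n+1} \;\approx\; |Q|,$$
and summing over $Q \in \mathcal{F}_b$ and invoking \eqref{cutoffbds2.eq} gives the claim. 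The main obstacle is the lateral packing estimate: one must check that the dyadic cubes where $\eta$ transitions laterally are properly controlled by the maximal ``stopping cubes'' of the semi-coherent regime $\sbf$, which is where the hypothesis of semi-coherence is genuinely used, and some care is needed to ensure that a Whitney region $U_Q^{***}$ with $Q \notin \sbf_{N_1,N_2}$ only intersects $\Omega_{\sbf_{N_1,N_2}}^{***}$ for cubes $Q$ comparable in scale and location to an element of $\sbf_{N_1,N_2}$ or one of its dyadic neighbors, which is the content of the bounded-overlap property that makes the construction go through.
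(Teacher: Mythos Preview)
Your construction of $\eta$ and the overall scheme (partition-of-unity bumps, then a three-way split of $\mathcal{F}_b$) match the paper's approach. The derivation of \eqref{iscutoff.eq}, \eqref{cutoffbds1.eq}, and the reduction of \eqref{cutoffbds3.eq} to \eqref{cutoffbds2.eq} are fine.

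However, your packing argument for $\mathcal{F}_b^{\mathrm{lat}}$ has a genuine gap, and your trichotomy misses one case. First, ``the maximal stopping cubes of $\sbf$ are pairwise disjoint subcubes of $Q(\sbf)$, hence $\sum |Q| \lesssim |Q(\sbf)|$'' does not follow: a single maximal stopping cube $Q'$ can have boundary cubes $Q \in \mathcal{F}_b$ adjacent to its lateral faces at \emph{every} scale from $\ell(Q')$ down to $2^{-N_2}\ell(Q(\sbf))$ (whenever $\sbf'$-cubes persist at those scales next to $Q'$), so you must still show these pack into $|Q'|$. Second, your split omits the cubes $Q\in\mathcal{F}_b$ lying \emph{outside} $Q(\sbf)$ at intermediate scales, which are neither top, nor bottom, nor near an interior stopping cube.

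The paper's resolution is the following Whitney-type device. To each $Q \in \mathcal{F}_b$ it associates a pair $Q_{\mathrm{in}} \in \sbf' := \sbf_{N_1,N_2}$ and $Q_{\mathrm{out}} \in \dd \setminus \sbf'$ with $\ell(Q) \approx \ell(Q_{\mathrm{in}}) \approx \ell(Q_{\mathrm{out}})$ and $\dist(Q,Q_{\mathrm{in}}),\,\dist(Q,Q_{\mathrm{out}}) \lesssim \ell(Q)$. Letting $Q_{\mathrm{out}}^\star$ be a fixed central dyadic subcube of $Q_{\mathrm{out}}$ with $\ell(Q_{\mathrm{out}}^\star)\approx\ell(Q_{\mathrm{out}})$, one shows that either $Q_{\mathrm{out}}^\star$ lies inside some minimal cube $\widetilde{Q}$ of $\sbf'$ with $\dist(Q_{\mathrm{out}}^\star, (\widetilde{Q})^c) \approx \ell(Q_{\mathrm{out}}^\star)$, or $Q_{\mathrm{out}}^\star$ lies outside $Q(\sbf)$ with $\dist(Q_{\mathrm{out}}^\star, Q(\sbf)) \approx \ell(Q_{\mathrm{out}}^\star)$. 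In either case the $Q_{\mathrm{out}}^\star$'s behave like Whitney cubes (for $\widetilde{Q}$ or for $Q(\sbf)^c$ respectively) and therefore have bounded overlap; summing $|Q_{\mathrm{out}}^\star| \approx |Q|$ over each family and then over the disjoint minimal cubes $\widetilde{Q}$ (resp.\ over a fixed dilate of $Q(\sbf)$) yields \eqref{cutoffbds2.eq}. This is precisely the ``bounded-overlap property'' you flag at the end but do not supply; it is also what covers the missing exterior case.
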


The following square function estimate is a crucial ingredient in our proof of Theorems \ref{main1.thrm} and \ref{main2.thrm}. Indeed, after proving Lemma \ref{mainsfest.lem}, we will no longer have to consider whether we are proving Theorem \ref{main1.thrm} or Theorem \ref{main2.thrm}.

We recall that $K$ is the large (see \eqref{eq:K_big}) parameter in the definitions of the various fattened Whitney regions $U_Q, U_Q^*$, etc. (see Subsection \ref{whit.sec}).
\begin{lemma}\label{mainsfest.lem}
Suppose that the hypotheses of Theorem \ref{main1.thrm} or Theorem \ref{main2.thrm} hold. If $K = 2^{N}$, with $N \ge 2000$  is sufficiently large and $N_1 := 10N$  then for all $N_2 \ge N_1$ it holds
\begin{equation}\label{mnsqfnest.eq}
\iiint_{\Omega_{\sbf^{*}_{N_1,N_2}}^{**}} \big(|\partial_t u(\mbf{X})|^2 + |\nabla^2_X u(\mbf{X})|^2\big) \dist(\mbf{X}, \partial \Omega) \, \d\mbf{X} \lesssim |Q(\sbf^*)|,
\end{equation} 
with implicit constants depending on the structural constants and $K$,
and independent of $N_2$.

Moreover, if $\, \sbf \subseteq \dd$ is any semi-coherent stopping time with $\sbf \subseteq \sbf^*$ and $\ell(Q(\sbf)) \le 2^{-10N}\ell(Q(\sbf^*))$, then for any $N_3 > 0$ we have
\begin{equation}\label{locmnsqfnest.eq}
\iiint_{\Omega_{\sbf_{0,N_3}}^{**}} \big(|\partial_t u(\mbf{X})|^2 + |\nabla^2_X u(\mbf{X})|^2\big) \dist(\mbf{X}, \partial \Omega) \, \d\mbf{X} \lesssim |Q(\sbf)|, 
\end{equation}
with implicit constants depending on the structural constants and $K$.
\end{lemma}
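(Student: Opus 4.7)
The plan is to localize via the cutoff $\eta$ from Lemma~\ref{cutofffnlem.lem} adapted to $\sbf^*_{N_1,N_2}$, and to run an integration-by-parts scheme following Lewis--Nystr\"om~\cite{LN07} in the symmetric case and~\cite{HMT} in the general case. A first, easy reduction removes the $|\partial_t u|^2$ term: from $\mathcal{L}^* u = 0$ we obtain
\[
-\partial_t u \;=\; \operatorname{tr}(A^* \nabla_X^2 u) \;+\; (\operatorname{div}_X A^*)\cdot \nabla u,
\]
so pointwise $|\partial_t u|^2 \lesssim |\nabla_X^2 u|^2 + |\nabla A|^2 |\nabla u|^2$. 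Combining $|\nabla u| \lesssim 1$ from Lemma~\ref{ptwiseestinsaw4der.lem} with the $L^2$-Carleson bound~\eqref{L2Carleson.eq} on $|\nabla A|^2 \dist(\cdot,\partial\Omega)$ controls the second piece by $|Q(\sbf^*)|$, so it remains to bound $\iiint |\nabla^2 u|^2 \dist(\cdot,\partial\Omega)\,\eta \,\d\mbf{X}$.

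In the symmetric case (Theorem~\ref{main1.thrm}), pointwise diagonalization of $A(X,t)$ combined with ellipticity gives $|\nabla^2 u|^2 \lesssim_\Lambda (\operatorname{tr}(A\nabla^2 u))^2$ plus cross terms that can be absorbed; using $\mathcal{L}^* u=0$, one factor of $\operatorname{tr}(A\nabla^2 u)$ becomes $-\partial_t u$ up to a $\nabla A$-error. I would then integrate by parts in $t$, throwing $\partial_t$ onto the other factor, which after a second use of the equation returns to $-\operatorname{tr}(A\nabla^2 u)$ plus oscillation terms. The Lewis--Nystr\"om mechanism---which uses the symmetry of $A$, and is encoded in a bit of elementary linear algebra---allows the resulting identity to be closed using only $u$, without invoking an auxiliary adjoint solution. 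What remain are (i) interior errors involving $\nabla A$ and $\partial_t A$, and (ii) perimeter errors from $\nabla \eta$.

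In the non-symmetric case (Theorem~\ref{main2.thrm}), I would instead follow~\cite{HMT} and set up a bilinear pairing, replacing $(\operatorname{tr}(A^* \nabla^2 u))^2$ by $\operatorname{tr}(A^* \nabla^2 u)\cdot \operatorname{tr}(A\nabla^2 v)$, then use $\mathcal{L}^* u = 0$ together with $\mathcal{L} v = 0$ to distribute derivatives between $u$ and $v$ after integration by parts. That $u,v \approx \dist(\cdot,\partial\Omega)$ on $\Omega_{\sbf^*}^{***}$ (Lemma~\ref{gfcomplemt2.eq}) and both enjoy the derivative estimates of Lemma~\ref{ptwiseestinsaw4der.lem} makes this pairing as effective as the symmetric identity. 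The parabolic setting makes this step strictly more delicate than its elliptic counterpart, because one must carefully balance the forward pole $\xbf^+_{Q(\sbf^*)}$ of $u$ against the backward pole $\xbf^-_{Q(\sbf^*)}$ of $v$ in each integration by parts in $t$, and keep the two poles far enough from the support of $\eta$ that the derivative bounds remain valid.

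The main obstacle will be controlling the commutator-type terms generated when derivatives land on $A$: these have the schematic form $\iiint |\nabla^2 u| |\nabla u| |\nabla A| \dist(\cdot,\partial\Omega)\eta$ and $\iiint |\nabla u|^2 |\partial_t A| \dist(\cdot,\partial\Omega)^2 \eta$. They are to be handled by Cauchy--Schwarz, absorbing a small fraction of $\iiint |\nabla^2 u|^2 \dist(\cdot,\partial\Omega)\eta$ into the left-hand side, together with the $L^1$-Carleson condition of Definition~\ref{smoothL1carl.def}; it is precisely at this step that the $L^1$ hypothesis (rather than the more classical $L^2$ one) becomes essential, since one of the two derivatives of $u$ carries no extra distance weight. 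Perimeter contributions from $\nabla \eta$ are handled using~\eqref{cutoffbds2.eq}--\eqref{cutoffbds3.eq} together with Lemma~\ref{ptwiseestinsaw4der.lem}, giving overall a bound of $C|Q(\sbf^*)|$. For the local estimate~\eqref{locmnsqfnest.eq}, the hypothesis $\ell(Q(\sbf)) \le 2^{-10N}\ell(Q(\sbf^*))$ keeps us well away from the pole of $u$, so Lemma~\ref{ptwiseestinsaw4der.lem} remains valid throughout $\Omega_{\sbf_{0,N_3}}^{***}$, and the same scheme, with the cutoff now adapted to $\sbf_{0,N_3}$, returns the bound with $|Q(\sbf)|$ on the right-hand side.
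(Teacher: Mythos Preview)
Your overall architecture is right (localize with $\eta$, use Lemma~\ref{ptwiseestinsaw4der.lem} for pointwise bounds, control boundary terms via~\eqref{cutoffbds3.eq}, and absorb oscillation terms using the $L^1$-Carleson hypothesis), and your initial reduction of $|\partial_t u|^2$ to $|\nabla^2 u|^2$ via the equation is exactly what the paper does in the non-symmetric case. But the core step in both cases has a genuine gap.

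The claimed pointwise inequality $|\nabla^2 u|^2 \lesssim_\Lambda (\operatorname{tr}(A\nabla^2 u))^2 + \text{(absorbable)}$ is false: the trace is a single scalar and cannot control the full Hessian (take $A=I$ and a Hessian with zero trace). The same objection applies to your bilinear version $\operatorname{tr}(A^*\nabla^2 u)\cdot \operatorname{tr}(A\nabla^2 v)$ in the non-symmetric case. Consequently the integration-by-parts scheme you sketch, even if it closes, would only bound $\iiint(\partial_t u)^2\dist(\cdot,\partial\Omega)\,\eta$, not the Hessian term you need.

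What the paper actually does is work with quantities that \emph{are} pointwise comparable to $|\nabla^2 u|^2$. In the symmetric case the key object is
\[
\alpha \;=\; \iiint u\,(a_{i,j}\,u_{x_i,x_k})(a_{k,\ell}\,u_{x_j,x_\ell})\,\eta,
\]
and an elementary linear-algebra fact (proved in the paper's Appendix~\ref{appendix:matrices}) shows $a_{i,j}b_{i,k}a_{k,\ell}b_{j,\ell} \approx |B|^2$ for any symmetric $B$, so $\alpha \approx \iiint u\,|\nabla^2 u|^2\,\eta$. One then sets $\beta := \iiint u\,(u_t)^2\,\eta$ and shows, via repeated integration by parts using the symmetry identity $\tfrac12\partial_t(A\nabla u\cdot\nabla u)=A\nabla u\cdot\nabla u_t+\tfrac12(\partial_tA)\nabla u\cdot\nabla u$ and the equation $\mathcal{L}^*u=0$, that $\alpha+2\beta$ reduces entirely to error terms of your types $\texttt{E}_1$--$\texttt{E}_9$. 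Since $\alpha,\beta\ge 0$, this bounds both at once. In the non-symmetric case one fixes a spatial direction $\partial=\partial_{x_k}$, uses ellipticity to get $\iiint|\nabla\partial u|^2 v\eta \lesssim \iiint A^*\nabla(\partial u)\cdot\nabla(\partial u)\,v\eta$, and then integrates by parts using both $\mathcal{L}^*u=0$ and $\mathcal{L}v=0$; summing over $k$ recovers the full Hessian. In both cases the point is that the quadratic form being manipulated already encodes all second derivatives, not just the trace.
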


\begin{remark} \label{rem:poles_far_ibp} % 31-7
	For the proof of Lemma~\ref{mainsfest.lem}, it is crucial that $u$ is an adjoint solution wherever $\eta \neq 0$. And that is true (see the comment right after \eqref{defu.eq}) because our choice of pole $\mbf{X}^+_{Q(\sbf^*)}$ in \eqref{CSfortop.eq} was made with $M_\star$ so large that $\{\eta \neq 0\}$ lies way back in time from $t(\mbf{X}^+_{Q(\sbf^*)})$. Indeed, since in Lemma~\ref{mainsfest.lem} the cubes $Q$ that compose the sets $\mbf{S}^*_{N_1, N_2}$ and $\mbf{S}_{0, N_3}$ always satisfy $\ell(Q) \leq 2^{-10N}\ell(Q(\sbf^*)) = K^{-10} \ell(Q(\sbf^*))$, it is not difficult to conclude from \eqref{eq:U_Q_in_balls} that $\Omega_{\sbf^{*}_{N_1,N_2}}^{**}, \Omega_{\sbf_{0,N_3}}^{**} \subseteq Q_{K^{-7} M_\star \diam(Q(\sbf^*))}(\psi(\mbf{x}_Q), \mbf{x}_Q)$, which implies that $\mbf{X}^+_{Q(\sbf^*)}$ is indeed always far enough to the future. 
\end{remark}

The proof of Lemma \ref{mainsfest.lem} uses an integration by parts argument, which is different whether we assume the hypotheses of Theorem~\ref{main1.thrm} or Theorem~\ref{main2.thrm}. So let us separate them. Throughout the proof, we will frequently abbreviate $\nabla := \nabla_X$ and forget about the $\d\mbf{X}$, simply for brevity. 

\begin{proof}[Proof of Lemma \ref{mainsfest.lem} under the hypotheses of Theorem \ref{main1.thrm}]
Let us prove \eqref{mnsqfnest.eq} (indeed, \eqref{locmnsqfnest.eq} follows by picking the cut-off function in the next line adapted to $\sbf_{0, N_3}$ instead of $\sbf^*_{N_1,N_2}$). Let $\eta$ be the cut-off function for $\sbf^*_{N_1,N_2}$ provided by Lemma \ref{cutofffnlem.lem}. By Lemma \ref{cutofffnlem.lem} and \eqref{M2def.eq} it suffices to show 
\begin{equation}\label{mnsqfnest1.eq}
\iiint \big(|\partial_t u(\mbf{X})|^2 + |\nabla^2_X u(\mbf{X})|^2\big) \, u \eta \,  \d\mbf{X} \lesssim |Q(\sbf^*)|.
\end{equation}

Before continuing, we let $\tt E$ denote a generic term that enjoys the estimate $|{\tt E}| \lesssim |Q(\sbf^*)|$. In particular, we denote by ${\tt E}_i$ ($i = 1,2,\dots, 9$) any term with the following bounds
\begin{align*}
&|{\tt E}_1| \lesssim \iiint |\nabla u|^2 u \, |\partial_t \eta| \, \d\mbf{X}, \quad \qquad \,
|{\tt E}_2| \lesssim \iiint |\partial_t u| \, |\nabla u| \, u \, |\nabla \eta|  \, \d\mbf{X}, \quad \,\, 
|{\tt E}_3| \lesssim \iiint  |\nabla u|^3  |\nabla \eta| \, \d\mbf{X}, 
\\
&|{\tt E}_4| \lesssim \iiint u \, |\nabla A| \,  |\nabla u|^2 |\nabla \eta| \, \d\mbf{X}, \quad \;\,
|{\tt E}_5| \lesssim \iiint u  \, |\nabla u| \, |\nabla^2 u| \, |\nabla \eta| \, \d\mbf{X}, \quad 
|{\tt E}_6| \lesssim \iiint |\partial_t A| \, |\nabla u|^2 u \eta  \, \d\mbf{X}, 
\\
&|{\tt E}_7| \lesssim \iiint |\nabla A| \, |\nabla^2 u| \, |\nabla u| \, u \eta \, \d\mbf{X}, \quad \!
|{\tt E}_8| \lesssim \iiint |\nabla A|^2 |\nabla u|^2 \, u \eta \, \d\mbf{X}, \quad \;\;\;
|{\tt E}_9| \lesssim \iiint  |\nabla u|^3 |\nabla A| \, \eta \, \d\mbf{X}.
\end{align*}
Indeed, each term has the control $|{\tt E}_i| \lesssim |Q(\sbf^*)|$. 
To show that, first use point-wise bounds from Lemma~\ref{ptwiseestinsaw4der.lem}, \eqref{smoothnessonA.eq} and \eqref{iscutoff.eq}, and then finish by integrating with the nice properties of the derivatives of $\eta$ in \eqref{cutoffbds3.eq} for the terms $\tt E_1$--$\tt E_5$, and the Carleson bounds for $A$ in \eqref{carlesonmeasure.eq} for the terms $\tt E_6$--$\tt E_9$ (to use this, note that the region $\Omega_{\sbf^*}^{***}$ can be contained in a cube of the form $\mathcal{J}_{C\ell(Q(\sbf^*))}(\mbf{X}_{\mbf{S}^*})$, where $C = C(K)$ is a large constant, and $\mbf{X}_{\mbf{S}^*}$ is any point in $U_{Q(\sbf^*)}^{**}$).

We are going to modify the argument in \cite{LN07} to our variable coefficient setting. In particular we introduce two quantities (where subscripts of $u$ mean derivatives) 
\[\alpha := \iiint u \, (a_{i,j} u_{x_i,x_k})\, (a_{k,\ell}u_{x_j, x_\ell})\, \eta, 
\qquad \beta := \iiint u \, (u_t)^2 \eta.\]
Here we have adopted Einstein summation notation for convenience.
The ellipticity of $A$ yields (this requires some elementary linear algebra work, which we defer to Appendix~\ref{appendix:matrices})
\begin{equation} \label{hessianAndMixedIndices.eq}
	\alpha \approx  \iiint u \, |\nabla^2 u|^2 \eta.
\end{equation}
Thus, since $\alpha, \beta \geq 0$, to prove \eqref{mnsqfnest1.eq}, it is enough to show that
\begin{equation} \label{alpha2beta.eq}
	\alpha + 2\beta = {\tt E},
\end{equation}
where $\tt E$ satisfies the estimate $|{\tt E}| \lesssim |Q(\sbf^*)|$. 

Let us begin by treating $\beta$. 
First using the equation $\mathcal{L}^* u = 0$, then integrating by parts, we obtain
\[\beta = \iiint -(\dv A \nabla u) \, u_t u \eta 
=\iiint A \nabla u \cdot \nabla (u_t u \eta) 
= {\tt E}   + \iiint  A \nabla u \cdot \nabla u_t \, u \eta 
+ \iiint  A \nabla u \cdot \nabla u \, u_t   \eta,\]
where, after using the product rule, the error term is of type ${\tt E}_2$. Next, we will use this fact:
\begin{equation} \label{symcalc.eq}
	(1/2) \, \partial_t (A\nabla u \cdot \nabla u) = A\nabla u \cdot \nabla u_t + (1/2) (\partial_t A)\nabla u \cdot \nabla u.
\end{equation}
To verify it, just expand and remember that \textbf{$A$ is symmetric}. Using this identity to continue rewriting $\beta$, it turns out that, up to a new error term of type ${\tt E}_6$, we have
\[\beta = E  +\iiint (1/2) \, \partial_t (A\nabla u \cdot \nabla u) \, u\eta +  \iiint  A \nabla u  \cdot \nabla u \, u_t \eta.\]
Finally, integrating by parts in $t$ on the first term we have, up to a new ``error term" of type ${\tt E}_1$,
\begin{equation} \label{betasimplified.eq}
	\beta 
	=
	{\tt E} - \frac12 \iiint  A \nabla u \cdot \nabla u \, u_t \eta 
	+  \iiint  A \nabla u  \cdot \nabla u \, u_t \eta
	= {\tt E} + \frac12 \iiint  A \nabla u \cdot \nabla u \, u_t \eta.
\end{equation}

Next, our goal is to show that $\alpha$ is nearly of the same form as $\beta$, to get cancellations. First, using the product rule both for $\partial_{x_k} (a_{i, j}u_{x_i})$ and $\partial_{x_j}(a_{k, \ell} u_{x_\ell})$ simultaneously, we obtain
\[
\alpha 
=
\iiint u \, \partial_{x_k}(a_{i,j} u_{x_i})\, \partial_{x_j}(a_{k,\ell}u_{x_\ell})\, \eta
- \tt E_8 - \tt E_7 - \tt E_7,
\]
and we will abbreviate the errors by $\tt E$. Next, we integrate by parts in $x_k$ and use the product rule to obtain, up to a new error term of type $ - \tt E_4 - \tt E_5$ (when $\partial_{x_k}$ lands on $\eta$),
\[\alpha = E -  \iiint u_{x_k} (a_{i,j} u_{x_i})\, \partial_{x_j}(a_{k,\ell}u_{x_\ell})\, \eta 
-  \iiint u \, (a_{i,j} u_{x_i})\, \partial_{x_k} \partial_{x_j}(a_{k,\ell}u_{x_\ell})\, \eta =: {\tt E} + I + II.\]

We estimate $I$ and $II$ separately, starting with $II$. Using the equation $\mathcal{L}^* u = 0$ (and exchanging the order of derivatives when needed) and rewriting the resulting expression using that $A$ is symmetric, then invoking \eqref{symcalc.eq} (and hence introducing an error term of type $\tt E_6$), and lastly integrating by parts in $t$ (introducing an error of type $\tt E_1$), we have
\begin{multline*}
	II =  -  \iiint u \, (a_{i,j} u_{x_i})\, \partial_{x_j} \partial_{x_k}(a_{k,\ell}u_{x_\ell}) \, \eta 
	= \iiint u \, (a_{i,j} u_{x_i})  \, \partial_{x_j} u_t \, \eta
	=
	\iiint u \, A\nabla u \cdot \nabla u_t \, \eta 
	\\ = {\tt E} + \frac12 \iiint  u \, \partial_t (A\nabla u \cdot \nabla u)\, \eta 
	= {\tt E} - \frac12 \iiint  A \nabla u \cdot \nabla u \, u_t \eta.
\end{multline*}
We have a desirable estimate for $II$ and now turn to $I$. We start by integrating by parts in $x_j$, to obtain, up to a new error term of type ${\tt E}_3$,
\[I = {\tt E} + \iiint  u_{x_k,x_j} (a_{i,j} u_{x_i})(a_{k,\ell}u_{x_\ell})\, \eta + \iiint u_{x_k} \partial_{x_j} (a_{i,j} u_{x_i})\, a_{k,\ell}u_{x_\ell}\, \eta =: {\tt E} + I_1 + I_2,\]
By the equation $\mathcal{L}^* u = 0$ and symmetry of $A$, we simply compute
\[I_2 = - \iiint  A \nabla u \cdot  \nabla u \, u_t \eta.\]
To handle $I_1$ and get some cancellation with $I_2$ in this form, we use a similar identity to \eqref{symcalc.eq},
\[(1/2) \, \partial_{x_k} (A\nabla u \cdot \nabla u) = A\nabla u \cdot \nabla u_{x_k} + (1/2) \, (\partial_{x_k} A)\nabla u \cdot \nabla u.\]
Now, rewriting $I_1$, then using this identity (introducing an error term of type $\tt E_9$), next integrating by parts in $\partial_{x_k}$ (introducing a new error term of type $\tt E_3$), and lastly using the equation $\mathcal{L}^* u = 0$, we obtain
\begin{multline*}
	I_1 
	= 	\iiint A \nabla u_{x_k} \cdot \nabla u \, (a_{k,\ell}u_{x_\ell}) \, \eta 
	=  {\tt E} + \frac12 \iiint \partial_{x_k}(A \nabla u \cdot \nabla u)\, (a_{k,\ell}u_{x_\ell}) \, \eta
	\\ 
	= {\tt E} - \frac12 \iiint A \nabla u \cdot \nabla u \, \partial_{x_k}(a_{k,\ell} u_{x_\ell}) \, \eta  = {\tt E} + \frac12  \iiint  A \nabla u \cdot \nabla u \, u_t \eta.
\end{multline*}
Joining all the previous computations, we obtain 
\[I = {\tt E} - \frac12 \iiint  A \nabla u \cdot \nabla u \, u_t \eta.\]
and combining this with our estimate for $II$ before, we end up with
\[\alpha = {\tt E}   - \iiint  A \nabla u  \cdot \nabla u \, u_t \eta, \]
which together with \eqref{betasimplified.eq} yield the cancellation needed to obtain \eqref{alpha2beta.eq} and finish the proof.
\end{proof}

\begin{proof}[Proof of Lemma \ref{mainsfest.lem} under the hypotheses of Theorem \ref{main2.thrm}] 
In this proof we are going to follow more closely the ideas in \cite[Proposition 4.36]{HMT}. Again, \eqref{locmnsqfnest.eq} and \eqref{mnsqfnest.eq} only differ in which cut-off function we use in Lemma \ref{cutofffnlem.lem}, so we only prove \eqref{mnsqfnest.eq}. Let $\eta$ be the cut-off function for $\sbf^*_{N_1,N_2}$ introduced in Lemma \ref{cutofffnlem.lem}.

First, let us show that only the spatial derivatives are relevant. Indeed, using the equation $\mathcal{L}^*u = 0$ (which we can use in a pointwise sense because the pole of $u$ is far from the region we are working in, see Remark~\ref{rem:poles_far_ibp}) and the product rule (along with the boundedness of $A$), we obtain
\begin{multline*}
	\iiint_{\Omega_{\sbf^{*}_{N_1,N_2}}^{**}} |\partial_t u(\mbf{X})|^2 \dist(\mbf{X}, \partial \Omega) \, \d\mbf{X}
	=
	\iiint_{\Omega_{\sbf^{*}_{N_1,N_2}}^{**}} |\div(A^* \nabla u)|^2 \dist(\mbf{X}, \partial \Omega) \, \d\mbf{X}
	\\ \lesssim
	\iiint_{\Omega_{\sbf^{*}_{N_1,N_2}}^{**}} |\nabla A|^2 |\nabla u|^2 \dist(\mbf{X}, \partial \Omega) \, \d\mbf{X}
	+ \iiint_{\Omega_{\sbf^{*}_{N_1,N_2}}^{**}} |\nabla^2 u|^2 \dist(\mbf{X}, \partial \Omega) \, \d\mbf{X},
\end{multline*}
and the first term is $\lesssim |Q(\sbf^*)|$ by \eqref{ptwiseestinsaw4der.lem} and our hypothesis on $A$, namely \eqref{L2Carleson.eq}. Therefore, it suffices to deal with the spatial derivatives.  
Moreover, by the properties of $\eta$ (concretely \eqref{iscutoff.eq}) and \eqref{M2defb.eq} (recalling that $N_1 = 10 N$ to use that estimate) it suffices to show 
\begin{equation*} 
\iiint  |\nabla^2 u(\mbf{X})|^2 \, v(\mbf{X}) \, \eta \, \d\mbf{X} \lesssim |Q(\sbf^*)|.
\end{equation*}
In fact, denoting by $\partial u := \partial_{x_k} u$ an arbitrary spatial derivative of $u$, this reduces to
\begin{equation}\label{mnsqfnestns2.eq}
\iiint  |\nabla \partial u(\mbf{X})|^2 \, v(\mbf{X}) \, \eta \, \d\mbf{X} \lesssim |Q(\sbf^*)|.
\end{equation}
Using the ellipticity of $A$ and the product rule, we first obtain
\begin{multline} \label{eq:split_ibp_ns}
\iiint  |\nabla \partial u|^2\, v \eta \approx \iiint  A^*\nabla( \partial u) \cdot  \nabla (\partial u) \, v\eta
\\  = \iiint  A^* \nabla (\partial u) \cdot  \nabla(\partial u v\eta)- \frac12 \iiint  A^*\nabla ((\partial u)^2) \cdot \nabla(v\eta) =: I + II.
\end{multline}
Below we let ${\tt E}$ be a generic error term (i.e. $|{\tt E}| \lesssim |Q(\sbf^*)|$), as was in the proof of Lemma \ref{mainsfest.lem} under the hypotheses of Theorem \ref{main1.thrm}. We note that the terms ${\tt E}_i$ ($i  = 1,2,\dots 9$) are still under control if we replace any occurrence of $u$ (including its derivatives) with $v$, as these terms are controlled using the Carleson/size conditions on $A$ (Definition~\ref{smoothL1carl.def}), properties of $\eta$ (Lemma~\ref{cutofffnlem.lem}), and Lemma~\ref{ptwiseestinsaw4der.lem}.

Now we continue by handling term $I$. First, we compute
\[
I = \iiint \partial( A^*(\nabla u)) \cdot  \nabla (\partial u  v\eta) - \iiint \partial( A^*)(\nabla u) \cdot  \nabla(\partial u v\eta)
= - \iiint  A^*\nabla u \cdot  \nabla\partial(\partial u v\eta) + {\tt E},
\]
where in the first equality we have used the product rule; and in the second equality, we have integrated the first term by parts, and we have expanded the gradient in the second term to identify an error term of type $\tt E_4 + \tt E_7 + \tt E_9$. Then, using the equation $\mathcal{L}^* u = 0$ and integrating by parts again:
\[
I 
=
- \iiint \partial_t u \, \partial(\partial u v\eta) + {\tt E}
=
\iiint \partial_t (\partial u) \, \partial u \, v\eta + {\tt E}.
\]
Note the difference with respect to the computation in \cite{HMT}: we get an extra term, apart from the error term. Our goal now is to cancel it with $II$.

Now we treat $II$. Start by using the duality between $A$ and $A^*$, and then the product rule to get
\begin{multline*}
II
=
- \frac12 \iiint A \nabla (v \eta) \cdot \nabla ((\partial u)^2)
=
- \frac12 \iiint A \nabla v \cdot \nabla ((\partial u)^2 \eta) 
+ \frac12 \iiint A \nabla v \cdot \nabla v (\partial u)^2
\\ - \frac12 \iiint A \nabla \eta \cdot \nabla ((\partial u)^2) \, v
=
\frac12 \iiint \partial_t v \, (\partial u)^2 \eta
+ \tt E,
\end{multline*}
where in the last equality we have used the equation $\mathcal{L} v = 0$ in the first term, and we have noticed that the last two terms are $\tt E_3 + \tt E_5$. Then, integrate by parts in $t$ and use the product rule 
\[
II
=
- \frac12 \iiint v \, \partial_t ((\partial u)^2 \eta)
+ {\tt E}
=
\iiint v \, \partial_t (\partial u) \, \partial u \, \eta
- \frac12 \iiint v \, \partial_t v \, (\partial u)^2
+ {\tt E}, 
\]
where we notice the last term is an error of type $\tt E_1$. Therefore, putting $I$ and $II$ back in \eqref{eq:split_ibp_ns}, we obtain the desired cancellation which yields 
\[I + II = {\tt E},\]
which shows \eqref{mnsqfnestns2.eq} and finishes the proof.
\end{proof}

In the sequel, we make no distinction between whether we assume Theorem \ref{main1.thrm} or Theorem \ref{main2.thrm}.

Next, let us obtain a consequence of Lemma~\ref{ptwiseestinsaw4der.lem}, where the square function estimates take exactly the form that we will need later in Subsection~\ref{subsec:sqfunest}.

\begin{lemma}\label{impsfnest.lem} Suppose that the hypotheses of Theorem \ref{main1.thrm} or Theorem \ref{main2.thrm} hold. Let $K = 2^{N}$, with $N \ge 2000$ being sufficiently large, and $N_1 := 10N$. Then, for all $N_2 \ge N_1$, it holds
\begin{multline}\label{redderivtxest.eq}
\iiint_{\Omega_{\sbf^*_{N_1,N_2}}^*}
\Big( |\dist(\cdot, \partial \Omega)\,\partial_t u|^2
+
|\dist(\cdot, \partial \Omega)\,\nabla_X^2 u|^2
\Big)
 \, \frac{\d\mbf{X}}{\dist(\cdot, \partial \Omega)(\mbf{X})}
\\ + 
\iiint_{\Omega_{\sbf^*_{N_1,N_2}}^*}
\Big(
|\dist(\cdot, \partial \Omega)^2\,\partial_t u\, \nabla_X^2u|^2
+
|\dist(\cdot, \partial \Omega)^2\,\nabla_X\partial_t u|^2
\Big)
 \, \frac{\d\mbf{X}}{\dist(\cdot, \partial \Omega)(\mbf{X})}
\, \lesssim \,|Q(\sbf^*)|,
\end{multline}
with implicit constants depending on the structural constants and $K$,
and independent of $N_2$.
 
Moreover, if $\sbf \subseteq \sbf^*$ is semi-coherent and $\ell(Q(\sbf)) \le 2^{-10N}\ell(Q(\sbf^*))$, then for any $N_3 > 0$
\begin{multline}\label{redderivtxestsub.eq}
\iiint_{\Omega_{\sbf_{0, N_3}}^*}
\Big( |\dist(\cdot, \partial \Omega)\,\partial_t u|^2
+
|\dist(\cdot, \partial \Omega)\,\nabla_X^2 u|^2
\Big)
\, \frac{\d\mbf{X}}{\dist(\cdot, \partial \Omega)(\mbf{X})}
\\ + 
\iiint_{\Omega_{\sbf_{0, N_3}}^*}
\Big(
|\dist(\cdot, \partial \Omega)^2\,\partial_t u\, \nabla_X^2u|^2
+
|\dist(\cdot, \partial \Omega)^2\,\nabla_X\partial_t u|^2
\Big)
\, \frac{\d\mbf{X}}{\dist(\cdot, \partial \Omega)(\mbf{X})}
\, \lesssim \,|Q(\sbf)|,
\end{multline}
with implicit constants depending on the structural constants and $K$,
and independent of $N_3$.
\end{lemma}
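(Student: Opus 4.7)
The plan is to deduce \eqref{redderivtxest.eq} (the proof of \eqref{redderivtxestsub.eq} being identical after replacing $\sbf^*_{N_1,N_2}$ by $\sbf_{0,N_3}$) from the square function estimate of Lemma \ref{mainsfest.lem}, combined with the pointwise derivative bounds of Lemma \ref{ptwiseestinsaw4der.lem} and a parabolic Caccioppoli-type inequality for the equation satisfied by $\partial_t u$.

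First, I observe the identities
\[
\frac{|\dist(\cdot,\partial\Omega)\,\partial_t u|^2}{\dist(\cdot,\partial\Omega)} = \dist(\cdot,\partial\Omega)\,|\partial_t u|^2, \qquad \frac{|\dist(\cdot,\partial\Omega)\,\nabla_X^2 u|^2}{\dist(\cdot,\partial\Omega)} = \dist(\cdot,\partial\Omega)\,|\nabla_X^2 u|^2,
\]
so the first two integrals in \eqref{redderivtxest.eq} are immediately controlled by \eqref{mnsqfnest.eq}, using that $\Omega^*_{\sbf^*_{N_1,N_2}} \subseteq \Omega^{**}_{\sbf^*_{N_1,N_2}}$. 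For the third integrand, the pointwise bound $|\nabla_X^2 u(\mbf{X})| \lesssim \dist(\mbf{X},\partial\Omega)^{-1}$ from \eqref{ptwisederest.eq} gives
\[
\dist(\cdot,\partial\Omega)^3\,|\partial_t u|^2\,|\nabla_X^2 u|^2 \lesssim \dist(\cdot,\partial\Omega)\,|\partial_t u|^2,
\]
again reducing that contribution to \eqref{mnsqfnest.eq}.

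The term requiring genuine work is $\iiint \dist^3\,|\nabla_X\partial_t u|^2$: the pointwise estimate $|\nabla_X\partial_t u|\lesssim \dist^{-2}$ from \eqref{avgderest.eq} would only yield $\dist^{-1}$, which is not integrable enough to close. Instead, differentiating $\mathcal{L}^* u = 0$ in $t$ (legitimate because of the interior smoothness of $u$ together with the fact, recorded in Remark \ref{rem:poles_far_ibp}, that the pole $\mbf{X}^+_{Q(\sbf^*)}$ lies far from $\Omega^{**}_{\sbf^*_{N_1,N_2}}$) produces
\[
\mathcal{L}^*(\partial_t u) = \div\bigl((\partial_t A^*)\nabla_X u\bigr).
\]
I would then apply parabolic Caccioppoli for an adjoint solution with divergence-form forcing on a parabolic cylinder $\mathcal{J}_r(\mbf{X})$ with $r\approx\dist(\mbf{X},\partial\Omega)$ and $\mathcal{J}_{2r}(\mbf{X})\subseteq \Omega^{**}_{\sbf^*_{N_1,N_2}}$, obtaining
\[
\iiint_{\mathcal{J}_r(\mbf{X})}|\nabla_X\partial_t u|^2 \lesssim r^{-2}\iiint_{\mathcal{J}_{2r}(\mbf{X})}|\partial_t u|^2 + \iiint_{\mathcal{J}_{2r}(\mbf{X})}|\partial_t A|^2\,|\nabla_X u|^2.
\]
Multiplying by $r^3\approx \dist^3$ and summing over a bounded-overlap Whitney cover of $\Omega^*_{\sbf^*_{N_1,N_2}}$ yields
\[
\iiint_{\Omega^*_{\sbf^*_{N_1,N_2}}}\!\!\dist^3\,|\nabla_X\partial_t u|^2 \lesssim \iiint_{\Omega^{**}_{\sbf^*_{N_1,N_2}}}\!\!\dist\,|\partial_t u|^2 + \iiint_{\Omega^{**}_{\sbf^*_{N_1,N_2}}}\!\!\dist^3\,|\partial_t A|^2\,|\nabla_X u|^2.
\]
The first summand on the right is $\lesssim |Q(\sbf^*)|$ by Lemma \ref{mainsfest.lem}; the second is $\lesssim |Q(\sbf^*)|$ using $|\nabla_X u|\lesssim 1$ from \eqref{ptwisederest.eq}, the containment of $\Omega^{**}_{\sbf^*_{N_1,N_2}}$ in a parabolic cube of side $\sim\ell(Q(\sbf^*))$ (cf.\ Remark \ref{rem:poles_far_ibp}), and the $L^2$-Carleson bound \eqref{L2Carleson.eq} for $|\partial_t A|^2\dist^3$. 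The sole obstacle is this last step: the extra derivative beyond what Lemma \ref{mainsfest.lem} supplies must be traded off via Caccioppoli applied to the differentiated adjoint equation, where the forcing is just barely absorbed by the $L^2$-Carleson condition on $\partial_t A$.
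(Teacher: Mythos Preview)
Your proposal is correct and follows essentially the same approach as the paper: the first two terms come directly from Lemma \ref{mainsfest.lem}, the third from a pointwise bound in Lemma \ref{ptwiseestinsaw4der.lem} (the paper uses $|\partial_t u|\lesssim\dist^{-1}$ to reduce to the $\nabla_X^2 u$ term rather than the other way around, but this is immaterial), and the fourth via a Caccioppoli inequality for $\partial_t u$ solving the differentiated adjoint equation on Whitney cubes, with the forcing absorbed by \eqref{L2Carleson.eq}. The paper spells out the Caccioppoli step explicitly (cutoff, Young's inequality, hiding terms) and remarks that the doubled Whitney cubes sit inside $\Omega^{**}_{\sbf^*_{N_1,N_2}}$ for $K$ large, but your sketch captures all the ideas.
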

\begin{proof}
	Let us only show \eqref{redderivtxest.eq}, because \eqref{redderivtxestsub.eq} is analogous in view of Lemma~\ref{mainsfest.lem}. In turn, the first two terms in \eqref{redderivtxest.eq} are exactly those in Lemma~\ref{mainsfest.lem}. For the third one, we use the pointwise interior estimates from Lemma~\ref{ptwiseestinsaw4der.lem} (along with Lemma~\ref{mainsfest.lem})
	\[
	\iiint_{\Omega_{\sbf^*_{N_1,N_2}}^*}
	|\dist(\cdot, \partial \Omega)^2\,\partial_t u\, \nabla_X^2u|^2
	\, \frac{\d\mbf{X}}{\dist(\cdot, \partial \Omega)(\mbf{X})}
	\lesssim 
	\iiint_{\Omega_{\sbf^*_{N_1,N_2}}^*}
	|\nabla_X^2u|^2 \dist(\cdot, \partial \Omega)\, \d\mbf{X}
	\lesssim 
	|Q(\sbf^*)|.
	\]
	
	The fourth one is more elaborate, and will follow from a Caccioppoli-type inequality for $\partial_t u$. However, since our PDE have variable coefficients, $\partial_t u$ does not solve a homogeneous PDE, and we will need to use our hypothesis on $A$ (see Definition~\ref{smoothL1carl.def}) to finish the argument. Let us include the details. 	
	Using a (parabolic) Whitney decomposition
(with cubes as in \eqref{cuba}), we estimate 
	\begin{equation} \label{splitWhitney.eq}
		\iiint_{\Omega^*_{\sbf^*_{N_1, N_2}}} |\dist(\mbf{X}, \pom)^2 \nabla_X \partial_t u(\mbf{X})|^2 \frac{\d \mbf{X}}{\dist(\mbf{X}, \pom)}
		\lesssim 
		\sum_{\mathcal{J} : \mathcal{J} \cap \Omega^*_{\sbf^*_{N_1, N_2}} \neq \emptyset} \ell(\mathcal{J})^3 \iiint_{\mathcal{J}} |\nabla_X \partial_t u|^2.
	\end{equation}	
	
	Now write $v:=\partial_t u$, and fix one such Whitney cube $\mathcal{J}$. We proceed to prove a Caccioppoli inequality. Find a cut-off $\phi = \phi_{\mathcal{J}} \in C^\infty_c(\RR^n)$ satisfying $\mathbbm{1}_{\mathcal{J}} \leq \phi \leq \mathbbm{1}_{2\mathcal{J}}$ and $\ell(\mathcal{J}) |\nabla_X \phi| + \ell(\mathcal{J})^2 |\partial_t \phi| \lesssim 1$. Then compute
	\begin{equation*}
		\iiint_{2\mathcal{J}} |\nabla_X v|^2 \phi^2
		\lesssim 
		\iiint_{2\mathcal{J}} A \nabla_X v \cdot \nabla_X v \phi^2
		= 
		\iiint_{2\mathcal{J}} A \nabla_X v \cdot \nabla_X (v\phi^2)
		- \iiint_{2\mathcal{J}} A \nabla_X v \cdot \nabla_X (\phi^2) v
		=: 
		I + II. 
	\end{equation*}
	To deal with $II$, use Young's inequality with $\varepsilon_1 > 0$
	\begin{equation*}
		|II| 
		\lesssim
		\eps_1 \iiint_{2\mathcal{J}} |\nabla_X v|^2 \phi^2
		+ \eps_1^{-1} \iiint_{2\mathcal{J}} v^2 |\nabla_X \phi|^2
		\lesssim 
		\eps \iiint_{2\mathcal{J}} |\nabla_X v|^2 \phi^2
		+ \eps^{-1} \ell(\mathcal{J})^{-2} \iiint_{2\mathcal{J}} v^2,
	\end{equation*}
	and for $\varepsilon_1$ small enough we will be able to later hide the first term. For the term $I$, note that differentiating the equation that $u$ solves, we obtain that $v$ solves the (inhomogeneous) equation $\partial_t v - \div(A\nabla_X v) = \div((\partial_t A)\nabla_X u)$. Therefore, interpreting that equation in the weak sense, we get
	\begin{equation*}
		I 
		= 
		- \iiint_{2\mathcal{J}} v \partial_t (v \phi^2)
		+ \iiint_{2\mathcal{J}} (\partial_t A) \nabla_X u \cdot \nabla_X (v\phi^2)
		=:
		I_1 + I_2. 
	\end{equation*}
	The term $I_1$ is easy to handle. Using the product rule and an integration by parts, we obtain
	\begin{equation*}
		|I_1|
		\leq
		\big| \frac12 \iiint_{2\mathcal{J}} \partial_t (v^2) \phi^2 \big|
		+ \big|\iiint_{2\mathcal{J}} v^2 \partial_t (\phi^2) \big|
		=
		\frac32 \, \big|\iiint_{2\mathcal{J}} v^2 \partial_t (\phi^2)\big|
		\lesssim 
		\ell(\mathcal{J})^{-2} \iiint_{2\mathcal{J}} v^2,
	\end{equation*}
	In turn, for $I_2$, using the product rule and Young's inequality with $\varepsilon_2 > 0$, we get
	\begin{multline*}
		I_2
		=
		\iiint_{2\mathcal{J}} (\partial_t A) \nabla_X u \cdot \nabla_X v\phi^2
		+ 2\iiint_{2\mathcal{J}} (\partial_t A) \nabla_X u \cdot \nabla_X \phi v \phi
		\\ \lesssim
		\eps_2 \iiint_{2\mathcal{J}} |\nabla_X v|^2 \phi^2
		+ \eps_2^{-1} \iiint_{2\mathcal{J}} |\partial_t A|^2 |\nabla_X u|^2 \phi^2
		+ \iiint_{2\mathcal{J}} v^2 |\nabla_X \phi|^2
		+ \iiint_{2\mathcal{J}} |\partial_t A|^2  |\nabla_X u|^2 \phi^2
		\\ \leq 
		\eps_2 \iiint_{2\mathcal{J}} |\nabla_X v|^2 \phi^2
		+ \ell(\mathcal{J})^{-2} \iiint_{2\mathcal{J}} v^2 
		+ C(\eps_2) \iiint_{2\mathcal{J}} |\partial_t A|^2,
	\end{multline*}
	where in the last estimate we have used the estimates for $u$ from Lemma~\ref{ptwiseestinsaw4der.lem} (because points in $\mathcal{J}$ fall into the assumptions of this lemma because $\mathcal{J} \cap \Omega^*_{\sbf^*_{N_1, N_2}} \neq \emptyset$ with $N_1$ large enough). Then, for $\eps_2$ small enough, we will hide the first term.
	
	Putting all the above estimates together back in \eqref{splitWhitney.eq} (and hiding the appropriate terms as mentioned before), and recalling that $\phi_{\mathcal{J}} \equiv 1$ in $\mathcal{J}$ and that $v = \partial_t u$, we obtain 
	\begin{multline*}
		\iiint_{\Omega^*_{\sbf^*_{N_1, N_2}}} |\dist(\mbf{X}, \pom)^2 \nabla_X \partial_t u(\mbf{X})|^2 \frac{\d \mbf{X}}{\dist(\mbf{X}, \pom)}
		\lesssim 
		\sum_{\mathcal{J} : \mathcal{J} \cap \Omega^*_{\sbf^*_{N_1, N_2}} \neq \emptyset} \ell(\mathcal{J})^3 \iiint_{2\mathcal{J}} |\nabla_X \partial_t u|^2 \phi_{\mathcal{J}}^2
		\\ \lesssim 
		\sum_{\mathcal{J} : \mathcal{J} \cap \Omega^*_{\sbf^*_{N_1, N_2}} \neq \emptyset} \ell(\mathcal{J})^3 \left( \ell(\mathcal{J})^{-2} \iiint_{2\mathcal{J}} |\partial_t u|^2  + \iiint_{2\mathcal{J}} |\partial_t A|^2 \right)
		\\ \lesssim 
		\iiint_{\Omega^{**}_{\sbf^*_{N_1, N_2}}} |\partial_t u|^2 \dist(\cdot, \pom) \d\mbf{X}
		+ \iiint_{\Omega^{**}_{\sbf^*_{N_1, N_2}}} |\partial_t A|^2 \dist(\cdot, \pom)^3 \d\mbf{X}
		\lesssim 
		|Q(\sbf^*)|,
	\end{multline*}
where in the next-to-last step, we used that, for
a sufficiently large choice of the parameter $K$, all the doubled Whitney boxes
$2\mathcal{J}$, appearing in the sum, are 
contained in $\Omega^{**}_{\sbf^*_{N_1, N_2}}$,
 and then in the last step we used
 Lemma~\ref{mainsfest.lem} and our hypothesis on $A$ (namely \eqref{L2Carleson.eq}). 
\end{proof}

\subsection{\for{toc}{\small}Refinement of the corona decomposition to obtain vertical growth of $u$} \label{sec:second_corona}

We also deduce the following from Lemma \ref{mainsfest.lem}.
\begin{lemma}\label{fkwhsa.lem}
Let $\sbf^*$ from Lemma~\ref{pmcorona.eq} (or \ref{pmcorona2.eq}). Suppose $K = 2^N$, with $N \ge 2000$ sufficiently large. Then there exists a constant $C'' \geq 1$ (depending on $K$)
such the following holds.
The stopping time regime $\sbf^*$ can be decomposed as $\sbf^* = \mathcal{G}_{\sbf^*} \sqcup \mathcal{B}_{\sbf^*}$, where the ``bad cubes'' $\mathcal{B}_{\sbf^*}$ 
contain all the cubes $Q \in \sbf^*$ with $\ell(Q) \geq 2^{-10N} \ell(Q(\sbf^*))$ and moreover
satisfy a packing condition
\begin{equation}\label{degpack.eq}
\sum_{Q \in \mathcal{B}_{\sbf^*}, Q \subseteq P} |Q| \le C'' |P|, \quad \forall P \in \dd
\end{equation}
and for the ``good cubes'' $Q \in \mathcal{G}_{\sbf^*}$ it holds (for $c_1$ from \eqref{nondegpt.eq})
\begin{equation}\label{nondegwhluq.eq}
\partial_{y_0} u(\mbf{Y}) \geq c_1/8, \quad \quad \forall \mbf{Y} \in U_Q.
\end{equation}
\end{lemma}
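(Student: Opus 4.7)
My approach is to partition $\sbf^*$ using a local square function as the selection quantity, and then bootstrap the resulting $L^2$ control into a pointwise lower bound on the good cubes. First I define
\[
b(Q):=\iiint_{U_Q^*}\bigl(|\nabla^2 u|^2\dist(\cdot,\pom) + |\partial_t\nabla u|^2\dist(\cdot,\pom)^3\bigr)\d\mbf{X},
\]
and for a small $\eps>0$ to be fixed at the end, set
\[
\mathcal{B}_1:=\{Q\in\sbf^*:\ell(Q)\ge 2^{-10N}\ell(Q(\sbf^*))\},\quad \mathcal{B}_2:=\{Q\in\sbf^*\setminus\mathcal{B}_1:b(Q)>\eps|Q|\},
\]
and $\mathcal{B}_{\sbf^*}:=\mathcal{B}_1\cup\mathcal{B}_2$, $\mathcal{G}_{\sbf^*}:=\sbf^*\setminus\mathcal{B}_{\sbf^*}$. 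By construction $\mathcal{B}_{\sbf^*}$ contains all cubes of size $\ge 2^{-10N}\ell(Q(\sbf^*))$, as required.

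For the packing \eqref{degpack.eq}, the large family $\mathcal{B}_1$ is handled by a crude dyadic count: its cubes lie at one of the $10N+1$ scales between $\ell(Q(\sbf^*))$ and $2^{-10N}\ell(Q(\sbf^*))$, and at each scale the cubes contained in any $P\in\dd$ pack by volume, giving $\sum_{Q\in\mathcal{B}_1,Q\subseteq P}|Q|\lesssim_K|P|$. For $\mathcal{B}_2$ I use $|Q|<\eps^{-1}b(Q)$ together with the bounded overlap of $\{U_Q^*\}_{Q\in\sbf^*}$ and invoke Lemma \ref{impsfnest.lem}: when $P\subseteq Q(\sbf^*)$ and $\ell(P)\le 2^{-10N}\ell(Q(\sbf^*))$, the local estimate \eqref{redderivtxestsub.eq} applied to the semi-coherent sub-regime $\sbf^*\cap\dd(P)$ (with top $P$) gives $\sum_{Q\in\mathcal{B}_2,Q\subseteq P}b(Q)\lesssim_K|P|$; in the remaining cases (including $P\supseteq Q(\sbf^*)$ or $P$ coarser than $2^{-10N}\ell(Q(\sbf^*))$) the global estimate \eqref{redderivtxest.eq} together with the trivial bound $|Q(\sbf^*)|\lesssim_K|P|$ suffices.

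For \eqref{nondegwhluq.eq} and $Q\in\mathcal{G}_{\sbf^*}$, Lemma \ref{fndndpt.lem} applies since $\ell(Q)<K^{-10}\ell(Q(\sbf^*))$ and produces some $\mbf{Y}_0\in U_Q$ with $\partial_{y_0}u(\mbf{Y}_0)\ge c_1$; I must propagate this one-point bound uniformly to $U_Q$. Writing $f:=\partial_{y_0}u$ and $c_Q:=\bariiint_{U_Q^*}f$, the change of variables $\tilde{x}_0=x_0-\psi(x,t)$ turns $U_Q^*$ into an essentially parabolic cube of size $\approx_K\ell(Q)$, on which parabolic Poincar\'e combined with $\dist(\cdot,\pom)\approx_K\ell(Q)$ yields
\[
\iiint_{U_Q^*}|f-c_Q|^2\lesssim_K\ell(Q)^2\iiint_{U_Q^*}|\nabla f|^2+\ell(Q)^4\iiint_{U_Q^*}|\partial_t f|^2\lesssim_K\ell(Q)\,b(Q)\le\eps\,\ell(Q)^{n+2}.
\]
Lemma \ref{ptwiseestinsaw4der.lem} in turn gives $|\nabla f|\lesssim\ell(Q)^{-1}$ and $|\partial_t f|\lesssim\ell(Q)^{-2}$ on $U_Q^{***}$, so for any $\mbf{Y}\in U_Q$ and any parabolic ball $B=B(\mbf{Y},\delta\ell(Q))\subset U_Q^*$ (valid for small $\delta$ depending on $K$),
\[
\bigl|f(\mbf{Y})-\bariiint_B f\bigr|\lesssim\delta,\qquad \bigl|\bariiint_B f-c_Q\bigr|\le\Bigl(\bariiint_B|f-c_Q|^2\Bigr)^{\!1/2}\lesssim_K\delta^{-(n+2)/2}\sqrt{\eps}.
\]
Balancing with $\delta\sim\eps^{1/(n+4)}$ gives $|f(\mbf{Y})-c_Q|\le C_K\eps^{1/(n+4)}$ uniformly in $\mbf{Y}\in U_Q$, for some $C_K$ depending only on structural constants and $K$. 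Applying this at $\mbf{Y}_0$ yields $c_Q\ge c_1-C_K\eps^{1/(n+4)}$, whence $\partial_{y_0}u(\mbf{Y})\ge c_1-2C_K\eps^{1/(n+4)}$ on all of $U_Q$; fixing $\eps$ small enough so that $2C_K\eps^{1/(n+4)}\le 7c_1/8$ gives \eqref{nondegwhluq.eq}.

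The main delicate point is the final bootstrap: trading the $L^2$ smallness coming from $b(Q)\le\eps|Q|$ against the Lipschitz loss from the interior second-derivative bound, which forces $\eps$ to be chosen in terms of the structural constant $c_1$ of Lemma \ref{fndndpt.lem}. A secondary, largely routine, issue is justifying parabolic Poincar\'e on the distorted slab $U_Q^*$ through the straightening change of variables $\tilde{x}_0=x_0-\psi(x,t)$, which only absorbs $\|\psi\|_{\Lip(1,1/2)}$-dependent constants.
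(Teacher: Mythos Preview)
Your proof is correct, and the overall skeleton (define bad cubes via a local square-function threshold, pack them via Lemma~\ref{impsfnest.lem}, and propagate the single-point lower bound from Lemma~\ref{fndndpt.lem} across $U_Q$) matches the paper. The difference lies in the propagation step. The paper exploits a special feature of Lemma~\ref{fndndpt.lem}: for \emph{any} $\mbf{x}\in 10^5Q$ it produces a good point $(y_0,\mbf{x})$ on the \emph{same vertical line}. So if $\partial_{x_0}u\le c_1/8$ at some $(x_0,\mbf{x})\in U_Q$, one thickens both points to small cubes (using the pointwise bound on $\nabla^2 u$), and then a single application of the fundamental theorem of calculus in the $x_0$-direction, averaged over the small cube in $(x,t)$, gives $c_1/4\lesssim \|\nabla^2 u\|_{L^1(U_Q^*)}\lesssim_K \eps^{1/2}$ by Cauchy--Schwarz. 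This uses only the $|\nabla^2 u|^2\dist$ part of the square function and avoids Poincar\'e entirely.

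Your route---Poincar\'e on $U_Q^*$ for $f=\partial_{y_0}u$, then balancing the $L^2$ smallness against the Lipschitz bound---is more robust (it does not rely on the vertical-line alignment), but it costs you the extra term $|\partial_t\nabla u|^2\dist^3$ in $b(Q)$ and the Poincar\'e justification. On that last point, be careful: the shear $\tilde{x}_0=x_0-\psi(x,t)$ is not the cleanest justification, because $\partial_t\psi$ need not be bounded for a $\Lip(1,1/2)$ function, so $\partial_t\tilde f$ picks up an uncontrolled factor. The Poincar\'e inequality on $U_Q^*$ \emph{does} hold with constants depending only on $K$ and $\|\psi\|_{\Lip(1,1/2)}$, but the honest argument is a direct path construction inside $U_Q^{**}$ (first move in $x_0$, then in $x$, then in $t$ at fixed $x_0$, then a final $x_0$-adjustment), not the change of variables. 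With that caveat your argument goes through.
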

\begin{proof}
Let $\eps > 0$ be small to be chosen, possibly depending on the structural constants and $K$.
We define $\mathcal{B}_{\sbf^*}$ as the collection of cubes $Q \in \sbf^*$ that are either ``large''
\begin{equation}\label{toobig4est.eq}
\ell(Q) \ge 2^{-10N}\ell(Q(\sbf^*)) =  K^{-10}\ell(Q(\sbf^*))
\end{equation}
or the Green's function $u$ has some (undesired) oscillation at their scale, in the form
\begin{equation}\label{badosc.eq}
\iiint_{U_Q^*} (|\nabla^2 u|^2 + |\partial_t u|^2) \dist(\mbf{X}, \partial\Omega)\, \d\mbf{X} \ge \eps |Q|.
\end{equation}
The cubes in $\mathcal{B}_{\sbf^*}$ satisfying \eqref{toobig4est.eq} pack trivially. Then, if we call $\mathcal{B}'_{\sbf^*}$ the cubes for which \eqref{badosc.eq} holds, we may use the bounded overlap of the $U_Q^*$ regions and \eqref{locmnsqfnest.eq} to obtain 
\begin{equation} \label{packingsqfun.eq}
	\sum_{Q \in \mathcal{B}'_{\sbf^*}, Q \subseteq P} |Q| \le \eps^{-1} \sum_{Q \in \mathcal{B}'_{\sbf^*}, Q \subseteq P} \iiint_{U_Q^*} (|\nabla^2 u|^2 + |\partial_t u|^2) \dist(\mbf{X}, \partial\Omega) \d \mbf{X} \lesssim \eps^{-1} |P|, \quad \forall P \in \mathbb{D},
\end{equation}
Indeed, to use \eqref{locmnsqfnest.eq}, we may basically assume that $P \in \sbf^*$ with $\ell(P) \le 2^{-10N}\ell(Q(\sbf^*))$; for if this were not the case (let us quickly sketch the standard argument), then either 
$\{Q \in \mathcal{B}'_{\sbf*}, Q \subseteq P\}= \emptyset$,
or else we could apply the estimate to the collection $\{P'\}$ of maximal cubes in
$\{Q \in \mathcal{B}'_{\sbf^*}, Q \subseteq P\}$
and sum over $P'$, which are disjoint subcubes of $P$.

In any case, \eqref{packingsqfun.eq} above shows the packing of $\mathcal{B}_{\sbf^*}$ with constant depending on $\eps$. Therefore, we need to show that \eqref{nondegwhluq.eq} holds for $Q \in \sbf^* \setminus \mathcal{B}_{\sbf^*} =: \mathcal{G}_{\sbf^*}$, provided $\eps$ is chosen sufficiently small. 

To this end, for the sake of a contradiction, suppose there is $\mbf{X} = (x_0, \mbf{x}) \in U_Q$ for $Q \in \mathcal{G}_{\sbf^*}$ with
\begin{equation*}
\partial_{x_0} u(\mbf{X}) \le c_1/8.
\end{equation*}
Then, by construction of $U_Q$ (see \eqref{whitdef.eq}), it holds $\mbf{x} \in 10^5Q$. Therefore, Lemma \ref{fndndpt.lem} gives the existence of some point $\mbf{Y} = (y_0,\mbf{x}) \in U_Q$ such that 
\[\partial_{x_0} u(\mbf{Y}) \ge c_1.\]
Now, by the estimates for the second derivatives of $u$ shown in Lemma~\ref{ptwiseestinsaw4der.lem} (note that $\ell(Q) \leq K^{-10}\ell(Q(\sbf^*))$ since $Q \in \mathcal{G}_{\sbf^*}$), there exists a small enough $c = c(K) > 0$ so that
\begin{equation}\label{degnearby.eq}
\partial_{x_0} u(\mbf{X}') \le c_1/4, \quad \forall \mbf{X}' \in \mathcal{J}_{c\ell(Q)}(\mbf{X}), 
\quad \text{ and } \quad 
\mathcal{J}_{c\ell(Q)}(\mbf{X})\subseteq U_Q^*,
\end{equation}
and similarly
\begin{equation}\label{nondegnearby.eq}
\partial_{x_0} u(\mbf{Y}') \ge c_1/2, \quad \forall \mbf{Y}' \in \mathcal{J}_{c\ell(Q)}(\mbf{Y}), 
\quad \text{ and } \quad \mathcal{J}_{c\ell(Q)}(\mbf{Y}) \subseteq U_Q^*.
\end{equation}
Now, recalling that $Q_{c\ell(Q)}(\mbf{x})$ is the projection of $\mathcal{J}_{c\ell(Q)}$ into space-time $\mathbb{R}^n$, define $\theta_0 := y_0 - x_0$,
so that  the fundamental theorem of calculus and the estimates \eqref{degnearby.eq} and \eqref{nondegnearby.eq} yield
\begin{equation*}
c_1/4 
\le \frac{1}{|Q_{c\ell(Q)}(\mbf{x})|} \iint_{Q_{c\ell(Q)}(\mbf{x})} \int_0^{\theta_0} \partial_{z_0}^2 u(x_0 + z_0,\mbf{z}) \, \d z_0 \, \d\mbf{z} 
\lesssim_K |Q|^{-1} \iiint_{U_Q^*}  |\nabla^2 u (\mbf{Z})| \, \d\mbf{Z} 
\lesssim_K \eps^{1/2},
\end{equation*}
where in the last estimate we have used Cauchy-Schwarz followed by the opposite inequality to \eqref{badosc.eq}, because this time $Q \in \mathcal{G}_{\sbf^*}$ (and also the easy fact that $\dist(\mbf{Z}, \pom) \approx_K \ell(Q)$ for $\mbf{Z} \in U_Q^*$, coming from Lemma~\ref{distgraphlem.lem}).
Therefore, for $\eps = \eps(c_1, K)$ sufficiently small, we get the contradiction that finishes the proof of the lemma. 
\end{proof}

Now let us break up $\mathcal{G}_{\sbf^*}$ into stopping time regimes, so that we obtain a coronization.
\begin{lemma}\label{graphregimes.lem}
In the conditions of Lemma \ref{fkwhsa.lem}, we can write
\[\mathcal{G}_{\sbf^*} = \sqcup_j \; \sbf_j,\]
where each $\sbf_j$ is a semi-coherent stopping time regime 
and the maximal cubes $\{Q(\sbf_j)\}$ pack, i.e.,
\[\sum_{Q(\sbf_j): Q(\sbf_j) \subseteq P} |Q(\sbf_j)| \lesssim |P|, \quad \forall P \in \dd,\]
with implicit constants depending on the structural constants and $K$.
\end{lemma}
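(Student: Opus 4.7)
The plan is a standard stopping-time decomposition of $\mathcal{G}_{\sbf^*}$ into maximal ``connected'' subfamilies, reducing the packing of the tops to the packing of $\mathcal{B}_{\sbf^*}$ from \eqref{degpack.eq}. First, for each $Q \in \mathcal{G}_{\sbf^*}$, I would define its top $Q^\sharp(Q)$ to be the largest dyadic ancestor $R$ of $Q$, with $R \subseteq Q(\sbf^*)$, such that every intermediate cube $R'$ satisfying $Q \subseteq R' \subseteq R$ belongs to $\mathcal{G}_{\sbf^*}$. Since $Q$ itself qualifies, $Q^\sharp(Q)$ is well-defined. Letting $\mathcal{M}$ denote the resulting family of tops, I set
\[
\sbf(Q^\sharp) := \{Q \in \mathcal{G}_{\sbf^*} : Q^\sharp(Q) = Q^\sharp\}, \qquad Q^\sharp \in \mathcal{M}.
\]
By construction, $\{\sbf(Q^\sharp)\}_{Q^\sharp \in \mathcal{M}}$ partitions $\mathcal{G}_{\sbf^*}$, and each $\sbf(Q^\sharp)$ has $Q^\sharp$ as its unique maximal cube. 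Semi-coherence is immediate from the ``interval'' nature of the definition: if $Q, R \in \sbf(Q^\sharp)$ and $Q \subseteq R'' \subseteq R$, then $R''$ lies on the chain from $Q$ up to $Q^\sharp$, hence belongs to $\mathcal{G}_{\sbf^*}$ and must satisfy $Q^\sharp(R'') = Q^\sharp$.

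For the packing, fix $P \in \dd$. Either $Q^\sharp = Q(\sbf^*)$ (contributing at most $|P|$ to the sum, in the case $Q(\sbf^*) \subseteq P$), or else $Q^\sharp \subsetneq Q(\sbf^*)$; in the latter situation, the dyadic parent $\widehat{Q^\sharp}$ is still contained in $Q(\sbf^*)$ and therefore belongs to $\sbf^*$ by the semi-coherence of $\sbf^*$, while the very maximality built into the definition of $Q^\sharp(Q)$ forces $\widehat{Q^\sharp} \in \mathcal{B}_{\sbf^*}$. If, in addition, $Q^\sharp \subsetneq P$, then $\widehat{Q^\sharp} \subseteq P$. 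Since a given parabolic dyadic cube has at most a dimensional-constant number of children, I obtain
\[
\sum_{\sbf_j \,:\, Q(\sbf_j) \subseteq P} |Q(\sbf_j)| \;\leq\; |P| \,+\, C_n \!\!\sum_{R \in \mathcal{B}_{\sbf^*},\, R \subseteq P} |R| \;\lesssim\; |P|,
\]
where the last inequality is precisely \eqref{degpack.eq}. This gives the required packing bound.

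There is no genuine obstacle at this step: the construction and the packing estimate are purely formal consequences of the semi-coherence of $\sbf^*$ together with the ``bad cube'' packing established via the square function estimate of Lemma \ref{mainsfest.lem}. All the real work was done in Lemmas \ref{mainsfest.lem} and \ref{fkwhsa.lem}; the present lemma merely repackages that information into the corona language that will eventually be fed into Lemma \ref{coronaenough.lem}.
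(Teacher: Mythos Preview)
Your proof is correct and follows essentially the same approach as the paper: both identify the stopping-time regimes as the maximal ``good chains'' inside $\sbf^*$, observe that each top cube (other than possibly $Q(\sbf^*)$ itself) has its dyadic parent in $\mathcal{B}_{\sbf^*}$, and then deduce the packing of tops from \eqref{degpack.eq}. The only cosmetic difference is that the paper phrases the construction top-down (iterating the stopping procedure below maximal bad cubes), while you phrase it bottom-up via $Q^\sharp(Q)$; also, your case $Q^\sharp = Q(\sbf^*)$ is in fact vacuous here since $Q(\sbf^*)\in\mathcal{B}_{\sbf^*}$ by Lemma~\ref{fkwhsa.lem}, but this is harmless.
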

\begin{proof} 
This proof is standard, too. Let us sketch it quickly. Starting at $Q(\sbf^*)$, we call $\mathcal{F} = \{Q_j\}_j$ the family of maximal cubes (with respect to containment) in $\mathcal{B}_{\sbf^*} \cap \dd(Q(\sbf^*))$. Then, if we call the first stopping-time regime $\sbf := \dd_{\mathcal{F}, Q(\sbf^*)}$ as in \eqref{saw4pm.eq}, it is easy to see that $\sbf$ is semi-coherent (follows directly from maximality of $\mathcal{F}$). After this first step, we iterate the same stopping-time procedure below each $Q_j$, so that every time we find a new stopping-time regime $\sbf$, its top cube $Q(\sbf)$ is right below some cube in $\mathcal{B}_{\sbf^*}$ --with the exception of the first level $Q(\sbf^*)$--. In this way, after iterating the procedure and exhausting $\sbf^*$, the top cubes $Q(\sbf)$ pack because their measure is controlled by that of their parents, which are $\mathcal{B}_{\sbf^*}$ cubes which already pack by \eqref{degpack.eq}. 
\end{proof}

\subsection{\for{toc}{\small}Reduction to finding regular Lip(1,1/2) approximating graphs in each stopping-time} \label{sec:reduction_corona}

To conclude this section, we set the stage for the remainder of the proof of Theorems \ref{main1.thrm} and \ref{main2.thrm}. In particular, Steps 3 and 4 are aimed at showing the following (concretely, it will follow from Lemmas~\ref{psiSclose.lem} and \ref{psiSlip.lem}, and Proposition~\ref{regular.prop}).
\begin{proposition}\label{bigprop.prop}
Suppose that $K = 2^N$ with $N \ge 2000$ sufficiently large. Then there exist constants $b_1,b_2$ and $C_1$ (depending on $K$) 
such that the following holds.
For each $\sbf$ produced in Lemma \ref{graphregimes.lem}, there exists a regular $\Lip(1,1/2)$ function $\psi_{\sbf}$, with graph $\Sigma_\sbf$, such that 
\begin{equation} \label{eq:approx_graph2}
	\sup_{\xbf \in \mbf{\Psi}(2Q)} \dist(\xbf, \Sigma_{\mbf{S}})
	\leq 
	C_1 \diam(Q), \quad \forall Q \in \sbf.
\end{equation}
\end{proposition}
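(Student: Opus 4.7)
The plan is to construct $\psi_{\sbf}$ using a level set of the normalized Green's function $u$, and then verify three properties which I expect to be the content of Lemmas~\ref{psiSclose.lem}, \ref{psiSlip.lem} and Proposition~\ref{regular.prop}: proximity of the graph $\Sigma_\sbf$ to $\Sigma$ on $\sbf$, the Lip$(1, 1/2)$ bound on $\psi_\sbf$, and the regular Lip$(1, 1/2)$ condition $\mathcal{D}_t\psi_\sbf \in \BMO$. Together with the characterization in Remark~\ref{PUR} (identifying regular Lip$(1, 1/2)$ functions, in the graph setting, with parabolic uniformly rectifiable graphs, and hence with the corona hypothesis of Lemma~\ref{coronaenough.lem}), these will yield Proposition~\ref{bigprop.prop}.

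For the construction I would exploit the key non-degeneracy $\partial_{x_0} u \geq c_1/8$ on $U_Q$ for $Q \in \mathcal{G}_{\sbf^*}$ (Lemma~\ref{fkwhsa.lem}), together with the existence of level points provided by Lemma~\ref{fndndpt.lem}, to apply the implicit function theorem. Fix a small constant $c_0 = c_0(K)$ and, adapting to the scale of each point $\mbf{x}$ in a suitable neighborhood of the projection of $Q(\sbf)$, define $\psi_\sbf(\mbf{x})$ by taking the level $\{u = \lambda\}$ with $\lambda$ comparable to the local scale of $\sbf$ near $\mbf{x}$ (and equal to $\psi(\mbf{x})$ on the contact set where $\sbf$ reaches arbitrarily small scales); one then extends $\psi_\sbf$ globally by a standard Lipschitz extension. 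Since $u \approx \dist(\cdot, \partial\Omega) \approx x_0 - \psi(\mbf{x})$ throughout $\Omega_{\sbf}^{**}$ by Lemmas~\ref{gfcomplemt1.eq}/\ref{gfcomplemt2.eq} and \ref{distgraphlem.lem}, the locally chosen level set lies within $O(\ell(Q))$ of $\Sigma$ for every $Q \in \sbf$, giving the required proximity \eqref{eq:approx_graph2}.

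The Lip$(1, 1/2)$ bound (Lemma~\ref{psiSlip.lem}) follows from differentiating the defining relation $u(\psi_\sbf(\mbf{x}), \mbf{x}) = \lambda$ and applying the implicit function theorem: the spatial Lipschitz estimate $|\nabla_x \psi_\sbf(\mbf{x})| = |\nabla_x u|/|\partial_{x_0} u| \lesssim 1$ follows from \eqref{ptwisederest.eq} and \eqref{nondegwhluq.eq}, while the Hölder-$1/2$ in time is obtained by converting temporal oscillation of $u$ on vertical slices (via interior parabolic regularity of $u$) into horizontal displacement of the level set, again using the uniform lower bound on $\partial_{x_0} u$.

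The main obstacle is proving $\mathcal{D}_t\psi_\sbf \in \BMO$ (Proposition~\ref{regular.prop}). I would verify this through the Carleson-measure characterization of Remark~\ref{PUR}: it is enough to establish \eqref{Carlestforh.eq} for $\psi_\sbf$ uniformly over parabolic cubes contained in (a large dilate of) $Q(\sbf)$. Differentiating the identity $u(\psi_\sbf(\mbf{x}), \mbf{x}) = \lambda$ twice and rearranging, the $\beta$-numbers of $\psi_\sbf$ at scale $r$ can be dominated above by Whitney-scale averages of a weighted combination of $|\nabla_X^2 u|^2 + |\partial_t u|^2 + |\partial_t \nabla_X u|^2\dist(\cdot, \partial\Omega)^2$, weighted by $\dist(\cdot, \partial\Omega)$, over a Whitney-fattened slab of $\Omega^*_\sbf$. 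Summing over a Whitney decomposition and invoking the square-function estimates from Lemma~\ref{impsfnest.lem} would then deliver the Carleson bound with constant depending only on the structural constants and $K$. The delicate technical point is ensuring that the second-derivative computations in the implicit function theorem produce precisely those quantities controlled by Lemma~\ref{impsfnest.lem}; this is the reason for the inclusion of the mixed terms $|\partial_t u\,\nabla_X^2 u|^2\dist^2$ and $|\nabla_X\partial_t u|^2\dist^2$ in \eqref{redderivtxest.eq}.
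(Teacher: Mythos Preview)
Your construction and the verification of Lip$(1,1/2)$ and proximity are broadly aligned with the paper, though the paper makes your phrase ``level comparable to the local scale of $\sbf$'' precise via a regularized stopping-time distance $h(\mbf{x}) \approx \inf_{Q\in\sbf}(\dist(\mbf{x},Q)+\diam(Q))$ (Lemma~\ref{regdistlem.lem}), setting $\psi_\sbf(\mbf{x}):=\psi(h(\mbf{x});\mbf{x})\,\phi(\mbf{x})$ where $\psi(r;\cdot)$ is the implicitly defined level-set function and $\phi$ a cutoff adapted to $Q(\sbf)$. The regularity of $h$ itself (in particular $\mathcal{D}_t h\in\BMO$) is essential later and is not something you mention.

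For the regularity $\mathcal{D}_t\psi_\sbf\in\BMO$, your route via the $\beta$-number Carleson characterization is genuinely different from the paper's. The paper proves the BMO bound directly, via the John--Str\"omberg inequality and a four-case analysis on the position and scale of the testing cube $Q$ relative to $\sbf$. In the principal case ($Q$ comparable to a nearby $\sbf$-cube), the argument localizes $\mathcal{D}_t$ to $\mathcal{D}_t^Q$, introduces a further smoothed level-set function $\tilde\psi(r;\mbf{x}):=\psi^\heart(r+P_{\gamma r}h(\mbf{x});\mbf{x})$, integrates by parts twice in the height variable $r$, and pairs the result with Littlewood--Paley square functions of the test function; the needed square-function control on $\tilde\psi$ (Lemma~\ref{atildeprop.lem}) comes from Proposition~\ref{psiheartbd.prop}, which is where Lemma~\ref{impsfnest.lem} enters via a change of variables $(r,\mbf{x})\mapsto(\psi(r;\mbf{x}),\mbf{x})$.

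Your $\beta$-number approach has a gap at the key step. You write the defining identity as $u(\psi_\sbf(\mbf{x}),\mbf{x})=\lambda$ with $\lambda$ constant when you differentiate, but your own construction requires a \emph{varying} level $\lambda=h(\mbf{x})$; differentiating the correct identity $u(\psi_\sbf(\mbf{x}),\mbf{x})=h(\mbf{x})$ produces cross-terms involving $\nabla h$, $\partial_t h$, $\nabla^2 h$, $\partial_t\nabla h$, which are \emph{not} controlled by the square function of $u$ alone. The paper absorbs precisely these contributions by building $h$ to be regular Lip$(1,1/2)$ (Lemma~\ref{regdistlem.lem}) and by the further smoothing $P_{\gamma r}h$, whose Littlewood--Paley estimates (Lemma~\ref{PgrhLPprop.lem}) feed into Lemma~\ref{atildeprop.lem}. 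Without an analogue of this mechanism, the claimed domination of $\beta$-numbers by Whitney averages of $|\nabla^2 u|^2+|\partial_t u|^2+\ldots$ does not follow, and it is not clear your route closes without essentially reproducing the paper's integration-by-parts-in-$r$ machinery.
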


We claim that to prove Theorem \ref{main1.thrm} and \ref{main2.thrm} it suffices to prove Proposition \ref{bigprop.prop}. Indeed, by Lemma \ref{coronaenough.lem}, we just need to see that we can take $\mathcal{B} := \cup_{\sbf^*} \mathcal{B}_{\sbf^*}$ and $\mathcal{G} :=  \cup_{\sbf^*} \mathcal{G}_{\sbf^*}$ and then use Lemma \ref{graphregimes.lem} in concert with either Lemma \ref{gfcomplemt1.eq} or Lemma \ref{gfcomplemt2.eq} to pack the maximal cubes in the stopping time (sub-)regimes $\{\sbf\}$.

\section{Step 3: Construction of approximating graphs via a stopping-time distance, and modified level sets} \label{construction.sec}

Having reduced matters to showing Proposition \ref{bigprop.prop}, we now fix $\sbf$ a ``graph stopping time regime" from Lemma~\ref{graphregimes.lem}, aiming to produce the regular $\Lip(1,1/2)$ function  $\psi_{\sbf}$. Note that we have still left $K = 2^N$ with $N \ge 2000$ as a parameter at our disposal (which will be fixed shortly):
recall (see Lemma~\ref{fkwhsa.lem}) that $\sbf$ contains only cubes $Q$ with $\ell(Q) \leq 2^{-10N} \ell(Q(\sbf^*))$, so the parameter $N$ will help us obtaining some ``space'' between our region $\sbf$ and the top scale $Q(\sbf^*)$ which will help us later when obtaining estimates.

In this section, we will first (along Subsection~\ref{sec:construction}) focus on analyzing some geometrical properties of our Whitney regions to construct $\psi_\sbf$ using level lines of the Green's function $u$. The fact that those level lines only behave well on Whitney regions associated to each $\sbf$ introduces several technical difficulties, like the need to use stopping-time distances and cut-offs. In the second part (Subsection~\ref{sec:lip}) we take advantage of the connection between the Green's function $u$ and the graphs $\psi_\sbf$ that we just constructed, to transfer estimates (obtained from the PDE) from $u$ to $\psi_\sbf$. From these, we will deduce that $\psi_\sbf$ is indeed Lipschitz, and approximates the original graph $\Sigma$ at the scale of $\sbf$.

\subsection{\for{toc}{\small}Construction of the approximating Lip(1,1/2) graphs} \label{sec:construction}

In contrast to \cite{BHMN1}, our sawtooth domains will not be defined in terms of a (regularized) distance to a `contact set', instead we use a regularized version of a stopping time distance from \cite{DS1}. For our fixed $\sbf$ we define 
\[d(\mbf{x}) := \inf_{Q \in \sbf}\big( \dist(\mbf{x}, Q) + \diam(Q)\big), \qquad \mbf{x} \in \mathbb{R}^n. \]
Then the function $d(\mbf{x})$ is clearly $\Lip(1,1/2)$ with constant at most $1$ since it is the point-wise infimum over such functions. We set also
\[F := \big\{\mbf{x} \in \mathbb{R}^n: d(\mbf{x}) = 0\big\}.\]
Unfortunately, he function $d(\mbf{x})$ is not regular enough for us to carry out our computations later, so we prefer to work with the following regularization of $d$.

\begin{lemma}[{\cite[Lemma 3.24]{BHHLN-CME}}] \label{regdistlem.lem}
There exists a function $h: \RR^n \longrightarrow \RR$ satisfying
\begin{align}\label{regular-dist}
 d(\mbf{x})/10  \le  h(\mbf{x})  \le  C_nd(\mbf{x}),
\qquad
\|h\|_{\Lip(1,1/2)}\lesssim 1, \qquad \|\mathcal{D}_t h\|_{\BMO(\re)}
\lesssim
1,
\end{align}
for a constant $C_n > 1$ (so concretely, $h(\mbf{x}) = 0$ on $F$), and
\begin{equation*}
d(\mbf{x})^{2\,k-1}\,|\partial_t^k h(\mbf{x})|+d(\mbf{x})^{k-1}\,|\nabla_x^k h(\mbf{x})|\lesssim_k 1,
\qquad \forall\,\mbf{x}\notin F,\, k\in\NN,
\end{equation*}
where the implicit constants and $C_n$ depend only on $n$ (and on $k$ in the last estimate).  
\end{lemma}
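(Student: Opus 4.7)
The plan is to construct $h$ as a parabolic Whitney-smoothing of $d$, paralleling the classical regularized-distance construction of \cite[Chapter VI]{Stein_Singular_Integrals} adapted to the parabolic metric. Take a Whitney decomposition $\{W_i\}$ of $\mathbb{R}^n \setminus F$ by parabolic dyadic cubes with $\diam(W_i) \approx \dist(W_i,F)$ and bounded overlap of the slightly enlarged cubes $\tfrac{11}{10}W_i$, and let $\{\phi_i\}$ be a subordinate smooth partition of unity satisfying the standard Whitney derivative estimates $|\partial_t^j \nabla_x^k \phi_i(\mbf{x})| \lesssim_{j,k} \diam(W_i)^{-2j-k}$. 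Define
\[
 h(\mbf{x}) := \sum_i d(\mbf{c}_i)\,\phi_i(\mbf{x}), \quad \mbf{x}\notin F, \qquad h\equiv 0 \text{ on } F,
\]
where $\mbf{c}_i$ is the center of $W_i$.

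For the size comparison in \eqref{regular-dist}, note that whenever $\phi_i(\mbf{x})\neq 0$ we have $\|\mbf{x}-\mbf{c}_i\|\lesssim \diam(W_i) \approx \dist(W_i,F) \lesssim d(\mbf{x})$, and since $d$ is $1$-Lipschitz in the parabolic metric, this forces $d(\mbf{c}_i) \approx d(\mbf{x})$. Summing the finitely-many nonzero terms then gives $d(\mbf{x})/10 \le h(\mbf{x}) \le C_n d(\mbf{x})$ for an appropriate $C_n$. The pointwise derivative bounds are obtained from the identity (valid for $(j,k)\neq(0,0)$, using $\sum_i\phi_i\equiv 1$)
\[
\partial_t^j \nabla_x^k h(\mbf{x}) = \sum_i \bigl(d(\mbf{c}_i)-d(\mbf{x})\bigr)\,\partial_t^j \nabla_x^k \phi_i(\mbf{x}),
\]
together with the Whitney derivative bounds, the bounded overlap, and $|d(\mbf{c}_i)-d(\mbf{x})|\lesssim \diam(W_i)\approx d(\mbf{x})$. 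This yields both $d^{k-1}|\nabla_x^k h|\lesssim 1$ and $d^{2k-1}|\partial_t^k h|\lesssim 1$.

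The $\Lip(1,1/2)$ bound for $h$ then splits into two cases. The spatial part is immediate from $\|\nabla_x h\|_\infty \lesssim 1$. For the time direction, to estimate $|h(x,t_1)-h(x,t_2)|$: if $\max\{d(x,t_1),d(x,t_2)\} \lesssim |t_1-t_2|^{1/2}$ then the size bound $h\lesssim d$ together with the Lip$(1,1/2)$ property of $d$ itself already gives $|h(x,t_1)-h(x,t_2)|\lesssim |t_1-t_2|^{1/2}$; otherwise $d$ is bounded below by $c|t_1-t_2|^{1/2}$ along the entire segment $\{x\}\times[t_1,t_2]$ (again because $d$ is $1$-Lipschitz), and integration of $|\partial_t h|\lesssim d^{-1}$ yields $|t_1-t_2|\cdot |t_1-t_2|^{-1/2} = |t_1-t_2|^{1/2}$.

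The main obstacle is the $\BMO$ bound on $\mathcal{D}_t h$, which does not follow from the Lip$(1,1/2)$ regularity of $h$ alone and genuinely requires the stopping-time structure of $d$. The strategy is to verify the Carleson measure condition
\[
\d\nu(\mbf{x}) := |\partial_t h(\mbf{x})|^2\, d(\mbf{x})\, \d\mbf{x}, \qquad \nu(Q)\lesssim |Q| \text{ for every parabolic cube } Q\subseteq \mathbb{R}^n,
\]
which, combined with $h\in\Lip(1,1/2)$, yields $\mathcal{D}_t h \in \BMO(\mathbb{R}^n)$ via (a parabolic analogue of) Strichartz's characterization \cite{Stz} and the fractional-integration/Fourier symbol relation used in \eqref{eqnormequiv}. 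The Carleson bound is checked by grouping the Whitney cubes $W_i$ meeting $Q$ according to dyadic scale: on each $W_i$ the pointwise estimate $|\partial_t h|\lesssim \diam(W_i)^{-1}$ together with $d\approx \diam(W_i)$ reduces the Carleson integrand to $|W_i|/\diam(W_i)^{n+1}$-type packing, and the packing property of the stopping-time family $\sbf$ that defines $d$ (together with the elementary fact that, by construction, the set $F$ controls exactly the limit points of arbitrarily small cubes of $\sbf$) ensures the sum collapses to $\lesssim |Q|$. This last step is the technical heart of the argument and is where the corona structure of $\sbf$ enters in an essential way.
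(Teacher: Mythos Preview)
The paper does not prove this lemma but cites \cite[Lemma~3.24]{BHHLN-CME} (which adapts Stein's regularized-distance construction to the parabolic setting). Your Whitney construction and the arguments for the size comparison, the pointwise derivative bounds, and the $\Lip(1,1/2)$ estimate are correct and match that approach.

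The gap is in the $\BMO$ bound for $\mathcal{D}_t h$. Your packing argument for the proposed Carleson condition $\int_Q |\partial_t h|^2\, d\,\d\mbf{x}\lesssim |Q|$ uses only the crude estimate $|\partial_t h|\lesssim \diam(W_i)^{-1}$ on each Whitney cube, which reduces matters to $\sum_{W_i\subset Q}\diam(W_i)^n\lesssim |Q|$; but this Whitney-cube packing is \emph{false} for general closed $F$ (take $F=\{x_1=0\}$: the Whitney cubes at each dyadic scale contribute a fixed nonzero amount and the sum diverges). In that very example $\partial_t h\equiv 0$, so your Carleson bound does hold---just not via the mechanism you propose; the crude bound $|\partial_t h|\lesssim d^{-1}$ throws away precisely the cancellation that makes the estimate true. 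Invoking ``the packing property of the stopping-time family $\sbf$'' is both vague and off-target: the constants in the lemma depend only on $n$, so the argument must work for the regularized distance to an \emph{arbitrary} closed set, with no stopping-time input. Separately, the implication you assert (that this $\re^n$-Carleson condition plus $\Lip(1,1/2)$ yields $\mathcal{D}_t h\in\BMO$) is not what Strichartz's theorem or \eqref{eqnormequiv} provide. The route taken in \cite{BHHLN-CME}, and reflected in the closely related square-function bounds of Lemma~\ref{PgrhLPprop.lem} (proved in \cite{HL96}), instead passes through \emph{upper-half-space} Carleson estimates for Littlewood--Paley operators $P_r$ acting on $h$, where the averaging at scale $r$ supplies the cancellation that the pointwise bound $|\partial_t h|\lesssim d^{-1}$ alone cannot.
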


The purpose of the function $h(\mbf{x})$ is to create a sawtooth (above its graph) where we have the non-degeneracy estimates for $u$. Roughly speaking, for each $\mbf{x}$, we want to work with level sets of $u$ that are roughly above $h(\mbf{x})$, and ensure these points lie in our $U_Q$ regions, because $u$ behaves well there. Let us first use the stopping time distance to identify the cube with the correct scale.

\begin{lemma}\label{cubeid.lem}
Let $Q_0 \in \sbf$, and suppose that $\mbf{x} \in 500Q_0$ and $\rho \in (h(\mbf{x})/10, 10^{10}C_n\diam(Q_0))$. 
Then there exists a cube $Q \subseteq Q_0$, $Q \in \sbf$ such that 
\[\mbf{x} \in 500Q \quad \text{ and } \quad 10^{-10}\diam(Q) \le \rho \le 10^{10}C_n\diam(Q).\]
\end{lemma}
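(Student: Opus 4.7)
The plan is to extract, from the hypothesis $\rho > h(\mbf{x})/10$ combined with the lower bound $h \geq d/10$ from Lemma~\ref{regdistlem.lem}, the estimate $d(\mbf{x}) < 100\rho$. Unfolding the infimum in the definition of $d$ then produces a cube $Q' \in \sbf$ with $\dist(\mbf{x}, Q') + \diam(Q') < 100\rho$; this $Q'$ will be the anchor, supplying an element of $\sbf$ of diameter $< 100\rho$ located within parabolic distance $< 100\rho$ of $\mbf{x}$.

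Next, I would split based on the relative size of $\rho$ and $\diam(Q_0)$. If $\rho \geq 10^{-10}\diam(Q_0)$, then $Q = Q_0$ itself meets every required condition: $Q_0 \in \sbf$ and $Q_0 \subseteq Q_0$ are immediate, $\mbf{x} \in 500Q_0$ is part of the hypothesis, and the bilateral size bound $10^{-10}\diam(Q_0) \leq \rho \leq 10^{10}C_n\diam(Q_0)$ follows from the case assumption together with the standing upper bound on $\rho$. So the large-$\rho$ regime is disposed of with no further work.

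Assume henceforth $\rho < 10^{-10}\diam(Q_0)$, and let $\tilde{Q}$ denote the unique parabolic dyadic cube containing $\mbf{x}$ with $\rho \leq \diam(\tilde{Q}) < 2\rho$. Since both $Q'$ and $\tilde{Q}$ have diameter $< 100\rho$ and lie within parabolic distance $< 100\rho$ of $\mbf{x}$, standard parabolic dyadic arithmetic yields a smallest common dyadic ancestor $R \supseteq Q' \cup \tilde{Q}$ with $\diam(R) \leq C_0\rho$ for some universal $C_0$. In the principal subcase $\mbf{x} \in Q_0$, the nesting $\tilde{Q} \subseteq Q_0$ follows from $\diam(\tilde{Q}) < \diam(Q_0)$ and $\mbf{x} \in \tilde{Q} \cap Q_0$; hence $R \cap Q_0 \neq \emptyset$, and then $\diam(R) \leq C_0\rho \ll \diam(Q_0)$ forces $R \subseteq Q_0$. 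Semi-coherence applied to the chain $Q' \subseteq R \subseteq Q_0$ (with $Q', Q_0 \in \sbf$) then gives $R \in \sbf$, and the choice $Q := R$ satisfies the conclusion, since $\mbf{x} \in \tilde{Q} \subseteq R \subseteq 500R$ and $\rho \leq \diam(R) \leq C_0 \rho$ fit the required bilateral size bounds comfortably (once $C_0$ is taken much smaller than $10^{10}$).

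I expect the main technical obstacle to be the edge configuration where $\mbf{x} \in 500Q_0 \setminus Q_0$, or where the anchor cube $Q'$ satisfies $Q' \cap Q_0 = \emptyset$: in such situations $R$ constructed as above need not lie inside $Q_0$, so semi-coherence within $Q_0$ cannot be invoked directly. The remedy is to replace $\tilde{Q}$ (and, if necessary, $Q'$) by parabolic dyadic cubes inside $Q_0$ of comparable scale $\approx \rho$ that remain close to $\mbf{x}$, taking advantage of the large slack $\mbf{x} \in 500Q_0$ to guarantee the existence of such dyadic cubes in $\dd(Q_0)$ at the correct scale. The quantitative control $d(\mbf{x}) < 100\rho$ is then used to produce the needed ancestor in $\sbf \cap \dd(Q_0)$, and the generous slack in the constants $500$, $10^{-10}$, and $10^{10}$ built into the statement of the lemma is precisely what provides enough room for this construction to succeed.
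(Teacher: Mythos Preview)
Your opening matches the paper exactly: split at $\rho \ge 10^{-10}\diam(Q_0)$ (take $Q = Q_0$), and otherwise extract $Q' \in \sbf$ with $\dist(\mbf{x}, Q') + \diam(Q') < 100\rho$ from $d(\mbf{x}) < 100\rho$. But your small-$\rho$ argument has a genuine error: you assert that the least common dyadic ancestor $R$ of $Q'$ and $\tilde{Q}$ satisfies $\diam(R) \le C_0\rho$ for a universal $C_0$. This is false in general---two dyadic cubes of diameter $\lesssim \rho$ at mutual distance $\lesssim \rho$ can sit on opposite sides of a dyadic boundary of arbitrarily large scale, forcing their least common ancestor to be enormous relative to $\rho$. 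There is no ``standard parabolic dyadic arithmetic'' that bounds $\diam(R)$ in terms of $\rho$ here.

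The paper avoids this entirely by never introducing $\tilde{Q}$ or any common-ancestor construction. It simply climbs the ancestor chain of $Q'$ alone, stopping at the minimal ancestor $Q$ with $\diam(Q) \ge 10^5\rho$; then automatically $10^5\rho \le \diam(Q) \le 2 \cdot 10^5\rho$, and $\dist(\mbf{x}, Q) \le \dist(\mbf{x}, Q') < 100\rho \le 10^{-3}\diam(Q)$ gives $\mbf{x} \in 500Q$ directly---no auxiliary cube needed. As for the edge configuration you flag (guaranteeing $Q \subseteq Q_0$ when $Q'$ may lie outside $Q_0$): the paper's proof does not explicitly address this either; it simply asserts $Q' \subseteq Q \subseteq Q_0$ from $\diam(Q) \le 10^{-4}\diam(Q_0)$. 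Your instinct that this point deserves care is reasonable, but your proposed remedy is too vague to evaluate.
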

\begin{proof}
If $\rho \ge 10^{-10}\diam(Q_0)$, then we may just select $Q = Q_0$.
Hence, assume $\rho < 10^{-10} \diam(Q_0)$.

To begin, since $\rho > h(\mbf{x})/10$, Lemma~\ref{regdistlem.lem} implies that $\rho > d(\mbf{x})/100$. 
So by definition of $d(\mbf{x})$ there exists some $Q' \in \sbf$ such that
\[\dist(\mbf{x}, Q') + \diam(Q') < 100\rho.\]
Then, we may traverse one by one the ancestors of $Q'$ until we find the minimal $Q$ with $10^5\rho \le \diam(Q)$, which will then satisfy 
\begin{equation}\label{rhosand.eq}
	10^5\rho \le \diam(Q) \le 2\cdot 10^5 \rho \leq 10^{-4} \diam(Q_0).
\end{equation}
Therefore, $Q' \subseteq Q \subseteq Q_0$, whence $Q \in \sbf$ by semi-coherency. Also, since $Q' \subseteq Q$, we also have 
\[ \dist(\mbf{x}, Q) \leq \dist(\mbf{x}, Q') < 100\rho \le 10^{-3}\diam(Q), \]
so $\mbf{x} \in 500Q$. (In fact, \eqref{rhosand.eq} gives a better bound than when $\rho \ge 10^{-10}\diam(Q_0)$ and $Q = Q_0$.)
\end{proof}

Next, with regards to later putting together Lemmas~\ref{fndndpt.lem} and \ref{cubeid.lem}, we choose $\delta \in (0, 1/2)$ with
\begin{equation}\label{deltakchoice.eq}
	\big(10^{-10}\diam(Q), 10^{10}C_n\diam(Q)\big) \; \subseteq \; \big(\delta\ell(Q), \delta^{-1}\ell(Q)\big).
\end{equation}
And then, for this $\delta$, we fix $N = N(\delta)$ as given by Lemma~\ref{fndndpt.lem}. This is our {\bf final choice of $N$, which defines $K= 2^N$}. Clearly, $N$ and $K$ only depend on the structural constants; therefore we will not make dependencies on $N$ or $K$ explicit anymore.

Having fixed $K$, we collect some the information we have produced in a more concise format. First we have the following lemma.

\begin{lemma}[Definition of $\psi(r; \cdot)$]\label{1t1etc.lem}
Define the projections $\pi^{\perp}: \mathbb{R}^{n+1} \longrightarrow \mathbb{R}$ and $\pi: \mathbb{R}^{n+1} \longrightarrow \mathbb{R}^n$ by
\[\pi^{\perp}(x_0, \mbf{x}) := x_0, \qquad \pi(x_0, \mbf{x}) = \mbf{x}.\]
Then, for any fixed $\mbf{x} \in 10^5Q(\sbf)$, 
\[\text{the set } \;\; \pi^{-1}(\mbf{x}) \cap \Omega_{\sbf}
\;\; \text{ is a line segment, } \quad \text{ and } \quad
\mathcal{I}_{\mbf{x}} := \overline{\pi^{\perp}(\pi^{-1}(\mbf{x}) \cap \Omega_{\sbf})} \;\; \text{is an interval.}\]
Moreover, the function (of $x_0$)  $u(x_0, \mbf{x}) : \mathcal{I}_{\mbf{x}} \to \mathbb{R}$ is strictly increasing.
Therefore, we define, for $r \in u(\mathcal{I}_{\mbf{x}}, \mbf{x})$, the function $\psi(r;\mbf{x})$ implicitly as the unique element in $\mathcal{I}_{\mbf{x}}$ such that
\begin{equation*}
u(\psi(r; \mbf{x}),\mbf{x}) = r.
\end{equation*}
For brevity in the notation, we also define
\[\mbf{\Psi}(r;\mbf{x}) := (\psi(r; \mbf{x}),\mbf{x}).\]
\end{lemma}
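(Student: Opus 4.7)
The plan is to identify $\pi^{-1}(\mbf{x}) \cap \Omega_{\sbf}$ as a union of vertical Whitney slabs indexed by $\{Q \in \sbf : \mbf{x} \in 10^5 Q\}$, check connectivity of the slabs via semi-coherency of $\sbf$, and then invoke the vertical non-degeneracy $\partial_{x_0} u > 0$ from Lemma~\ref{fkwhsa.lem} to invert.

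Set $\mathcal{Q}(\mbf{x}) := \{Q \in \sbf : \mbf{x} \in 10^5 Q\}$, which is nonempty since $Q(\sbf) \in \mathcal{Q}(\mbf{x})$. By the defining inequality \eqref{whitdef.eq},
\[
	\pi^{-1}(\mbf{x}) \cap U_Q \;=\; \{(x_0, \mbf{x}) : x_0 \in J_Q\},
	\qquad
	J_Q \;:=\; \bigl(\psi(\mbf{x}) + K^{-1}\ell(Q),\, \psi(\mbf{x}) + K\ell(Q)\bigr),
\]
for every $Q \in \mathcal{Q}(\mbf{x})$, and therefore $\pi^{\perp}\bigl(\pi^{-1}(\mbf{x}) \cap \Omega_{\sbf}\bigr) = \bigcup_{Q \in \mathcal{Q}(\mbf{x})} J_Q$.

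The key structural observation is that if $Q \in \mathcal{Q}(\mbf{x})$ then every dyadic ancestor $\widetilde Q$ of $Q$ with $\widetilde Q \subseteq Q(\sbf)$ also lies in $\mathcal{Q}(\mbf{x})$: a triangle inequality between the centers of $Q$ and $\widetilde Q$ (using $Q \subseteq \widetilde Q$ and $\ell(\widetilde Q) \geq \ell(Q)$) forces $\mbf{x} \in 10^5 \widetilde Q$ as soon as $\mbf{x} \in 10^5 Q$, and semi-coherency of $\sbf$ places $\widetilde Q$ in $\sbf$. Consequently, each $Q \in \mathcal{Q}(\mbf{x})$ sits in a chain of nested cubes in $\mathcal{Q}(\mbf{x})$ whose side lengths double, terminating at $Q(\sbf)$. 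Along any such chain, consecutive $J_Q$ and $J_{\widetilde Q}$ overlap, since $K^{-1}\ell(\widetilde Q) = 2K^{-1}\ell(Q) < K\ell(Q)$ (recall $K = 2^N$ with $N \geq 2000$, so $K^2 \gg 2$). Hence
\[
	\bigcup_{Q \in \mathcal{Q}(\mbf{x})} J_Q \;=\; \bigl(\psi(\mbf{x}) + K^{-1}\ell_{\min}(\mbf{x}),\, \psi(\mbf{x}) + K\ell(Q(\sbf))\bigr),
\]
where $\ell_{\min}(\mbf{x}) := \inf\{\ell(Q) : Q \in \mathcal{Q}(\mbf{x})\} \in [0, \ell(Q(\sbf))]$ (and $= 0$ precisely when $\mbf{x} \in F$). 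This establishes simultaneously that $\pi^{-1}(\mbf{x}) \cap \Omega_{\sbf}$ is a line segment and that $\mathcal{I}_{\mbf{x}}$, the closure of its projection, is a compact interval.

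Strict monotonicity of $u(\cdot, \mbf{x})$ on $\mathcal{I}_{\mbf{x}}$ is then immediate from \eqref{nondegwhluq.eq} in Lemma~\ref{fkwhsa.lem}: $\partial_{x_0} u \geq c_1/8 > 0$ on every $U_Q$ with $Q \in \sbf$, hence on the open interval above, and strict monotonicity extends to the closure by continuity of $u$. Inverting the strictly monotone continuous map $u(\cdot, \mbf{x}) : \mathcal{I}_{\mbf{x}} \to u(\mathcal{I}_{\mbf{x}}, \mbf{x})$ uniquely defines $r \mapsto \psi(r; \mbf{x})$ and completes the proof. The one nontrivial step is the overlap-of-intervals argument up the dyadic tower, which ultimately rests on $K$ being chosen much larger than $1$; everything else is bookkeeping.
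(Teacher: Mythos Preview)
Your proof is correct and follows essentially the same route as the paper: identify the vertical slice of each $U_Q$ as an interval $J_Q$, use semi-coherency to chain up to $Q(\sbf)$ with consecutive $J_Q$'s overlapping (thanks to $K\gg 1$), and conclude monotonicity from \eqref{nondegwhluq.eq}. Your version is in fact slightly more careful than the paper's in one respect: you explicitly verify that every ancestor $\widetilde Q$ of $Q$ with $\widetilde Q\subseteq Q(\sbf)$ again satisfies $\mbf{x}\in 10^5\widetilde Q$, which the paper uses implicitly when writing $\pi^\perp(\pi^{-1}(\mbf{x})\cap U_{Q'})$ as a nonempty interval for every $Q'$ in the chain.
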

\begin{proof}	
First, recall that $\Omega_{\sbf}$ is the union of $U_Q$ with $Q \in \sbf$, and $\sbf \subseteq \mathcal{G}_{\sbf^*}$ (see Lemma~\ref{graphregimes.lem}). Therefore, Lemma \ref{fkwhsa.lem} (specifically \eqref{nondegwhluq.eq}) yields that $\partial_{y_0} u(y_0, \mbf{y}) > 0$ for any $(y_0, \mbf{y}) \in \Omega_{\sbf}$. 

Now fix $\mbf{x} \in 10^5Q(\sbf)$. We are only left with showing that $\pi^{-1}(\mbf{x}) \cap \Omega_{\sbf}$ is a line segment or, equivalently, that $\mathcal{I}_{\mbf{x}}$ is an interval. Fix $Q \in \sbf$. First note that we have (see \eqref{whitdef.eq})
\[ \pi^\perp (\pi^{-1}(\mbf{x}) \cap U_Q) = (\psi(\mbf{x}) + K^{-1} \ell(Q), \psi(\mbf{x}) + K \ell(Q)), \]
which is indeed an interval. Next we note that if $Q^*$ is the parent of $Q$ in $\dd$, then their associated intervals (as in the last display) intersect because $\big{(} K^{-1}\ell(Q), K\ell(Q) \big{)} \cap \big{(}K^{-1}\ell(Q^*), K\ell(Q^*)\big{)} \neq \emptyset$
if $K$ is large enough (depending on $n$). So going all the way up from $Q$ to $Q(\sbf)$ (by semi-coherency):
\[ \bigcup_{Q' : Q \subseteq Q' \subseteq Q(\sbf)} \pi^\perp (\pi^{-1}(\mbf{x}) \cap U_{Q'}) 
=
(\psi(\mbf{x}) + K^{-1}\ell(Q), \psi(\mbf{x}) + K\ell(Q(\sbf))),  \]
which is an interval. Since the right endpoint of these intervals does not depend on $Q$, the arbitrariness of $Q \in \sbf$ in the last display implies that $\mathcal{I}_{\mbf{x}}$ is an interval, hence finishing the proof. 
\end{proof}

Without loss of generality (by a translation) we assume that, if $\mbf{x}_{Q(\sbf)}$ is the center of $Q(\sbf)$,
\begin{equation}\label{psicentervalzero.eq}
 \psi(h(\mbf{x}_{Q(\sbf)}); \mbf{x}_{Q(\sbf)}) = 0.
\end{equation}
With that, we finally define the (candidate to) \textbf{approximating graph} using $\psi(r; \cdot)$ from Lemma~\ref{1t1etc.lem}:
\begin{equation}\label{psistrSdef.eq}
	\psi_{\sbf}(\mbf{x}) := \psi^*_{\sbf}(\mbf{x}) \, \phi(\mbf{x}),
	\qquad \text{where} \qquad 
\psi^*_{\sbf}(\mbf{x}) := \psi(h(\mbf{x}); \mbf{x}),
\end{equation}
where $\phi \in C^\infty_c(\RR^n)$ is a cut-off function adapted to $Q(\sbf)$, in the sense that
\begin{equation}\label{cutoffqsest1.eq}
\mathbbm{1}_{2Q(\sbf)} \le \phi \le \mathbbm{1}_{4Q(\sbf)}, \quad \|\phi\|_{\Lip(1,1/2)} \lesssim \ell(Q(\sbf))^{-1}.
\end{equation}
Actually, we may choose $\phi$ to satisfy 
\begin{equation}\label{cutoffqsest2.eq}
|\nabla^k_x \phi| \, \ell(Q(\sbf))^k + |\partial_t^k \phi|\, \ell(Q(\sbf))^{2k} \lesssim_k 1
\qquad 
\text{for } k = 1,2,\dots
\end{equation}

Now that we have defined $\psi_\sbf$, we need to show that it is regular $\Lip(1,1/2)$, and that it approximates the original graph $\Sigma$ in the sense of \eqref{eq:approx_graph2}. As anticipated at the beginning of the section, it will be key to exploit the connection between $\psi_\sbf$ and $u$ later. But since $u$ only behaves well on sawtooth/Whitney regions, we need to check that our construction also works on those.

Indeed, for any given $Q_0 \in \sbf$, we define a localized stopping time by
\[\sbf(Q_0) := \{Q \in \sbf: Q \subseteq Q_0\}\] 
Then we associate two sawtooths to $\sbf(Q_0)$ (see Figure~\ref{fig:sawtooths}): the `usual' (dyadic) one, adapted to $\Omega$,
\[\Omega_{\sbf(Q_0)} := \bigcup_{Q \in \sbf(Q_0)} U_Q \quad 
\big( \! \subseteq \Omega \big);\]
and the other is a continuous version, in the upper half space
\begin{equation}\label{fncySdef.eq}
\mathcal{S}(Q_0) := \big{\{}(r, \mbf{x}): \; x \in 500Q_0, \;\; r \in \big(h(\mbf{x})/10, 10^{10}C_n\diam(Q_0)\big)\big{\}}
\quad \big(\! \subseteq \ree_+\big).
\end{equation}
In the case that $Q_0 = Q(\sbf)$ we omit $Q_0$ from our notation, e.g. $\mathcal{S} = \mathcal{S}(Q(\sbf))$.
The reason for introducing these objects (and also for our choice of $N$ and $K$ in \eqref{deltakchoice.eq}) is apparent in the following lemma: $\psi(r; \cdot)$ ``maps'' the sawtooth $\mathcal{S}(Q_0)$ to the other sawtooth $\Omega_{\sbf(Q_0)}$.

\begin{figure}
	\centering 
	\includegraphics[width=.95\textwidth]{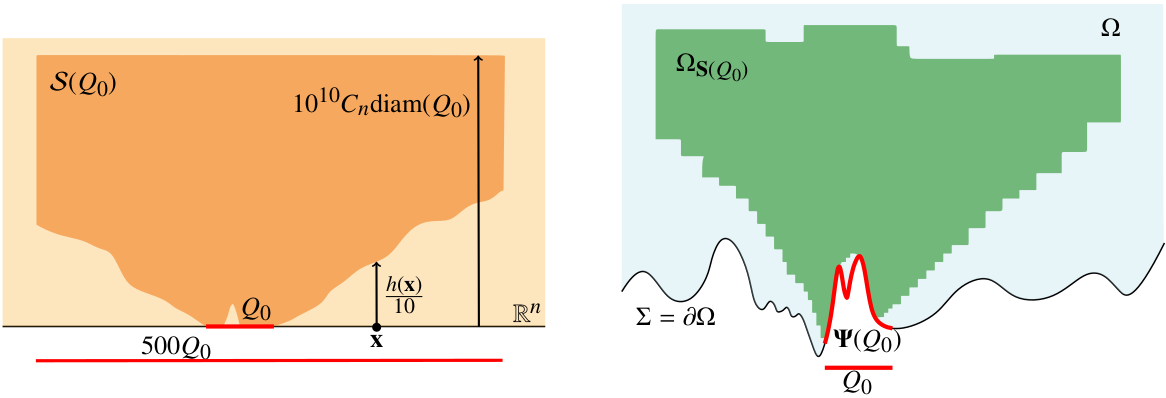}
	\caption{
	A (not-to-scale) depiction of a ``continuous'' sawtooth $\mathcal{S}(Q_0)$ in $\RR^{n+1}$ (the left figure) associated with the dyadic sawtooth $\Omega_{\sbf(Q_0)}$ inside $\Omega$ (the right figure). The map $\psi(r; \cdot)$ takes $\mathcal{S}(Q_0)$ inside $\Omega_{\sbf(Q_0)}$, twisting it to adapt to the shape of $\Omega$.
	}
	\label{fig:sawtooths}
\end{figure}

\begin{lemma} \label{map_sawtooths.lem}
The function $\psi(r;\mbf{x})$ from Lemma~\ref{1t1etc.lem} is well-defined for $(r, \mbf{x}) \in \mathcal{S}$. Moreover,
for any $Q_0 \in \sbf$, it holds
\[(r, \mbf{x}) \in \mathcal{S}(Q_0) \quad \implies \quad (\psi(r; \mbf{x}), \mbf{x}) \in \Omega_{\sbf(Q_0)}.\]
\end{lemma}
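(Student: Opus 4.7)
The plan is to combine the two preparatory lemmas already in place, namely Lemma~\ref{cubeid.lem} (which produces a cube in $\sbf$ at the right scale for each point in the continuous sawtooth) and the second half of Lemma~\ref{fndndpt.lem} (which produces a point on the vertical line over $\mbf{x}$ at which $u$ attains a prescribed value in the Whitney region), calibrated against each other by the choice of $\delta$ (and hence $N$ and $K$) fixed in \eqref{deltakchoice.eq}.

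In detail, I would fix $(r, \mbf{x}) \in \mathcal{S}(Q_0)$, so that $\mbf{x} \in 500 Q_0$ and $r \in (h(\mbf{x})/10,\, 10^{10} C_n \diam(Q_0))$, and apply Lemma~\ref{cubeid.lem} to obtain $Q \in \sbf$ with $Q \subseteq Q_0$, $\mbf{x} \in 500 Q \subseteq 10^5 Q$, and $10^{-10} \diam(Q) \le r \le 10^{10} C_n \diam(Q)$. By the very definition of $\delta$ in \eqref{deltakchoice.eq}, this sandwich then gives $r \in (\delta \ell(Q), \delta^{-1} \ell(Q))$. Since $\sbf \subseteq \mathcal{G}_{\sbf^*}$, Lemma~\ref{fkwhsa.lem} guarantees $\ell(Q) \le 2^{-10N} \ell(Q(\sbf^*)) = K^{-10} \ell(Q(\sbf^*))$, so the hypotheses of the second half of Lemma~\ref{fndndpt.lem} are satisfied. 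That lemma provides $y_r \in \RR$ with $(y_r, \mbf{x}) \in U_Q$ and $u(y_r, \mbf{x}) = r$.

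To conclude, I would invoke Lemma~\ref{1t1etc.lem}: since $(y_r, \mbf{x}) \in U_Q \subseteq \Omega_{\sbf(Q_0)} \subseteq \Omega_\sbf$, the point $y_r$ lies in the interval $\mathcal{I}_{\mbf{x}}$, hence $r = u(y_r, \mbf{x}) \in u(\mathcal{I}_{\mbf{x}}, \mbf{x})$ and $\psi(r; \mbf{x})$ is well-defined; the strict monotonicity of $u$ on $\mathcal{I}_{\mbf{x}}$ then forces $y_r = \psi(r; \mbf{x})$. Specialising to $Q_0 = Q(\sbf)$ gives the first assertion (well-definedness on $\mathcal{S}$), while in general one reads off $(\psi(r; \mbf{x}), \mbf{x}) = (y_r, \mbf{x}) \in U_Q \subseteq \Omega_{\sbf(Q_0)}$, which is the second assertion.

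There is no real obstacle here: everything substantive has already been absorbed into Lemmas~\ref{cubeid.lem} and \ref{fndndpt.lem}, together with the scale matching \eqref{deltakchoice.eq} between $\delta^{\pm 1}$ and the endpoints $10^{\pm 10} C_n \diam(Q)$ appearing in the definition of $\mathcal{S}(Q_0)$. The only point one needs to be attentive to is that the cube $Q$ extracted at scale $r$ in fact belongs to $\sbf$ (not just to $\sbf^*$) so that $U_Q \subseteq \Omega_{\sbf(Q_0)}$; this is immediate from semi-coherency of $\sbf$ applied to $Q \subseteq Q_0$ with $Q_0 \in \sbf$, as in the proof of Lemma~\ref{cubeid.lem}.
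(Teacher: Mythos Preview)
Your proposal is correct and follows essentially the same route as the paper: apply Lemma~\ref{cubeid.lem} to locate a cube $Q \in \sbf(Q_0)$ at the right scale, use the calibration \eqref{deltakchoice.eq} together with Lemma~\ref{fndndpt.lem} to produce a point $(y_r,\mbf{x}) \in U_Q$ with $u(y_r,\mbf{x})=r$, and then invoke the monotonicity in Lemma~\ref{1t1etc.lem} to identify $y_r=\psi(r;\mbf{x})$. If anything, your write-up is slightly more careful in explicitly checking the side condition $\ell(Q)\le K^{-10}\ell(Q(\sbf^*))$ needed for Lemma~\ref{fndndpt.lem}.
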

\begin{proof}
Let $(r,\mbf{x}) \in \mathcal{S}(Q_0)$. By definition of $\mathcal{S}(Q_0)$ and Lemma \ref{cubeid.lem} it holds that there exists $Q \subseteq Q_0$ with $Q \in \sbf$ such that
\[\mbf{x} \in 500Q \quad \text{ and } \quad 10^{-10}\diam(Q) \le r \le 10^{10}C_n\diam(Q).\]
By Lemma \ref{fndndpt.lem} and the fact we have chosen $K$ so that \eqref{deltakchoice.eq} holds, there exists $(x_0, \mbf{x})$ such that
\[u(x_0,\mbf{x}) =r, \quad \text{and} \quad  (x_0,\mbf{x}) \in U_Q.\]
This of course gives that $x_0 \in \mathcal{I}_{\mbf{x}}$, as defined in Lemma \ref{1t1etc.lem}. By the same Lemma \ref{1t1etc.lem}, $\psi(r;\mbf{x})$ is well-defined and it must be 
$\psi(r;\mbf{x}) = x_0$. Hence $(\psi(r;\mbf{x}),\mbf{x}) = (x_0,\mbf{x}) \in U_Q \subseteq \Omega_{\sbf(Q_0)}$, as desired.
\end{proof}

\subsection{\for{toc}{\small}Pointwise estimates at level sets, and the approximating graph is Lip(1, 1/2)} \label{sec:lip}

Next we want to look closer at the properties of $\psi(r,\mbf{x})$ over $\mathcal{S}$. In fact, it will be crucial to exploit that $\psi(r; \cdot)$ has been defined in terms of level sets of the Green function, for which we have nice estimates in sawtooth/Whitney regions. Let us confirm that the estimates also apply to $\psi(r; \cdot)$.

\begin{lemma}[Transference of estimates from $u$ to $\psi(r; \cdot)$]\label{lemma:estimates:G-star}
Let $(r,y,s)\in\mathcal{S}$. Then,
\begin{equation}\label{eqn:estimates:G-star:1}
|\partial_s \psi(r;y,s)|\lesssim |(\partial_s u)(\mbf{\Psi}(r;y,s))|,
\quad 
|\nabla_{y,r}^2 \psi(r;y,s)|\lesssim |(\nabla_Y^2 u)(\mbf{\Psi}(r;y,s))|,
\end{equation}	
\begin{equation}\label{eqn:estimates:G-star:2}
|\nabla_{y,r}\partial_s \psi(r;y,s)|
\,\lesssim\,
|(\nabla_Y\partial_s u)(\mbf{\Psi}(r;y,s))|\,+\, |(\partial_s u)(\mbf{\Psi}(r;y,s))|\,|(\nabla_Y^2 u)(\mbf{\Psi}(r;y,s))|,
\end{equation}	
\begin{equation}\label{eqn:estimates:G-star:3}
|\nabla_{y,r} \psi(r;y,s)| \,+\,
r\,|\partial_s \psi(r;y,s)|\,+\,r\,|\nabla_{y,r}^2 \psi(r;y,s)| 
\,+\,r^2\, |\nabla_{y,r}\partial_s \psi(r;y,s)| \lesssim 1.
\end{equation}	
\end{lemma}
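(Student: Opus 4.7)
The plan is to proceed by implicit differentiation of the defining identity $u(\psi(r;y,s),y,s) = r$ on $\mathcal{S}$. The key input making this work is Lemma~\ref{map_sawtooths.lem}, which guarantees $\mbf{\Psi}(r;y,s) \in \Omega_{\sbf}$, so that three ingredients are available at every evaluation point: (i) the vertical non-degeneracy $\partial_{x_0} u \ge c_1/8$ from \eqref{nondegwhluq.eq}; (ii) the pointwise interior bounds $|\nabla_X u|\lesssim 1$, $|\nabla_X^2 u|\lesssim \dist(\cdot,\partial\Omega)^{-1}$, $|\partial_t u|\lesssim \dist(\cdot,\partial\Omega)^{-1}$ and $|\nabla_X \partial_t u|\lesssim \dist(\cdot,\partial\Omega)^{-2}$ from Lemma~\ref{ptwiseestinsaw4der.lem} (applicable because $\Omega_{\sbf}\subseteq \Omega_{\sbf^*}^{***}$ and every $Q \in \sbf$ satisfies $\ell(Q)\le K^{-10}\ell(Q(\sbf^*))$); and (iii) the comparison $r = u(\mbf{\Psi}(r;y,s)) \approx \dist(\mbf{\Psi}(r;y,s),\partial\Omega)$ from Lemma~\ref{gfcomplemt1.eq} (or~\ref{gfcomplemt2.eq}).

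For the first-order bounds, differentiating the defining identity in $r$, in each $y_i$, and in $s$ immediately yields $\partial_r \psi = 1/(\partial_{x_0} u)$, $\partial_{y_i} \psi = -(\partial_{y_i} u)/(\partial_{x_0} u)$ and $\partial_s \psi = -(\partial_s u)/(\partial_{x_0} u)$, with all derivatives of $u$ evaluated at $\mbf{\Psi}(r;y,s)$. Combining (i) with the $k=1$ case of (ii), this gives both $|\nabla_{y,r} \psi|\lesssim 1$ in \eqref{eqn:estimates:G-star:3} and the first estimate in \eqref{eqn:estimates:G-star:1}; the remaining first-order bound $r|\partial_s \psi| \lesssim r|\partial_s u| \lesssim r/\dist(\cdot,\partial\Omega) \lesssim 1$ then follows from (ii) and (iii).

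For the second-order statements, I differentiate the first-order formulas once more via the quotient rule. Since $\partial_{x_0} u$ stays bounded below and $|\nabla_{y,r}\psi|$ is already controlled, every term appearing in $\nabla_{y,r}^2 \psi$ can be written as a bounded coefficient times $\nabla_X^2 u|_{\mbf{\Psi}}$, yielding the second estimate in \eqref{eqn:estimates:G-star:1}; analogously, every term in $\nabla_{y,r}\partial_s \psi$ is a bounded coefficient times either $\nabla_X\partial_s u|_{\mbf{\Psi}}$ or the product $(\partial_s u)|_{\mbf{\Psi}} \cdot \nabla_X^2 u|_{\mbf{\Psi}}$, proving \eqref{eqn:estimates:G-star:2}. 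The remaining estimates in \eqref{eqn:estimates:G-star:3} are then deduced by inserting (ii) and (iii): $r|\nabla_{y,r}^2 \psi|\lesssim r/\dist(\cdot,\partial\Omega)\lesssim 1$, and for the mixed term both $r^2|\nabla_X\partial_s u|\lesssim r^2/\dist(\cdot,\partial\Omega)^2 \lesssim 1$ and $r^2|\partial_s u||\nabla_X^2 u|\lesssim r^2/\dist(\cdot,\partial\Omega)^2 \lesssim 1$.

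There is no substantial obstacle to this argument; it is essentially chain-rule bookkeeping, and the fact that the denominator $\partial_{x_0} u$ never degenerates is exactly what the refined corona decomposition in Lemma~\ref{fkwhsa.lem} was engineered to provide. The only point requiring care is to verify at the outset that all hypotheses of Lemma~\ref{ptwiseestinsaw4der.lem} hold at $\mbf{\Psi}(r;y,s)$, which is automatic once we observe that the sawtooth $\Omega_\sbf$ is contained in $\Omega_{\sbf^*}^{***}$ and every $Q \in \sbf$ has the required side-length.
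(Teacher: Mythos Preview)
Your proposal is correct and follows essentially the same approach as the paper: implicit differentiation of $u(\psi(r;y,s),y,s)=r$ on $\mathcal{S}$, combined with the non-degeneracy $\partial_{x_0}u\approx 1$ (from Lemma~\ref{fkwhsa.lem} via Lemma~\ref{map_sawtooths.lem}), the interior pointwise bounds of Lemma~\ref{ptwiseestinsaw4der.lem}, and the comparison $r\approx\dist(\mbf{\Psi}(r;y,s),\partial\Omega)$. The paper's proof is slightly more terse but invokes exactly the same ingredients in the same order.
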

\begin{proof}
	Let us include the proof from \cite[Lemma 5.18]{BHMN1}. By Lemma~\ref{map_sawtooths.lem}, it holds 
	\[
	u(\psi(r; y, s), y, s) = r, 
	\qquad (r, y, s) \in \mathcal{S}.
	\]
	Differentiating implicitly this equation with respect to $r$ yields 
	\begin{equation} \label{partialr.eq}
	(\partial_{y_0} u) (\mbf{\Psi}(r; y, s))\, \partial_r \psi(r; y, s) = 1, 
	\quad \text{i.e.} \quad 
	\partial_r \psi(r; y, s) = \frac{1}{(\partial_{y_0} u) (\mbf{\Psi}(r; y, s))}, 
	\end{equation}
	and similarly, differentiating with respect to $s$ and $y_i$ ($i = 1, \ldots, n$), we obtain 
	\[
	\partial_s \psi(r; y, s) = - \frac{(\partial_s u)(\mbf{\Psi}(r; y, s))}{(\partial_{y_0} u)(\mbf{\Psi}(r; y, s))}
	\quad \text{ and } \quad 
	\partial_{y_i} \psi(r; y, s) = - \frac{(\partial_{y_i} u)(\mbf{\Psi}(r; y, s))}{(\partial_{y_0} u)(\mbf{\Psi}(r; y, s))}.
	\]
	Recalling that $\partial_{y_0} u(\mbf{\Psi}(r; y, s)) \approx 1$ for $(r, y, s) \in \mathcal{S}$ (a consequence of Lemma \ref{fkwhsa.lem}, which can be applied for points in $\mathcal{S}$ because of Lemma~\ref{map_sawtooths.lem}--, the estimates in \eqref{eqn:estimates:G-star:1} and \eqref{eqn:estimates:G-star:2} follow from the equations in the previous displays, differentiating them again when needed, and using the estimates in \eqref{ptwisederest.eq}. % 2-8
	The estimates in \eqref{eqn:estimates:G-star:1} follow similarly, using Lemma~\ref{ptwiseestinsaw4der.lem} and the fact that $\dist(\mbf{\Psi}(r; y, s)) \approx u(\mbf{\Psi}(r; y, s)) = r$ by \eqref{M2defa.eq}. 
\end{proof}

These pointwise estimates will be the key to confirm that $\psi_\sbf$ defined in \eqref{psistrSdef.eq} is $\Lip(1,1/2)$ and has approximates the original graph $\Sigma$ in the required way \eqref{eq:approx_graph2}. Let us start by the latter.

\begin{lemma}[$\psi_\sbf$ approximates $\Sigma$]\label{psiSclose.lem}
	For every $\mbf{x} \in 4Q(\sbf)$ it holds that
	\begin{equation}\label{psiclshest.eq}
		0 \le \psi_\sbf(\mbf{x}) - \psi(\mbf{x}) \le M_0M_2h(\mbf{x}),
	\end{equation}
	where $M_2$ is from Lemma \ref{gfcomplemt1.eq} (or \ref{gfcomplemt2.eq}).
	In particular, \eqref{eq:approx_graph2} holds with $C_1 := 2C_nM_0M_2$.
\end{lemma}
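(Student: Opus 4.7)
The plan is to establish the pair of inequalities first for the unregularized object $\psi^*_\sbf$, then to transfer them to $\psi_\sbf=\phi\,\psi^*_\sbf$ via the cut-off, and finally to deduce \eqref{eq:approx_graph2}.

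First I would check that $\psi^*_\sbf(\mbf{x})=\psi(h(\mbf{x});\mbf{x})$ is well defined on $4Q(\sbf)$. Since $4Q(\sbf)\subseteq 500Q(\sbf)$ and Lemma~\ref{regdistlem.lem} gives $0<h(\mbf{x})\le C_n d(\mbf{x})\lesssim \diam(Q(\sbf))$, the pair $(h(\mbf{x}),\mbf{x})$ lies in $\mathcal{S}$ (see \eqref{fncySdef.eq}). By Lemma~\ref{map_sawtooths.lem}, this implies $\mbf{Y}:=(\psi^*_\sbf(\mbf{x}),\mbf{x})\in\Omega_\sbf\subseteq\Omega$, and in fact $\mbf{Y}\in U_Q$ for some $Q\in\sbf$, so the pointwise Green-to-distance estimates in Lemma~\ref{gfcomplemt1.eq} (or Lemma~\ref{gfcomplemt2.eq}) are available at $\mbf{Y}$. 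The lower bound $\psi^*_\sbf(\mbf{x})>\psi(\mbf{x})$ is then automatic from $\mbf{Y}\in\Omega=\{x_0>\psi(\mbf{x})\}$. For the upper bound I would chain Lemma~\ref{distgraphlem.lem} with \eqref{M2def.eq} (or \eqref{M2defa.eq}):
\begin{equation*}
\psi^*_\sbf(\mbf{x})-\psi(\mbf{x}) \,\le\, M_0\,\dist(\mbf{Y},\Sigma) \,\le\, M_0M_2\,u(\mbf{Y}) \,=\, M_0M_2\,h(\mbf{x}),
\end{equation*}
where the final equality is the defining property $u(\psi^*_\sbf(\mbf{x}),\mbf{x})=h(\mbf{x})$ from Lemma~\ref{1t1etc.lem}.

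On $2Q(\sbf)$ the cut-off $\phi$ equals $1$ by \eqref{cutoffqsest1.eq}, so $\psi_\sbf=\psi^*_\sbf$ and \eqref{psiclshest.eq} follows at once. The main obstacle is the annulus $4Q(\sbf)\setminus 2Q(\sbf)$, where $\phi\in(0,1)$: there $\psi_\sbf$ is a non-trivial convex combination of $\psi^*_\sbf$ and $0$, and neither inequality in \eqref{psiclshest.eq} is automatic from the corresponding one for $\psi^*_\sbf$. To handle it, I would observe that on this annulus every $Q\in\sbf$ (which is contained in $Q(\sbf)$) satisfies $\dist(\mbf{x},Q)\gtrsim \diam(Q(\sbf))$, so $d(\mbf{x})\gtrsim \diam(Q(\sbf))$ and hence $h(\mbf{x})\gtrsim \diam(Q(\sbf))$. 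Combining this with the $\Lip(1,1/2)$ regularity of $\psi$ and the normalization \eqref{psicentervalzero.eq} (applying the bound of the previous paragraph at $\mbf{x}_{Q(\sbf)}$) yields $|\psi(\mbf{x})|,\,|\psi^*_\sbf(\mbf{x})|\lesssim h(\mbf{x})$, and the decomposition
\begin{equation*}
\psi_\sbf(\mbf{x})-\psi(\mbf{x}) \,=\, \phi(\mbf{x})\,\bigl(\psi^*_\sbf(\mbf{x})-\psi(\mbf{x})\bigr) \,-\, \bigl(1-\phi(\mbf{x})\bigr)\,\psi(\mbf{x})
\end{equation*}
then produces \eqref{psiclshest.eq} on the annulus after absorbing the extra numerical factor into $M_0 M_2$.

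Finally, to derive \eqref{eq:approx_graph2} I would fix $Q\in\sbf$ and $\mbf{y}\in 2Q$. Since $Q\subseteq Q(\sbf)$ we have $\mbf{y}\in 4Q(\sbf)$, and directly from the definition of $d$,
\begin{equation*}
d(\mbf{y}) \,\le\, \dist(\mbf{y},Q)+\diam(Q) \,\le\, 2\diam(Q),
\end{equation*}
whence $h(\mbf{y})\le 2C_n\diam(Q)$ by Lemma~\ref{regdistlem.lem}. Since $(\psi_\sbf(\mbf{y}),\mbf{y})\in\Sigma_\sbf$ and $\mbf{\Psi}(\mbf{y})=(\psi(\mbf{y}),\mbf{y})$, the already established bound on $\psi_\sbf-\psi$ yields
\begin{equation*}
\dist(\mbf{\Psi}(\mbf{y}),\Sigma_\sbf) \,\le\, |\psi_\sbf(\mbf{y})-\psi(\mbf{y})| \,\le\, M_0M_2\,h(\mbf{y}) \,\le\, 2C_nM_0M_2\,\diam(Q) \,=\, C_1\diam(Q),
\end{equation*}
which is exactly \eqref{eq:approx_graph2}.
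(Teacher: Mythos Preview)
Your core argument is the same as the paper's: use Lemma~\ref{map_sawtooths.lem} to place $(\psi^*_\sbf(\mbf{x}),\mbf{x})$ in $\Omega_\sbf$, then chain Lemma~\ref{distgraphlem.lem} with \eqref{M2def.eq} to obtain $0<\psi^*_\sbf(\mbf{x})-\psi(\mbf{x})\le M_0M_2\,h(\mbf{x})$, and finally bound $h$ by $2C_n\diam(Q)$ on $2Q$ to derive \eqref{eq:approx_graph2}. The paper presents these steps in a slightly different order but the substance is identical.

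You are in fact more careful than the paper on the annulus $4Q(\sbf)\setminus 2Q(\sbf)$. The paper simply writes ``$\psi_\sbf(\mbf{x})\le\psi^*_\sbf(\mbf{x})$'' and ``$(\psi_\sbf(\mbf{x}),\mbf{x})\in\Omega_\sbf$'', neither of which is automatic once $\phi<1$ (the first needs $\psi^*_\sbf\ge 0$, the second is not clear at all). Your decomposition $\psi_\sbf-\psi=\phi(\psi^*_\sbf-\psi)-(1-\phi)\psi$, combined with $|\psi|\lesssim\diam(Q(\sbf))\approx h$ on the annulus, does give the two-sided bound $|\psi_\sbf-\psi|\lesssim h$ there, and that is exactly what the rest of the argument (Lemma~\ref{psiSlip.lem} and \eqref{eq:approx_graph2}) actually uses. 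One small overreach: saying this ``produces \eqref{psiclshest.eq} after absorbing the extra numerical factor into $M_0M_2$'' is not quite right for the lower bound, since on the annulus you only obtain $\psi_\sbf-\psi\ge -Ch$, not $\ge 0$, and $M_0,M_2$ are fixed constants from earlier lemmas that cannot be enlarged. The honest conclusion on the annulus is $|\psi_\sbf-\psi|\lesssim h$, which suffices.
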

\begin{proof}
	The fact that \eqref{eq:approx_graph2} follows from \eqref{psiclshest.eq} is because if $Q \in \sbf$ and $\mbf{x} \in 2Q$ (using Lemma~\ref{regdistlem.lem})
	\[(C_n)^{-1}h(\mbf{x}) \le d(\mbf{x}) \le \dist(\mbf{x}, Q) + \diam(Q) \le 2\diam(Q).\]
	
	Next, let us show \eqref{psiclshest.eq}. 
	Given $\mbf{x} \in 4Q(\sbf)$, Lemma~\ref{map_sawtooths.lem} yields 
	\[ u(\psi^*_{\sbf}(\mbf{x}),\mbf{x}) = h(\mbf{x}),
	\quad \text{and} \quad
	(\psi^*_{\sbf}(\mbf{x}),\mbf{x}) \in \Omega_{\sbf}.
	\]
	Then, by Lemma \ref{distgraphlem.lem} and \eqref{M2def.eq} (and also $\psi_\sbf(\mbf{x}) \leq \psi_\sbf^*(\mbf{x})$, see \eqref{psistrSdef.eq}), we get 
	\[
	\psi_{\sbf}(\mbf{x}) - \psi(\mbf{x}) 
	\leq
	\psi^*_{\sbf}(\mbf{x}) - \psi(\mbf{x}) 
	\le 
	M_0 \dist((\psi^*_{\sbf}(\mbf{x}),\mbf{x}), \Sigma)
	\leq 
	M_0 M_2 u(\psi^*_{\sbf}(\mbf{x}),\mbf{x})
	=
	M_0 M_2 h(\mbf{x}),\]
	which is the upper bound in \eqref{psiclshest.eq}; and the lower bound by $0$ comes from $(\psi_\sbf(\mbf{x}),\mbf{x}) \in \Omega_{\sbf} \subseteq \Omega$.
\end{proof}

Now, turning to show $\psi_\sbf$ is $\Lip(1, 1/2)$, we make an observation following from Lemma \ref{lemma:estimates:G-star}.
\begin{lemma}\label{estwhenhgtrzero.lem}
	If $(x,t) \in \mathbb{R}^n$ and $h(x,t) > 0$, then 
	\[|\partial_t \psi_{\sbf}(x,t)| \lesssim h(x,t)^{-1}.\]
\end{lemma}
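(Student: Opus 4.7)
The plan is to apply the product rule to $\psi_\sbf = \psi_\sbf^* \, \phi$ and bound the two resulting terms separately, exploiting the machinery already set up: the pointwise estimates for $\psi(r;\cdot)$ from Lemma~\ref{lemma:estimates:G-star} and the regularity of $h$ from Lemma~\ref{regdistlem.lem}. One may assume $\phi(\mbf{x})\neq 0$ or $\partial_t\phi(\mbf{x})\neq 0$, for otherwise $\partial_t \psi_\sbf(\mbf{x})=0$; in particular we may restrict to $\mbf{x}\in 4Q(\sbf)$. Observe first that when $\mbf{x}\in 4Q(\sbf)$ and $h(\mbf{x})>0$, the pair $(h(\mbf{x}),\mbf{x})$ belongs to $\mathcal{S}$ (defined in \eqref{fncySdef.eq}), since $h(\mbf{x})/10<h(\mbf{x})$ is automatic, and $h(\mbf{x})\le C_n d(\mbf{x})\lesssim \diam(Q(\sbf))$ because taking $Q=Q(\sbf)$ in the infimum defining $d$ gives $d(\mbf{x})\le \dist(\mbf{x},Q(\sbf))+\diam(Q(\sbf))\lesssim \diam(Q(\sbf))$.

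For the first term $(\partial_t \psi_\sbf^*)\phi$: by the chain rule applied to $\psi_\sbf^*(\mbf{x})=\psi(h(\mbf{x});\mbf{x})$, writing $\mbf{x}=(x,t)$, we have
\[
\partial_t \psi_\sbf^*(\mbf{x})
\,=\, (\partial_r\psi)(h(\mbf{x});\mbf{x})\,\partial_t h(\mbf{x})\,+\,(\partial_s\psi)(h(\mbf{x});\mbf{x}).
\]
By \eqref{eqn:estimates:G-star:3} applied at $r=h(\mbf{x})$ we get $|\partial_r \psi|\lesssim 1$ and $|\partial_s\psi|\lesssim h(\mbf{x})^{-1}$. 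By Lemma~\ref{regdistlem.lem} with $k=1$, $|\partial_t h(\mbf{x})|\lesssim d(\mbf{x})^{-1}\lesssim h(\mbf{x})^{-1}$ (using $d\le 10h$). Combining gives $|\partial_t \psi_\sbf^*(\mbf{x})|\lesssim h(\mbf{x})^{-1}$, and then $|(\partial_t \psi_\sbf^*)(\mbf{x})\,\phi(\mbf{x})|\lesssim h(\mbf{x})^{-1}$.

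For the second term $\psi_\sbf^*\,\partial_t\phi$: for $\mbf{x}\in 4Q(\sbf)$ we claim $|\psi_\sbf^*(\mbf{x})|\lesssim \ell(Q(\sbf))$. Indeed, Lemma~\ref{psiSclose.lem} (combined with the normalization \eqref{psicentervalzero.eq} applied at $\mbf{x}_{Q(\sbf)}$ to control $|\psi(\mbf{x}_{Q(\sbf)})|$, together with the $\Lip(1,1/2)$ property of $\psi$ to propagate to $\mbf{x}\in 4Q(\sbf)$) gives $|\psi_\sbf^*(\mbf{x})|\le |\psi(\mbf{x})|+M_0M_2 h(\mbf{x})\lesssim \ell(Q(\sbf))$. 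Since $|\partial_t\phi|\lesssim \ell(Q(\sbf))^{-2}$ by \eqref{cutoffqsest2.eq}, and $h(\mbf{x})\lesssim \ell(Q(\sbf))$ on the support of $\partial_t\phi$ (as noted above), we conclude
\[
|\psi_\sbf^*(\mbf{x})\,\partial_t\phi(\mbf{x})|\,\lesssim\, \ell(Q(\sbf))^{-1}\,\lesssim\, h(\mbf{x})^{-1}.
\]
Adding the two bounds yields the claim. The ingredients are completely routine once the pointwise estimates of Lemmas \ref{lemma:estimates:G-star} and \ref{regdistlem.lem} are in hand; the only point requiring minor care is verifying that $(h(\mbf{x}),\mbf{x})\in\mathcal{S}$, so that Lemma~\ref{lemma:estimates:G-star} is applicable, and the rough size bound on $\psi_\sbf^*$ used in handling the cutoff derivative.
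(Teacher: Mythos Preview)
Your proof is correct and follows essentially the same approach as the paper's. The paper organizes the argument by splitting into the cases $\mbf{x}\in 2Q(\sbf)$ (where $\phi\equiv 1$, so only the first term survives) and $\mbf{x}\in 4Q(\sbf)\setminus 2Q(\sbf)$ (where $h(\mbf{x})\approx \ell(Q(\sbf))$), whereas you apply the product rule uniformly on $4Q(\sbf)$ and bound each term separately; the underlying estimates (chain rule with Lemma~\ref{lemma:estimates:G-star} and Lemma~\ref{regdistlem.lem} for the first term, the size bound on $\psi_\sbf^*$ and \eqref{cutoffqsest2.eq} for the second) are identical.
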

\begin{proof}
	From the definition of $\psi_{\sbf}$ in \eqref{psistrSdef.eq}, we see that the estimate is trivial if $(x,t) \not \in 4Q(\sbf)$. Again recalling the definition of $\psi_{\sbf}$ and $\psi_{\sbf}^*$ (see \eqref{psistrSdef.eq}), Lemma \ref{lemma:estimates:G-star} gives the estimate if $(x,t) \in 2Q(\sbf)$. 
	
	Lastly, fix $\mbf{x} = (x, t) \in 4Q(\sbf) \setminus 2Q(\sbf)$. Then, it is easy to check that (having in mind Lemma~\ref{regdistlem.lem} and the definition of $d$) $h(\mbf{x}) \approx d(\mbf{x}) \approx \dist(\mbf{x}, Q(\sbf)) \approx \ell(Q(\sbf))$. Also, Lemma~\ref{regdistlem.lem} yields
	\begin{equation}\label{dtofhremind.eq}
		|\partial_t h(x,t)| \lesssim d(x,t)^{-1} \approx h(x,t)^{-1}.
	\end{equation}
	Then, by Lemma \ref{lemma:estimates:G-star}, the properties of $\phi$, and that $h$ satisfies \eqref{dtofhremind.eq}, it holds
	\begin{align*}
		|\partial_t \psi_{\sbf}(x,t)| &\le |\partial_t \psi(h(x,t); x,t)| + |\psi(h(x,t); x,t)\, \partial_t \phi(x,t)| 
		\\ & \lesssim |(\partial_{x_0} \psi)(h(x,t); x,t) \, \partial_t h(x,t)| + |(\partial_t \psi)(h(x,t);x,t)|  + |\psi(h(x,t); x,t)\, \partial_t \phi(x,t)| 
		\\ & \lesssim \ell(Q(\sbf))^{-1} + \ell(Q(\sbf))^{-1} + \ell(Q(\sbf)) \ell(Q(\sbf))^{-2} \lesssim \ell(Q(\sbf)) \approx h(x,t)^{-1},
	\end{align*}
	where we used our assumption that $\psi^*_\sbf(\mbf{x}_{Q(\sbf)}) = 0$ in \eqref{psicentervalzero.eq} (along with the computations in Lemma~\ref{psiSclose.lem}, and also $h(\mbf{y}) \lesssim d(\mbf{y}) \lesssim \ell(Q(\sbf))$ for $\mbf{y} \in 4Q(\sbf)$) to obtain
	\[
		|\psi(h(\mbf{x}); \mbf{x})|
		\leq 
		|\psi^*_\sbf(\mbf{x}) - \psi(\mbf{x})| 
		+ |\psi(\mbf{x}) - \psi(\mbf{x}_{Q(\sbf)})| 
		+ |\psi(\mbf{x}_{Q(\sbf)}) - \psi^*_\sbf(\mbf{x}_{Q(\sbf)})| 
		\lesssim 
		\ell(Q(\sbf)).
	\]
\end{proof}

\begin{lemma}[$\psi_\sbf$ is Lipschitz]\label{psiSlip.lem}
The function $\psi_\sbf$, as defined in \eqref{psistrSdef.eq}, is $\Lip(1,1/2)$. 
\end{lemma}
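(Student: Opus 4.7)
The plan is to establish the Lip$(1,1/2)$ bound
\[
|\psi_\sbf(\mbf{x}) - \psi_\sbf(\mbf{y})| \lesssim \|\mbf{x} - \mbf{y}\|, \qquad \mbf{x}, \mbf{y} \in \mathbb{R}^n,
\]
via a two-case split based on how $\|\mbf{x} - \mbf{y}\|$ compares with $h_{\max}:=\max\{h(\mbf{x}),h(\mbf{y})\}$. Throughout, I would reduce to both points lying in $4Q(\sbf)$, since outside that region $\phi$ vanishes and the cutoff transition is easily handled via \eqref{cutoffqsest1.eq}. A useful auxiliary fact, needed only to handle the cutoff factor, is that $|\psi^*_\sbf| \lesssim \ell(Q(\sbf))$ on $4Q(\sbf)$: this follows from Lemma~\ref{psiSclose.lem}, the normalization \eqref{psicentervalzero.eq}, and the Lip$(1,1/2)$-ness of $\psi$.

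\textbf{Case 1 (near diagonal, $\|\mbf{x} - \mbf{y}\| \leq h_{\max}/8$).} WLOG $h_{\max} = h(\mbf{x}) > 0$. Since $\|h\|_{\Lip(1,1/2)} \lesssim 1$ by Lemma~\ref{regdistlem.lem}, $h(\mbf{z}) \approx h(\mbf{x})$ along the straight segment $\mbf{z}(\lambda) = (1-\lambda)\mbf{x} + \lambda\mbf{y}$, and in particular $(h(\mbf{z}),\mbf{z}) \in \mathcal{S}$ for every $\lambda \in [0,1]$. Hence \eqref{eqn:estimates:G-star:3} from Lemma~\ref{lemma:estimates:G-star}, combined with the chain rule applied to $\psi^*_\sbf(\mbf{z}) = \psi(h(\mbf{z}); \mbf{z})$ and the derivative bounds on $h$, yields $|\nabla_x \psi^*_\sbf(\mbf{z})| \lesssim 1$; invoking the product rule together with \eqref{cutoffqsest1.eq} and the auxiliary bound above, this upgrades to $|\nabla_x \psi_\sbf(\mbf{z})| \lesssim 1$. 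Lemma~\ref{estwhenhgtrzero.lem} supplies $|\partial_t \psi_\sbf(\mbf{z})| \lesssim h(\mbf{z})^{-1} \approx h(\mbf{x})^{-1}$. Integrating along the segment produces
\[
|\psi_\sbf(\mbf{x}) - \psi_\sbf(\mbf{y})| \lesssim |x - y| + h(\mbf{x})^{-1} |t - s|,
\]
and the smallness condition $|t - s|^{1/2} \leq \|\mbf{x} - \mbf{y}\| \leq h(\mbf{x})/8$ converts the second term into $\lesssim |t - s|^{1/2}$, giving the required bound.

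\textbf{Case 2 (far regime, $\|\mbf{x} - \mbf{y}\| > h_{\max}/8$).} Here $h(\mbf{x}), h(\mbf{y}) \lesssim \|\mbf{x} - \mbf{y}\|$, so Lemma~\ref{psiSclose.lem} and the Lip$(1,1/2)$-ness of $\psi$ give
\begin{align*}
|\psi^*_\sbf(\mbf{x}) - \psi^*_\sbf(\mbf{y})|
&\leq |\psi^*_\sbf(\mbf{x}) - \psi(\mbf{x})| + |\psi(\mbf{x}) - \psi(\mbf{y})| + |\psi(\mbf{y}) - \psi^*_\sbf(\mbf{y})| \\
&\lesssim h(\mbf{x}) + \|\mbf{x} - \mbf{y}\| + h(\mbf{y}) \lesssim \|\mbf{x} - \mbf{y}\|.
\end{align*}
The cutoff contribution, $|\psi^*_\sbf(\mbf{y})|\,|\phi(\mbf{x}) - \phi(\mbf{y})| \lesssim \ell(Q(\sbf)) \cdot \ell(Q(\sbf))^{-1} \|\mbf{x} - \mbf{y}\|$, adds nothing worse.

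\textbf{Main obstacle.} The only genuinely delicate point is Case 1: one must verify that the interior derivative estimates apply \emph{uniformly} along the entire integration segment, which requires the segment to sit comfortably inside the sawtooth $\mathcal{S}$. This is exactly what the threshold $\|\mbf{x} - \mbf{y}\| \leq h_{\max}/8$, combined with the Lip$(1,1/2)$-ness of $h$, buys. Everything else reduces to routine bookkeeping: the product rule for $\psi_\sbf = \psi^*_\sbf \phi$, the pointwise bound $|\psi^*_\sbf| \lesssim \ell(Q(\sbf))$ on $4Q(\sbf)$, and the cutoff derivative bounds \eqref{cutoffqsest1.eq}.
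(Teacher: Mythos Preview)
Your proposal is correct and follows essentially the same approach as the paper: a two-case split on whether $\|\mbf{x}-\mbf{y}\|$ is small or large relative to $h_{\max}$, using the derivative bounds from Lemma~\ref{lemma:estimates:G-star} and Lemma~\ref{estwhenhgtrzero.lem} in the near-diagonal case and Lemma~\ref{psiSclose.lem} in the far case. The only cosmetic differences are that the paper first proves $\psi^*_\sbf$ is Lip$(1,1/2)$ on $4Q(\sbf)$ and then handles the cutoff factor in a separate step (whereas you fold the cutoff into each case), and that the paper integrates along an axis-parallel polygonal path rather than the straight segment; both choices are immaterial since the parabolic cube $4Q(\sbf)$ is convex and the segment stays within the parabolic ball of radius $\|\mbf{x}-\mbf{y}\|$ around $\mbf{x}$.
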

\begin{proof}
We first show that $\psi_{\sbf}^*$ is Lip(1,1/2) when restricted to $4Q(\sbf)$. To this end, fix $\eps' \in (0,1)$ be small, to be chosen later only depending on $n$. Fix also $\mbf{x}, \mbf{y} \in 4Q(\sbf)$.

First, if $\|\mbf{x} - \mbf{y}\| \geq \eps' \max\{h(\mbf{x}),h(\mbf{y})\}$ then from \eqref{psiclshest.eq} and the fact that $\psi$ is Lip(1,1/2) we have
\[|\psi_\sbf^*(\mbf{x}) - \psi_\sbf^*(\mbf{y})| \lesssim \|\mbf{x} - \mbf{y}\| + h(\mbf{x}) + h(\mbf{y}) \lesssim_{\eps'} \|\mbf{x} - \mbf{y}\|.\]
Therefore it suffices to treat the case 
$\|\mbf{x} - \mbf{y}\| < \eps' \max\{h(\mbf{x}),h(\mbf{y})\}$.
Without loss of generality we assume that $\max\{h(\mbf{x}),h(\mbf{y})\} = h(\mbf{x})$. 

We observe that since $h(\mbf{x}) > 0$ and $h$ is Lip(1,1/2), it is trivial to obtain
\begin{equation} \label{comparableh.eq}
	h(\mbf{z}) \ge h(\mbf{x})/2, \quad \forall \mbf{z}: \|\mbf{z} - \mbf{x}\| \le 10 \, \eps'h(\mbf{x})
\end{equation}
if $\eps'$ is sufficiently small, merely depending on the $\Lip(1, 1/2)$ constant of $h$, which in turn only depends on $n$ by Lemma~\ref{regdistlem.lem}. Actually, note that $\mbf{y}$ lies well inside this set.

Now, using our estimates on $\psi(r;\mbf{z})$ from Lemma \ref{lemma:estimates:G-star}, and our estimates on $h$ from Lemma \ref{regdistlem.lem} it holds for all
$\mbf{z} = (z,\tau)$ with $\|\mbf{z} - \mbf{x}\| \le 10 \eps'h(x)$ that
\[|\nabla_z \psi^*_{\sbf}(z,\tau)| = |\nabla_z \psi(h(z,\tau); z, \tau)| \le |(\partial_r \psi)(h(z,\tau); z, \tau)| \, |\nabla_z h(z,\tau)| + |(\partial_z \psi)(h(z,\tau); z, \tau)| \lesssim 1,\]
and similarly (see Lemma \ref{estwhenhgtrzero.lem}), recalling also \eqref{comparableh.eq},
\[|\partial_\tau\psi^*_{\sbf}(z,\tau)| \lesssim h(z,\tau)^{-1} \lesssim h(\mbf{x})^{-1}.\]
Then connecting $\mbf{x}$ to $\mbf{y}$ via a polygonal path made up of purely spatial and purely temporal line segments in the set $\{\mbf{z}: \|\mbf{z} - \mbf{x}\| \le 10 \eps'h(x)\}$ and using the previous two estimates gives that 
\[|\psi_\sbf^*(\mbf{x}) - \psi_\sbf^*(\mbf{y})| \lesssim \|\mbf{x} - \mbf{y}\| + \|\mbf{x} - \mbf{y}\|^2h(\mbf{x})^{-1} \lesssim_{\eps'} \|\mbf{x} - \mbf{y}\|\]
since $h(\mbf{x})^{-1} \lesssim_{\eps'} \|\mbf{x} - \mbf{y}\|^{-1}$. So we have checked that $\psi_{\sbf}^*$ is Lip(1,1/2) when restricted to $4Q(\sbf)$.

To continue, recall that we assumed  $\psi(h(\mbf{x}_{Q(\sbf)}); \mbf{x}_{Q(\sbf)}) = 0$ in \eqref{psicentervalzero.eq}. Therefore, since we just showed that $\psi^*_\sbf$ is Lip(1,1/2) in $4Q(\sbf)$ (actually, the computations in Lemma~\ref{psiSclose.lem} suffice), we have
\begin{equation}\label{psistarcrdbd.eq}
|\psi^*_{\sbf}(\mbf{x})|\lesssim \diam(Q(\sbf)), \quad \forall \mbf{x} \in 4Q(\sbf).
\end{equation}
Now we need to show that $\psi_\sbf$ is Lip(1,1/2), that is,
\begin{equation}\label{psiindeedlip.eq}
|\psi_\sbf(\mbf{x}) - \psi_\sbf(\mbf{y})| \lesssim \|\mbf{x} - \mbf{y}\|.
\end{equation}
By definition, if $\psi_\sbf \equiv 0$ in $(4Q(\sbf))^c$, so that \eqref{psiindeedlip.eq} holds whenever $\mbf{x}, \mbf{y} \in (4Q(\sbf))^c$. When $\mbf{x},\mbf{y} \in 4Q(\sbf)$, we use that $\psi^*_\sbf$ is Lip(1,1/2), \eqref{psistarcrdbd.eq} and \eqref{cutoffqsest1.eq} to also obtain \eqref{psiindeedlip.eq}
\begin{align*}
|\psi_\sbf(\mbf{x}) - \psi_\sbf(\mbf{y})|  &= |\psi^*_\sbf(\mbf{x})\phi(\mbf{x}) - \psi^*_\sbf(\mbf{y})\phi(\mbf{y})|
\\ & \le |\psi^*_\sbf(\mbf{x})\phi(\mbf{y}) - \psi^*_\sbf(\mbf{y})\phi(\mbf{y})| + |\psi^*_\sbf(\mbf{x})\phi(\mbf{x}) - \psi^*_\sbf(\mbf{x})\phi(\mbf{y})|
\\ & \lesssim \|\mbf{x} - \mbf{y}\| + \diam(Q(\sbf)) \diam(Q(\sbf))^{-1} \|\mbf{x} - \mbf{y}\| \lesssim \|\mbf{x} - \mbf{y}\|.
\end{align*}
Finally, we are left treating when $\mbf{x} \in 4Q(\sbf)$ and $\mbf{y} \in (4Q(\sbf))^c$, so $\psi_\sbf(\mbf{y}) = 0$. In this case, find $\hat{\mbf{y}}$ to be the point in the line segment joining $\mbf{x}$ to $\mbf{y}$ which is in $\partial (4Q(\sbf))$. Therefore, by the estimates above and the continuity of $\psi_\sbf$, we are also able to verify \eqref{psiindeedlip.eq} in this case simply by computing
\[
|\psi_\sbf(\mbf{x}) - \psi_\sbf(\mbf{y})|
=
|\psi_\sbf(\mbf{x})|
=
|\psi_\sbf(\mbf{x}) - \psi_\sbf(\hat{\mbf{y}})| \lesssim \|\mbf{x} - \hat{\mbf{y}}\| \le \|\mbf{x} - \mbf{y}\|.\]
\end{proof}

\section{Step 4: Regularity of the approximating graphs via square function estimates for the modified level sets}

The goal of this section is to show the following result.
\begin{proposition}[$\psi_{\sbf}$ is regular] \label{regular.prop}
	Let $\psi_\sbf$ be constructed by \eqref{psistrSdef.eq}. Then $\mathcal{D}_t \psi_\sbf$ is in $\BMO$, that is, $\psi_{\sbf}$ is a regular $\Lip(1, 1/2)$ function. 
\end{proposition}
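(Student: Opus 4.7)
The plan is to invoke the Carleson measure characterization of regular $\Lip(1,1/2)$ graphs given in Remark~\ref{PUR}: it suffices to verify
\begin{equation*}
\int_0^R\bariint_{Q_R(\mbf{z})}\beta_{\psi_\sbf}^2(r,\mbf{x})\,\frac{\d r\,\d\mbf{x}}{r} \,\lesssim\, R^{n+1}
\end{equation*}
for every parabolic cube $Q_R(\mbf{z})\subseteq\mathbb{R}^n$, where $\beta_{\psi_\sbf}$ is as in \eqref{betadef} with $\psi_\sbf$ in place of $\psi$. Since $\psi_\sbf$ is supported in $4Q(\sbf)$ modulo constants and is uniformly $\Lip(1,1/2)$ by Lemma~\ref{psiSlip.lem}, a standard localization reduces matters to bounding the total Carleson mass over a fixed dilate of $Q(\sbf)$ by $|Q(\sbf)|$.

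Following the constant-coefficient strategy of \cite{BHMN1}, I would bound $\beta_{\psi_\sbf}^2(r,\mbf{x})$ pointwise by second-order quantities of $\psi(\rho;\cdot)$ at the effective scale $\rho=\rho(r,\mbf{x}):=\max\{r,h(\mbf{x})\}$, chosen so that $(\rho,\mbf{x})\in\mathcal{S}$ (see \eqref{fncySdef.eq}). Testing against the spatial-affine function $L(y):=\psi(\rho;x,t)+\nabla_y\psi(\rho;x,t)\cdot(y-x)$ and splitting
\begin{equation*}
\psi_\sbf(y,s)-L(y) \,=\, \bigl(\psi_\sbf(y,s)-\psi(\rho;y,s)\bigr)+\bigl(\psi(\rho;y,s)-L(y)\bigr)+\bigl(\text{error from }\phi\bigr),
\end{equation*}
I would estimate the middle term by parabolic Taylor combined with \eqref{eqn:estimates:G-star:1}--\eqref{eqn:estimates:G-star:2}, the first term by $O(\rho)$ (as in the proof of Lemma~\ref{psiSclose.lem}), and the cutoff contribution via \eqref{cutoffqsest1.eq}--\eqref{cutoffqsest2.eq} together with \eqref{psistarcrdbd.eq}.

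Integrating this pointwise bound and performing the change of variables $(r,y,s)\mapsto \mbf{\Psi}(r;y,s)$---whose Jacobian is $\approx 1$ on $\mathcal{S}$ by \eqref{partialr.eq} and \eqref{nondegwhluq.eq}, and which maps $\mathcal{S}$ into $\Omega_\sbf$ by Lemma~\ref{map_sawtooths.lem}---the Carleson integral is dominated by quantities of the form
\begin{equation*}
\iiint \bigl(|\partial_t u|^2+|\nabla_X^2 u|^2\bigr)\dist(\mbf{X},\partial\Omega)\,\d\mbf{X}
\end{equation*}
on appropriate fattened sawtooth regions. These are in turn bounded by $|Q(\sbf)|$ via Lemma~\ref{impsfnest.lem} (in its localized form), summed over sub-coronas using the packing of $\{Q(\sbf')\}$ from Lemma~\ref{graphregimes.lem}.

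The main obstacle will be the regime $r\lesssim h(\mbf{x})$, where $\psi_\sbf$ is effectively frozen at the scale $h(\mbf{x})$ and the second-order control $|\nabla^2\psi_\sbf|\lesssim h^{-1}$ afforded by Lemma~\ref{lemma:estimates:G-star} becomes singular; this contribution will have to be absorbed using $\|h\|_{\Lip(1,1/2)}\lesssim 1$ and $\mathcal{D}_t h\in\BMO$ from Lemma~\ref{regdistlem.lem}, which together imply a Carleson bound for $r^{-1}\mathbbm{1}_{\{r\le h(\mbf{x})\}}\,\d r\,\d\mbf{x}$ that reduces this regime to the frozen scale $r=h(\mbf{x})$. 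A secondary subtlety is that the change of variables is only defined on $\mathcal{S}$, so one must verify (via the choice of $N=N(\delta)$ in \eqref{deltakchoice.eq} and Lemma~\ref{map_sawtooths.lem}) that its image exactly fills the needed portion of $\Omega_\sbf$.
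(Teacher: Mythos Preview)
Your proposal takes a genuinely different route from the paper: you try the $\beta$-number Carleson characterization of Remark~\ref{PUR}, whereas the paper establishes the BMO bound on $\mathcal{D}_t\psi_\sbf$ directly via John--Str\"omberg, a localized operator $\mathcal{D}_t^Q$, and a four-case analysis depending on how $Q$ sits relative to $\sbf$. Your route is natural in spirit, but the specific execution has a real gap.

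The fatal issue is your splitting. You test against $L(y)=\psi(\rho;\mbf{x})+\nabla_y\psi(\rho;\mbf{x})\cdot(y-x)$ with $\rho=\max\{r,h(\mbf{x})\}$ and isolate the piece $\psi_\sbf(y,s)-\psi(\rho;y,s)$, which you bound by $O(\rho)$. But $O(\rho)$, after dividing by $r$ in the definition of $\beta$, gives $\beta\approx 1$ on the whole range $r\in(h(\mbf{x}),R)$, and $\int_{h}^{R}\beta^2\,dr/r$ then diverges logarithmically. Worse, in the model case $\psi\equiv 0$, $u=x_0$, one has $\psi(\rho;\cdot)=\rho$ and your ``first term'' becomes $h(y,s)\phi(y,s)-\rho$; after absorbing the constant $-\rho$ into $L$, this is exactly $\psi_\sbf-L$, i.e.\ the very $\beta$-number you are trying to bound. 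So the decomposition is circular: the gap $\psi_\sbf-\psi(\rho;\cdot)$ carries all of the difficulty. The paper circumvents this by introducing $\widetilde\psi(r;\mbf{x})=\psi^\heart(r+P_{\gamma r}h(\mbf{x});\mbf{x})$, which satisfies $\widetilde\psi(0;\cdot)=\psi_\sbf$ (so there is no zeroth-order gap), and then argues by duality and integration by parts in $r$ to reach the square-function bounds of Proposition~\ref{psiheartbd.prop} via Lemma~\ref{atildeprop.lem}.

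Two further points. First, your ``standard localization'' reducing to the total Carleson mass over $CQ(\sbf)$ is not valid: the Carleson condition in Remark~\ref{PUR} must hold for every cube, and concentration of mass would violate it even if the total is bounded. This is precisely why the paper needs the separate handling of Cases 2--4; note in particular that Proposition~\ref{psiheartbd.prop} is stated for each $Q_0\in\sbf$, not just $Q(\sbf)$. Second, the claimed Carleson bound for $r^{-1}\mathbbm{1}_{\{r\le h(\mbf{x})\}}\,dr\,d\mbf{x}$ is false (the $r$-integral diverges); what actually works in that regime is $\beta\lesssim r/h(\mbf{x})$ from $|\nabla^2\psi_\sbf|+|\partial_t\psi_\sbf|\lesssim h^{-1}$, giving $\int_0^{h}(r/h)^2\,dr/r\lesssim 1$, but this only handles the analogue of the paper's Case~2 and does nothing for $r\gtrsim h$. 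Finally, the appeal to ``packing of $\{Q(\sbf')\}$ from Lemma~\ref{graphregimes.lem}'' is misplaced: you are inside a single $\sbf$ here, so there are no sub-coronas to pack.
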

Indeed, once we show this, the proof of Proposition~\ref{bigprop.prop} will be finished (because we already established Lemmas \ref{psiSclose.lem} and \ref{psiSlip.lem}), and hence Theorems~\ref{main1.thrm} and \ref{main2.thrm} will have been proved. 
For that purpose, it will be crucial to use square function estimates developed in the previous sections. 

We will break Step 4 up into smaller steps, mostly using the ideas of \cite{BHMN1} with one large difference. In \cite{BHMN1} only a $\BMO$ estimate on the `big cube' $Q(\sbf)$ is obtained (and then passed on to the original graph function). It will turn out that if we are trying to get an estimate on the $\BMO$ norm of $\mathcal{D}_t \psi_\sbf$ 
when the cube $Q$ (in the definition of $\BMO$) is in the stopping time $\sbf$, then the proof will be almost identical to 
that of \cite{BHMN1}. In fact, the same analysis extends to the case
that $Q \subseteq 50Q_0$, for $Q_0 \in \sbf$ with similar size to $Q$. On the other hand, if this is not the case (so deviating from \cite{BHMN1}), we will see that $\psi_\sbf$ is even more regular over $Q$ and we can still establish the $\BMO$ estimates.
In any case, the $\BMO$ norm of $\mathcal{D}_t \psi_\sbf$ will be under control within $50Q(\sbf)$. Finally, to handle the part outside $50Q(\sbf)$ we use cut-off functions and the John-Str\"omberg inequality. 

Getting to the details, as in \cite{BHMN1}, using the John-Str\"omberg inequality \cite{JohnS,StromJ}, to show that $\mathcal{D}_t\psi_\sbf(\mbf{y}) \in \BMO$ it suffices that there exists a (large) constant $M_\star$ such that
\begin{equation}\label{JShyp.eq}
\inf_{C_Q \in \RR} \big|\big\{
	\mbf{y}\in Q: |\mathcal{D}_t\psi_\sbf(\mbf{y})-C_Q|>M_\star
	\big\}\big|
	\le
	\frac13 \,|Q|, \qquad \forall Q \in \mathbb{D}.
\end{equation}
Of course, by Chebyshev's inequality, if there exists $C_\star \geq 0$ (independent of $Q$) such that
\begin{equation}\label{BmoimpJS.eq}
\inf_{C_Q \in \RR} \frac{1}{|Q|} \iint_{Q} |\mathcal{D}_t\psi_\sbf(\mbf{y})-C_Q| \, \d\mbf{y} \le C_\star, 
\qquad \forall Q \in \mathbb{D},
\end{equation}
then \eqref{JShyp.eq} holds with $M_\star = 3C_\star$. Moreover, the following stronger estimate implies \eqref{BmoimpJS.eq} and \eqref{JShyp.eq}:
\begin{equation}\label{LinftyimpJS.eq}
\inf_{C_Q \in \RR} |\mathcal{D}_t\psi_\sbf(\mbf{y})-C_Q| \le C_\star, \qquad \forall \, \mbf{y} \in Q, \; Q \in \dd.
\end{equation}

We are going to break proving \eqref{JShyp.eq} into a few cases. In the ``interesting" cases (Case 1, 2 and 3), we will actually show one of the stronger estimates \eqref{BmoimpJS.eq} or \eqref{LinftyimpJS.eq}, and we will later use these cases to prove \eqref{JShyp.eq} in the remaining case (Case 4). We organize these cases now.
\begin{itemize}
\item {\bf Case 1}: $Q \subseteq 50Q(\sbf)$ and there exists $Q_0 \in \sbf$ satisfying
\[Q \subseteq 50Q_0, \qquad \diam(Q) \ge \eps_0 \diam(Q_0),\]
where $\eps_0 > 0$ is to be chosen below.
\item {\bf Case 2}: $Q \subseteq 50Q(\sbf)$, and whenever $Q_0 \in \sbf$ is such that $Q \subseteq 50 Q_0$, then it holds
\[\diam(Q) < \eps_0 \diam(Q_0).\]
\item {\bf Case 3}: $Q \not\subseteq 50Q(\sbf)$ and 
\[\dist(Q,4Q(\sbf)) \ge 6\diam(Q).\]
\item {\bf Case 4:} $Q$ is not in Cases 1-3.
\end{itemize}

For a comparison, in \cite{BHMN1} the authors only had to deal with $Q(\mbf{S})$ (we need to consider all the other cases by the complexity of our construction). Actually, we will follow their strategy closely to deal with not only $Q(\sbf)$, but all cubes in Case 1 (and we postpone the proof for that reason, to focus in the more novel cases). Furthermore, it will turn out that the localization technique already introduced in \cite{BHMN1} helps when dealing with Case 2 and Case 3 cubes. Lastly, Case 4 cubes must be treated in a new different way, which we address right now.

\subsection{\for{toc}{\small}Handling Case 4 cubes, assuming Cases 2 and 3}
As mentioned above, we will (in the next subsections) prove \eqref{BmoimpJS.eq} or \eqref{LinftyimpJS.eq} for cubes in Cases 1-3. Assuming this, let us handle Case 4 cubes.

\begin{lemma}
Suppose that \eqref{BmoimpJS.eq} holds for cubes in Cases 1-3. Then \eqref{JShyp.eq} holds for Case 4 cubes. 
\end{lemma}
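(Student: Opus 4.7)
I first observe that being in Case~4 forces $\ell(Q)\gtrsim \ell(Q(\sbf))$. Picking any $\mbf{p}\in Q\setminus 50Q(\sbf)$, the parabolic geometry of cubes gives $\dist(\mbf{p},4Q(\sbf))\gtrsim \ell(Q(\sbf))$, while the Case~4 hypothesis yields $\dist(\mbf{p},4Q(\sbf))\leq\diam(Q)+\dist(Q,4Q(\sbf))<7\diam(Q)\approx \ell(Q)$; hence $\ell(Q)\gtrsim \ell(Q(\sbf))$.

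Next, I will derive an explicit pointwise bound $|\mathcal{D}_t\psi_\sbf(\mbf{y})|\lesssim 1$ for $\mbf{y}\notin 50Q(\sbf)$. The ingredients are: compact support of $\psi_\sbf$ in $4Q(\sbf)$ via the cutoff $\phi$ in \eqref{psistrSdef.eq}; the bound $\|\psi_\sbf\|_\infty\lesssim \ell(Q(\sbf))$, which follows from Lemma~\ref{psiSlip.lem} together with the normalization \eqref{psicentervalzero.eq} that gives $\psi_\sbf(\mbf{x}_{Q(\sbf)})=0$; and the kernel decay $|\partial_tV(\mbf{y})|\lesssim \|\mbf{y}\|^{-(n+2)}$, which is a standard consequence of \eqref{vstdbd.eq} and the parabolic homogeneity of $V$. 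For $\mbf{y}\notin 50Q(\sbf)$ we have $\dist(\mbf{y},4Q(\sbf))\gtrsim \ell(Q(\sbf))$, so
\begin{equation*}
|\mathcal{D}_t\psi_\sbf(\mbf{y})|
\,\lesssim\,
\ell(Q(\sbf))\iint_{4Q(\sbf)}\frac{\d\mbf{z}}{\|\mbf{y}-\mbf{z}\|^{n+2}}
\,\lesssim\,
\Big(\tfrac{\ell(Q(\sbf))}{\ell(Q(\sbf))+\dist(\mbf{y},4Q(\sbf))}\Big)^{n+2}
\,\lesssim\, 1.
\end{equation*}

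To conclude \eqref{JShyp.eq}, I will set $C_Q := C_{Q(\sbf)}$ (the Case~1 centering constant for $Q(\sbf)$, which is itself a Case~1 cube), and choose $M_\star$ larger than $|C_{Q(\sbf)}|$ plus the constant from the second step. Then the bad set $\{\mbf{y}\in Q:|\mathcal{D}_t\psi_\sbf(\mbf{y})-C_Q|>M_\star\}$ is contained in $Q\cap 50Q(\sbf)$. In the ``large'' regime $\ell(Q)\geq 50\cdot 3^{1/(n+1)}\,\ell(Q(\sbf))$, this set has measure $\leq |50Q(\sbf)|\leq |Q|/3$, which gives the bound. In the complementary ``medium'' regime $\ell(Q(\sbf))\lesssim\ell(Q)<50\cdot 3^{1/(n+1)}\,\ell(Q(\sbf))$, only boundedly many dyadic configurations arise; I will cover $Q$ by a bounded-overlap family of dyadic subcubes that fall under Cases~1--3 and apply \eqref{BmoimpJS.eq} on each, combined with Chebyshev, to obtain the JS estimate.

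\emph{The principal obstacle} is the medium regime: the various Case~1--3 subcubes come with a priori different centering constants $C_{Q'}$, and producing a single $C_Q$ valid throughout $Q$ requires comparing these constants via a telescoping chain anchored at the Case~1 bound for $Q(\sbf)$ itself, so that all the deviations $|C_{Q'}-C_{Q(\sbf)}|$ are $O(C_\star)$ and can be absorbed into $M_\star$.
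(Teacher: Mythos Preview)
Your proposal takes a genuinely different route from the paper in the large-$Q$ regime, and your initial observation that Case~4 forces $\ell(Q)\gtrsim \ell(Q(\sbf))$ is correct and useful (the paper never states this explicitly). Your pointwise bound $|\mathcal{D}_t\psi_\sbf(\mbf{y})|\lesssim 1$ for $\mbf{y}\notin 50Q(\sbf)$ is valid and gives a much cleaner treatment of the large regime than the paper's argument: the paper instead subdivides $Q$ into cubes of generation $k_0=10^5$, identifies a ``bad'' subfamily $\mathcal{F}_1$ of small measure, and chains the remaining cubes (possibly escaping into $10Q$ to circumvent $\mathcal{F}_1$). Your direct approach avoids all of that.

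There are two points to flag. First, a minor gap: you need $|C_{Q(\sbf)}|$ to be bounded by structural constants for $M_\star$ to be uniform; this is not free, but it does follow because $C_{Q(\sbf)}=\mathcal{E}^R\psi_\sbf(\mbf{x}_{Q(\sbf)})$ with $R\approx\ell(Q(\sbf))$, the kernel $K_R$ is supported in $\{\|\cdot\|\gtrsim R\}$ with $|K_R|\lesssim\|\cdot\|^{-d-1}$, and $\psi_\sbf$ is supported in $4Q(\sbf)$ with $\|\psi_\sbf\|_\infty\lesssim R$. (Even simpler: in the large regime you may take $C_Q=0$.)

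Second, the medium regime is only sketched, and this is where most of the work lies. Your chaining idea is exactly what the paper does in its Case~I: subdivide $Q$ into cubes $Q'$ of generation $k_0$, observe that each $10Q'$ falls into Cases~1--3 (because $\ell(Q')\ll\ell(Q(\sbf))$), and then compare neighboring constants $|C_{Q'}-C_{Q''}|\lesssim C_\star$ via overlapping $10$-dilates. The paper anchors at an arbitrary subcube of $Q$ rather than at $Q(\sbf)$; your anchoring at $C_{Q(\sbf)}$ also works but requires the chain to reach $Q(\sbf)$, which may lie outside $Q$---this is fine since cubes of scale $2^{-k_0}\ell(Q)$ near $Q(\sbf)$ are still in Cases~1--3, but you should say so. The deviations along the chain are $O(L_n\,C_\star)$ with $L_n$ the (bounded) chain length, not $O(C_\star)$ as you wrote.
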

\begin{proof}
Fix $Q \in \dd$ a cube in Case 4. This means that
\begin{equation*}
	Q \not \subseteq 50Q(\sbf), 
	\quad \text{ and } \quad 
	\dist(Q,4Q(\sbf)) < 6\diam(Q).
\end{equation*}

The idea now is to cover $Q$ by subcubes which are in Cases 1-3. For that purpose, it will be notationally convenient to ``shift'' the generations in our dyadic structure $\dd$ to a new dyadic grid $\widetilde{\dd}$, with exactly the same cubes, but where the generations are relabeled so that $\widetilde{\dd}_k$ consists of cubes with side length $2^{-k}\ell(Q)$ (so concretely $\widetilde{\dd}_0$ contains $Q$). Writing $k_0 := 10^5$, we set 
\begin{equation*}
	\mathcal{F}_1 := \big\{Q' \in \widetilde{\dd}_{k_0}: \dist(10Q', 4Q(\sbf)) < 60\diam(Q')\big\}.
\end{equation*}

The point of this definition is that we should focus mostly on $\mathcal{F}_1$ cubes, 
because it holds that
\begin{equation} \label{notF1.eq}
	Q' \in \widetilde{\dd}_{k_0} \setminus \mathcal{F}_1 
	\quad \implies \quad 
	10Q' \text{ is in Cases 1-3}. 
\end{equation}
Indeed, if $10Q' \subseteq 50Q(\sbf)$, then $10Q'$ is in Cases 1-2. Otherwise, if $10Q'$ did not belong to Case 3, we would have $\dist(10Q', 4Q(\sbf)) < 6 \diam(10Q') = 60 \diam(Q')$, which would contradict $Q' \notin \mathcal{F}_1$.

Let us consider two cases depending on the size of $Q$:

{\bf Case I}: $Q$ is ``not too big'', say $\diam(Q) \le 2^{200} \diam(Q(\sbf))$. Then, for $Q' \in \mathcal{F}_1$ and $\mbf{x} \in 10Q'$:
\begin{multline*}
\| \mbf{x}_{Q(\sbf)} - \mbf{x}\| 
\leq 
\diam(10Q') + \dist(10Q', Q(\sbf)) + \diam(Q(\sbf))
\le 
70\diam(Q') + \diam(Q(\sbf))
\\ \le 
70\cdot 2^{200} \cdot 2^{-10^5} \diam(Q(\sbf)) + \diam(Q(\sbf)) 
\le 
2\diam(Q(\sbf)).
\end{multline*}
Therefore, $10Q' \subseteq 50Q(\sbf)$, so $10Q'$ falls in Cases 1-2. And if $Q' \in \widetilde{\dd}_{k_0} \setminus \mathcal{F}_1$, we already showed in \eqref{notF1.eq} that $10Q'$ is in Cases 1-3. Putting these together and using the hypothesis of the lemma, we have shown that for every $Q' \in \widetilde{\dd}_{k_0}$ there exists some $C_{Q'} \in \RR$ such that 
\[\frac{1}{|10Q'|} \iint_{10Q'} |\mathcal{D}_t\psi_\sbf(\mbf{y})-C_{Q'}| \, \d\mbf{y} \le C_\star,\]
which of course this implies that
\begin{equation}\label{case4case1est.eq}
	\frac{1}{|Q'|} \iint_{Q'} |\mathcal{D}_t\psi_\sbf(\mbf{y})-C_{Q'}| \, \d\mbf{y} \le 10^{n+1}C_\star.
\end{equation}

To finish the proof in Case I, it remains to obtain a common constant $C_Q$ that works well for all $Q' \in \widetilde{\dd}_{k_0}(Q) := \{Q' \in \widetilde{\dd}_{k_0} : Q' \subseteq Q\}$. For that purpose, first note that for adjacent cubes $Q', Q'' \in \widetilde{\dd}_{k_0}(Q)$ (so that $Q' \subseteq 10Q''$) we have, using the previous estimates, that
\begin{multline} \label{adjacentCQ.eq} 
	|C_{Q'} - C_{Q''}| \le \bariint_{Q'} |\mathcal{D}_t\psi_\sbf(\mbf{y}) - C_{Q''}| \d\mbf{y} +\bariint_{Q'} |\mathcal{D}_t\psi_\sbf(\mbf{y}) - C_{Q'}| \d\mbf{y}
	\\ \le 10^{n+1} \bariint_{10Q''} |\mathcal{D}_t\psi_\sbf(\mbf{y}) - C_{Q''}|\d\mbf{y} + \bariint_{Q'} |\mathcal{D}_t\psi_\sbf(\mbf{y}) - C_{Q'}|\d\mbf{y}
	\le 2\cdot 10^{n+1}C_\star.
\end{multline} 
Hence, if we pick any $\widetilde{Q} \in \widetilde{\dd}_{k_0}(Q)$ and set $C_Q := C_{\widetilde{Q}}$, we can ``connect" $\widetilde{Q}$ to any other cube $Q' \in \widetilde{\dd}_{k_0}(Q)$  by a chain of at most $L_n \lesssim (2^{10^5})^{n+1}$ adjacent cubes (the amount of cubes in $\widetilde{\dd}_{k_0}(Q)$, see Figure~\ref{fig:chains}) so that iterating the estimate in the previous display all along the chain yields
\begin{equation*}
	|C_Q - C_{Q'}| \le 2\cdot 10^{n+1}L_nC_\star, 
	\qquad \forall Q' \in \widetilde{\dd}_{k_0}(Q).
\end{equation*}
Therefore, if we define $M_\star := (2\cdot 10^{n+1}L_n + 100^{n+1})C_\star$, we obtain that, for any $Q' \in \widetilde{\dd}_{k_0}(Q)$,
\[\big \{\mbf{y} \in Q':  | \mathcal{D}_t\psi_\sbf(\mbf{y}) - C_Q| > M_\star\big \} 
\; \subseteq \; 
\big\{\mbf{y} \in Q':  | \mathcal{D}_t\psi_\sbf(\mbf{y}) - C_{Q'}| > 100^{n+1} C_\star\big\},\]
so that using Chebyshev and \eqref{case4case1est.eq} it holds
\begin{equation} \label{chebyshevNotF1.eq}
	\big| \big\{\mbf{y} \in Q':  | \mathcal{D}_t\psi_\sbf(\mbf{y}) - C_Q| > M_\star \big\} \big| \le 10^{-(n+1)} \, |Q'| \le (1/10) \, |Q'|.
\end{equation} 
Summing over $Q' \in \dd_{k_0}(Q)$ (they are disjoint and cover $Q$) we finally get \eqref{JShyp.eq} in Case I. 

{\bf Case II}: $Q$ is very big, say $\diam(Q) > 2^{200} \diam(Q(\sbf))$. In this case we are going to be able to ignore the cubes in $\mathcal{F}_1$. Indeed, if $Q' \in \mathcal{F}_1$ and $\mbf{x} \in 10Q'$, we may compute, similarly as in Case I,
\[ \| \mbf{x}_{Q(\sbf)} - \mbf{x} \| 
\leq 
70 \diam(Q') + \diam(Q(\sbf))
\leq 
70 \cdot 2^{10^{-5}} \diam(Q) + 2^{-200} \diam(Q) 
\leq 
2^{-100} \diam(Q),\]
from which we readily infer the smallness of $\mathcal{F}_1$, in the form of 
\begin{equation}\label{F1small.eq}
	\bigg{|}\bigcup_{Q' \in \mathcal{F}_1} Q' \bigg{|} \le \frac{1}{100} \, |Q|.
\end{equation}

On the other hand, for each $Q' \in \widetilde{\dd}_{k_0} \setminus \mathcal{F}_1$, \eqref{notF1.eq} implies that \eqref{case4case1est.eq} holds, as in Case I. Again, the problem resides on using a common $C_Q$. For that, we argue as in Case I: define $C_Q := C_{\widetilde{Q}}$, where $\widetilde{Q}$ is any cube in $\widetilde{\dd}_{k_0}(Q) \setminus \mathcal{F}_1$. Then, the point is to connect any given $Q' \in \widetilde{\dd}_{k_0}(Q) \setminus \mathcal{F}_1$ to $\widetilde{Q}$ using a chain of at most $L_n$ cubes, but this time all of the cubes in the chain belong to $\widetilde{\dd}_{k_0} \setminus \mathcal{F}_1$ (so that we may apply \eqref{adjacentCQ.eq} at each step). To construct such chain from $Q'$ to $\widetilde{Q}$, we may have problems avoiding cubes which are simultaneously in $\mathcal{F}_1$ and below $Q$ (see Figure~\ref{fig:chains}). However, if we allow ourselves to use all the cubes in the larger region $\{Q'' \in \widetilde{\dd}_{k_0} \setminus \mathcal{F}_1 : Q'' \cap 10Q \neq \emptyset\}$, we will have enough space (and this dilation preserves $L_n \lesssim (2^{10^5})^{n+1}$): indeed, if $\mbf{x} \in Q'' \in \mathcal{F}_1$, then 
\[
\| \mbf{x} - \mbf{x}_{Q(\sbf)} \|
\leq 
\diam(Q'') + \dist(Q'', Q(\sbf)) + \diam(Q(\sbf))
\leq 
(61 \cdot 10^{-5} + 2^{-200}) \diam(Q)
\leq 
\frac{1}{100} \diam(Q),
\]
which says that the ``diameter'' of $\mathcal{F}_1$ is much smaller than that of $10Q$, so that if connecting $Q'$ to $\widetilde{Q}$ was impossible within $Q$ (avoiding $\mathcal{F}_1$), there will always be room to first escape to $10Q \setminus Q$, and then find an alternative path towards $\widetilde{Q}$ (because $\mathcal{F}_1$ is too small to block any section of $10Q \setminus Q$). Summing up, we also obtain \eqref{chebyshevNotF1.eq} for $Q' \in \widetilde{\dd}_{k_0}(Q) \setminus \mathcal{F}_1$ in Case II.

\begin{figure}
	\centering 
	\includegraphics[width=.9\textwidth]{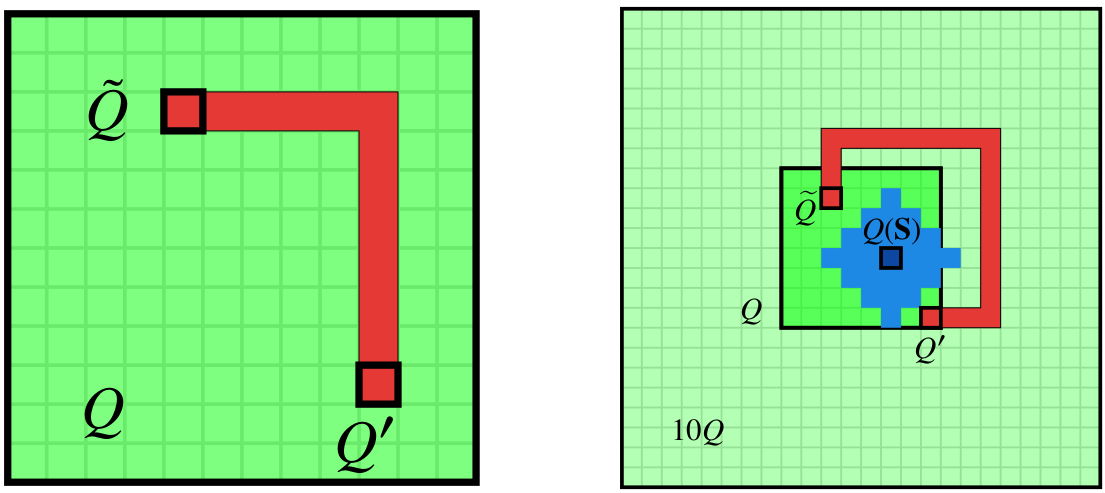}
	\caption{
		The picture in the left represents Case I: we show (in red) a possible chain of cubes in $\widetilde{D}_{k_0}(Q)$ connecting the fixed $\widetilde{Q}$ with some other $Q' \in \widetilde{D}_{k_0}(Q)$. The picture in the right represents Case II: to connect $Q'$ to $\widetilde{Q}$, we need to avoid the cubes in $\mathcal{F}_1$ (in blue), which surround $Q(\sbf)$, and may ``disconnect'' $Q'$ and $\widetilde{Q}$ within $Q$; but there will always be space for a chain within $10Q$.
		}
	\label{fig:chains}
\end{figure}

Therefore, separating the cubes in $\mathcal{F}_1$ (for which we use \eqref{F1small.eq}) and in $\widetilde{\dd}_{k_0}(Q) \setminus \mathcal{F}_1$ (for which we just argued that we can obtain the same estimates as in Case I), we obtain \eqref{JShyp.eq} also in Case II (and with the same $M_\star$ as in Case I):
\begin{equation*}
	\big| \big\{\mbf{y} \in Q:  | \mathcal{D}_t\psi_\sbf(\mbf{y}) - C_Q| > M_\star \big\}\big|
	\leq 
	\frac{1}{100} \, |Q| + \!\!\! \sum_{Q' \in \widetilde{\dd}_{k_0}(Q) \setminus \mathcal{F}_1} \frac{1}{10} \, |Q'|
	\leq 
	\frac13 \, |Q|.
\end{equation*} 
\end{proof}

\subsection{\for{toc}{\small}Localization of $\mathcal{D}_t\psi_\sbf(\mbf{y})$}\label{localizeandcase3.sect}

In this (sub)section we localize $\mathcal{D}_t\psi_\sbf(\mbf{y})$, as was done in \cite{BHMN1}. Heuristically (and realistically) we can only expect to gain information about $\psi_\sbf$ from our hypotheses on the parabolic measure for cubes in or near the stopping time $\sbf$; therefore the BMO estimates on $\mathcal{D}_t\psi_\sbf(\mbf{y})$ for cubes that are in Case 2 and 3 should come from somewhere else. This localization will not only help us in Case 1; but it will trivialize Case 3, and make Case 2 easier to handle.

Let $\varphi \in C_c^\infty(\re)$ be
an even function with $\mathbbm{1}_{(-1,1)}\le \varphi \le \mathbbm{1}_{(-2,2)}$ and set 
\[\Phi(x,t) := \varphi(x_1) \, \varphi(x_2)\dots \varphi(x_{n-1})
\, \varphi(t/2), \qquad  \mbf{x}= (x,t) = (x_1,\dots x_{n-1}, t) \in\re^n.\] 
We set $\Phi_R(\mbf{x}):= \Phi(x/R,t/R^2)$ for $\mbf{x}= (x,t) \in\re^n$.
Recalling the definition of $\mathrm{I_P}$ in \eqref{def-IP}, we write $V_R:=\Phi_R\,V$ for the locally truncated kernel, and we consider the localized 
parabolic fractional integral
\begin{equation}\label{IPRdef.eq}
\mathrm{I}_\mathrm{P}^R \, h(\mbf{x})
:=
\iint_{\re^n} V_R(\mbf{x}-\mbf{y})\,h(\mbf{y})\,\d\mbf{y}
=
\iint_{\re^n} \Phi_R(\mbf{x}-\mbf{y})\,V(\mbf{x}-\mbf{y})\,h(\mbf{y})\,\d\mbf{y}.
\end{equation}
We can then define the localized half-order time derivative
\begin{multline}\label{localdtdef}
	\mathcal{D}_t^R \psi_{\mbf{S}}(\mbf{x})
:=
\partial_t \circ\mathrm{I}_\mathrm{P}^R  \psi_{\mbf{S}}(\mbf{x})
=
\iint_{\re^n} K^R (\mbf{x}-\mbf{y})\,\psi_{\mbf{S}}(\mbf{y})\,\d\mbf{y}
: =
\iint_{\re^n} \partial_t\big(V_R(\mbf{x}-\mbf{y})\big)\,\psi_{\mbf{S}}(\mbf{y})\,\d\mbf{y}
\\
=
\iint_{\re^n} \partial_t\big(V(\mbf{x}-\mbf{y})\,\Phi_R(\mbf{x}-\mbf{y})\big)\,\psi_{\mbf{S}}(\mbf{y})\,\d\mbf{y},
\qquad
\mbf{x}\in\re^n.
\end{multline}
This operator should be viewed as a principal value operator, or $\partial_t$ should be considered in the weak sense. Let $\mathcal{E}^R:=\mathcal{D}_t-\mathcal{D}_t^R$ and set $K_R := \partial_t V - K^R$. Thus
\[
\mathcal{E}^R \psi_{\mbf{S}}(\mbf{x})
=
\iint_{\re^n} \partial_t\big(V(\mbf{x}-\mbf{y})\,(1-\Phi_R(\mbf{x}-\mbf{y}))\big)\,\psi_{\mbf{S}}(\mbf{y})\,\d\mbf{y}
=
\iint_{\re^n} K_R (\mbf{x}-\mbf{y})\,\psi_{\mbf{S}}(\mbf{y})\,\d\mbf{y},
\quad 
\mbf{x} \in \re^n.
\]
Recalling that $d=n+1$, and the (parabolic) scaling of our equations, we observe that
\begin{align*}
	%\label{KR:estimates}
	\begin{split}
	& |K_R(\mbf{x})|\lesssim \|\mbf{x}\|^{-d-1}\,\mathbbm{1}_{(Q_{R}(0))^c}(\mbf{x}),
	\\[2pt]
	&|K_R(\mbf{x})-K_R(\mbf{x}')|\lesssim \frac{R}{(\|\mbf{x}\|+R)^{d+2}}, \qquad\text{if }\|\mbf{x}-\mbf{x}'\|\lesssim R,
	\\[2pt]
	&\iint_{\re^n} K_R(\mbf{x})\,\d\mbf{x}=0, \qquad\text{(i.e., $\mathcal{E}^R \mathbbm{1}´=0$).}
	\end{split}
\end{align*}
These estimates imply that if $\|\mbf{x}-\mbf{x'}\|\lesssim R$, using that $\psi_\sbf$ is $\Lip(1, 1/2)$ (see Lemma~\ref{psiSlip.lem}), then
\begin{multline*}
	\big|\mathcal{E}^R \psi_{\mbf{S}}(\mbf{x})-\mathcal{E}^R \psi_{\mbf{S}}(\mbf{x}')\big|
=
\Big|
\iint_{\re^n} \big(K_R(\mbf{x}-\mbf{y})-K_R(\mbf{x}'-\mbf{y})\big)\,\big(\psi_{\mbf{S}}(\mbf{y})-\psi_{\mbf{S}}(\mbf{x})\big)\,\d\mbf{y}
\Big|
\\
\lesssim\,
\iint_{\re^n} \frac{R}{(\|\mbf{x}-\mbf{y}\|+R)^{d+2}}\,\|\mbf{y}-\mbf{x}\|\,\d\mbf{y}
\, \lesssim\,
\iint_{\re^n} \frac{R}{(\|\mbf{x}-\mbf{y}\|+R)^{d+1}}\,\d\mbf{y}
\lesssim 1.
\end{multline*}
Thus, if $Q = Q_R(\mbf{x}_Q)$, we define $C_Q:=\mathcal{E}^R \psi_{\mbf{S}}(\mbf{x}_Q)$ and $\mathcal{D}_t^Q \psi_{\mbf{S}}(\mbf{y}): = 
\mathcal{D}_t^R \psi_{\mbf{S}}(\mbf{y})$, we have, for $ \mbf{y} \in Q$,
\begin{equation*}%\label{linfest.eq}
	 \big|\mathcal{D}_t \psi_{\mbf{S}}(\mbf{y})-C_Q\big|  =
\big| \mathcal{D}_t^R \psi_{\mbf{S}}(\mbf{y})
+ \mathcal{E}^R \psi_{\mbf{S}}(\mbf{y})
	 -\mathcal{E}^R \psi_{\mbf{S}}(\mbf{x}_Q)\big| 
	 \lesssim 
	\big|\mathcal{D}_t^R \psi_{\mbf{S}}(\mbf{y}) \big| + 1
	=
	\big|\mathcal{D}_t^Q \psi_{\mbf{S}}(\mbf{y}) \big| + 1.
\end{equation*}
Therefore, to show \eqref{JShyp.eq} for some $Q \in \mathbb{D}$, it suffices to show that there exists $M_\star > 0$ such that
\begin{equation*}%\label{JShypLoc.eq}
 \big|\big\{
	\mbf{y}\in Q: |\mathcal{D}^Q_t\psi_\sbf(\mbf{y})|>M_\star
	\big\}\big|
	\le
	(1/3)\,|Q|,
\end{equation*}
In fact, for Case 1 cubes we will show, for some $C_\star \le M_\star/3$, the stronger condition (by Chebyshev)
\begin{equation}\label{BmoimpJSlocalized.eq}
 \frac{1}{|Q|} \iint_{Q} |\mathcal{D}^Q_t\psi_\sbf(\mbf{y})| \, \d\mbf{y} \le C_\star,
\end{equation}
and for Case 2 and 3 cubes, we will show the even stronger condition
\begin{equation}\label{LinftimpJSlocalized.eq}
\esssup_{\mbf{y} \in Q} \, |\mathcal{D}^Q_t\psi_\sbf(\mbf{y})| \le C_\star.
\end{equation}
.

\subsection{\for{toc}{\small}Case 3 cubes}

Recall that $Q$ falls in Case 3 when  $Q \not\subseteq 50Q(\sbf)$ and 
\begin{equation}\label{cs3remind.eq}
\dist(Q,4Q(\sbf)) \ge 6\diam(Q).
\end{equation}
Also recall that
\[\psi_{\sbf}(\mbf{x}) = \psi^*_{\sbf}(\mbf{x}) \, \phi(\mbf{x}), 
\quad \text{where} \quad 
|\phi| \le \mathbbm{1}_{4Q(\sbf)}.\]
Moreover, for $Q = Q_R(\mbf{x}_Q)$ the object $\mathcal{D}^Q_t\psi_\sbf$ has kernel $V_R$ supported in $Q_{2R}(0)$. Therefore, by \eqref{cs3remind.eq} the integrand in the definition of $\mathcal{D}^Q_t\psi_\sbf$ is zero in $Q$ and \eqref{LinftimpJSlocalized.eq} holds with $C_\star = 0$. 

\subsection{\for{toc}{\small}Proving the localized estimate for Case 2 cubes}
Here we prove that \eqref{LinftimpJSlocalized.eq} holds in Case 2. Recall that Case 2 cubes satisfy $Q \subseteq 50Q(\sbf)$; and whenever $Q_0 \in \sbf$ is such that $Q \subseteq 50Q_0$, it holds $\diam(Q) < \eps_0 \diam(Q_0)$.
Here $\eps_0$ is a small parameter at our disposal. We shall use it now to conclude that in Case 2, for $\mbf{y} \in Q$ the function $h$ is much larger than the diameter of the cube $Q$ and therefore we can conclude that the portion of $\psi_{\sbf}$ above $Q$ has an estimate on its (classical, full order) time-derivative that is sufficient for proving \eqref{LinftimpJSlocalized.eq}. The first step is the following lemma.

\begin{lemma}\label{tderbdscs2.lem}
If $\eps_0$ is sufficiently small (depending only on $n$), and $Q$ is a Case 2 cube, then 
\[h(\mbf{x}) \gtrsim \ell(Q), \quad \forall \mbf{x} \in 10Q.\]
\end{lemma}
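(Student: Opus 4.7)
The plan is to bypass the regularized distance $h$ and work directly with $d(\mbf{x})$: since Lemma~\ref{regdistlem.lem} gives $h \geq d/10$, it suffices to show that $d(\mbf{x}) \gtrsim \ell(Q)$ for every $\mbf{x}\in 10Q$. I will argue by contradiction. Suppose $d(\mbf{x}) < c_1 \ell(Q)$ for some small $c_1 = c_1(n)$ to be chosen. By definition of $d$, there exists $Q' \in \sbf$ with $\dist(\mbf{x},Q') + \diam(Q') < c_1\ell(Q)$, so in particular $\diam(Q') < c_1\ell(Q)$ and $\dist(\mbf{x},Q') < c_1\ell(Q)$.

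The strategy is to inflate $Q'$ to a dyadic ancestor $Q_0 \supseteq Q'$ with $\ell(Q_0)\in[\ell(Q),2\ell(Q))$ and then apply the Case 2 hypothesis to $Q_0$. Two preliminary observations make this work. First, since $Q\subseteq 50Q(\sbf)$ and $Q(\sbf)\in\sbf$, the Case 2 hypothesis (applied with $Q(\sbf)$ itself) yields $\diam(Q) < \eps_0 \diam(Q(\sbf))$, so for $\eps_0 < 1/2$ there is plenty of room between $\ell(Q')$ and $\ell(Q(\sbf))$ for an ancestor $Q_0$ of $Q'$ at scale $\approx \ell(Q)$ to exist strictly inside $Q(\sbf)$; then by semi-coherency of $\sbf$, since $Q'\subseteq Q_0\subseteq Q(\sbf)$ and $Q',Q(\sbf)\in\sbf$, we conclude $Q_0\in\sbf$. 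Second, for any $\mbf{y}\in Q$, the parabolic triangle inequality gives
\[
\dist(\mbf{y},Q_0)\;\le\;\dist(\mbf{y},\mbf{x})+\dist(\mbf{x},Q')\;\lesssim_n\;\ell(Q)\;\le\;\ell(Q_0),
\]
which is far less than the parabolic ``radius'' $\approx 25\ell(Q_0)$ of $50Q_0$, so $Q\subseteq 50Q_0$.

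Now both conditions in the Case 2 hypothesis are met for $Q_0$, so
\[
\diam(Q) < \eps_0\,\diam(Q_0)\;\lesssim_n\; \eps_0\,\ell(Q_0) \;<\; 2\eps_0\,\ell(Q)\;\lesssim_n\;\eps_0\,\diam(Q),
\]
which is a contradiction once $\eps_0$ is chosen smaller than an absolute constant depending only on $n$. This forces $d(\mbf{x})\geq c_1\ell(Q)$ and therefore $h(\mbf{x})\geq c_1\ell(Q)/10$, completing the proof. The argument is essentially combinatorial — no PDE or square function input is needed — and the only mildly delicate point is arranging the ancestor $Q_0$ at the right scale inside $Q(\sbf)$, which is handled precisely by first invoking Case 2 at the top to show $Q$ is much smaller than $Q(\sbf)$.
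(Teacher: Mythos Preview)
Your proof is correct and follows essentially the same approach as the paper: reduce to $d$ via Lemma~\ref{regdistlem.lem}, argue by contradiction to produce a small $Q'\in\sbf$ near $\mbf{x}$, use Case 2 at the top cube $Q(\sbf)$ to guarantee room for an ancestor $Q_0$ of $Q'$ at scale $\approx\ell(Q)$ inside $\sbf$ (by semi-coherency), check $Q\subseteq 50Q_0$, and then contradict the Case 2 hypothesis for small $\eps_0$. The only cosmetic difference is that the paper contradicts $d(\mbf{x})<\diam(Q)$ directly rather than introducing an auxiliary constant $c_1$.
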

\begin{proof}
Let $\mbf{x} \in 10Q$ and suppose, for the sake of contradiction (using Lemma~\ref{regdistlem.lem}), that $d(\mbf{x}) < \diam(Q)$.
Then, by definition of $d$, there exists $Q' \in \sbf$ such that
\[\dist(\mbf{x},Q') + \diam(Q') \le \diam(Q)\]
Since $\diam(Q) \le \eps_0 \diam(Q(\sbf))$ (just use the definition of Case 2 with $Q_0 = Q(\sbf)$), with $\eps_0 \ll 1$, we can find an ancestor $\widetilde{Q}$ of $Q'$ with $\widetilde{Q} \in \sbf$ such that 
\[\diam(Q) \le \diam(\widetilde{Q}) \lesssim \diam(Q),\]
where the implicit constant in the last inequality depends only on dimension (to find such $\widetilde{Q}$, just traverse one by one the ancestors of $Q'$ until the first time that $\ell(\widetilde{Q}) \geq \ell(Q)$). Note that since $Q' \subseteq \widetilde{Q}$, we also have $\dist(\mbf{x},\widetilde{Q} ) \leq \dist(\mbf{x}, Q') \le \diam(Q)$. 
Then it is clear that $Q \subseteq 50\widetilde{Q}$, and if we choose $\eps_0$ sufficiently small, $\diam(Q) \ge \eps_0 \diam(\widetilde{Q})$ (see the last display), which contradicts the fact that $Q$ is in Case 2 (using $Q_0 = \widetilde{Q} \in \sbf$). 
\end{proof}

Therefore, if we are given a Case 2 cube $Q = Q_R(\mbf{x}_Q)$, we can successfully control the localized half-order time derivative $\mathcal{D}_t^Q \psi_\sbf= \mathcal{D}_t^R \psi_{\sbf}$ in \eqref{localdtdef}. Indeed, using Lemmas \ref{estwhenhgtrzero.lem} and \ref{tderbdscs2.lem}, and the support properties of $\Phi_R$, we may integrate by parts in \eqref{localdtdef} to obtain for $\mbf{x} \in Q$
\begin{align*}
|\mathcal{D}_t^R \psi_{\sbf}(\mbf{x})| &= \left|\iint_{\re^n} \big(V(\mbf{x}-\mbf{y})\,\Phi_R(\mbf{x}-\mbf{y})\big)\,\partial_t\psi(\mbf{y})\,\d\mbf{y}\right|
\\ &\lesssim \frac{1}{\diam(Q)} \iint_{Q_{2R}(\mbf{x})} |V(\mbf{x}-\mbf{y})| \, \d\mbf{y} \lesssim  \frac{1}{\diam(Q)} \iint_{Q_{2R}(0)} \|\mbf{x}\|^{-n}\, \d\mbf{x} \lesssim 1,
\end{align*}
where we used \eqref{vstdbd.eq}) and the fact that $\|\cdot\|$ is the parabolic metric. This shows \eqref{LinftimpJSlocalized.eq} in Case 2.

\subsection{\for{toc}{\small}Modified level sets and square function estimates} \label{subsec:sqfunest}

After all the previous (sub)sections, to finish the proof of Proposition~\ref{regular.prop}, we are only left with obtaining \eqref{BmoimpJSlocalized.eq} for Case 1 cubes. In fact, this is the most challenging case, but luckily it is also the closest to \cite{BHMN1}. So before obtaining \eqref{BmoimpJSlocalized.eq}, let us put together some square function estimates that we essentially obtained in the previous sections.

\begin{proposition}\label{psinoheart.prop}
	Let $Q_0 \in \sbf$. Then it holds
	\begin{equation*}
		\iiint_{{\mathcal{S}(Q_0)}}\big(
		|r\,\partial_s \psi(r;y,s)|^2+|r\,\nabla_{y,r}^2 \psi(r;y,s)|^2+ |r^2 \,\nabla_{y,r}\partial_s \psi(r;y,s)|^2\big)\,\frac{\d r}{r}{\d y\d s}
		\lesssim |Q_0|.
	\end{equation*}
\end{proposition}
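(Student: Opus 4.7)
The strategy is to perform a change of variables transforming the integral over the sawtooth $\mathcal{S}(Q_0) \subseteq \mathbb{R}^{n+1}_+$ into an integral over a corresponding sawtooth inside $\Omega$, then reduce the pointwise integrand to quantities involving derivatives of $u$, and finally invoke the local square function estimate \eqref{redderivtxestsub.eq} from Lemma~\ref{impsfnest.lem}.

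Concretely, for $(r,y,s) \in \mathcal{S}(Q_0)$ let $\mbf{X} := \mbf{\Psi}(r;y,s) = (\psi(r;y,s),y,s)$. By Lemma~\ref{map_sawtooths.lem}, the map $(r,y,s) \mapsto \mbf{X}$ sends $\mathcal{S}(Q_0)$ into $\Omega_{\sbf(Q_0)} \subseteq \Omega_{\sbf(Q_0)}^{*}$, and by construction $u(\mbf{X}) = r$, which by \eqref{M2defa.eq} gives $r \approx \dist(\mbf{X},\partial\Omega)$. Differentiating implicitly as in \eqref{partialr.eq}, the Jacobian of the change of variables $(r,y,s) \mapsto \mbf{X}$ at fixed $(y,s)$ is $\partial_r \psi(r;y,s) = 1/\partial_{x_0} u(\mbf{X})$, which is comparable to $1$ by the non-degeneracy \eqref{nondegwhluq.eq} together with the pointwise bound $|\partial_{x_0} u| \lesssim 1$ from Lemma~\ref{ptwiseestinsaw4der.lem}. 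Consequently
\[
\frac{\d r\,\d y\,\d s}{r} \, \approx \, \frac{\d\mbf{X}}{\dist(\mbf{X},\partial\Omega)}.
\]

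For the integrand, Lemma~\ref{lemma:estimates:G-star} together with $r \approx \dist(\mbf{X},\partial\Omega)$ yields the pointwise comparisons
\begin{align*}
|r\,\partial_s\psi(r;y,s)|^2 &\lesssim \dist(\mbf{X},\partial\Omega)^2 |\partial_t u(\mbf{X})|^2,\\
|r\,\nabla_{y,r}^2\psi(r;y,s)|^2 &\lesssim \dist(\mbf{X},\partial\Omega)^2 |\nabla_X^2 u(\mbf{X})|^2,\\
|r^2\,\nabla_{y,r}\partial_s\psi(r;y,s)|^2 &\lesssim \dist(\mbf{X},\partial\Omega)^4\big(|\nabla_X\partial_t u(\mbf{X})|^2 + |\partial_t u(\mbf{X})|^2|\nabla_X^2 u(\mbf{X})|^2\big).
\end{align*}
Plugging these into the left-hand side of the Proposition and pushing forward the measure, we obtain (up to a constant depending only on the structural constants and $K$) an upper bound by
\[
\iiint_{\Omega_{\sbf(Q_0)}^{*}} \Big(|\dist\,\partial_t u|^2 + |\dist\,\nabla_X^2 u|^2 + |\dist^2\,\nabla_X\partial_t u|^2 + |\dist^2\,\partial_t u\,\nabla_X^2 u|^2\Big)\frac{\d\mbf{X}}{\dist(\mbf{X},\partial\Omega)}.
\]
These four terms are precisely the terms appearing in \eqref{redderivtxestsub.eq}.

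Finally, to apply Lemma~\ref{impsfnest.lem}, we check the hypotheses for $\sbf(Q_0) := \{Q\in\sbf : Q\subseteq Q_0\}$: it is semi-coherent with top cube $Q_0$, it satisfies $\sbf(Q_0) \subseteq \sbf \subseteq \mathcal{G}_{\sbf^*} \subseteq \sbf^*$, and $\ell(Q_0) \leq \ell(Q(\sbf)) \leq 2^{-10N}\ell(Q(\sbf^*))$ by Lemma~\ref{fkwhsa.lem}. We therefore apply \eqref{redderivtxestsub.eq} with $\sbf$ replaced by $\sbf(Q_0)$ and $N_3$ arbitrarily large (using monotone convergence to exhaust $\Omega_{\sbf(Q_0)}^{*}$), which provides a bound of $|Q_0|$, as desired. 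The main (mild) technical point is ensuring that the pushforward from $\mathcal{S}(Q_0)$ under $(r,y,s)\mapsto \mbf{X}$ indeed lands inside the sawtooth region $\Omega_{\sbf(Q_0)}^{*}$ on which Lemma~\ref{impsfnest.lem} provides the estimate; this is precisely the content of Lemma~\ref{map_sawtooths.lem}, and was the very reason for introducing the sawtooth $\mathcal{S}(Q_0)$ and calibrating $\delta, N, K$ via \eqref{deltakchoice.eq}.
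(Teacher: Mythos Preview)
Your argument is essentially identical to the paper's own proof: both perform the change of variables $(r,y,s)\mapsto \mbf{\Psi}(r;y,s)$, use Lemma~\ref{map_sawtooths.lem} to land in $\Omega_{\sbf(Q_0)}$, invoke Lemma~\ref{lemma:estimates:G-star} for the pointwise bounds, note that the Jacobian is comparable to $1$ via \eqref{partialr.eq} and the non-degeneracy of $\partial_{x_0}u$, and conclude by letting $N_3\to\infty$ in \eqref{redderivtxestsub.eq}. If anything, you are slightly more explicit than the paper in verifying that $\sbf(Q_0)$ satisfies the hypotheses of Lemma~\ref{impsfnest.lem} and that the resulting bound is $|Q_0|$.
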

\begin{proof}
	With all the work that we have already done in the previous sections, the proof of this result is just a tiny fraction of the proof of \cite[Proposition 5.1]{BHMN1}. Let us include the argument for completeness. For the time being, consider the first term. Using Lemma~\ref{lemma:estimates:G-star}, we obtain 
	\[ 
	\iiint_{{\mathcal{S}(Q_0)}}
	|r\,\partial_s \psi(r;y,s)|^2
	\,\frac{\d r}{r}{\d y\d s}
	\lesssim
	\iiint_{{\mathcal{S}(Q_0)}}
	|r\,(\partial_s u)(\mbf{\Psi}(r;y,s))|^2
	\,\frac{\d r}{r}{\d y\d s}.
	 \]
	Now perform the change of variables $(y_0, y, s) = \mbf{Y} = \mbf{\Psi}(r;y,s) = (\psi(r; y, s), y, s)$. The domain of integration changes to $\Omega_{\sbf(Q_0)}$ by Lemma~\ref{map_sawtooths.lem}. The Jacobian of the transformation, which is $|\partial_r \psi(r; y, s)|^{-1}$, which by \eqref{partialr.eq} equals $|(\partial_{y_0} u)(\mbf{\Psi}(r; y, s))| \lesssim 1$ after using Lemma~\ref{ptwiseestinsaw4der.lem}. Moreover, $r = u(\mbf{\Psi}(r; y, s)) = u(\mbf{Y}) \approx_{M_2} \dist(\mbf{Y}, \pom)$ because $\mbf{Y} \in \Omega_{\sbf(Q_0)} \subset \Omega_{\sbf}$, so we can apply \eqref{M2defa.eq} since $\mbf{S}$ only contains cubes with side-length smaller than $2^{-10N}\ell(Q(\sbf^*)) = K^{-10}\ell(Q(\sbf^*))$ (see Lemma~\ref{fkwhsa.lem} or the introduction of Section~\ref{construction.sec}). All these considerations yield 
	\[ 
	\iiint_{{\mathcal{S}(Q_0)}}
	|r\,\partial_s \psi(r;y,s)|^2
	\,\frac{\d r}{r}{\d y \d s}
	\lesssim 
	\iiint_{{\Omega_{\sbf(Q_0)}}}
	|\dist(\mbf{Y}, \pom)\,\partial_s u(\mbf{Y})|^2
	\,\frac{\d \mbf{Y}}{\dist(\mbf{Y}, \pom)}
	\lesssim 
	|Q(\sbf)|,
	\]
	where in the last estimate we have used Lemma~\ref{impsfnest.lem} (letting $N_3 \to \infty$ using the monotone convergence theorem). This ends the proof for the first term of the statement, and the same procedure and all the estimates in Lemma~\ref{impsfnest.lem} yield the result for the remaining terms. % 31-7/4
\end{proof}

Note here a difference with \cite{BHMN1}: we need to obtain Proposition~\ref{psinoheart.prop} in ${\mathcal{S}(Q_0)}$ for every $Q_0 \in \sbf$ since our construction of the approximating graphs $\psi_\sbf$ is more intricate, and we need to consider many cubes in Case 1, not only $Q(\sbf)$ as in \cite{BHMN1}. Hence the importance of working in Section~\ref{sec:step2} with arbitrary subregimes (which we can now localize to be below any given $Q_0$) like in \eqref{locmnsqfnest.eq}, as opposed to only dealing with the original scale like in \eqref{mnsqfnest.eq}.

Actually, we will use a slightly modified version of the previous proposition. First we define
\begin{equation}\label{psiheart}
	\psi^{\heart}(r;x,t) := \psi(r;x,t) \, \phi(x,t),
\end{equation}
where $\phi \in C^\infty_c(\RR^n)$ is a cut-off adapted to $Q(\sbf)$ satisfying \eqref{cutoffqsest1.eq} and \eqref{cutoffqsest2.eq}. Note that
\begin{equation*}
	\psi^{\heart}(h(x,t);x,t) = \psi_\sbf(x,t),
\end{equation*}
so $\psi^\heart$ somehow ``interpolates'' between $\psi$ (with $r=0$) and $\psi_\sbf$ (with $r = h(x, t)$).

\begin{proposition}\label{psiheartbd.prop}
	Let $Q_0 \in \sbf$. Then it holds
	\begin{equation}\label{psiSboundheart.eq} 
		\iiint_{{\mathcal{S}(Q_0)}}\big(
		|r\,\partial_s \psi^{\heart}(r;y,s)|^2+|r\,\nabla_{y,r}^2 \psi^{\heart}(r;y, s)|^2+ |r^2 \,\nabla_{y,r}\partial_s \psi^{\heart}(r;y, s)|^2\big)\,\frac{\d r}{r}{\d y\d s}
		\lesssim |Q_0|.
	\end{equation}
\end{proposition}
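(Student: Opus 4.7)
The strategy is to expand $\psi^{\heart} = \psi \cdot \phi$ using the Leibniz rule, exploiting the crucial fact that $\phi = \phi(y,s)$ is independent of $r$ (so $\partial_r \phi \equiv 0$). For each derivative operator $D \in \{\partial_s,\, \nabla_{y,r}^2,\, \nabla_{y,r}\partial_s\}$ appearing in \eqref{psiSboundheart.eq}, we write $D(\psi \phi) = \phi\cdot D\psi + \mathcal{R}[\psi,\phi]$, where $\mathcal{R}[\psi,\phi]$ is a finite sum of product terms, each involving derivatives of $\psi$ of order strictly less than that of $D$ multiplied by derivatives of $\phi$ of positive order. Applying $(a+b)^2 \leq 2a^2 + 2b^2$ splits the left-hand side of \eqref{psiSboundheart.eq} into a \emph{main part}, of the form $\phi^2$ times the integrand in Proposition \ref{psinoheart.prop}, plus a finite collection of \emph{error terms}, each containing at least one derivative of $\phi$.

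The main part is immediate: since $0 \le \phi \le 1$, applying Proposition \ref{psinoheart.prop} directly yields the bound $\lesssim |Q_0|$. The error terms are the bulk of the work, but each reduces to an elementary integration. The derivatives of $\phi$ are supported in $4Q(\sbf)\setminus 2Q(\sbf)$, so the $(y,s)$ integration restricts to $500Q_0 \cap 4Q(\sbf)$, a set of measure $\lesssim |Q_0|$ because $Q_0 \subseteq Q(\sbf)$. Each occurring derivative $|\nabla_y^j \partial_s^k \phi|$ is bounded by $\ell(Q(\sbf))^{-(j+2k)}$ thanks to \eqref{cutoffqsest1.eq}--\eqref{cutoffqsest2.eq}. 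For the factors involving $\psi$: when at least one derivative falls on $\psi$, we use the pointwise bounds \eqref{eqn:estimates:G-star:3}; when $\psi$ itself appears (undifferentiated), we use
\[
|\psi(r;y,s)| \,\lesssim\, \diam(Q(\sbf)),
\qquad (r,y,s) \in \mathcal{S}(Q_0)\cap(\mathbb{R}\times 4Q(\sbf)),
\]
which follows by combining the normalization \eqref{psicentervalzero.eq}, the Lipschitz bound $|\psi^*_{\sbf}(\mbf{y})| \lesssim \diam(Q(\sbf))$ on $4Q(\sbf)$ established in the proof of Lemma \ref{psiSlip.lem}, the pointwise estimate $|\partial_r \psi| \lesssim 1$ from \eqref{eqn:estimates:G-star:3}, and the fact that $r \lesssim \diam(Q_0) \le \diam(Q(\sbf))$ and $h(y,s) \lesssim d(y,s) \lesssim \diam(Q(\sbf))$ on the relevant set.

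With these ingredients, every error term boils down to an estimate of the form
\[
\iiint_{\mathcal{S}(Q_0)\cap(\mathbb{R}\times 4Q(\sbf))}
r^{2b}\,\ell(Q(\sbf))^{-2a}\,\frac{\d r}{r}\,\d y\,\d s
\;\lesssim\;
\ell(Q(\sbf))^{-2a}\,\diam(Q_0)^{2b}\,|Q_0|
\;\lesssim\; |Q_0|,
\]
for appropriate integers $a \ge 1$ and $b \ge a$ (determined by the total order of derivatives on each side), where the final inequality uses $\diam(Q_0) \le \diam(Q(\sbf))$. Summing over the finitely many error terms produced by Leibniz completes the proof. The only real ``obstacle'' is bookkeeping the expansion, since the products in $\nabla_{y,r}^2(\psi\phi)$ and $\nabla_{y,r}\partial_s(\psi\phi)$ generate several combinations, but each is handled uniformly by the scheme above; no new analytic input beyond Proposition \ref{psinoheart.prop}, \eqref{eqn:estimates:G-star:3}, and the cutoff bounds \eqref{cutoffqsest1.eq}--\eqref{cutoffqsest2.eq} is required.
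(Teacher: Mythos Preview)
Your proof is correct and follows essentially the same approach as the paper: expand via the Leibniz rule, bound the main term (carrying the factor $\phi$ or $\phi^2$) by Proposition~\ref{psinoheart.prop}, and control the error terms (those carrying at least one derivative of $\phi$, hence supported in $4Q(\sbf)\setminus 2Q(\sbf)$) using the pointwise bounds \eqref{eqn:estimates:G-star:3}, the cutoff estimates \eqref{cutoffqsest2.eq}, and $|\psi|\lesssim \diam(Q(\sbf))$. The only cosmetic difference is that the paper additionally invokes $h(y,s)\approx \diam(Q(\sbf))$ on $4Q(\sbf)\setminus 2Q(\sbf)$ to truncate the $r$-integration from below at $c\,\diam(Q(\sbf))$ (making the integrand $\lesssim 1$ over a range of length $O(1)$ in $\log r$), whereas you integrate all the way down and rely on the positive power $r^{2b-1}$ with $b\ge 1$ for convergence---both yield the same bound.
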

\begin{proof} % 6-8
	We are going to leverage Proposition \ref{psinoheart.prop} heavily. Let us note that since $0 \le \phi \le 1$, when we expand the terms in \eqref{psiSboundheart.eq} out using the product rule, the only terms not handled by Proposition \ref{psinoheart.prop} are those where at least one derivative lands on $\phi$. Because $\mathbbm{1}_{2Q(\sbf)} \le \phi \le \mathbbm{1}_{4Q(\sbf)}$ these additional terms are zero unless $(y,s) \in 4Q(\sbf) \setminus 2Q(\sbf)$. In such case, by definition of $d$,
	\begin{equation}\label{hxtlwbdheart.eq}
		h(y,s) \approx d(y,s) \approx \diam(Q(\sbf)), \quad \forall (y,s) \in 4Q(\sbf) \setminus 2Q(\sbf).
	\end{equation}
	Therefore, recalling the definition of $\mathcal{S}(Q_0)$ in \eqref{fncySdef.eq}, to control the terms not handled by Proposition \ref{psinoheart.prop}, it is enough to obtain a bound of the form
	\begin{equation*}
		\iint_{500Q_0} \int_{c \diam(Q(\sbf))}^{10^{10}C_n\diam(Q_0)}
		\!\!\!\! 
		\big(
		|r\,\partial_s \psi^{\heart}(r; y,s)|^2+|r\,\nabla_{y,r}^2 \psi^{\heart}(r;y,s)|^2+ |r^2 \,\nabla_{y,r}\partial_s \psi^{\heart}(r;y,s)|^2\big)\,\frac{\d r}{r}{\d y\d s}
		\lesssim |Q_0|,
	\end{equation*}
	where $c$ is implicitly defined by \eqref{hxtlwbdheart.eq}.
	Now, using Lemma \ref{lemma:estimates:G-star}, the estimates on the derivatives of $\phi$ (see \eqref{cutoffqsest2.eq}), and the fact that $|\psi(y, s)| \lesssim \diam(Q(\sbf))$ for $(y, s) \in 500Q(\sbf)$ (recall the normalization in \eqref{psistrSdef.eq}, and the bounds for our regions in Lemma~\ref{map_sawtooths.lem} and \eqref{eq:U_Q_in_balls}), we get
	\[|r\,\partial_s \psi^{\heart}(r;y,s)|^2+|r\,\nabla_{y,r}^2 \psi^{\heart}(r;y,s)|^2+ |r^2 \,\nabla_{y,r}\partial_s \psi^{\heart}(r;y,s)|^2 \lesssim 1
	\]
	for every $(r, y, s)$ in the domain of integration in the previous display, which readily gives the desired estimate (recalling that $\diam(Q_0) \leq \diam(Q(\sbf))$ because $Q_0 \in \sbf$).
\end{proof}

\subsection{\for{toc}{\small}Treating the Case 1 cubes with methods from \cite{BHMN1}}\label{Case1.ssect}

To finish handling the Case 1 cubes, we will largely follow the ideas in \cite{BHMN1}. In fact, in some sense we have an easier task, because in \cite{BHMN1} the authors needed to more closely relate the object $\mathcal{D}_t^R\psi_\sbf$ to $\mathcal{D}_t^R\psi$ (see e.g. \cite[``Step 2"]{BHMN1}), whereas we are proving a Coronization and using Lemma \ref{coronaenough.lem}.

So fix a Case 1 cube $Q = Q_R(\mbf{x}_Q)$. Our goal is to show \eqref{BmoimpJSlocalized.eq}. Since $Q \subseteq 50Q_0$ for some $Q_0 \in \sbf$ with $\diam(Q) \approx_{\eps_0} \diam(Q_0)$ we may (and do) assume that $Q = 50Q_0$ for $Q_0 \in \sbf$. We are going to work with a smoothed, cut-off version of $\psi(r;x,t)$. To begin, we let $\zeta \in C_0^\infty([-1,1])$ be an even function with $\mathbbm{1}_{[-1/2,1/2]} \le \zeta \le \mathbbm{1}_{[-1,1]}$, and set
\begin{equation}\label{prdef1}
	p(x,t) := c_n \, \zeta(x_1)\, \zeta(x_2)\dots \zeta(x_{n-1}) \, \zeta(t), \quad \forall \, \mbf{x} = (x,t) = (x_1,\dots,x_{n-1},t) \in \rn,
\end{equation}
where $c_n$ is chosen so that $\iint_{\rn} p(x,t) \, \d x \, \d t = 1$.  We define, for $(x,t) \in \rn$,
\begin{equation}\label{prdef2}
	P_r f(x,t) := (p_r \ast f)(x,t),
\end{equation}
where $p_r(x,t) := r^{-n -1}p(x/r,t/r^2)$,
Thus, $P_r$ is a nice parabolic approximation to the identity on $\rn$.

Since $h$ is Lip(1,1/2) with $\|h\|_{\Lip{(1,1/2)}} \lesssim_n 1$ (Lemma~\ref{regdistlem.lem}), there exists $\gamma \ll_n 1/100$ such that
\begin{equation} \label{eq:approx_id}
	|h(\mbf{x}) - P_{\gamma r}h(\mbf{x})| \le r/4,\quad \; \forall r > 0, \; \mbf{x} \in \rn.
\end{equation}
The estimate \eqref{eq:approx_id} implies 
\begin{equation}\label{eqprh1}
	r + P_{\gamma r}h(\mbf{x}) \ge 3r/4 + h(\mbf{x}) > h(\mbf{x}), \quad \forall \, r > 0, \; \mbf{x} \in \rn.
\end{equation}

Now we define the smoothed cut-off version of $\psi(r;x,t)$ (with $\psi^\heart$ defined in \eqref{psiheart})
\begin{equation}\label{psitildedef.eq}
	\widetilde{\psi}_r(\mbf{x}) 
	:=
	\widetilde{\psi}(r; \mbf{x}) 
	:=
	\psi^\heart(r+ P_{\gamma r} h(\mbf{x}); \mbf{x}) 
	,
	\qquad 
	\mbf{x} \in 500Q_0, 0 \leq r \leq 10\diam(Q).
\end{equation}
We will leave $\gamma = \gamma(n)$ as a parameter we can adjust.
Let us make two observations. We have that
\begin{equation}\label{smthagreatzero.eq}
\widetilde{\psi}(0;\mbf{x})  = \psi_{\sbf}(\mbf{x}).
\end{equation}
We also want to observe that in the domain where we consider $\widetilde{\psi}_r(\mbf{x})$, the function only pulls information for $\psi^\heart$ in the region $\mathcal{S}(Q_0)$, where we have established a local square function estimate. In particular, we want to confirm that (recalling that $Q = 50Q_0$)
\begin{equation}\label{styinSQ.eq}
\mbf{x} \in 500Q_0, \;\; 0 \leq r \leq 10\diam(Q)
\quad \implies \quad 
(r+ P_{\gamma r} h(\mbf{x}), \mbf{x}) \in \mathcal{S}(Q_0).
\end{equation}
Indeed, inspecting the definition of $\mathcal{S}(Q_0)$ (see \eqref{fncySdef.eq}), we would want to confirm that
\[h(\mbf{x})/10 < r+ P_{\gamma r} h(\mbf{x}) < 10^{10}C_n\diam(Q_0).\]
The first inequality follows from \eqref{eqprh1}.  For the second, the definition of $d$ gives that $d(\mbf{x}) \le 501\diam(Q_0)$ for $\mbf{x} \in 500Q_0$, so by \eqref{eq:approx_id} and the properties of $h$ (see \eqref{regular-dist}) it holds
\[r+ P_{\gamma r} h(\mbf{x}) \le \frac{5}{4}r + h(x) \le \frac{50}{4} \diam(Q) + 501C_n \diam(Q_0) < 10^{10}C_n\diam(Q_0),\]
(where we again recall that $Q = 50Q_0$) which finishes the proof of \eqref{styinSQ.eq}.

Recall we are trying to prove \eqref{BmoimpJSlocalized.eq}. By H\"older's inequality it suffices to show
\begin{equation*}
\iint_{Q} |\mathcal{D}^Q_t\psi_\sbf(\mbf{y})|^2 \, \d\mbf{y} \le C_\star|Q|^{1/2}.
\end{equation*}
By duality and density, to prove this, it suffices to show, for $f \in C_c^\infty(Q)$ with $\|f\|_{L^2(Q)} \le 1$, that
\begin{equation}\label{BMO2cont.eq}
	\biggl |\iint \mathcal{D}_t^Q \psi^{\mbf{S}}(\mbf{x}) f(\mbf{x}) \, \d\mbf{x}\biggr | 
	\,=:\, \biggl |\iint \mathrm{I}_\mathrm{P}^Q \psi^{\mbf{S}}(\mbf{x})\, \partial_t f(\mbf{x}) \, \d\mbf{x}\biggr | \,\lesssim\, |Q|^{1/2}\,,
\end{equation}
where we have interpreted the $\partial_t$ in the definition of $\mathcal{D}_t^Q := \mathcal{D}_t^R$ (see \eqref{localdtdef}) in the weak sense.
Now, following \cite{BHMN1}, we use \eqref{smthagreatzero.eq}, introduce an approximate identity on $f$ and 
integrate several times by parts (we will explain this below)
to obtain
\begin{multline}\label{IBPfordthalf.eq}
	- \iint \mathrm{I}_\mathrm{P}^Q \psi^{\mbf{S}}(\mbf{x})\, \partial_t f(\mbf{x}) 
	\d\mbf{x}  \,=\, - \iint
	\mathrm{I}_\mathrm{P}^Q \tilde{\psi}(0; \mbf{x})\, \partial_t f(\mbf{x})
	\d\mbf{x}
	\\ 
	=- \int_0^{\diam(Q)} \iint_{2Q} 
	\partial_r[\mathcal{D}_t^Q \tilde{\psi}(r; \mbf{x}) 
	P_{\gamma r} f(\mbf{x})]\, \d\mbf{x} \, \d r \,+\, b_1
	\\  = \, \int_0^{\diam(Q)} \iint_{2Q} 
	\partial_r^2[\mathcal{D}_t^Q \tilde{\psi}(r; \mbf{x}) 
	P_{\gamma r} f(\mbf{x})]  \, \d\mbf{x}\, r \d r \,+\, b_1 - b_2
	\, =:\, I + b_1 - b_2,
\end{multline}
where the boundary terms $b_1$ and $b_2$ are defined by
\begin{equation}\label{b1err.eq}
	b_1 := 
	- \! \iint_{2Q} \!\! \mathrm{I}_\mathrm{P}^Q \tilde{\psi}(\diam(Q); \mbf{x}) \partial_t P_{\gamma \diam(Q)} f(\mbf{x}) \d\mbf{x}
	=
	\iint_{2Q} \!\! \mathcal{D}_t^Q \tilde{\psi}( \diam(Q), \mbf{x}) P_{\gamma \diam(Q)} f(\mbf{x}) \d\mbf{x},
\end{equation}
where we have integrated by parts in $t$ recalling that $\mathcal{D}_t^Q = \partial_t \circ \mathrm{I}_\mathrm{P}^Q$ (see \eqref{localdtdef}), and in turn
\begin{equation}\label{b2err.eq}
	b_2 :=  \iint_{2Q} \partial_r[\mathcal{D}_t^Q \tilde{\psi}(r; \mbf{x}) P_{\gamma r} f(\mbf{x})]\big|_{r = \diam(Q)}\, \diam(Q) \, \d\mbf{x}.
\end{equation}

Let us explain \eqref{IBPfordthalf.eq} more carefully. First, note the fact that all the integrals are taken over $2Q = 100Q_0$ because $\gamma \ll_n 1/100$ and $f$ being supported in $Q$ imply that $P_{\gamma r} f$ is supported in $2Q$ for $r < \diam(Q)$. Then, the first equality in \eqref{IBPfordthalf.eq} is just \eqref{smthagreatzero.eq}. In turn, in the second inequality, we first use the fundamental theorem of calculus in the variable $r$ (so that we obtain the boundary term $b_1$), and then we integrate by parts in $t$ in the main term, passing the $\partial_t$ from $f$ to $\mathrm{I}_\mathrm{P}^Q$ (recall $\mathcal{D}_t^Q = \partial_t \circ \mathrm{I}_\mathrm{P}^Q$). This actually makes sense because we will show in \eqref{L3est2.eq} below that $\mathcal{D}_t^Q \tilde{\psi}(r; \mbf{x})$ is well defined for $r>0$. Lastly, the third equality is an elementary 1D integration by parts in $r$.

To continue, we claim, and will prove in 
(sub)section \ref{tech} below, that
\begin{equation}\label{b12errest.eq}
	|b_1| + |b_2| \lesssim |Q|^{1/2}.
\end{equation}
This leaves the contribution of the main term
$I$, in which we distribute the $r$-derivatives:
\begin{align}\label{Iintothree.eq}
	I =  \int_0^{\diam(Q)}\! \iint_{2Q} \mathcal{D}_t^Q  \partial_r^2\tilde{\psi}(r; \mbf{x}) \, P_{\gamma r} f(\mbf{x}) \d\mbf{x}\,r  \d r 
	\,+\, 2 \int_0^{\diam(Q)}\! \iint_{2Q}  \mathcal{D}_t^Q \partial_r \tilde{\psi}(r; \mbf{x}) \, \partial_r P_{\gamma r} f(\mbf{x}) \d\mbf{x}\, r \d r &
	\nonumber
	\\
	+\,\, \int_0^{\diam(Q)} \!\iint_{2Q}  \mathcal{D}_t^Q \tilde{\psi}(r; \mbf{x}) \, \partial_r^2 P_{\gamma r} f(\mbf{x})   \d\mbf{x}\, r  \d r
	\,=:\, I_1 + I_2 + I_3. & 
\end{align}

To deal with these terms, we will use Littlewood-Paley theory. So let us define $\mathcal{Q}^{(j)}_r = 
\mathcal{Q}^{(j,Q)}_r $ by 
\[
\mathcal{Q}^{(1)}_r \,:=\,  r \,\mathcal{D}_t^Q P_{\gamma r}, 
\quad \mathcal{Q}^{(2)}_r \,:=\,  \mathrm{I}_P^Q \,\partial_r P_{\gamma r} , 
\quad \mathcal{Q}^{(3)}_r \,:= \,r \,\mathrm{I}_P^Q \,\partial_r^2 P_{\gamma r}\,.
\]
Then, recall that $\mathrm{I}_P^Q$ is the smoothly truncated fractional integral operator of order $1$ (see \eqref{IPRdef.eq}), and 
is self-adjoint. This implies (by an integration by parts in $t$, at least when we test against functions with compact suppport, as is our case) that 
$\mathcal{D}_t^Q = \partial_t \circ \mathrm{I}_P^Q$ is skew-adjoint.
This implies that we can continue estimating $I_1$ in \eqref{Iintothree.eq} as follows:
\begin{multline}\label{estI1.eq}
	|I_1| = \left|\int_0^{\diam(Q)} \!\iint_{2Q} r \partial_r^2\tilde{\psi}(r; \mbf{x}) \mathcal{Q}^{(1)}_r f(\mbf{x})  \, \d\mbf{x}\,  \frac{\d r}{r}\right|
	\\  
	\le \left( \int_0^{\diam(Q)}\! \iint_{2Q} 
	|r \partial_r^2\tilde{\psi}(r; \mbf{x})|^2 \, \d\mbf{x}\,  
	\frac{\d r}{r}\right)^{1/2} \left(\int_0^{\diam(Q)} \!
	\iint_{2Q} |\mathcal{Q}^{(1)}_r f(\mbf{x})|^2  \, \d\mbf{x}\,  \frac{\d r}{r}\right)^{1/2},
\end{multline}

To get a bound for the last term, note that by parabolic Littlewood-Paley theory\footnote{Estimate
	\eqref{IRPLPTest.eq} may be proved via 
	Plancherel's theorem.  We omit the standard argument.}
we have
\begin{equation}\label{IRPLPTest.eq}
	\int_0^\infty\! \iint_{\rn} |\mathcal{Q}^{(1)}_rf(\mbf{x})|^2 + |\mathcal{Q}^{(2)}_rf(\mbf{x})|^2 + |\mathcal{Q}^{(3)}_rf(\mbf{x})|^2 \, \d\mbf{x} \, \frac{\d r}{r} \,\lesssim\,  \|f\|_{L^2}^2 \,\le\, 1,
\end{equation}
where the implicit constant is independent of $Q$.
Note that the cancellation for $\mathcal{Q}^{(1)}_r $ comes from this $t$-derivative.

Therefore, to complete the estimate for $I_1$ in \eqref{estI1.eq}, we only lack the following lemma.
\begin{lemma}\label{atildeprop.lem} Let $\tilde \psi$ be defined as 
	in \eqref{psitildedef.eq} and $Q_0 \in \sbf$. Then, if $Q = 50Q_0$, it holds
	\begin{equation}\label{L3est2.eq}
		|r\partial_r^2 \tilde{\psi}(r;\mbf{x})| + |r\partial_t \tilde{\psi}(r; \mbf{x})|  + |r^2 \partial_t \partial_r \tilde{\psi}(r;\mbf{x})| \lesssim 1,
		\quad \forall \, r\in (0, 10\diam(Q)),\, \mbf{x}\in 10Q\,,
	\end{equation}
	and
	\begin{equation}\label{L3est1.eq}
		\int_{0}^{\diam(Q)}\! \iint_{2Q} 
		\left(|r\partial_r^2 \tilde{\psi}(r;\mbf{x})|^2 
		+ |r\partial_t \tilde{\psi}(r; \mbf{x})|^2  +
		|r^2 \partial_t \partial_r \tilde{\psi}(r;\mbf{x})|^2 \right) 
		\d\mbf{x}  \frac{\d r}{r} \,\lesssim\, |Q|.
	\end{equation}
	\end{lemma}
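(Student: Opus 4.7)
The approach, following \cite{BHMN1}, is to express derivatives of $\tilde\psi$ via the chain rule in terms of derivatives of $\psi^\heart$ (controlled pointwise by Lemma~\ref{lemma:estimates:G-star} and in square function by Proposition~\ref{psiheartbd.prop}) and derivatives of the mollified distance $H(r,\mbf{x}):=P_{\gamma r}h(\mbf{x})$ (controlled by the regularity of $h$ in Lemma~\ref{regdistlem.lem}). Setting $\rho(r,\mbf{x}):=r+H(r,\mbf{x})$ so that $\tilde\psi(r;\mbf{x})=\psi^\heart(\rho;\mbf{x})$, the chain rule gives
\begin{align*}
\partial_r^2\tilde\psi &= (\partial_\rho^2\psi^\heart)(1+\partial_r H)^2 + (\partial_\rho\psi^\heart)\,\partial_r^2 H,\\
\partial_t\tilde\psi &= (\partial_\rho\psi^\heart)\,\partial_t H + \partial_t\psi^\heart,\\
\partial_r\partial_t\tilde\psi &= (\partial_\rho^2\psi^\heart)(1+\partial_r H)\,\partial_t H + (\partial_\rho\psi^\heart)\,\partial_r\partial_t H + (\partial_\rho\partial_t\psi^\heart)(1+\partial_r H).
\end{align*}
Standard kernel estimates (using $\int p_{\gamma r}\equiv 1$, hence $\int\partial_r^j p_{\gamma r}=0$ for $j\ge 1$, together with the parabolic support of $p_{\gamma r}$ and $\|h\|_{\Lip(1,1/2)}\lesssim 1$) yield $|\partial_r H|\lesssim\gamma$, $|\partial_r^2 H|\lesssim 1/r$, $|\partial_t H|\lesssim 1/r$, and $|\partial_r\partial_t H|\lesssim 1/r^2$; in particular, for $\gamma$ small (depending only on $n$), $|1+\partial_r H|\approx 1$, and $\rho\approx r$ by \eqref{eq:approx_id}.

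For the pointwise estimate \eqref{L3est2.eq}, I combine the chain rule formulas with Lemma~\ref{lemma:estimates:G-star} (noting that the cut-off $\phi$ contributes only lower-order terms under \eqref{cutoffqsest1.eq}--\eqref{cutoffqsest2.eq} thanks to the normalization \eqref{psicentervalzero.eq}), which gives $|\partial_\rho\psi^\heart|\lesssim 1$, $|\partial_\rho^2\psi^\heart|+|\partial_t\psi^\heart|\lesssim 1/\rho\approx 1/r$, and $|\partial_\rho\partial_t\psi^\heart|\lesssim 1/\rho^2\approx 1/r^2$. Substituting, every summand in $r\partial_r^2\tilde\psi$, $r\partial_t\tilde\psi$, and $r^2\partial_r\partial_t\tilde\psi$ is pointwise $\lesssim 1$.

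For the square function estimate \eqref{L3est1.eq}, I split each of the three quantities into a ``main part'' in which every derivative falls on $\psi^\heart$, and ``error parts'' containing at least one derivative of $H$. The main parts are handled via the change of variables $(r,\mbf{x})\mapsto(\rho,\mbf{x})$: its Jacobian $1+\partial_r H\approx 1$ is uniformly bounded above and below, $\rho\approx r$, and its image lies in $\mathcal{S}(Q_0)$ by \eqref{styinSQ.eq}, so each main contribution folds (after rewriting $\d r/r$ as $\d\rho/\rho$ up to bounded factors) into one of the three integrals bounded by Proposition~\ref{psiheartbd.prop}, giving $\lesssim|Q_0|\approx|Q|$.

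The main obstacle is the error parts: after using the pointwise bounds on $\partial_\rho\psi^\heart$ and $\partial_\rho\partial_t\psi^\heart$, they reduce to the Carleson-type bounds
\[
\int_0^{\diam(Q)}\!\iint_{2Q}\Big(|\partial_r^2 H|^2+|\partial_t H|^2\Big)\,r\,\d\mbf{x}\,\d r \;+\int_0^{\diam(Q)}\!\iint_{2Q}|\partial_r\partial_t H|^2\,r^3\,\d\mbf{x}\,\d r\;\lesssim\;|Q|.
\]
These are proved by exploiting the additional regularity $\mathcal{D}_t h\in\BMO(\rn)$ from Lemma~\ref{regdistlem.lem}. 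The simplest case is $|\partial_t H|^2\,r$: the Fourier identity $\partial_t P_{\gamma r}h=(\gamma r)^{-1}\tilde Q_{\gamma r}(\mathcal{D}_t h)$, where $\tilde Q_{\gamma r}$ is a Littlewood--Paley-type operator with symbol $\vertiii{(\gamma r\xi,\gamma^2 r^2\tau)}\hat p(\gamma r\xi,\gamma^2 r^2\tau)$ (vanishing at the origin and decaying rapidly at infinity), combined with the classical Carleson measure characterization of $\BMO$, yields the bound. The remaining two terms follow from analogous but more delicate symbol decompositions, using that $\partial_r p_{\gamma r}$ and $\partial_r\partial_t p_{\gamma r}$ carry additional cancellation (vanishing zeroth and first parabolic moments) together with $\|\nabla_x h\|_\infty\lesssim 1$. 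Combining the main and error contributions yields \eqref{L3est1.eq}, completing the proof.
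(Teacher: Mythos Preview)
Your proof is correct and follows essentially the same strategy as the paper: chain rule plus pointwise bounds from Lemma~\ref{lemma:estimates:G-star} for \eqref{L3est2.eq}, and Proposition~\ref{psiheartbd.prop} for the main contribution to \eqref{L3est1.eq}, with the residual terms handled via the Carleson/Littlewood--Paley estimates for the mollified distance. The Carleson bounds you sketch for $\partial_r^2 H$, $\partial_t H$, $\partial_r\partial_t H$ are precisely the content of the paper's Lemma~\ref{PgrhLPprop.lem} (quoted from \cite[Lemma~2.8]{HL96}), which indeed rests on $\mathcal{D}_t h\in\BMO$ and $\nabla_x h\in L^\infty$.

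There is one tactical difference worth noting. You perform the change of variables $r\mapsto\rho=r+P_{\gamma r}h(\mbf{x})$ directly, which is nonlinear in $r$ (Jacobian $1+\partial_r H$). The paper instead first invokes the mean value theorem to replace $\psi^\heart(r+P_{\gamma r}h;\mbf{x})$ by $\psi^\heart(r+h;\mbf{x})$, at the cost of an error controlled by $|(P_{\gamma r}-I)h|$ and a third derivative of $\psi^\heart$; this reduces the change of variables to a pure shift $s=r+h(\mbf{x})$ with unit Jacobian, and the difference term is then absorbed by Lemma~\ref{PgrhLPprop.lem}(i). Your route avoids the MVT step but requires the full $r$-monotonicity of $\rho$ (which is fine for $\gamma$ small) and uses parts (ii)--(iii) of Lemma~\ref{PgrhLPprop.lem} in place of part (i). Both routes are equally valid and of comparable length.
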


We postpone the proof of the lemma to (sub)section \ref{tech}: indeed, it is a consequence of Proposition \ref{psiheartbd.prop} (and Lemma \ref{PgrhLPprop.lem} in the next subsection). Taking it for granted, we can quickly finish the proof of \eqref{BMO2cont.eq}. Indeed, just put together \eqref{estI1.eq}, \eqref{IRPLPTest.eq} and \eqref{L3est1.eq} to get the desired estimate for $I_1$ (and similarly for $I_2$ and $I_3$): % 31-7/3
\[|I_1| + |I_2| + |I_3| \lesssim  |Q|^{1/2}.\]
Therefore, combining our estimates for $I_1,I_2,I_3$ here, and $b_1,b_2$ in \eqref{b12errest.eq}, we conclude that \eqref{BMO2cont.eq} holds, which finally shows \eqref{BmoimpJSlocalized.eq} for Case 1 cubes, modulo the proofs in the next subsection.

\subsection{\for{toc}{\small}Estimating the boundary terms $b_1,b_2$ and the proof of Lemma \ref{atildeprop.lem}.}\label{tech}

To finish the proof of \eqref{BmoimpJSlocalized.eq} (and therefore of Theorems~\ref{main1.thrm} and \ref{main2.thrm}) we are only left with showing \eqref{b12errest.eq} and Lemma~\ref{atildeprop.lem}. For that, we will also follow \cite{BHMN1} very closely in this subsection. We start with the following result,
which is based on Lemma \ref{regdistlem.lem}. We refer to 
\cite[Lemma 2.8]{HL96} 
for its proof.

\begin{lemma}\label{PgrhLPprop.lem} Define $P_r$ as in 
	\eqref{prdef1}-\eqref{prdef2}. Then we have
	\begin{align*}\bigl| \frac{1}{r} (P_r - I) h(\mbf{x})  \bigr| + \bigl|\partial_r P_r \, h(\mbf{x}) \bigr| + \bigl|r^{2j + m - 1}\nabla_{x,r}^m \partial_t^j P_{r} h(\mbf{x})\bigr| \lesssim 1,
	\end{align*}
	where the implicit constant depends at most on $(n,m,j)$. Furthermore, for all $Q \subseteq \rn$,
	\begin{align*}
		\mathrm{(i)}&\quad  \int_{0}^{50\ell(Q)} \iint_{50Q}\bigl|\frac{1}{r} (P_r - I)\,  h(\mbf{x})  \bigr|^2\, \d\mbf{x} \, \frac{\d r}{r} \lesssim |Q|,\\
		\mathrm{(ii)}&\quad \int_{0}^{50\ell(Q)} \iint_{50Q} \left|\partial_r P_r h(\mbf{x})  \right|^2\, \d\mbf{x} \, \frac{\d r}{r}\lesssim |Q|,\\
		\mathrm{(iii)}&\quad \int_{0}^{50\ell(Q)} \iint_{50Q}  \big|r^{2j + m - 1}\nabla_{x,r}^m \partial_t^j P_{r} h(\mbf{x})  \big|^2 \, \d\mbf{x} \, \frac{\d r}{r} \lesssim |Q|,
	\end{align*}
	where in $\mathrm{(iii)}$ we require that $j,m\geq 0$, and that either 
	$j \ge 1$,  or that $ m \ge 2$. Again the implicit constants depend at most on $(n,m,j)$.
\end{lemma}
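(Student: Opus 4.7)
The pointwise bounds of the first display reduce to two facts: $p_r$ integrates to $1$ and is supported in $\{\|\mbf{y}\| \lesssim r\}$ (parabolic metric), and $|\partial_t^j \nabla_{x,r}^m p_r(\mbf{y})| \lesssim r^{-(n+1)-m-2j}\,\mathbbm{1}_{\{\|\mbf{y}\| \lesssim r\}}$ by parabolic rescaling. Combined with $\|h\|_{\Lip(1,1/2)} \lesssim 1$ from Lemma \ref{regdistlem.lem}, I would write $P_r h(\mbf{x}) - h(\mbf{x}) = \iint p_r(\mbf{y})[h(\mbf{x}-\mbf{y}) - h(\mbf{x})]\,\d\mbf{y}$, estimate the integrand pointwise by $\|\mbf{y}\| \lesssim r$, and integrate against $\|p_r\|_1 = 1$ to obtain $|P_r h(\mbf{x}) - h(\mbf{x})| \lesssim r$. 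For $\partial_r P_r h$, use that $\partial_r\iint p_r = 0$ to subtract $h(\mbf{x})$ from the integrand before estimating. For $\nabla_{x,r}^m\partial_t^j P_r h$ with $j\geq 1$ or $m\geq 2$ (more generally, whenever the differentiated kernel $\partial_t^j \nabla_{x,r}^m p_r$ has mean zero), the same subtraction trick yields $\lesssim r \cdot r^{-(n+1)-m-2j}\cdot r^{n+1} = r^{1-m-2j}$, exactly matching the claimed $r^{-(2j+m-1)}$ scaling.

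For the Carleson estimates $\mathrm{(i)}$--$\mathrm{(iii)}$ my plan is to recognize each integrand as a parabolic Littlewood--Paley square function applied to $h$, and then invoke the classical parabolic $\BMO$-to-Carleson correspondence on $\mathcal{D}_t h \in \BMO(\rn)$ (Lemma \ref{regdistlem.lem}). Concretely, by the cancellation built in to $\mathrm{(i)}$--$\mathrm{(iii)}$, the convolution against $r^{2j+m-1}\,\partial_t^j\nabla_{x,r}^m p_r$ (respectively against $\frac{1}{r}(p_r - \delta_0)$ in $\mathrm{(i)}$ and $\partial_r p_r$ in $\mathrm{(ii)}$) is, up to a harmless prefactor, a fixed parabolic LP operator $\Theta_r$ built from a smooth, compactly-supported, mean-zero kernel. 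Decomposing $\Theta_r$ so as to peel off one parabolic half-order time derivative, I would write $\Theta_r h = \widetilde\Theta_r(\mathcal{D}_t h) + \Xi_r(\nabla_x h)$ for bounded parabolic LP-type square functions $\widetilde\Theta_r,\Xi_r$; the second term contributes $\lesssim \|\nabla_x h\|_\infty^2|Q| \lesssim |Q|$ trivially, and the first is controlled by the standard fact that $|\widetilde\Theta_r f|^2\,\d\mbf{x}\,\d r/r$ is a Carleson measure with norm $\lesssim \|f\|_{\BMO}^2$, applied here with $f = \mathcal{D}_t h$.

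The main obstacle will be making the parabolic scaling line up precisely, i.e., checking that the $r^{2j + m - 1}$ prefactor in $\mathrm{(iii)}$ exactly cancels the $r^{-m-2j}$ blow-up of the differentiated kernel and leaves a single vertical $\d r/r$-density, and that the decomposition $\Theta_r h = \widetilde\Theta_r(\mathcal{D}_t h) + \Xi_r(\nabla_x h)$ respects the parabolic homogeneity (so the cancellation in $\Theta_r$ is correctly split between the time and space pieces, in view of the mismatched weights). Once those bookkeeping issues are settled the rest is the classical parabolic Carleson characterization of $\BMO$ applied to $\mathcal{D}_t h$, exactly as carried out in \cite[Lemma 2.8]{HL96}.
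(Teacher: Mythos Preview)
Your proposal is correct and aligns with the paper's approach: the paper does not give a self-contained proof but simply refers to \cite[Lemma 2.8]{HL96}, noting that the result rests on the properties of $h$ in Lemma~\ref{regdistlem.lem} (in particular $\|h\|_{\Lip(1,1/2)}\lesssim 1$ and $\mathcal{D}_t h\in\BMO$). Your sketch is exactly an outline of how that argument goes---pointwise bounds from the $\Lip(1,1/2)$ property plus kernel cancellation, and Carleson bounds from the parabolic Littlewood--Paley/$\BMO$ machinery applied to $\mathcal{D}_t h$---and you close with the same citation.
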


With this lemma in hand, we prove Lemma \ref{atildeprop.lem}.
\begin{proof}[Proof of Lemma \ref{atildeprop.lem}]
We start by proving \eqref{L3est2.eq}. We will only handle the first term to the left in \eqref{L3est2.eq}, as the rest of the terms can be handled analogously. We have
\[
\partial_r \tilde{\psi}(r;\mbf{x})
= 
\partial_r \left[
\psi(r+ P_{\gamma r}h(\mbf{x});\mbf{x})\phi(\mbf{x})\right] 
= 
(\partial_r \psi)(r+ P_{\gamma r}h(\mbf{x}),\mbf{x})
(1 + \partial_r P_{\gamma r}h(\mbf{x}))\phi(\mbf{x}),
\]
and hence
\begin{multline}\label{dr2tpsi}
	r\partial_r^2 \tilde{\psi}(r;\mbf{x})  
	= r(\partial_r^2 \psi)(r+ P_{\gamma r}h(\mbf{x});\mbf{x}) \,
	\left(1 + \partial_r P_{\gamma r}h(\mbf{x})\right)^2 
	\\ \, +\, (\partial_r \psi)(r+ P_{\gamma r}h(\mbf{x});\mbf{x}) 
	\, r\left(\partial_r^2 P_{\gamma r}h(\mbf{x})\right)\phi(\mbf{x}).
\end{multline}
The bound for $|r\partial_r^2 \tilde{\psi}(r;\mbf{x})|$ now 
follows from Lemma \ref{lemma:estimates:G-star}, specifically 
\eqref{eqn:estimates:G-star:1} and 
\eqref{eqn:estimates:G-star:3},  and 
Lemma \ref{PgrhLPprop.lem}.  In particular, \eqref{styinSQ.eq} justifies the appropriate use of  
Lemma \ref{lemma:estimates:G-star}. Note that for the remaining terms in \eqref{L3est2.eq}, there are derivatives landing on $\phi$, but these contribute terms that act well since $h(\mbf{x}) \approx \ell(Q(\sbf))$ where $\nabla_{x,t} \phi \neq 0$, as observed in the proof of Proposition \ref{psiheartbd.prop}.

Next we turn our attention to \eqref{L3est1.eq}, which is a little more delicate. Again, we will only handle the first term (in the integral) as the others terms can be handled analogously. First, we control a 
closely related expression, without the smoothing factor. 
Indeed, we observe that
\begin{multline*}
	\int_{0}^{\diam(Q)}\iint_{2Q} \!
	|r\partial_r^2 \psi^\heart(r +h(\mbf{x});\mbf{x})|^2 \, \d\mbf{x}  
	\frac{\d r}{r}
	= 
	\iint_{2Q}  \int_{h(\mbf{x})}^{h(\mbf{x})+ 
		\diam(Q)}
	|\partial_s^2 \psi^\heart( s;\mbf{x})|^2 \,\big(s - h(\mbf{x})\big)  \d s \d\mbf{x} 
	\\ \le \iiint_{\mathcal{S}_Q} 
	|\partial_s^2 \psi^\heart(s;\mbf{x})|^2 s \d\mbf{x} \d s \lesssim  |Q|\,,
\end{multline*}
where in the last step we used Proposition \ref{psiheartbd.prop}, 
and in the previous inequality, we used \eqref{styinSQ.eq} and $h \geq 0$. 
Now, we need to relate this to our smoothed version of $\psi$. With the preceding estimate in hand, we see that to obtain the bound
\[\int_{0}^{\diam(Q)}\iint_{2Q} |r\partial_r^2 \tilde{\psi}(r;\mbf{x})|^2\, \d\mbf{x} \, \frac{\d r}{r} \lesssim |Q|\]
in \eqref{L3est1.eq}, it is enough to prove
\begin{equation}\label{roughhsmoothhbd.eq}
	\int_{0}^{\diam(Q)}\iint_{2Q} 
	|r\partial_r^2[\tilde{\psi}(r;\mbf{x}) - \psi^\heart(r +h(\mbf{x});\mbf{x})]|^2 \, \d\mbf{x} \, \frac{\d r}{r} \lesssim |Q|.
\end{equation}

To do this, we first note that by \eqref{dr2tpsi}, Lemma 
\ref{lemma:estimates:G-star}, and Lemma \ref{PgrhLPprop.lem}, 
\[
r\partial_r^2\tilde{\psi}(r;\mbf{x}) =
r(\partial_r^2 \psi^\heart)\big(r + P_{\gamma r}h(\mbf{x});\mbf{x}\big)
\,+\, O\left(
\left|\partial_r P_{\gamma r}h(\mbf{x})\right|\,+\,r\left|\partial_r^2 P_{\gamma r}h(\mbf{x})\right|\right)\,,
\]
and observe further that the ``big-$O$" term 
may be handled via Lemma \ref{PgrhLPprop.lem}.
To treat the contribution to \eqref{roughhsmoothhbd.eq}
of the remaining term, we 
use the mean value theorem:  
for $(r,\mbf{x}) \in (0,\diam(Q)) \times {2Q}$, 
there exists $\tilde{r}$ between  $r + h(\mbf{x})$ and $r + P_{\gamma r}h(\mbf{x})$, such that
\begin{multline}\label{eesta}
	\big|(\partial_r^2 \psi^\heart)(r + P_{\gamma r}h(\mbf{x});\mbf{x}) 
	\,-\,\partial_r^2  \psi^\heart(r +h(\mbf{x});\mbf{x})\big| 
	\\ = |(P_{\gamma r} - I) \, h(\mbf{x})| \,
	|(\partial_r^3\psi)^\heart(\tilde{r};\mbf{x})|
	\lesssim \tilde{r}^{-2}|(P_{\gamma r} - I) \, h(\mbf{x})|
	\leq 
	r^{-2}|(P_{\gamma r} - I) \, h(\mbf{x})|.
\end{multline}
The use of \eqref{eqn:estimates:G-star:3} to 
derive the second-to-last inequality 
in this display may be justified 
by the fact that $\tilde{r}$ is between $r + h(\mbf{x})$ and 
$r + P_{\gamma r}h(\mbf{x})$; in particular,
$( \tilde{r},\mbf{x})$ lies in $\mathcal{S}(Q_0) \subseteq \mathcal{S}$. (We also remark that the cut-off function $\phi$ is independent of $r$ and $|\phi| \le 1$.)
Also, in the last inequality we have used the fact that $\tilde{r} \geq \min \{ r + h(\mbf{x}), r + P_{\gamma r}h(\mbf{x}) \} \geq r$ because $h \geq 0$. 
Multiplying \eqref{eesta} by $r$,
and using Lemma \ref{PgrhLPprop.lem} (i), we obtain \eqref{roughhsmoothhbd.eq}, thus completing the proof of the lemma. Note that in the (omitted) proofs of bounds for the second third terms in \eqref{L3est1.eq} that when derivatives land on the cut-off function these terms can be handled as in Proposition \ref{psiheartbd.prop}. 
\end{proof}

We are left with handling the ``boundary terms" $b_1$ and $b_2$, defined in \eqref{b1err.eq} and \eqref{b2err.eq}.
Let us estimate $|b_1|$ in \eqref{b1err.eq}. 
Recall that $\|f\|_{L^2} \leq 1$, and that, for $r>0$, the operator $P_{r}$ is bounded on $L^2$, uniformly in $r$. Hence
\begin{multline}\label{b1bdst.eq}
	|b_1|\,\le  \,\left( \iint_{2Q} |\mathcal{D}_t^Q 
	\tilde{\psi}(\diam(Q); \mbf{x})|^2 \, \d\mbf{x}\right)^{1/2} \left(\iint_{2Q} |P_{\gamma \diam(Q)} f(\mbf{x})|^2\, \d\mbf{x}\right)^{1/2}
	\\
	\lesssim \, \left( \iint_{2Q} |\mathcal{D}_t^Q 
	\tilde{\psi}(\diam(Q); \mbf{x})|^2 \, \d\mbf{x}\right)^{1/2} 
	\,  \lesssim \,\|\mathcal{D}_t^Q \tilde{\psi}(\diam(Q); \mbf{x})\|_{L^\infty} 
	\, |Q|^{1/2}\,.
\end{multline}
Next, recall that $\mathcal{D}_t^Q$ is a convolution operator with kernel $K^Q = \partial_t V_Q$, where $V_Q$ is the truncated version of the kernel of
$\mathrm{I}_P$, localized at scale $\diam(Q)$ 
(see section \ref{localizeandcase3.sect}). 
Hence,  passing the $t$-derivative onto $\tilde{\psi}(\diam(Q); \mbf{x})$ because $V_Q$ acts by convolution, 
and then using \eqref{L3est2.eq}, \eqref{vstdbd.eq} (and properties of $\phi$) we have
\begin{align*}
	\big|\mathcal{D}_t^Q \tilde{\psi}(\diam(Q); \mbf{x})\big| 
	&=
	 \big|V_Q \ast \partial_t \tilde{\psi} \, (\diam(Q); \mbf{x})\big|
	 \lesssim \frac{1}{\ell(Q)}\iint_{C_{10\diam(Q)}(\mbf{x})}\|\mbf{x} - \mbf{y}\|^{1-d} \, \d \mbf{y} \lesssim 1\,.
\end{align*}
Plugging the latter bound into \eqref{b1bdst.eq} gives 
$|b_1| \lesssim |Q|^{1/2}$, as desired. 
The proof in the case of $b_2$ in \eqref{b2err.eq} proceeds analogously 
and we omit the details. Having proved Lemma \ref{atildeprop.lem} and handling the boundary terms we have tied up all of the loose ends and proved Theorem \ref{main1.thrm} and Theorem \ref{main2.thrm}.

% **********************************************************************************************************************

\appendix
\section{Cut-off function construction, Lemma \ref{cutofffnlem.lem}}\label{cutoff.sect}
In this section we construct the cut-off function in Lemma \ref{cutofffnlem.lem}. Here we use ideas in a forthcoming joint work of the first and last author, with J.M. Martell and K. Nystr\"om \cite{BHMN2}\footnote{The general scheme here was shown to the first author by J.M. Martell.}.

First, consider $P_r$ a parabolic approximate identity on $\mathbb{R}^{n+1}$, that is, we take
$\zeta \in C_0^\infty([-1,1])$ be an even function with $\mathbf{1}_{[-1/2,1/2]} \le \zeta \le \mathbf{1}_{[-1,1]}$ and set
\begin{equation}\label{prdef1'}
	p(x_0,x,t) := c_n\, \zeta(x_0)\, \zeta(x_1)\dots \zeta(x_{n-1})\, \zeta(t), \quad \forall \, \mbf{X} = (x_0,x,t) = (x_0, x_1,\dots,x_{n-1},t) \in \mathbb{R}^{n+1},
\end{equation}
where $c_n$ is chosen so that $\iint_{\rn} p(x_0,x,t) \, \d x_0 \, \d x \, \d t = 1$
and we define $P_r f(\mbf{X}) := p_r \ast f (\mbf{X})$, where $p_r(x_0,x,t) := r^{-n-2}p(x_0/r,x/r,t/r^2)$.

Now, for each $Q \in \dd$, we set, for $\eps = \eps(K) > 0$ sufficiently small depending on $K$
\[\tilde{\eta}_Q(x) := P_{\eps \ell(Q)} \mathbbm{1}_{U_Q^*}(x).\]
Then it is easy to verify (simply use elementary properties of convolutions and rescalings) that
\begin{equation}\label{tetasupp.eq}
	\mathbbm{1}_{U_Q^*} \lesssim \tilde\eta_Q \le  \mathbbm{1}_{U_Q^{**}},
	\quad \text{and} \quad 
	|\nabla \tilde{\eta}_Q| \, \ell(Q) + |\partial_t\tilde{\eta}_Q|\, \ell(Q)^{2} \lesssim 1,
\end{equation}
with implicit constants depending on $n$, $K$ and $\|\psi\|_{\Lip(1,1/2)}$. Let us omit these dependencies from now on.

Then, we set
\begin{equation*}
	\eta_Q := \frac{\tilde \eta_Q}{\sum_{Q' \in \dd} \tilde \eta_{Q'}}.
\end{equation*}
Note now that whenever $U_Q^{**} \cap U_{Q'}^{**} \neq \emptyset$, it holds $\ell(Q) \approx \ell(Q')$. Actually, using this and \eqref{eq:U_Q_in_balls}, it is easy to deduce that the $U_Q^{**}$ regions have bounded overlap (i.e., given a cube $Q \in \dd$, there is only a --uniformly-- bounded number of $Q' \in \dd$ with $U_Q^{**} \cap U_{Q'}^{**} \neq \emptyset$). Therefore, from \eqref{tetasupp.eq} it follows 
\begin{equation}\label{etaQest.eq} 
	\mathbbm{1}_{U_Q^*} \lesssim {\eta}_Q \le  \mathbbm{1}_{U_Q^{**}}, \quad \text { and } \quad 
	|\nabla{\eta}_Q|\, \ell(Q) + |\partial_t{\eta}_Q| \, \ell(Q)^{2} \lesssim 1.
\end{equation}
 Moreover, by construction,
\[\sum_{Q \in \dd} \eta_Q \equiv 1 \quad \text{ in } \Omega.\]

Now we define the cut-off function $\eta$.
For simplicity of notation, set $\sbf' := \sbf_{N_1,N_2}$ and define
\begin{equation}\label{cutoffdefine.eq}
\eta := \sum_{Q \in \widetilde{\sbf} } \eta_Q, 
\quad 
\text{where} 
\quad 
\widetilde{\sbf} := \big\{Q' \in \dd: \exists Q \in \sbf' \text{ such that } U_Q^{**} \cap U_{Q'}^{**} \neq \emptyset \big\}.
\end{equation}
Then it is easy to verify \eqref{iscutoff.eq}: just use \eqref{etaQest.eq} and the definitions in Subsection~\ref{whit.sec}. It is even easier to obtain \eqref{cutoffbds1.eq}: simply use \eqref{etaQest.eq} and the fact that at a given point, only finitely many $\eta_Q$ can be non-zero, which follows from the bounded overlap of the $U_Q^{**}$ regions.

Therefore, it remains to show \eqref{cutoffbds2.eq} and \eqref{cutoffbds3.eq}. By definition
\[\big\{\nabla_{\mbf{X}} \eta \neq 0 \big\} \subseteq \bigcup_{Q \in  \F_b} U_{Q}^{***}.\]
Using that $|U_Q^{***}| \approx \ell(Q)^{n+2}$ and \eqref{cutoffbds1.eq}, we easily check that \eqref{cutoffbds2.eq} implies \eqref{cutoffbds3.eq}. 

In order to prove \eqref{cutoffbds2.eq} we make an observation: 
\begin{equation} \label{boundarycube.eq}
Q \in \F_b 
\quad \implies \quad 
\exists \; \Qin \in \sbf', \; \Qout \in \dd \setminus \sbf'
\; \text{ such that } \;
U_Q^{***} \cap U_\Qin^{***} \neq \emptyset \neq U_Q^{***} \cap U_\Qout^{***}.
\end{equation}
Indeed, given $Q \in \F_b$, pick $\mbf{X} \in U_Q^{***}$ with $\nabla_{\mbf{X}} \eta(\mbf{X}) \neq 0$. Then, by \eqref{cutoffdefine.eq} and \eqref{etaQest.eq}, there exists $\Qout \in \widetilde{S}$ with $\mbf{X} \in U_\Qout^{**}$. Then, using the definition of $\widetilde{S}$, we may find $\Qin \in \sbf'$ with $U_\Qout^{**} \subseteq U_\Qin^{***}$. This gives the result for $\Qout$ because $\mbf{X} \in U_Q^{***} \cap U_\Qout^{***}$. After that, let us note that if $\Qout$ belonged to $\sbf'$, then $\mbf{X}$ would belong to $\Omega_{\sbf'}^{**}$, which is an open set where $\eta \equiv 1$ (this is \eqref{iscutoff.eq}), which would lead to a contradiction because then we would have $\nabla_{\mbf{X}} \eta(\mbf{X}) = 0$. This gives the result for $\Qin$.

In this case, let us note that since $U_Q^{***} \cap U_\Qin^{***} \neq \emptyset \neq U_Q^{***} \cap U_\Qout^{***}$, it holds that
\begin{equation}\label{QQpprop.eq}
	\ell(\Qin) \approx \ell(Q) \approx \ell(\Qout) \quad \text{ and } \quad \dist(Q, \Qin), \,\dist(Q, \Qout) \lesssim \ell(Q).
\end{equation}
Also, this implies that, given $Q' \in \sbf$, the amount of cubes for which $Q'$ is their $\Qin$ is bounded:
\begin{equation} \label{associationQQin.eq}
	\# \big \{ Q \in \F_b : \Qin = Q' \big \} \lesssim 1.
\end{equation}

We are going to break up into cases, depending on the properties of the $\Qout$ from \eqref{boundarycube.eq}. 
\begin{itemize}
\item $\F_1$ is the collection of cubes $Q \in \F_b$ such that $\ell(\Qout) > 2^{-N_1} \ell(Q(\sbf))$,
\item $\F_2$ is the collection of cubes $Q \in \F_b$ such that $\ell(\Qout) \le 2^{-N_1} \ell(Q(\sbf))$ and $\Qout \cap Q(\sbf) \neq \emptyset$, 
\item $\F_3$ is the collection of cubes $Q \in \F_b$ such that $\ell(\Qout) \le 2^{-N_1} \ell(Q(\sbf))$ and $\Qout \cap Q(\sbf) = \emptyset$.
\end{itemize} 

\textbf{Case $\F_1$.} If we have $Q \in \F_1$, then, after recalling that $\Qin \in \sbf' = \sbf_{N_1, N_2}$, we quickly realize that $\ell(\Qin) \leq 2^{-N_1}\ell(Q(\sbf))$. Then, since $\ell(\Qin) \approx \ell(\Qout)$ (see \eqref{QQpprop.eq}) and $\ell(\Qout) > 2^{-N_1}\ell(Q(\sbf))$ (for $Q \in \F_1$), we obtain that 
\[\ell(\Qin) \approx 2^{-N_1}\ell(Q(\sbf)).\]
Using this, \eqref{associationQQin.eq}, $\ell(\Qin) \approx \ell(Q)$ from \eqref{QQpprop.eq}, and the fact that $\Qin \subseteq Q(\sbf)$, we get
\[
\sum_{Q\in \F_1} |Q|
\lesssim 
\!\!\!\! \sum_{\substack{\Qin \in \sbf' \\ \ell(\Qin) \approx 2^{-N_1}\ell(Q(\sbf))}} \!\!\!\!|\Qin| 
\lesssim 
|Q(\sbf)|.
\]

Next, to treat the cubes in $\F_2$ and $\F_3$, let us fix some notation. Let $\mbf{x}_{\Qout}$ be the center of $\Qout$ and let $\Qout^\star \in \dd(\Qout)$ be such that $\mbf{x}_{\Qout} \in \Qout^\star$ and $\ell(\Qout^\star) = 2^{-10}\ell(\Qout)$. Then 
\begin{equation}\label{QstarwhitQp.eq}
	\dist(\Qout^\star, (\Qout)^c) \approx \ell(\Qout), 
	\qquad 
	\ell(\Qout^\star) \approx \ell(\Qout).
\end{equation}

\textbf{Case $\F_2$.} Let us start with $\F_2$.
Let $\mathcal{B}'$ denote the minimal cubes in $\sbf'$. If $Q \in \F_2$, then $\Qout \subseteq \widetilde{Q}$ for some $\widetilde{Q} \in \mathcal{B}'$ (just develop the definition of $Q \in \F_2$), and hence $\Qout^\star \subseteq \widetilde{Q}$. We claim that 
\begin{equation}\label{QstarwhitQtil.eq}
	\ell(\Qout^\star) \lesssim \dist(\Qout^\star, (\widetilde{Q})^c) \lesssim \ell(\Qout^\star).
\end{equation}
Taking this claim momentarily for granted, let us continue. Fix $\widetilde{Q} \in \mathcal{B}'$. Let us consider the family of cubes $Q_k \in \F_2$ for which $(Q_k)_{\text{out}} \subseteq \widetilde{Q}$. Then \eqref{QstarwhitQtil.eq} implies that the associated $\{ (Q_k)_{\text{out}}^\star \}_k$ have bounded overlap. Indeed, if $(Q_k)_{\text{out}}^\star \cap (Q_{k'})_{\text{out}}^\star \neq \emptyset$, then \eqref{QstarwhitQtil.eq} (and \eqref{QstarwhitQp.eq}) would imply that $\ell(Q_k) \approx \ell(Q_k^\star) \approx \ell(Q_{k'}^\star) \approx \ell(Q_{k'})$, which would easily imply that $\dist((Q_k)_{\text{out}}^\star, (Q_{k'})_{\text{out}}^\star) \lesssim \ell((Q_k)_{\text{out}})$. Therefore, for a fixed $k$, the bounded overlap follows by the above facts:
\[\#\big\{Q_{k'}^\star: Q_{k'}^\star \cap Q_{k}^\star \neq \emptyset\big\} \le \#\big\{Q_{k'} : \ell(Q_{k'}) \approx \ell(Q_k), \; \dist(Q_k, Q_{k'}) \lesssim \ell(Q_k)\big\} \lesssim 1.\]

These facts allow us to finish this case by computing 
\[
\sum_{Q \in \F_2} |Q| 
=
\sum_{\widetilde{Q} \in \mathcal{B}'} \sum_{\substack{Q_k \in \F_2 \\ (Q_k)_{\text{out}} \subseteq \widetilde{Q}}} |Q_k|
\approx 
\sum_{\widetilde{Q} \in \mathcal{B}'} \sum_{\substack{Q_k \in \F_2 \\ (Q_k)_{\text{out}} \subseteq \widetilde{Q}}} |(Q_k)_{\text{out}}^\star|
\lesssim 
\sum_{\widetilde{Q} \in \mathcal{B}'} |\widetilde{Q}| 
\leq 
|Q(\sbf)|.
\]
In this line, the first equality is an elementary rearrangement since $Q \in \F_2$ implies $\Qout \subseteq \widetilde{Q}$, then we use $\ell((Q_k)_{\text{out}}^\star) \approx \ell((Q_k)_{\text{out}}) \approx \ell(Q)$, later we use the bounded overlap discussed in the last paragraph, and we finish by disjointness of the family $\mathcal{B}'$.

Therefore, we are left to prove the claim \eqref{QstarwhitQtil.eq}. First, the lower bound comes from \eqref{QstarwhitQp.eq}, since $Q' \subseteq \widetilde{Q}$. In turn, for the upper bound, we break into cases:
\begin{itemize}
	\item Case $\widetilde{Q} \cap \Qin = \emptyset$. Then $\Qin \subseteq \widetilde{Q}$, which allows us to compute (using also \eqref{QQpprop.eq} and \eqref{QstarwhitQp.eq})
	\[ \dist(\Qout^\star, (\widetilde{Q})^c) 
	\leq 
	\dist(\Qout^\star, \Qin)
	\leq 
	\diam(\Qout) + \dist(\Qout, \Qin)
	\lesssim 
	\ell(\Qout)
	\approx
	\ell(\Qout^\star).
	 \]
	 \item Case $\widetilde{Q} \subseteq \Qin$. Then we have, using \eqref{QQpprop.eq} and \eqref{QstarwhitQp.eq},
	 \[\Qout^\star \subseteq \widetilde{Q} \subseteq \Qin, 
	 \quad \text{ and } \quad 
	 \ell(\Qin) \lesssim \ell(\Qout) \lesssim \ell(\Qout^\star),
	 \]
	 which clearly implies that $\ell(\widetilde{Q}) \approx \ell(\Qout^\star)$. With this and the fact that $\mbf{x}_{\Qout} \in \Qout^\star \subseteq \widetilde{Q}$, we finish by this simple computation:
	 \[
	 \dist(\Qout^\star, (\widetilde{Q})^c)
	 \leq 
	 \dist(\mbf{x}_{\Qout}, \partial \widetilde{Q})
	 \leq 
	 \diam(\widetilde{Q})
	 \approx 
	 \ell(\widetilde{Q})
	 \approx 
	 \ell(\Qout^\star)
	 \approx 
	 \ell(\Qout).
	 \]
\end{itemize}
Actually, since $\Qin \in \sbf'$ and $\widetilde{Q} \in \mathcal{B}'$ (minimal cube in $\sbf'$), there are no more cases to be considered. Therefore, we are done with claim \eqref{QstarwhitQtil.eq}, and hence with Case $\F_2$.

\textbf{Case $\F_3$.} Now we conclude by handling the collection $\F_3$, but this is done in a similar manner to $\F_2$. In this case, if $Q \in \F_3$, then we claim that
\begin{equation}\label{QstarwhitQS.eq}
	\ell(\Qout^\star) \lesssim \dist(\Qout^\star, Q(\sbf)) \lesssim \ell(\Qout^\star).
\end{equation}
Indeed, the lower bound follows from \eqref{QstarwhitQp.eq} and the fact that $Q(\sbf) \subseteq (\Qout)^c$. And for the upper bound, note that $\Qin \subseteq Q(\sbf)$ because $\Qin \in \sbf' = \sbf_{N_1, N_2}$. Then, using also \eqref{QQpprop.eq} and \eqref{QstarwhitQp.eq},
\[
\dist(\Qout^\star, Q(\sbf))
\leq 
\dist(\Qout^\star, \Qin)
\leq 
\diam(\Qout) + \dist(\Qout, \Qin)
\lesssim 
\ell(\Qout)
\approx 
\ell(\Qout^\star).
\]

Now, from \eqref{QstarwhitQS.eq} and the fact that $\ell(\Qout^\star) \lesssim \ell(Q(\sbf))$ (because $\Qin \in \sbf$, and also \eqref{QQpprop.eq} and \eqref{QstarwhitQp.eq}), it is easy to deduce that there exists a uniform $C \geq 1$ such that $\Qout^\star \subseteq CQ(\sbf)$. This and the fact that $\{ \Qout^\star \}$ have bounded overlap (which can be proved in a similar manner as in Case $\F_2$, this time using \eqref{QstarwhitQS.eq} instead of \eqref{QstarwhitQtil.eq}) readily yield the desired estimate
\[\sum_{Q \in \mathcal{F}_3 } |Q| 
\lesssim 
\sum_{Q \in \mathcal{F}_3 } |\Qout^\star|
\lesssim 
|CQ(\sbf)|
\lesssim
|Q(\sbf)|.\]
This finishes Case $\F_3$, and ultimately the proof of \eqref{cutoffbds2.eq}, and thus of Lemma~\ref{cutofffnlem.lem}.

\section{Comments on smoothness assumptions}\label{extendrmks.sect}

In this last appendix, we describe how one can extend the main results in this paper to rougher matrices than the ones considered in our Definition~\ref{smoothL1carl.def}, because concretely we have assumed them to be smooth. In fact, we will obtain a satisfactory extension of Theorem~\ref{main1.thrm} in the case that the coefficients $A$ are symmetric; and at the end of the section, we discuss how one might extend the results to non-symmetric matrices (for a possible extension of Theorem~\ref{main2.thrm}). 

For brevity in the notation, in the setting of Theorem~\ref{main1.thrm}, we denote
\[\delta(\mbf{X}) := \dist(\mbf{X}, \partial\Omega),
\quad \text{ and } \quad  \osc(A,\mbf{X}) := \sup_{\mbf{Z}, \mbf{Y} \in \mathcal{J}_{\delta(\mbf{X})/(1000\sqrt{n+1})}(\mbf{X})} |A(\mbf{Y}) - A(\mbf{Z})|.\]
Then, the following extension of Theorem \ref{main1.thrm} can be made.
\begin{theorem}\label{main1ext.thrm}
Let $n \geq 2$. Suppose that $\Omega$ is the region above a $\Lip(1,1/2)$ function $\psi$ (see Definition \ref{Lipnot}), and $A$ is a \textbf{symmetric} elliptic matrix satisfying \eqref{ellip.eq} and the following Carleson condition
\begin{equation}\label{weakcarlcond.eq}
\sup_{\mbf{X} \in \partial\Omega} \; \sup_{\rho > 0} \;\; \rho^{-n - 1} \iiint_{\mathcal{J}_{\rho}(\mbf{X}) \cap \Omega} \frac{\osc(A, \mbf{Y})}{\delta(\mbf{Y})} \, \d \mbf{Y} < \infty.
\end{equation}
Assume further that there exists $1 < p < \infty$ such that the $L^p$ Dirichlet problem is solvable for $\mathcal{L}$ (see Definition~\ref{Lpsolv.def}), where
\[\mathcal{L}f := \partial_t f - \div_X(A(X,t) \nabla_X f).\]
Then $D_t^{1/2}\psi \in \BMO$ (the parabolic BMO space, see Section \ref{sec:BMO}). Equivalently, the $L^p$ solvability of the Dirichlet problem implies that the graph of $\psi$ is parabolic uniformly rectifiable. 
\end{theorem}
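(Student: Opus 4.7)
The plan is to reduce Theorem \ref{main1ext.thrm} to Theorem \ref{main1.thrm} via a Whitney-type mollification of the coefficient matrix. That is, we will produce a smooth symmetric matrix $\widetilde{A}$, still satisfying \eqref{ellip.eq} (perhaps with a slightly worse ellipticity constant), such that (i) $\widetilde{A}$ satisfies the $L^1$-Carleson oscillation condition of Definition \ref{smoothL1carl.def}, and (ii) the parabolic measure $\widetilde{\omega}$ associated to $\widetilde{\mathcal{L}}:= \partial_t - \div_X(\widetilde{A}\nabla_X)$ is in $A_\infty$ with respect to surface measure on $\Sigma$. Once this is achieved, Theorem \ref{main1.thrm} (applied to $\widetilde{\mathcal{L}}$) yields $D_t^{1/2}\psi\in\BMO$, which is the desired conclusion (since the conclusion depends only on $\psi$, not on which operator we use).

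For step (i), choose a nonnegative mollifier $\Theta\in C^\infty_c(\mathcal{J}_{1/(2000\sqrt{n+1})}(0))$ with $\iiint\Theta=1$, and for each $\mbf{X}\in\Omega$ set $\Theta_{\mbf{X}}(\mbf{Y}):=\delta(\mbf{X})^{-(n+2)}\Theta((\mbf{Y}-\mbf{X})/\delta(\mbf{X}))$ (with parabolic rescaling in the time slot), and define
\[
\widetilde{A}(\mbf{X}):= \iiint_{\Omega} A(\mbf{Y})\,\Theta_{\mbf{X}}(\mbf{Y})\,\d\mbf{Y}.
\]
Since $A$ is symmetric and elliptic, so is $\widetilde{A}$ (with the same ellipticity constant $\Lambda$), and $\widetilde{A}\in C^\infty(\Omega)$. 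Standard mollification estimates yield
\[
\delta(\mbf{X})^{k}|\nabla_X^k\widetilde{A}(\mbf{X})|+\delta(\mbf{X})^{2k}|\partial_t^k\widetilde{A}(\mbf{X})|+\delta(\mbf{X})^3|\partial_t\nabla_X\widetilde{A}(\mbf{X})|\lesssim \osc(A,c\mbf{X}),
\]
for some slightly enlarged Whitney-dilate, and in particular the pointwise bound \eqref{smoothnessonA.eq} follows from $\osc(A,\cdot)\lesssim \|A\|_{L^\infty}\lesssim 1$. Similarly,
\[
|\nabla_X\widetilde{A}(\mbf{X})|+\delta(\mbf{X})|\partial_t\widetilde{A}(\mbf{X})|\lesssim \frac{\osc(A,c\mbf{X})}{\delta(\mbf{X})},
\]
so that \eqref{weakcarlcond.eq} (combined with a trivial Whitney-overlap covering argument) immediately implies that the measure $\d\mu$ of \eqref{eq:carleson_measure} (with $\widetilde{A}$ replacing $A$) is a Carleson measure, which is condition \eqref{carlesonmeasure.eq}. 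Thus $\widetilde{A}$ satisfies Definition \ref{smoothL1carl.def}.

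For step (ii), we invoke the parabolic perturbation result of Nystr\"om \cite{N97}: in the symmetric Lip(1,1/2)-graph setting, if $A,\widetilde{A}$ are two symmetric elliptic matrices whose ``disagreement'' $\mathcal{E}(\mbf{X}):=\delta(\mbf{X})^{-1}\sup_{\mathcal{J}_{\delta(\mbf{X})/C}(\mbf{X})}|A-\widetilde{A}|^2\cdot\delta(\mbf{X})+\delta(\mbf{X})^{-1}\sup|A-\widetilde{A}|$ (in the appropriate parabolic $L^1$ or $L^2$ Carleson sense) defines a sufficiently small Carleson measure, then the $A_\infty$ property of parabolic measure transfers between $\mathcal{L}$ and $\widetilde{\mathcal{L}}$. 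In our setting $|A(\mbf{Y})-\widetilde{A}(\mbf{Y})|\lesssim \osc(A,c\mbf{Y})$ on Whitney boxes, so by \eqref{weakcarlcond.eq} the corresponding disagreement measure is Carleson. Hence from $\omega\in A_\infty$ (which is our hypothesis, via Theorem \ref{th4.5}) we deduce $\widetilde{\omega}\in A_\infty$, possibly with somewhat larger $A_\infty$ constants, but depending only on the quantitative data of the hypothesis. Combined with step (i), Theorem \ref{main1.thrm} applies to $\widetilde{\mathcal{L}}$ and gives $D_t^{1/2}\psi\in\BMO$.

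The main obstacle is step (ii): while it is essentially classical and is spelled out for symmetric matrices in \cite{N97}, one has to be careful that the Carleson perturbation is driven only by the oscillation hypothesis \eqref{weakcarlcond.eq} and not by the (stronger) pointwise bounds on $\nabla \widetilde{A}$, which after smoothing are only controlled pointwise by $\delta^{-1}$, not by $\osc(A,\cdot)/\delta$. For the non-symmetric analogue of Theorem \ref{main2.thrm}, one additionally needs the corresponding $A_\infty$-perturbation result for the adjoint parabolic measure, which as noted in Remark \ref{remark-smoothL1carl} is expected to follow by a straightforward (if technical) modification of \cite{N97}, but we do not pursue the details here.
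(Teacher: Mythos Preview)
Your approach is essentially the same as the paper's: mollify $A$ at the Whitney scale, verify that the mollified matrix $\widetilde{A}$ satisfies Definition~\ref{smoothL1carl.def}, then invoke the parabolic Carleson perturbation theorem of Nystr\"om \cite[Theorem~6.4]{N97} to transfer $A_\infty$ from $\omega$ to $\widetilde{\omega}$, and finally apply Theorem~\ref{main1.thrm} to $\widetilde{\mathcal{L}}$.

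There is, however, a technical oversight in your construction. You mollify at scale $\delta(\mbf{X})$, but $\delta(\cdot)=\dist(\cdot,\partial\Omega)$ is only Lipschitz, not smooth. When you differentiate $\widetilde{A}(\mbf{X})$ in $\mbf{X}$, derivatives land on $\delta(\mbf{X})$, and for $k\ge 2$ the quantities $\nabla_X^k\delta$, $\partial_t^k\delta$ do not exist in general; hence your claim that ``standard mollification estimates yield'' the higher-order pointwise bounds \eqref{smoothnessonA.eq} is not justified, and in fact $\widetilde{A}$ need not be $C^\infty(\Omega)$ as required by Definition~\ref{smoothL1carl.def}. The paper fixes this by first replacing $\delta$ with a regularized distance $\delta'\in C^\infty(\Omega)$ satisfying $\delta'\approx\delta$ and $\delta^{k-1}|\nabla_X^k\delta'|+\delta^{2k-1}|\partial_t^k\delta'|\lesssim_{k,n}1$ (via \cite[Lemma~3.24]{BHHLN-CME}, following Stein), and then mollifies at scale $c\,\delta'(\mbf{X})$. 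With this modification your argument goes through.

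A second minor point: the perturbation result you need from \cite{N97} is a \emph{large-constant} result (finiteness of the disagreement Carleson norm suffices), not a small-constant one; your phrase ``sufficiently small Carleson measure'' is misleading, and the formula you write for $\mathcal{E}(\mbf{X})$ is garbled. The correct quantity is the $L^2$ Carleson norm of $a(\mbf{Y}):=\sup_{\mathcal{J}_{c\delta(\mbf{Y})}(\mbf{Y})}|A-\widetilde{A}|$, i.e.\ one checks that $a(\mbf{Y})^2/\delta(\mbf{Y})\,d\mbf{Y}$ is a Carleson measure, which follows from $a(\mbf{Y})\lesssim\osc(A,\mbf{Y})$, boundedness of $a$, and \eqref{weakcarlcond.eq}.
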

\begin{remark}
Note that \eqref{weakcarlcond.eq} holds whenever $A$
satisfies the Carleson condition
\begin{equation*}
\sup_{\mbf{X} \in \partial\Omega} \; \sup_{\rho > 0} \;\; \rho^{-n - 1} \iiint_{\mathcal{J}_{\rho}(\mbf{X}) \cap \Omega} \;\; \sup_{\mbf{Y} \in \mathcal{J}_{\delta(\mbf{X})/(1000\sqrt{n+1})}(\mbf{X})} \!\! \big( |\nabla A(\mbf{Y})| + |\partial_t A(\mbf{Y})|\delta(\mbf{Y}) \big) \, \d \mbf{Y} < \infty,
\end{equation*}
which is actually a stronger condition than our Carleson measure assumption in Theorem~\ref{main1.thrm}, namely \eqref{carlesonmeasure.eq} (indeed, the measure in \eqref{eq:carleson_measure} does not have the supremum above). Therefore, Theorem~\ref{main1ext.thrm} extends Theorem~\ref{main1.thrm}.
\end{remark}

\begin{proof}
The idea is to regularize $A$ and use some perturbative argument. For that, first let $P_r$ be the parabolic approximate identity on $\mathbb{R}^{n+1}$ from Appendix \ref{cutoff.sect} (see \eqref{prdef1'}). We then let $\delta'(\cdot)$ be a regularized version of $\delta(\cdot)$ (we can use \cite[Lemma 3.24]{BHHLN-CME}, which is based on \cite[Chapter VI, \textsection 1 \& 2]{stein-SIOs}) so that
\[c_n\delta(\mbf{X}) \le \delta'(\mbf{X}) \le \delta(\mbf{X}), \qquad \delta(\mbf{X})^{k-1}|\nabla_X^k \delta'(\mbf{X})| + \delta(\mbf{X})^{2k -1}|\partial^k_t \delta'(\mbf{X})| \lesssim_{k,n} 1, 
\qquad \mbf{X} \in \Omega.\]
Then for $c := (10^5\sqrt{n+1})^{-1}$ we define
\[\widetilde{A}(\mbf{X}) 
:= 
p_{c\delta'(\mbf{X})} \ast A (\mbf{X})
:=
\iiint p_{c\delta'(\mbf{X})}(\mbf{Y}) A(\mbf{X} - \mbf{Y}) \,\d\mbf{Y}
,\]
where the integral is taken component-wise.
Then it is not hard to show that
\[|\nabla_X \widetilde{A}(\mbf{Y})|  + |\partial_t \widetilde{A}(\mbf{Y})|\delta(\mbf{Y}) \lesssim  \frac{\osc(A, \mbf{Y})}{\delta(\mbf{Y})}, 
\qquad \mbf{Y} \in \Omega,\]
and, moreover,
\[\delta(\mbf{Y})^k|\nabla_X^k \widetilde{A}(\mbf{Y})| + \delta(\mbf{Y})^{2k} |\partial_t^k \widetilde{A}(\mbf{Y})| \lesssim_{n,k} \|A\|_{L^\infty}, \qquad k = 1,2,\dots, \; \mbf{Y} \in \Omega.\]

These (and the hypothesis \eqref{weakcarlcond.eq}) easily imply that $\widetilde{A}$ satisfies the $L^1$-Carleson oscillation condition in Definition \ref{smoothL1carl.def} (and it is elementary to see that it is also elliptic). Therefore, since we have not modified $\Omega$ nor $\Sigma$, to prove Theorem \ref{main1ext.thrm} it suffices to show that there exists $1 < q < \infty$ such that the $L^q$ Dirichlet problem is solvable for $\widetilde{\mathcal{L}}$, where
\[\widetilde{\mathcal{L}}f := \partial_t f - \div_X(\widetilde{A}(X,t) \nabla_X f).\]

To this end, we use the already established perturbative theory in \cite{N97}, which requires us to compare $A$ to $\widetilde{A}$. For that purpose, define
\[a(\mbf{Y}) := \sup_{\mbf{Z} \in  \mathcal{J}_{\delta(\mbf{X})/(10^{20}\sqrt{n+1})}(\mbf{Y}) } |A(\mbf{Z}) - \widetilde{A}(\mbf{Z})|,
\qquad \mbf{Y} \in \Omega.\]
Then, it is easy to show that 
\[ \|a\|_{L^\infty} \lesssim \|A\|_{L^\infty}, 
\quad \text{ and } \quad 
a(\mbf{Y}) \lesssim \osc(A, \mbf{Y}), \quad \mbf{Y} \in \Omega, \]
which imply, jointly with \eqref{weakcarlcond.eq}, the estimate
\[\sup_{\mbf{X} \in \partial\Omega} \; \sup_{\rho > 0} \;\; \rho^{-n - 1} \int_{\mathcal{J}_{\rho}(\mbf{X}) \cap \Omega} \frac{a(\mbf{Y})^2}{\delta(\mbf{Y})} \, \d \mbf{Y} < \infty.\]
With this, by the perturbative theory in \cite[Theorem 6.4]{N97}, there exists $1 < q < \infty$ such that the $L^q$ Dirichlet problem is solvable for $\widetilde{\mathcal{L}}$, which finishes the proof.
\end{proof}

Finally, we close with a remark that while in this section we assumed that $A$ was symmetric, it seems likely that the results in \cite{N97} hold for non-symmetric $A$. Indeed, this is the case in the elliptic setting.

\section{A linear algebra fact about elliptic matrices} \label{appendix:matrices}

In this last appendix, we will show that \eqref{hessianAndMixedIndices.eq} holds because actually the stronger pointwise estimate $|\nabla^2 u|^2 \approx (a_{i, j}u_{x_i, x_k})(a_{k, \ell}u_{x_j, x_\ell})$ is true (along the section, we use the summation convention over repeated indices). In fact, we will prove a more general result for matrices. 

\begin{lemma} \label{matrices.lem}
	Let $A, B$ be $n \times n$ real and symmetric matrices. Suppose further that $A$ is elliptic and bounded, i.e. $\lambda \in [\Lambda^{-1}, \Lambda]$ for every eigenvalue $\lambda$ of $A$, for some $\Lambda > 0$. Then 
	\begin{equation*}
		|B|^2
		\approx 
		a_{i, j} b_{i, k} a_{k, \ell} b_{j, \ell},
	\end{equation*}
	with implicit constant depending only on $\Lambda$. Here and everywhere along the section, $|X|$ denotes the Hilbert-Schmidt norm of the matrix $X$, i.e. $|X|^2 := \sum_{i, j=1}^n |x_{i, j}|^2$. 
\end{lemma}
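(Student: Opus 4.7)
The plan is to recognize the index sum on the right-hand side as a trace. Since both $A$ and $B$ are symmetric, $\sum_i a_{i,j}b_{i,k} = (AB)_{j,k}$ and $\sum_\ell a_{k,\ell}b_{j,\ell} = (AB)_{k,j}$, so the full contraction equals
\[
a_{i,j}b_{i,k}a_{k,\ell}b_{j,\ell} \;=\; \sum_{j,k}(AB)_{j,k}(AB)_{k,j} \;=\; \mathrm{tr}\big((AB)^2\big).
\]
Thus the lemma is equivalent to the assertion that $\mathrm{tr}((AB)^2) \approx |B|^2$, with implicit constants depending only on $\Lambda$.

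Next, since $A$ is symmetric with spectrum in $[\Lambda^{-1},\Lambda]$, it is positive definite and admits a unique symmetric positive definite square root $A^{1/2}$ whose eigenvalues lie in $[\Lambda^{-1/2}, \Lambda^{1/2}]$. By the cyclicity of the trace,
\[
\mathrm{tr}\big((AB)^2\big) \;=\; \mathrm{tr}\big(A^{1/2}A^{1/2}BA^{1/2}A^{1/2}B\big) \;=\; \mathrm{tr}\big((A^{1/2}BA^{1/2})^2\big).
\]
The matrix $M:=A^{1/2}BA^{1/2}$ is real symmetric, so $\mathrm{tr}(M^2) = |M|^2$ (the Hilbert--Schmidt norm squared). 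Hence the problem reduces to comparing $|A^{1/2}BA^{1/2}|$ with $|B|$.

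The final step is the standard submultiplicativity $|XYZ| \le \|X\|_{\mathrm{op}}\,\|Z\|_{\mathrm{op}}\,|Y|$ for the Hilbert--Schmidt norm. Since $\|A^{1/2}\|_{\mathrm{op}} \le \Lambda^{1/2}$ and $\|A^{-1/2}\|_{\mathrm{op}} \le \Lambda^{1/2}$, we obtain
\[
|A^{1/2}BA^{1/2}| \;\le\; \Lambda\,|B|, \qquad |B| \;=\; |A^{-1/2}(A^{1/2}BA^{1/2})A^{-1/2}| \;\le\; \Lambda\,|A^{1/2}BA^{1/2}|,
\]
which combine to give $\Lambda^{-2}|B|^2 \le |A^{1/2}BA^{1/2}|^2 \le \Lambda^2|B|^2$. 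Chaining this with the identities in the previous paragraph yields the desired two-sided bound. There is no real obstacle in this argument; the only observation needed is that conjugation by $A^{1/2}$ preserves symmetry of $B$, which is what lets us convert $\mathrm{tr}(M^2)$ into $|M|^2$ without any sign issue.
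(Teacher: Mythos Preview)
Your proof is correct and follows essentially the same approach as the paper: both arguments introduce the square root $A^{1/2}$, identify the index sum with $|A^{1/2}BA^{1/2}|^2$, and then show $|A^{1/2}BA^{1/2}|\approx |B|$. The only cosmetic differences are that you reach the identification via trace cyclicity (recognizing $a_{i,j}b_{i,k}a_{k,\ell}b_{j,\ell}=\mathrm{tr}((AB)^2)=\mathrm{tr}((A^{1/2}BA^{1/2})^2)$) rather than by direct index computation with the entries of $A^{1/2}$, and you handle the norm comparison by operator-norm submultiplicativity rather than by diagonalizing via the spectral theorem.
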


Indeed, specializing the previous theorem to $B = \nabla^2 u$, which is symmetric by the discussion in Subsection~\ref{sec:green_whitney}, and $A$ is of course our coefficient matrix, we obtain \eqref{hessianAndMixedIndices.eq}. The proof of the lemma will follow from a sequence of easier intermediate results.

\begin{lemma}\label{multiplyMatrices.lem}
	Let $\widetilde{A}$ be as in Lemma~\ref{matrices.lem} and $B$ be any $n\times n$ matrix (not necessarily symmetric). Then (with implicit constants depending only on $\Lambda$)
	\[
	|\widetilde{A}B| \approx |B| \approx |B \widetilde{A}|.
	\]
\end{lemma}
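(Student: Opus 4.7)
The plan is to exploit the identity $|M|^2 = \mathrm{tr}(M^T M)$ for the Hilbert--Schmidt norm, together with the symmetry of $\widetilde A$. Specifically, since $\widetilde A = \widetilde A^T$, we have
\[
 |\widetilde A B|^2 = \mathrm{tr}\bigl((\widetilde A B)^T (\widetilde A B)\bigr) = \mathrm{tr}(B^T \widetilde A^2 B).
\]
Here $\widetilde A^2$ is symmetric and positive definite with eigenvalues in $[\Lambda^{-2},\Lambda^2]$, so as quadratic forms $\Lambda^{-2} I \le \widetilde A^2 \le \Lambda^2 I$.

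Writing $B = [b_1 \,|\, b_2\,|\,\cdots\,|\,b_n]$ in terms of its columns, one has $\mathrm{tr}(B^T \widetilde A^2 B) = \sum_j b_j^T \widetilde A^2 b_j$, so the bound $\Lambda^{-2}|v|^2 \le v^T \widetilde A^2 v \le \Lambda^2 |v|^2$ applied columnwise yields
\[
 \Lambda^{-2} |B|^2 \le |\widetilde A B|^2 \le \Lambda^2 |B|^2,
\]
which is the first comparison $|\widetilde A B| \approx |B|$.

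For the second comparison, I would reduce to the first by transposition: $|B \widetilde A| = |(B\widetilde A)^T| = |\widetilde A^T B^T| = |\widetilde A B^T|$, since the Hilbert--Schmidt norm is invariant under transposition and $\widetilde A$ is symmetric. Applying the first comparison to $B^T$ in place of $B$ gives $|\widetilde A B^T| \approx |B^T| = |B|$, completing the proof. There is really no obstacle here, it is simply a standard fact about the interaction of the Hilbert--Schmidt norm with a symmetric positive definite factor; the only mild point is remembering to apply the spectral estimate columnwise via the trace formula rather than attempting to use the operator norm directly.
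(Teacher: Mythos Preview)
Your proof is correct. The paper argues slightly differently: it diagonalizes $\widetilde A = U^T D U$ via the spectral theorem, uses that multiplication by the orthogonal matrix $U$ preserves the Hilbert--Schmidt norm, and that $|DX| \approx_\Lambda |X|$ since $D$ has diagonal entries in $[\Lambda^{-1},\Lambda]$. Your approach bypasses the explicit diagonalization by working directly with the quadratic-form inequality $\Lambda^{-2} I \le \widetilde A^2 \le \Lambda^2 I$ applied columnwise via the trace formula; this is equally valid and arguably a touch more streamlined. Your reduction of the second comparison to the first by transposition is also a clean shortcut (the paper simply says the other case is ``similar''). Either way, the content is the same elementary fact.
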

\begin{proof}
	Let us only show the first equivalence, the other one being similar. Since $\widetilde{A}$ is symmetric, we can use the spectral theorem to diagonalize it, i.e., there exists an orthogonal matrix $U$ and a diagonal matrix $D = \text{diag}(\lambda_1, \ldots, \lambda_n)$ such that $\widetilde{A} = U^T D U$. In fact, the assumption on the eigenvalues of $\widetilde{A}$ implies that $\lambda_i \approx_{\Lambda} 1$ for $i = 1, \ldots, n$. This easily implies that for any vector $v \in \RR^n$, $|Dv| \approx_{\Lambda} |v|$. Hence, computing the norm of any matrix $X$ by testing against vectors (i.e. $|X| = \sup_{v \in \RR^n : |v| \leq 1} |Xv|$), we also obtain $|DX| \approx_{\Lambda} |X|$. 	
	Using this and the fact that multiplication by orthogonal matrices preserves the Hilbert-Schmidt norm (recall that they are isometries in the euclidean space), we finish by computing
	\[
	|B| = |U B| \approx_{\Lambda} |D U B| = |U^T D U B| = |\widetilde{A} B|.
	\]
\end{proof}

\begin{corollary}\label{squareRootMatrix.corol}
	Let $A, B$ as in Lemma~\ref{matrices.lem}. Then (with implicit constant depending only on $\Lambda$)
	\[
	|B| \approx |A^{1/2} B A^{1/2}|.
	\]
\end{corollary}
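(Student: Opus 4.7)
The proof will be essentially a two-step application of Lemma \ref{multiplyMatrices.lem}. The key preliminary observation is that $A^{1/2}$ is well-defined and inherits the hypotheses of Lemma \ref{multiplyMatrices.lem}: since $A$ is symmetric with eigenvalues in $[\Lambda^{-1}, \Lambda]$, the spectral theorem lets me write $A = U^T D U$ with $U$ orthogonal and $D = \text{diag}(\lambda_1, \ldots, \lambda_n)$, and I define $A^{1/2} := U^T D^{1/2} U$ where $D^{1/2} := \text{diag}(\lambda_1^{1/2}, \ldots, \lambda_n^{1/2})$. Then $A^{1/2}$ is symmetric, and its eigenvalues $\lambda_i^{1/2}$ lie in $[\Lambda^{-1/2}, \Lambda^{1/2}]$, so it is elliptic and bounded with constant $\Lambda^{1/2}$.

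With this in hand, I would simply apply Lemma \ref{multiplyMatrices.lem} twice, taking $\widetilde A := A^{1/2}$. First, viewing $A^{1/2} B A^{1/2}$ as $A^{1/2}$ multiplying the matrix $(B A^{1/2})$ on the left, the first equivalence in Lemma \ref{multiplyMatrices.lem} gives
\[
|A^{1/2} B A^{1/2}| \;\approx\; |B A^{1/2}|,
\]
with implicit constant depending only on $\Lambda$. Then, viewing $B A^{1/2}$ as $A^{1/2}$ multiplying $B$ on the right, the second equivalence in Lemma \ref{multiplyMatrices.lem} yields
\[
|B A^{1/2}| \;\approx\; |B|.
\]
Chaining these two estimates completes the proof.

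There is no real obstacle here: the result follows from the fact that Lemma \ref{multiplyMatrices.lem} applies to any matrix $B$ (not necessarily symmetric), so symmetry of $B$ plays no role, and the only nontrivial point — that $A^{1/2}$ inherits ellipticity with a controlled constant — is immediate from the spectral theorem.
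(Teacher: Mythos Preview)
Your proof is correct and matches the paper's approach exactly: observe that $A^{1/2}$ is symmetric with eigenvalues in $[\Lambda^{-1/2},\Lambda^{1/2}]$, then apply Lemma~\ref{multiplyMatrices.lem} twice with $\widetilde A = A^{1/2}$. You have in fact spelled out the two applications more explicitly than the paper does.
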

\begin{proof}
	It is an elementary linear algebra fact that symmetric elliptic matrices (i.e. those whose eigenvalues are positive) like $A$ have a (unique) square root matrix $A^{1/2}$ which is also symmetric. It is also elementary that the eigenvalues of $A^{1/2}$ lie in $[\Lambda^{-1/2}, \Lambda^{1/2}]$. Therefore, the result follows by applying Lemma~\ref{multiplyMatrices.lem} twice with $\widetilde{A} = A^{1/2}$. 
\end{proof}

\begin{proof}[Proof of Lemma~\ref{matrices.lem}]
	Let $\tilde{a}_{i, j}$ denote the entries in $A^{1/2}$. Then, clearly the entries in $A^{1/2}BA^{1/2}$ are 
	\[
	(A^{1/2}BA^{1/2})_{\alpha, \beta}
	=
	\tilde{a}_{\alpha, i} b_{i, k} \tilde{a}_{k, \beta}.
	\]
	And using the symmetry of both $B$ and $A^{1/2}$, we similarly obtain 
	\[
	(A^{1/2}BA^{1/2})_{\alpha, \beta}
	=
	\tilde{a}_{\beta, \ell} b_{\ell, j} \tilde{a}_{j, \alpha}.
	\]
	Moreover, note also that since $A^{1/2}A^{1/2} = A$, then we have
	\[
	a_{k, \ell} = \tilde{a}_{k, m} \tilde{a}_{m, \ell}, 
	\qquad 
	\text{for } k, \ell = 1, \ldots, n.
	\]
	Using all the above facts together and Corollary~\ref{multiplyMatrices.lem}, we finish by computing
	\[
		|B|^2 
		\approx_{\Lambda}
		|A^{1/2} B A^{1/2}|^2
		= 
		\sum_{\alpha, \beta=1}^n ((A^{1/2}BA^{1/2})_{\alpha, \beta})^2
		=
		 \tilde{a}_{\alpha, i} b_{i, k} \tilde{a}_{k, \beta} \tilde{a}_{\beta, \ell} b_{\ell, j} \tilde{a}_{j, \alpha}
		\\ =
		a_{j, i} a_{k, \ell} b_{i, k} b_{\ell, j},
	\]
	which finishes the proof accounting for the symmetry of both $A$ and $B$.
\end{proof}

\newcommand{\etalchar}[1]{$^{#1}$}

\end{document}